\definecolor{my-red}{rgb}{0.5,0.0,0.0}
\definecolor{my-blue}{rgb}{0.0,0.0,0.6}
\definecolor{my-green}{rgb}{0.0,0.5,0.0}
\definecolor{light-gray}{gray}{0.6}
\def\qed{\hfill $\vcenter{\hrule height .3mm
		\hbox {\vrule width .3mm height 2.1mm \kern 2mm \vrule width .3mm
			height 2.1mm} \hrule height .3mm}$ \bigskip}
\def \v {\omega}
\def \bound {\mathcal{B}}
\def \RR {\mathbb R}
\def \EE {\mathbb E}
\def \PP {\mathbb P}
\def \eps {\varepsilon}
\def \vphi {\varphi}
\def \cP {\mathcal P}
\def \cN {\mathcal N}
\def \cI {\mathcal I}
\def \cX {\mathcal X}
\newcommand{\al}{\alpha}
\newcommand{\be}{\beta}
\newcommand{\la}{\lambda}
\newcommand{\R}{\mathbb{R}}
\renewcommand{\P}{\mathbb{P}}
\newcommand{\E}{\mathbb{E}}
\newcommand{\one}{{\bf 1}}
\def\Ent{\mathrm{Ent}}
\newcommand{\wt}[1]{\widetilde{#1}}
\DeclareMathOperator*{\argmin}{arg\,min}
\DeclareMathOperator{\sech}{sech}
\DeclareMathOperator{\arctanh}{arctanh}
\DeclareMathOperator{\diag}{diag}
\DeclareMathOperator{\Var}{Var}
\DeclareMathOperator{\Cov}{Cov}
\DeclareMathOperator{\Cor}{Corr}
\DeclareMathOperator{\Tr}{Tr}
\DeclareMathOperator{\op}{op}
\def\sT{{\mathsf T}}
\def\reg{{\textnormal{reg}}}
\def\rr{\tiny{\sf RR}}
\def\rb{\tiny{\sf RB}}
\def\br{\tiny{\sf BR}}
\def\bb{\tiny{\sf BB}}
\def\cut{{\rm cut}}
\newtheorem{theorem}{Theorem}
\newtheorem{lemma}{Lemma}
\newtheorem{proposition}{Proposition}
\newtheorem{corollary}[theorem]{Corollary}
\theoremstyle{definition}
\newtheorem{definition}[theorem]{Definition}
\theoremstyle{remark}
\newtheorem{remark}[theorem]{Remark}
\newtheorem*{remark*}{Remark}
\long\def\symbolfootnotetext[#1]#2{\begingroup
	\def\thefootnote{\fnsymbol{footnote}}\footnotetext[#1]{#2}\endgroup}
\title{Fast mixing in Ising models with a negative spectral outlier
	via Gaussian approximation}
\date{}
\author{Dan Mikulincer\thanks{Department of Mathematics, University of Washington. Seattle, WA. \url{danmiku@uw.edu}}\and Youngtak Sohn\thanks{Division of Applied Mathematics, Brown University. Providence, RI. \url{youngtak_sohn@brown.edu}}
}
\begin{document}
	\maketitle
	\vspace{-0.8cm}
	\begin{abstract}
    We study the mixing time of Glauber dynamics for Ising models in which the interaction matrix contains a single negative spectral outlier. This class includes the anti-ferromagnetic Curie-Weiss model, the anti-ferromagnetic Ising model on expander graphs, and the Sherrington-Kirkpatrick model with disorder of negative mean.
    Existing approaches to rapid mixing rely crucially on log-concavity or spectral width bounds and therefore can break down in the presence of a negative outlier. 
    
    To address this difficulty, we develop a new covariance approximation method based on Gaussian approximation. This method is implemented via an iterative application of Stein’s method to quadratic tilts of sums of bounded random variables, which may be of independent interest. The resulting analysis provides an operator-norm control of the full correlation structure under arbitrary external fields. Combined with the localization schemes of Eldan and Chen, these estimates lead to a modified logarithmic Sobolev inequality and near-optimal mixing time bounds in regimes where spectral width bounds fail.  We complement these results by proving exponential lower bounds on the mixing time for low temperature anti-ferromagnetic Ising models on sparse random regular graphs and Erd\"{o}s-R\'{e}nyi graphs, based on the existence of gapped states as in the recent work of Sellke.
    
	\end{abstract}
	\section{Introduction}
    In this work we study the mixing time of Glauber dynamics for Ising models whose interaction matrix $J$ contains a negative outlier.  As an application, we establish $O(n\log(n))$ mixing times for the anti-ferromagnetic Ising model on a random $d$-regular graph with inverse temperature $\beta=O(1/\sqrt{d})$ uniformly for all~$d<n$.

    For a symmetric matrix $J\in \R^{n\times n}$ and an external field $h\in \R^n$, the Ising measure on $\{-1,+1\}^n$ is
	
	$$\mu_{J,h}(x) =\frac{1}{Z_{J,h}} \exp\left(\langle x,Jx\rangle + \langle h, x\rangle\right),\qquad x\in \{-1,+1\}^n,$$
    where $Z_{J,h}$ is a normalizing constant. We refer to $J$ as the \emph{interaction matrix}.
    
    Glauber dynamics is the (discrete-time) Markov chain $(X_t)_{t\geq 0}$, which at each update time $t=0,1,\ldots$, picks a uniformly random coordinate $i \in [n]$ and resamples the $i$'th coordinate $X_{t}(i)$ from the conditional law of $\mu_{J,h}$ given the remaining spins. The chain is reversible with stationary distribution $\mu_{J,h}$, and under mild assumptions ergodic. Since its introduction in \cite{glauber1963time} as a model for the time evolution of interacting spins, Glauber dynamics has become one of the canonical examples of a high-dimensional local Markov chain.
    
    The main object of study is the rate at which $X_t$ converges to stationarity, typically measured in $\ell^2$ or total variation distance. A powerful framework for analyzing convergence is provided by functional inequalities, most notably Poincar\'{e} and modified log-Sobolev inequalities (MLSI). Let $P=(P_{x,y})_{x,y\in \{-1,+1\}^n}$ be a reversible transition operator with stationary measure $\mu$. We say $P$ satisfies an MLSI with constant $\rho_{\mathrm{LS}}>0$ if for any test function $f:\{-1,1\}^n \to \RR_+$, 
   \[
   \mathrm{Ent}_\mu(Pf)\leq (1-\rho_{\mathrm{LS}})\mathrm{Ent}_{\mu}(f),
   \]
   where $Pf(x):=\sum_{y\in \{-1,+1\}^n}P_{x,y}f(y)$ and $\Ent_{\mu}(g):=\E_{\mu}[g\log(g/\E_{\mu}[g])]$.
   We denote the optimal constant by $\rho_{\mathrm{LS}}(\mu)$. In what follows, $P$ will always be the Glauber transition operator for $\mu$.
   
   It is well known that the MLSI constant controls the rate of convergence in relative entropy, and consequently, in total variation.  In particular, (see \cite{bobkov2006modified})
	\begin{align*}
	t_{\mathrm{mix}}(\varepsilon):=	\min\left\{t\geq 0:\mathrm{TV}\left(\mathrm{Law}(X_t),\mu\right)\leq \varepsilon\right\} \leq C\rho_{\mathrm{LS}}(\mu)^{-1}\left(\log\log\left(\frac{1}{\min\limits_x \mu(x)}\right) + \log\left(\frac{1}{2\varepsilon^2}\right)\right). 
	\end{align*}
	Thus, in many cases of interest, including those considered in the present work, if the MLSI constant satisfies $\rho_{\mathrm{LS}}(\mu) = \Omega(n^{-1}),$ then the mixing time has the optimal bound $O(n\log(n))$.\footnote{Throughout, for convenience we work with discrete-time Glauber dynamics. For the continuous-time version in which each site is updated according to independent rate-$1$ Poisson clocks, the corresponding MLSI constant scales by $\rho_{\mathrm{LS}}^{\mathrm{cont}}(\mu)\asymp n\rho_{\mathrm{LS}}^{\mathrm{disc}}(\mu)$. All results stated here transfer directly between the two versions after this normalization.} 
    
    Originating in the work of Dobrushin \cite{dobrushin1968problem} (later adapted to the study of Markov chains in \cite{weitz2005combinatorial}), much effort has been devoted to identifying general structural conditions on the interaction matrix $J$ that ensure rapid mixing. Recent advances \cite{baurschmidt2019simple,eldan2022spectral,chen2025localization} culminated in the \emph{spectral width condition} 
    \[
    \lambda_{\max}(J) - \lambda_{\min}(J)<\frac{1}{2}
    \]
    which implies that the MLSI constant satisfies $\rho_{\mathrm{LS}}(\mu_{J,h}) = \Omega(n^{-1})$ uniform in $h\in \R^n$, and hence optimal mixing. This spectral width condition is tight in general: for the ferromagnetic Curie-Weiss model $J = \frac{\beta}{n} {\bf 1}{\bf 1}^T, \beta>0$, Glauber dynamics exhibits a phase transition in mixing precisely at $\beta = \frac{1}{2}$~(see e.g. \cite{levin08glauber}). However, as demonstrated in \cite[Theorem 3]{AKV24}, there exist broad classes of interaction matrices $J$ for which the spectral width condition is overly conservative, potentially resulting in pessimistic predictions. This observation motivates the development of tools adapted to more structured classes of models, rather than relying solely on worst-case spectral width condition.
	
	As the ferromagnetic Curie-Weiss model illustrates, the presence of a single sufficiently large \emph{positive} eigenvalue can dramatically impede mixing of Glauber dynamics. Our main focus in this work is to treat the opposite \emph{anti-ferromagnetic} regime, where the interaction matrix $J$ has a \emph{negative} eigenvalue outlier. As we discuss below, this class encompasses several models of interest. Our main result establishes a uniform MLSI bound for this family.
    
    Note that because the measures we consider are supported on $\{-1,+1\}^n$, one may freely add a multiple of the identity to the interaction matrix without changing the model. In particular, we may assume that all remaining eigenvalues of $J$ are nonnegative, in which case the spectral width condition reduces to a constraint on the top eigenvalue.
	\begin{theorem} \label{thm:main}
		Let $J$ be an $n\times n$ positive semi-definite matrix with $\|J\|_{\op} < \frac{1}{2}$. For any $u, h\in \R^n$ such that $\|u\|_{\infty}\leq 1$, consider the Ising measure on $\{-1,1\}^n$,
		\begin{equation} \label{eq:antifer}
			\mu_{\beta,u,J,h}(x)\propto \exp\Big(-\frac{\beta}{n}\langle u, x\rangle^2 + \langle x, Jx \rangle +\langle h, x\rangle \Big),
		\end{equation}
		where $0\leq \beta \leq n^{1-\eps}$ for some $0<\eps<1$. Then,
		$$\rho_{\mathrm{LS}}(\mu_{\beta,u,J,h}) \geq (1-o(1))(1-2\|J\|_{\op})n^{-1},$$
        where the $o(1)$ depends only on $n,\eps$ and vanishes polynomially in $n$.
         Consequently, Glauber dynamics mixes in time $O(n\log(n))$.
	\end{theorem}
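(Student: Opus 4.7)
My plan is to reduce the MLSI estimate to a uniform operator-norm bound on the covariance of $\mu_{\beta,u,J,h}$ via the Eldan--Chen localization framework, and then to prove that covariance bound by quantifying how close the measure is to a Gaussian in the direction $u$.

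\textbf{Step 1 (Localization).} By the localization-to-spectral-independence machinery of Eldan and Chen~\cite{eldan2022spectral,chen2025localization}, the MLSI bound $\rho_{\mathrm{LS}}(\mu_{\beta,u,J,h})\geq (1-o(1))(1-2\|J\|_{\op})n^{-1}$ will follow from the uniform covariance estimate
\[
\sup_{h'\in \RR^n}\big\|\mathrm{Cov}(\mu_{\beta,u,J,h'})\big\|_{\op}\ \leq\ (1-2\|J\|_{\op})^{-1}+o(1),
\]
where $h'$ ranges over all external fields produced along the localization path. Since replacing $h$ by $h'$ preserves the quadratic tilt, the target is a uniform covariance bound for the single family \eqref{eq:antifer}.

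\textbf{Step 2 (Gaussian heuristic).} Let $\nu_{J,h'}(x)\propto \exp(\langle x,Jx\rangle+\langle h',x\rangle)$ be the base measure without the quadratic tilt. The spectral-width hypothesis $\|J\|_{\op}<1/2$ already yields $\|\mathrm{Cov}(\nu_{J,h'})\|_{\op}\leq (1-2\|J\|_{\op})^{-1}$ uniformly in $h'$, and the target $\mu_{\beta,u,J,h'}$ is a quadratic tilt of $\nu_{J,h'}$ in the single scalar $S:=\langle u,X\rangle$. If $\nu_{J,h'}$ were exactly multivariate Gaussian, a direct Sherman--Morrison calculation would give
\[
\Sigma_\mu\ =\ \Sigma_\nu\ -\ \frac{2\beta/n}{1+(2\beta/n)\,u^\top\Sigma_\nu u}\,\Sigma_\nu uu^\top\Sigma_\nu\ \preceq\ \Sigma_\nu,
\]
a Loewner-negative rank-one update; the covariance bound would then hold with no loss. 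The remaining work is to make this Gaussian picture quantitative.

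\textbf{Step 3 (Stein's method, iterated).} I will establish a quantitative central limit theorem for $S/\sqrt{n}$ not only under $\nu_{J,h'}$ but uniformly along the one-parameter family $\mu_\gamma(x)\propto \nu_{J,h'}(x)\exp(-\gamma S^2/n)$, $\gamma\in[0,\beta]$. The base case $\gamma=0$ is a standard application of Stein's method to sums of bounded dependent random variables in a spectral-width regime. For $\gamma>0$ I derive a Stein-type integration-by-parts identity directly on $\mu_\gamma$, differentiate the covariance in $\gamma$, and view the evolution as a perturbation of its Gaussian counterpart: the Gaussian part strictly contracts the covariance in the direction $u$ as in Step 2, while the non-Gaussian remainder is controlled by the current CLT estimate. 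This sets up a self-improving bootstrap --- a sharper Gaussian approximation at level $\gamma$ feeds into a sharper covariance bound, which in turn yields a sharper approximation at the next infinitesimal step --- and integrating over $\gamma\in[0,\beta]$, the accumulated error is polynomially small provided $\beta\leq n^{1-\eps}$. Plugging the resulting approximate Gaussianity into the Sherman--Morrison picture of Step 2 gives the operator-norm bound required by Step 1.

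\textbf{Main obstacle.} The central difficulty is Step 3: pushing Stein's method through a potentially strong quadratic tilt with bounds tight enough to cover the full range $\beta\leq n^{1-\eps}$. A one-shot Stein argument yields only $O(n^{-1/2})$-type errors, which translate into rank-one covariance corrections of size $O(\beta/\sqrt{n})$ and become useless once $\beta\gg \sqrt{n}$. The iterative scheme, in which the Gaussian approximation is re-proved at each tilt level using the freshly updated covariance bound, is what allows the near-optimal regime $\beta\leq n^{1-\eps}$ to be reached, and is the principal technical novelty of the argument.
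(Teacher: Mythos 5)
There is a genuine gap in Step 1, and it is the load-bearing step. A uniform bound $\sup_{h'}\|\Cov(\mu_{\beta,u,J,h'})\|_{\op}\leq (1-2\|J\|_{\op})^{-1}+o(1)$ on the \emph{full} measure does not imply $\rho_{\mathrm{LS}}\geq (1-o(1))(1-2\|J\|_{\op})n^{-1}$ through the localization machinery. The coordinate-by-coordinate (spectral/entropic independence) route needs a bound on the \emph{correlation} matrix of all tilts of the form $\|\Cor\|_{\op}\leq 1+\delta$, and yields only $\rho_{\mathrm{LS}}\gtrsim n^{-(1+\delta)}$; with $\delta$ of constant order (which is all your covariance bound can give when $\|J\|_{\op}$ is a constant), this is polynomially worse than $n^{-1}$ and does not give $O(n\log n)$ mixing. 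The stochastic-localization route, on the other hand, converts covariance control along the path into a multiplicative factor relating $\rho_{\mathrm{LS}}(\mu)$ to the MLSI constant of the \emph{terminal} measure; you still need an $\Omega(n^{-1})$ MLSI for that terminal measure, and if you try to localize away the rank-one term $-\frac{\beta}{n}uu^{\sT}$ you pay $e^{-2\beta}$, which is fatal for $\beta$ polynomial in $n$. The actual argument is two-staged: first remove only $J$ by stochastic localization (this is exactly where the factor $1-2\|J\|_{\op}$ is paid, via a Hubbard--Stratonovich covariance bound), and then handle the residual rank-one model $\mu_{\beta,u,h'}$ by coordinate-by-coordinate localization, which requires the much sharper estimate $\sup_h\|\Cor(\mu_{\beta,u,h})\|_{\op}\leq 1+\tilde O(\beta/n)$ — an error of size $o(1)$, in fact $n^{-\eps}$ up to logs, not $O(1)$. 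Your proposal never identifies this target, and without it the constant in the theorem cannot be recovered.

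Steps 2 and 3 are in the right spirit (the Sherman--Morrison rank-one picture is indeed the correct heuristic, and an iterated Stein's method is indeed what closes the argument), but as formulated they aim at the wrong object: you propose a CLT for $\langle u,X\rangle/\sqrt n$ under the \emph{dependent} measure $\nu_{J,h'}$ with $J$ still present, whereas the precision actually needed — relative errors of order $1/(n\beta^{3/2})$ for tilted second moments $\E[Z^2e^{sZ/\sqrt n-\beta Z^2}]$ — is only attainable (and only proved) for sums of \emph{independent} bounded variables, which is what one has after the $J$-removal and a single global exponential tilt. Your bootstrap "in $\gamma$ along the tilt" is also not the mechanism that beats the one-shot Berry--Esseen rate; the gain comes from parity and algebraic cancellations in the moment functionals $x\mapsto x^me^{-\gamma x^2}$ exploited through an induction on the order of the Taylor expansion inside Stein's method, and it is unclear that a differential bootstrap in $\gamma$ alone would recover the even-$m$ improvement from $O(\v^{-1/2})$ to $O(\v^{-1})$ that the final bound requires.
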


    Here, the condition $\beta \leq n^{1-\eps}$ is imposed primarily for simplicity, and our proofs can track the dependence on $\eps>0$ explicitly to cover $\beta\leq n^{1-o(1)}$, where the term $o(1)$ can be made quantitative. We do not attempt to optimize this dependence.
    
	We highlight some features of Theorem \ref{thm:main} (see Section \ref{sec:related} for comparisons with the literature). First, the vector $u$ may be essentially arbitrary: we do not assume $u={\bf 1}$ nor do we require any uniform control on the ratio of its coordinates. When $u={\bf 1}$, the induced quadratic term enforces negative correlation among coordinates, allowing one to use tools from the theory of negative dependence (e.g.~\cite{borcea2009negative}). However, as shown in Proposition~\ref{prop:cov:approx:entry}, once the signs of $u$ vary, negative dependence breaks down and these tools are no longer available.

    Second, after absorbing the temperature parameter into $u$ by writing $\tilde{u}=\sqrt{\beta/n}\cdot u$, the condition $0\leq \beta \leq n^{1-\eps}$ becomes $\|\tilde{u}\|_{\infty}\leq n^{-\eps/2}$. For the simplest case $J=0$, this is far weaker, especially for coordinate delocalized $\tilde{u}$, than the spectral width condition imposing $\ell^2$ constraint $\|\tilde{u}\|_2< 1/\sqrt{2}$. This illustrates a fundamental asymmetry: positive spectral outliers may destroy rapid mixing, while negative outliers are comparatively benign.
    
   Finally, we essentially allow $\beta = o(n)$, which is nearly optimal, and obtain MLSI constants that are uniform in $\beta$ throughout this regime in contrast to previous results (see Section~\ref{sec:comps}). This cannot be improved to $\beta = \Omega(n)$. Indeed, for the anti-ferromagnetic Curie-Weiss measure $\propto \exp\big(-\beta_0\big(\sum\limits_{i=1}^n x_i\big)^2\big),$
 the Glauber dynamics reweigh the transition probabilities by $e^{\beta_0}$, thus the mixing time will scale by this factor. In Lemma \ref{lem:lowerbound}, we show that the MLSI constant is at most $O(e^{-\beta_0}/n)$. Thus, there is no hope to obtain the uniform bounds of the form in Theorem \ref{thm:main} in this regime.

    The main technical challenge in proving Theorem \ref{thm:main} is to cover the near-optimal window $\beta\leq n^{1-\eps}$, where the negative outlier may lie far away from the spectrum of $J$. As we show in Section~\ref{sec:comps}, achieving this requires a highly accurate Gaussian approximation for quadratic tilts of a sum of random variables (see Eq.~\eqref{eq:CGF}). We obtain such estimates through an iterative application of Stein’s method, a result that may be of independent interest. Our key contribution is a sharp operator-norm approximation for the covariance and correlation matrices of the measures appearing in \eqref{eq:antifer} when $J=0$ and $h\in \R^n$ is arbitrary. In particular, we show that $\mathrm{Cov}(\mu_{\beta,u,0,h})$ is, up to an operator norm error of $\tilde{O}\left(\beta/n\right)$, a diagonal matrix minus a rank one matrix. The role of these approximations is discussed in Section \ref{sec:comps}.

	\subsection{Relevant cases of interest} \label{sec:corrolaries}
	In this section we present several natural classes of models to which Theorem \ref{thm:main} applies.

    \medskip\noindent\textbf{Regular graphs}: Our first case of interest lies in the \emph{anti-ferromagnetic} Ising model on a $d$-regular graph, where the interaction matrix is $J_G := -\beta A_G,$ and where $A_G$ is the adjacency matrix of some $d$-regular graph and $\beta > 0$ is the temperature parameter. These models go back to Ising's original work on the cycle graph and later the works of Peierls and Onsager, who established phase transitions on $d$-dimensional grids. Since $A_G/d$ is stochastic, the all-ones vector ${\bf 1}$ is the leading eigenvector with $A_G{\bf 1} = d{\bf 1}$. Decomposing $A_G$ on $\bf 1^\perp$ yields the following consequence of Theorem~\ref{thm:main}.
	\begin{corollary}\label{cor:dregular}
	    	Let $3 \leq d < n$ and let $G$ be a $d$-regular graph, with eigenvalues $-d\leq \lambda_n\leq \dots \leq \lambda_2\leq \lambda_1 = d$. For $h \in \RR^n$, and $\beta \geq 0$, let $J_G= -\beta A_G$ and consider $\mu_{J_G,h}$, the Ising measure on $\{-1,1\}^n$.	
		Then, if $\beta < \min\left(\frac{1}{2(\lambda_2-\lambda_n)},\frac{n^{1-\eps}}{d}\right)$, for some $\eps> 0$,
		$$\rho_{\mathrm{LS}}(\mu_{J_G,h}) \geq (1-o(1))(1-2\beta(\lambda_2-\lambda_n))n^{-1},$$
		where the $o(1)$ depends only on $n,\eps$ and vanishes polynomially in $n$.
         Consequently, Glauber dynamics mixes in time $O(n\log(n))$.
	\end{corollary}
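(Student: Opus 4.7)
The plan is to apply Theorem \ref{thm:main} to $\mu_{J_G, h}$ after decomposing $J_G = -\beta A_G$ so that the negative outlier along the top eigenvector $\ones$ is isolated into the rank-one quadratic term. Let $P_{\ones} := \ones\ones^T/n$ be the orthogonal projection onto the span of $\ones$. Assuming for simplicity that $\lambda_2 \geq 0$ (the degenerate case $\lambda_2 < 0$ is handled analogously by taking $c = 0$ below), set $c := \beta \lambda_2$ and define $\widetilde J := -\beta(A_G - d P_{\ones}) + cI$. Then
\[
-\beta A_G = -\frac{\beta d}{n}\ones\ones^T + \widetilde J - cI,
\]
so for every $x\in\{-1,+1\}^n$,
\[
\langle x, J_G x\rangle = -\frac{\beta d}{n}\langle \ones, x\rangle^2 + \langle x, \widetilde J x\rangle - cn.
\]
The constant $-cn$ is absorbed into the normalizing constant $Z_{J_G,h}$, so $\mu_{J_G,h}$ coincides with $\mu_{\beta d,\, \ones,\, \widetilde J,\, h}$ in the notation of Theorem \ref{thm:main}, with $u = \ones$ (hence $\|u\|_\infty = 1$) and effective temperature parameter $\beta' = \beta d$.

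Next I would verify the two hypotheses of Theorem \ref{thm:main}. A direct calculation using $A_G \ones = d\ones$ shows that $\widetilde J$ has eigenvalue $c$ on $\ones$ and eigenvalue $c - \beta \lambda_i = \beta(\lambda_2 - \lambda_i)$ on the eigenvector of $A_G$ corresponding to $\lambda_i$ for $i = 2, \ldots, n$. Hence $\widetilde J$ is positive semi-definite (all eigenvalues are nonnegative since $\lambda_2 \geq \lambda_i$) and
\[
\|\widetilde J\|_{\op} = \max\bigl\{c,\ c - \beta \lambda_n\bigr\} = \beta(\lambda_2 - \lambda_n),
\]
which is strictly less than $1/2$ by the assumption $\beta < \frac{1}{2(\lambda_2 - \lambda_n)}$. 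The second hypothesis $\beta' = \beta d \leq n^{1-\eps}$ is immediate from $\beta < n^{1-\eps}/d$.

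With both conditions in place, Theorem \ref{thm:main} yields
\[
\rho_{\mathrm{LS}}(\mu_{J_G,h}) \geq (1-o(1))\bigl(1 - 2\|\widetilde J\|_{\op}\bigr) n^{-1} = (1-o(1))\bigl(1 - 2\beta(\lambda_2 - \lambda_n)\bigr) n^{-1},
\]
and the $O(n\log n)$ mixing bound follows from the standard conversion of MLSI into a total-variation mixing-time bound recalled in the introduction. The content beyond Theorem \ref{thm:main} is entirely contained in identifying the correct decomposition, which is dictated by the spectral structure of $A_G$: the negative outlier of $-\beta A_G$ comes precisely from pairing the leading eigenvector $\ones$ with eigenvalue $-\beta d$, and pulling it out of the interaction matrix into the quadratic tilt term reduces the remainder to a PSD bulk on $\ones^\perp$ whose operator norm equals the spectral width $\beta(\lambda_2 - \lambda_n)$. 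There is therefore no significant technical obstacle in this reduction, and the entire substance of the corollary lies in the main theorem.
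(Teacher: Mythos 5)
Your proposal is correct and follows essentially the same route as the paper: isolate the rank-one term $-\frac{\beta d}{n}\ones\ones^T$ along the Perron eigenvector, shift the remainder by a multiple of the identity (harmless on the hypercube) to make it PSD with operator norm $\beta(\lambda_2-\lambda_n)$, and invoke Theorem~\ref{thm:main} with $u=\ones$ and effective parameter $\beta d$. The only point you gloss over is the degenerate case $\lambda_2<0$, which by the classification cited in the paper occurs only for the complete graph, where $\|\widetilde J\|_{\op}=\beta|\lambda_n|=\beta$ rather than $\beta(\lambda_2-\lambda_n)=0$ and one must use the constraint $\beta<n^{1-\eps}/d=o(1)$ to verify $\|\widetilde J\|_{\op}<1/2$ — exactly as the paper does.
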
 
	One interpretation is that for anti-ferromagnetic models on $d$-regular expanders, it is the second eigenvalue that determines the potential threshold for the temperature, rather than the first eigenvalue appearing in the spectral width condition. A prominent set of examples of such expanders are the random $d$-regular graphs, which are taken to be uniform among all $ d$-regular graphs on $n$ vertices, for $dn$  even. We denote the distribution by $G_{\textnormal{reg}}(n,d)$. These random graphs enjoy an a-priori bound, holding with high probability, on their spectral gap due to Friedman's inequality, \cite{friedman2009proof} for $d$ fixed. More recently this inequality was extended to arbitrary $d$ by \cite{he2024spectral ,chen2024new}. We immediately obtain the following corollary. 
	\begin{corollary} \label{cor:randomdregular}
		Let $3\leq d < n$ for some $\eps > 0$, and let $G \sim G_{\textnormal{reg}}(n,d)$ be a random $d$-regular graph, with $h \in \RR^n$. Then, if $0\leq \beta < \min\left(\frac{1}{8\sqrt{d(1-d/n)}}, \frac{n^{1-\eps}}{d}\right)$, with $1-o(1)$ probability over $G$, 
		$$\rho_{\mathrm{LS}}(\mu_{J_G,h}) \geq (1-o(1))\left(1- 8\beta\sqrt{d(1-d/n)}\right) n^{-1},$$
		where the $o(1)$ depends only on $n,\eps$ and vanishes polynomially in $n$, and $\mu_{J_G,h}$ is as in Corollary~\ref{cor:dregular}. If $d=O(1)$, then we can take the range of $\beta$ to be $0\leq \beta < \frac{1}{8\sqrt{d-1}}$. Consequently, Glauber dynamics mixes in time $O(n\log(n))$.
	\end{corollary}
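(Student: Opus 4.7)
The plan is to derive Corollary~\ref{cor:randomdregular} as an essentially immediate consequence of Corollary~\ref{cor:dregular}, by substituting high-probability spectral-gap bounds for $G \sim G_{\textnormal{reg}}(n,d)$. Since $A_G \mathbf{1}= d\mathbf{1}$ for every $d$-regular graph, the only graph-dependent quantity appearing in the hypothesis and conclusion of Corollary~\ref{cor:dregular} is the spectral width $\lambda_2-\lambda_n$ of $A_G$ on $\mathbf{1}^\perp$. So it suffices to control this width with probability $1-o(1)$ and then condition on the resulting good event.

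For the full range $3\leq d<n$, I would invoke the quantitative extensions of Friedman's theorem due to He \cite{he2024spectral} and Chen-Liao-Tikhomirov-Vershynin \cite{chen2024new}. These guarantee that with probability $1-o(1)$ over $G\sim G_{\textnormal{reg}}(n,d)$,
$$\max(|\lambda_2|,|\lambda_n|)\leq 2\sqrt{d(1-d/n)}\,(1+o(1)),$$
and hence $\lambda_2-\lambda_n\leq 4\sqrt{d(1-d/n)}\,(1+o(1))$. On this event, the hypothesis $\beta<\tfrac{1}{8\sqrt{d(1-d/n)}}$ of Corollary~\ref{cor:randomdregular} implies $2\beta(\lambda_2-\lambda_n)<1-o(1)$, which together with the second hypothesis $\beta<n^{1-\eps}/d$ gives both conditions required to apply Corollary~\ref{cor:dregular}. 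Plugging the spectral bound into the conclusion $\rho_{\mathrm{LS}}(\mu_{J_G,h})\geq (1-o(1))(1-2\beta(\lambda_2-\lambda_n))n^{-1}$ yields the claimed bound
$$\rho_{\mathrm{LS}}(\mu_{J_G,h})\geq (1-o(1))\bigl(1-8\beta\sqrt{d(1-d/n)}\bigr)n^{-1}.$$

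For the sparse regime $d=O(1)$, I would instead apply the original Friedman theorem \cite{friedman2009proof}, which yields the sharper bound $\max(|\lambda_2|,|\lambda_n|)\leq 2\sqrt{d-1}+o(1)$; the same reduction then permits $\beta$ up to $\tfrac{1}{8\sqrt{d-1}}$. The only care needed is to ensure the $o(1)$ slack from the spectral estimate is absorbed into the $o(1)$ error already present in Corollary~\ref{cor:dregular}, which is routine since both error terms vanish polynomially in $n$. There is no serious obstacle in this reduction: the entire content of the corollary is packaged inside Theorem~\ref{thm:main} and the quoted Friedman-type bounds, and the only step of substance is verifying that the two hypotheses of Corollary~\ref{cor:dregular} are simultaneously met on the high-probability event.
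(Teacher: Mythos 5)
Your proposal is correct and matches the paper's own proof essentially verbatim: both reduce to Corollary~\ref{cor:dregular} and substitute the Friedman-type bounds $\max(\lambda_2,|\lambda_n|)\leq 2\sqrt{d(1-d/n)}(1+o(1))$ (from \cite{friedman2009proof,he2024spectral,chen2024new}) to control $\lambda_2-\lambda_n$ on a high-probability event, with the sharper $2\sqrt{d-1}$ bound for $d=O(1)$. The only detail the paper adds that you omit is the trivial observation that for $d=O(1)$ the condition $\beta<\frac{1}{8\sqrt{d-1}}$ automatically implies $\beta\leq n^{1-\eps}/d$ for large $n$, which is why the second constraint can be dropped in that regime.
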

	We emphasize that in both Corollaries~\ref{cor:dregular} and~\ref{cor:randomdregular}, the degree $d$ is allowed to grow with $n$, and the MLSI bound remains uniform in $d$ throughout the admissible regime. This uniformity is one of the important strengths of Theorem~\ref{thm:main}.

We further remark that when $d\equiv d_n\to\infty$ as $n\to\infty$, and $\beta=o(d^{-1/2})$, mean-field predictions for the \emph{free energy} are known to be valid~\cite{basak17universality,augeri21transportation}. However, it is unclear how MLSI can be obtained in this regime, let alone up to $\beta=\Theta(1/\sqrt{d})$. Note that when $d=O(1)$, $\beta= \Theta(1/\sqrt{d})$ is the correct transition window for rapid mixing to exponential slow mixing in the anti-ferromagnetic Ising model on $G_{\reg}(n,d)$ (see Proposition~\ref{prop:slow:mixing:ER} below).

	\medskip\noindent\textbf{Erd\"os-R\'enyi graphs:}
	In Corollary~\ref{cor:randomdregular}, we used that the leading eigenvector of a $d$-regular graph is the all-ones vector. Theorem \ref{thm:main}, however, applies more broadly, provided the top eigenvector is sufficiently delocalized. A natural example is $G(n,d/n)$, Erd\"os-R\'enyi random graphs. For these graphs, it's well known that the top eigenvector is delocalized, with the exact bound depending on $d$. When $G$ is sparse, however, high-degree vertices cause the spectrum to fluctuate more substantially than in the regular case, and a spectral gap may fail to exist. For this reason our result requires $d\gtrsim \log n$.
	\begin{corollary}\label{thm:denseER}
		Let $C>0$ be a sufficiently large universal constant and suppose $C\log(n)\leq d \leq n$. Let $G \sim G(n,d/n)$. For any $h\in \R^n$, if $0\leq \beta < \min\left(\frac{c}{8\sqrt{d}},\frac{n^{1-\eps}}{d}\right)$ for some $\eps > 0$, and a constant $c>0$ depending on $C>0$ (with $c=1+o(1)$ as $C\to\infty$) then with probability $1-o(1)$ over $G$,
		$$\rho_{\mathrm{LS}}\left(\mu_{J_G,h}\right) \geq (1-o(1))\big(1-8c^{-1}\beta\sqrt{d}\big) n^{-1},$$
		where the $o(1)$ depends only on $n,\eps$ and vanishes polynomially in $n$. Consequently, Glauber dynamics mixes in time $O(n\log(n))$.
	\end{corollary}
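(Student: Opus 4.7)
The plan is to reduce to Theorem~\ref{thm:main} by isolating the top eigenvector of $A_G$, which for dense enough $G(n,p)$ is close to the flat vector $n^{-1/2}\mathbf{1}$ and hence delocalized. Write the spectral decomposition $A_G = \lambda_1 v_1 v_1^\top + \tilde A$, where $v_1$ is the unit top eigenvector and $\tilde A$ is the restriction of $A_G$ to $v_1^\perp$. Setting $u := v_1/\|v_1\|_\infty$ (so that $\|u\|_\infty = 1$) and $\tilde\beta := n\beta\lambda_1 \|v_1\|_\infty^2$, we obtain
\[
-\beta\langle x, A_G x\rangle \;=\; -\frac{\tilde\beta}{n}\langle u,x\rangle^2 \;-\; \beta\langle x, \tilde A x\rangle.
\]
Since shifts by multiples of the identity leave $\mu_{J_G,h}$ unchanged, replace $-\beta \tilde A$ by $J := -\beta \tilde A + \beta\|\tilde A\|_{\op}\, I \succeq 0$, which satisfies $\|J\|_{\op} \le 2\beta\|\tilde A\|_{\op}$. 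This places us exactly in the setup of Theorem~\ref{thm:main}.

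Three random-matrix inputs are needed for $G\sim G(n,p)$ with $p\ge C\log n/n$ and $C$ sufficiently large: (i) $\lambda_1 = (1+o(1))np$ with high probability; (ii) a Feige--Ofek-type spectral gap bound $\|\tilde A\|_{\op} \le (2+o(1))c^{-1}\sqrt{np}$, which follows from $\|A_G-\E A_G\|_{\op} = O(\sqrt{np})$ together with the known spectrum of $\E A_G = p\mathbf{1}\mathbf{1}^\top - pI$ and a Davis--Kahan-type perturbation argument; and (iii) sharp $\ell^\infty$-delocalization of the top eigenvector in the form $\|v_1\|_\infty \le (1+o(1))/\sqrt{n}$, available from the literature on edge eigenvectors of Erd\H{o}s--R\'enyi graphs in this density regime. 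The constant $c=c(C)$ collecting the errors in (ii)--(iii) can be taken to tend to $1$ as $C\to\infty$, giving $\tilde\beta \le (1+o(1))\beta np$ and $\|J\|_{\op} \le (4+o(1))c^{-1}\beta\sqrt{np}$.

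Feeding this into Theorem~\ref{thm:main} produces the two conditions of the corollary: $\|J\|_{\op} < \tfrac12$ amounts, after absorbing the $o(1)$ into $c$, to $\beta < c/(8\sqrt{np})$, while $\tilde\beta \le n^{1-\eps}$ amounts to $\beta \le n^{1-\eps}/(np)$. The conclusion of Theorem~\ref{thm:main} then yields
\[
\rho_{\mathrm{LS}}(\mu_{J_G,h}) \;\ge\; (1-o(1))(1-2\|J\|_{\op})\, n^{-1} \;\ge\; (1-o(1))\big(1-8c^{-1}\beta\sqrt{np}\big)\, n^{-1},
\]
and the $O(n\log n)$ mixing time follows from the MLSI-to-mixing inequality stated in the introduction.

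The main obstacle is input (iii): bulk delocalization with an unspecified universal constant is classical, but the $\ell^\infty$ bound with a \emph{sharp} leading constant (rather than some $O(1)/\sqrt{n}$) is what allows us to take $c=1+o(1)$ and match the explicit form of the corollary. In the regime $p\asymp \log n/n$, high-degree vertices can in principle create sizable $\ell^\infty$ fluctuations of $v_1$, and it is precisely this effect that the hypothesis $p\ge C\log n/n$ with $C$ large is designed to suppress. All remaining steps---the algebraic decomposition, the identity shift enforcing positive semi-definiteness, and the bookkeeping of the parameters in Theorem~\ref{thm:main}---are essentially mechanical.
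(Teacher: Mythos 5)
Your proof is correct and follows essentially the same route as the paper: split off the top eigenvalue of $A_G$, use delocalization of the top eigenvector to cast the rank-one part as $-\tfrac{\tilde\beta}{n}\langle u,x\rangle^2$ with $\|u\|_\infty\le 1$, bound the remaining spectrum by $O(\sqrt{np})$, shift by a multiple of the identity, and invoke Theorem~\ref{thm:main}. One correction to your diagnosis of the ``main obstacle'': sharp $\ell^\infty$ delocalization $\|v_1\|_\infty\le(1+o(1))/\sqrt{n}$ is \emph{not} needed, and the paper only uses $\|v_1\|_\infty\le n^{-1/2+o(1)}$. The delocalization constant enters solely through $\tilde\beta = n\beta\lambda_1\|v_1\|_\infty^2$, which must only satisfy $\tilde\beta\le n^{1-\eps'}$ for \emph{some} $\eps'>0$, so any constant (or even $n^{o(1)}$) factor is absorbed by shrinking $\eps$; it does not appear in the final MLSI bound $(1-o(1))(1-2\|J\|_{\op})n^{-1}$ at all. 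The constant $c=c(C)\to 1$ comes entirely from your input (ii), the bound $\max(\lambda_2,|\lambda_n|)\le \tfrac{2}{c}\sqrt{np}(1+o(1))$ (the paper cites Alt--Ducatez--Knowles for this), not from (iii).
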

    
	For sparse graphs with $p  =d/n$, for some $d > 0$, one can enforce concentration of the spectrum by pruning high-degree nodes, as shown in \cite{le2017concentration}, yielding comparable results to Theorem \ref{thm:denseER} for the pruned graph. However, recent work~\cite{liu2024fast} on spin glasses on sparse Erd\"os-R\'enyi graphs suggests such pruning is unnecessary. While Theorem~\ref{thm:main} does not apply directly to the full graph in this setting, it can be applied to the bulk of the graph. Treating the neighborhoods of high-degree vertices separately, as in~\cite{liu2024fast, prodromidis2025polynomial}, we expect that polynomial mixing bounds can be obtained in the sparse regime. We do not pursue this direction here. 
    
	\medskip\noindent\textbf{Sherrington-Kirkpatrick (SK) with anti-ferromagnetic interactions:} The classical SK model~\cite{sherrington_kirkpatrick75} is defined by an interaction matrix $J$ drawn from the Gaussian Orthogonal Ensemble, where $J$ is symmetric with $J_{ij}\sim \mathcal{N}(0,\beta^2/n)$ and inverse temperature $\beta>0$. By Wigner's semi-circle theorem, the spectral width criterion of~\cite{eldan2022spectral} implies that Glauber dynamics mixes rapidly whenever $\beta < \frac{1}{8}$, a result later improved to $\beta \approx 0.148$ in \cite{AKV24}. The precise threshold for mixing of the Glauber dynamics, conjecturally at $\beta  = \frac{1}{2}$, remains open (see \cite{el2025sampling} for a different sampling method). 

      Kirkpatrick and Sherrington~\cite{kirkpatrick78infinite} also considered a non-zero mean Gaussian disorder $J$, where $J_{ij}\sim \mathcal{N}(\mu/n, \beta^2/n)$ for $i<j$ and some $\mu\in \R$. Rigorous works have focused on the ferromagnetic case $\mu>0$, analyzing the free energy~\cite{chen2014mixed} and its fluctuations~\cite{baik2017fluctuations,baik2018ferromagnetic,Banerjeee20fluctuation}, and establishing a phase diagram that differs qualitatively from the standard ($\mu = 0$) model due to the emergence of the ferromagnetic phase. In contrast, the anti-ferromagnetic interaction $\mu<0$ is predicted in \cite{kirkpatrick78infinite} to exhibit the same phase diagram as the mean-zero model. Consistent with this prediction, combining Theorem \ref{thm:main} and the fact that Gaussian Orthogonal Ensemble has spectral width $4+o(1)$ with probability $1-o(1)$~\cite{AGZ2010}, we immediately obtain that the range of $\beta$ for which the SK model satisfies MLSI remains of constant order for all $\mu<0$ as long as $|\mu|\ll n$.

	\begin{corollary}\label{cor:SK}
	    	Let $J\in \R^{n\times n}$ be symmetric with independent off-diagonal entries $J_{ij}\sim \mathcal{N}(-\mu/n,\beta^2/n)$ for $i<j$. If $0\leq \mu\leq n^{1-\eps}$ and $0\leq \beta <1/8$ for some $\eps\in (0,1)$, then for any $h\in \R^n$,  with probability $1-o(1)$ over $J$,
            $$\rho_{\mathrm{LS}}(\mu_{J,h}) \geq (1-o(1))(1-8\beta) n^{-1},$$
           where the $o(1)$ depends only on $n,\eps$ and vanishes polynomially in $n$. Consequently, Glauber dynamics mixes in time $O(n\log(n))$.
	\end{corollary}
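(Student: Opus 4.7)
The plan is to view Corollary~\ref{cor:SK} as a direct instance of Theorem~\ref{thm:main}, after identifying the mean of the anti-ferromagnetic disorder as a rank-one negative quadratic tilt. Writing $J_{ij} = -\mu/n + (\beta/\sqrt{n})\, Z_{ij}$ for independent $Z_{ij} \sim \mathcal{N}(0,1)$ (with any convenient choice of diagonal for $J$, which does not affect $\mu_{J,h}$ on $\{-1,+1\}^n$), the interaction decomposes as
\begin{equation*}
\langle x, J x \rangle \;=\; -\frac{\mu}{n}\,\langle \one, x\rangle^2 \;+\; \langle x, W x\rangle \;+\; \text{const},
\end{equation*}
where $W$ is the symmetric mean-zero Gaussian part. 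Observe that $u = \one$ satisfies $\|u\|_\infty = 1$, and the hypothesis $0 \le \mu \le n^{1-\eps}$ matches the admissible temperature range in Theorem~\ref{thm:main}.

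The main step is to replace $W$ by a positive semidefinite matrix of small operator norm. The Gaussian Orthogonal Ensemble concentration cited after the corollary (see \cite{AGZ2010}) yields that with probability $1-o(1)$, the spectrum of $W$ is contained in $[-2\beta - o(1),\, 2\beta + o(1)]$. On this high-probability event, choose $c \in [-2\beta - o(1), 0]$ so that $\tilde{J} := W + c\,I$ is positive semidefinite, and note that
\begin{equation*}
\|\tilde{J}\|_{\op} \;\leq\; 4\beta + o(1) \;<\; \tfrac{1}{2},
\end{equation*}
the last inequality using $\beta < 1/8$. Since $x_i^2 = 1$ on the hypercube, adding a multiple of the identity to $W$ does not alter the Ising measure, so $\mu_{J,h}$ equals (up to normalization) the measure $\mu_{\mu,\one,\tilde{J},h}$ defined in~\eqref{eq:antifer}.

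Applying Theorem~\ref{thm:main} with the inverse-temperature parameter $\mu$, vector $u = \one$, interaction matrix $\tilde{J}$, and field $h$ then yields, on the same high-probability event,
\begin{equation*}
\rho_{\mathrm{LS}}(\mu_{J,h}) \;\geq\; (1-o(1))\bigl(1 - 2\|\tilde{J}\|_{\op}\bigr)\, n^{-1} \;\geq\; (1-o(1))(1 - 8\beta)\, n^{-1},
\end{equation*}
as desired. No substantive obstacle appears in the argument: the negative-mean SK interaction is precisely of the rank-one quadratic-tilt form that Theorem~\ref{thm:main} is designed to handle, and Wigner-type spectral asymptotics control the residual spectral width. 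The only minor point is that the operator-norm tail bound for the GOE must hold on an event of probability $1-o(1)$, which is standard.
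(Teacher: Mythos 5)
Your argument is exactly the paper's (implicit) proof: split $J$ into the rank-one mean part $-\tfrac{\mu}{n}\one\one^{\sT}$ plus the centered GOE part $W$, use that the GOE spectral width is $4\beta+o(1)$ with high probability to produce a PSD matrix $\tilde J$ with $\|\tilde J\|_{\op}\le 4\beta+o(1)<\tfrac12$, and invoke Theorem~\ref{thm:main} with $u=\one$ and temperature parameter $\mu\le n^{1-\eps}$. The only slip is the sign of the identity shift: to make $W+cI$ positive semidefinite you need $c\approx +2\beta$ (i.e.\ $c\ge -\lambda_{\min}(W)\ge 0$), not $c\in[-2\beta-o(1),0]$; the resulting bound $\|\tilde J\|_{\op}\le 4\beta+o(1)$ and the rest of the argument are correct.
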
 
\subsection{Exponential lower bounds on the mixing time}

    To complement Corollaries \ref{cor:randomdregular} and \ref{thm:denseER}, we show that the range $\beta=O(1/\sqrt{d})$ is tight up to a constant. To be specific, we prove that if either $G\sim G_{\reg}(n,d)$ with $d=O(1)$ or $G\sim G(n,d/n)$ with $d\leq n/2$, and if $\beta > C/\sqrt{d}$, with probability $1-o(1)$ over $G$, the mixing time is at least $e^{cn\beta \sqrt{d}}$. An aspect that makes this result interesting is that, at low temperatures, these models are expected to exhibit \emph{continuous/full replica symmetry breaking}~\cite{krzkakala2008potts} as in the SK model. Therefore, the overlap free-energy-barrier mechanism in the sense of \cite[Definition~1.2 and Theorem~1.3]{ben2018spectral} is not expected to be applicable to these models.
    Instead, our argument follows the strategy by Sellke~\cite{Sellke25}, who recently proved exponential mixing time lower bounds for the Sherrington-Kirkpatrick at low temperature using the existence of \textit{gapped states} (see \eqref{eq:def:gapped} for the definition). For Erd\"os-R\'enyi graphs $G(n,d/n)$, the existence of gapped states was previously established in for $d=\Theta(1)$ or $d=\Theta(n)$ in~\cite{minzer2024perfect, dandi2025maximal}. Here, we observe that their proof also extends to the intermediate regimes $1\ll d\ll n$. More significantly, we prove the existence of gapped states for sparse random regular graphs $G\sim G_{\reg}(n,d), d=O(1)$. The argument is rather delicate: using the coupling of Dembo, Montanari, and Sen~\cite{Dembo2017extremal}, originally developed for the extremal cut problems, we show that the local fields in the regular and Erd\"os-R\'enyi models only differ by a vanishing fraction of vertices. The key point is a non-trivial cancellation effect in the change of the local field, analogous to the global max-cut comparison in~\cite{Dembo2017extremal}. Having established the existence of gapped states in these ensembles, we apply Sellke's argument to obtain slow mixing. However, non-trivial modifications are required to adapt the argument to the non-dense setting, due to the presence of high-degree nodes in the case of $G(n,d/n)$.

    Finally, we note that this slow mixing result extends to the \emph{Kawasaki dynamics}~\cite{BBD25} in the centered slice~$\{x:\langle \one, x\rangle=0\}$. This provides a positive answer, up to universal constants, to Open Question~2 of \cite{BBD25}: Kawasaki dynamics on sparse random $d$-regular graphs is exponentially slow once $\beta \gtrsim 1/\sqrt{d}$. We refer to Proposition~\ref{prop:slow:mixing:ER} for the precise statement.
    
            \begin{proposition}\label{thm:explowerbound}
                  Consider $\mu_{J_G, 0}$ the anti-ferromagnetic Ising model with zero external field, where either $G\sim G_{\reg}(n,d)$ with $d\geq d_0$ fixed sufficiently large, or $G \sim G(n,d/n)$ with $d\leq n/2$. If $\beta \geq C/\sqrt{d}$, then with probability $1-o(1)$ over $G$, the mixing time of Glauber dynamics is at least $e^{cn\beta \sqrt{d}}$.
            \end{proposition}

	\subsection{Components of the proof} \label{sec:comps}
	
	\subsubsection{The localization framework}
	Our method of proof follows the localization framework of Chen and Eldan~\cite{chen2025localization}. While we primarily use this framework and its results as a black box, we outline the key ideas for context. The underlying principle is to study functional inequalities associated with a given measure through a measure-valued stochastic process whose mass progressively localizes in smaller regions until collapsing to an atom. When the evolution is constrained to preserve certain martingale properties, global features of the original measure can be deduced from the increasingly local structure along the trajectory.

	\medskip\noindent\textbf{Stochastic localization:} In this work we will implicitly implement two localization schemes. The first localization scheme we employ is the \emph{stochastic localization} process by Eldan \cite{eldan2013thin}. 
   Given the initial measure $\mu$ on $\R^n$, the measure-valued process $(\nu_t)_{t\geq 0}$ is defined by the change of density $\frac{d\nu_t}{d\mu}(x)=F_t(x)$, where $F_t(x)$ solves infinite system of SDEs
    \begin{equation}\label{eq:SL:SDE}
    dF_t(x)=F_t(x)\langle x- b(\nu_t), C_t dB_t\rangle\,,\quad\forall x\in \R^n\,,\quad F_0\equiv 1\,.
    \end{equation}
    Here, $b(\nu_t)\equiv \int x d\nu_t(x)$ is the barycenter of $\nu_t$, and $(C_t)_{t\geq 0}$ is adapted process of $n\times n$ positive semi-definite matrices. The uniqueness and existence of $(\nu_t)_{t}$ was established in \cite{eldan2013thin, eldan20clt}. By Ito's formula (see \cite[Fact 3.5]{chen2025localization}), this evolution is equivalent to tilting $\mu$ by a random quadratic factor
    \begin{equation} \label{eq:SL}
        \frac{d\mu_t}{d\mu}(x) = e^{-\langle x, Q_tx\rangle + \langle h_t,x\rangle}\,,\quad Q_t=\int_{0}^{t}C_s^2 ds\,,\quad h_t=\int_{0}^{t} C_s dB_s+C_s^2 b(\nu_s) ds\,.
    \end{equation}
    As $t$ increases, $\nu_t$ becomes more log-concave, which allows to apply tools like the Brascamp-Lieb or Bakry-\'Emery inequalities. Moreover, by choosing the matrix $C_t$ appropriately, e.g. $C_t=J$, the quadratic tilt $Q_t$ can partially cancel interactions coming from $J$ in $\mu=\mu_{J,h}$. This perspective has been crucial in recent advances on Ising-type models \cite{alaoui2023fast, caputo2025factorizations, chen2025rapidthreshold, AKV24}.
	
	 For Theorem \ref{thm:main}, our main use of stochastic localization is to reduce the study of the measure $\mu_{\beta,u,J,h}$ from \eqref{eq:antifer} to the study of an anti-ferromagnetic rank one model \begin{equation} \label{eq:antfernoJ}
		\mu_{\beta,u,h'}(x)=(Z_{\beta,u,h'})^{-1}\exp\left(-\frac{\beta}{n}\langle u,x\rangle^2 + \langle h',x\rangle\right),
	\end{equation}
    where $Z_{\beta,u,h'}$ is the normalizing constant called the \textit{partition function}. Here, $\mu_{\beta,u,h'}$ no longer includes $J$, and $h'$ is a different external field. For this reduction to be valid, one must control the dissipation of entropy along the localization process. As shown in \cite[Section 3]{chen2025localization}, the key quantity to control this dissipation is the operator norm $\|\Cov(\mu_{\beta,u,J,h})\|_{\op}$. We obtain the the required bounds through the Hubbard–Stratonovich transform, in a manner similar to \cite{baurschmidt2019simple}, together with a sharp operator norm bounds on $\|\Cov(\mu_{\beta,u,h'})\|_{\op}$ derived from a Gaussian approximation (see Theorem~\ref{thm:corr:approx}).
	
	\medskip\noindent\textbf{Coordinate-by-coordinate localization:}
    In contrast to the bounded matrix $J$ in Theorem \ref{thm:main} the remaining rank one matrix $-\frac{\beta}{n}uu^T$ can have a much larger operator norm of order $n^{1-\eps}$. Applying the previous stochastic localization scheme used to eliminate $J$ would, as becomes clear from the proof, introduce a multiplicative factor $e^{-\beta}$ in the MLSI constant. Since we allow $\beta$ to scale with $n$ such bounds can lead to exponential sub-optimality (see Section \ref{sec:related} for some examples in previous works). 
    
    For the negative rank-one model $\mu_{\beta,u,h'}$, we therefore employ a different, and arguably simpler, \emph{coordinate-by-coordinate localization} scheme. Let $\sigma$ be a uniformly random permutation on $[n]$, independent of $X\sim \mu$. The coordinate-by-coordinate localization is the measure-valued martingale process defined by
    $$\mu_t:= \mathrm{Law}\left(X|X_{\sigma(1)},\dots,X_{\sigma(t)}\right).$$
    Clearly $\mu_0 = \mu$ and $\mu_n$ is a (random) Dirac measure. Moreover, the conditional law $\mu_{n-1}|X$ is closely related to the transition kernel of Glauber dynamics. In this sense, the coordinate-by-coordinate localization generates Glauber dynamics. To demonstrate its usefulness, let us explain how to use the coordinate-by-coordinate localization to derive a spectral gap for $\mu$. Since we are working on $\{-1,1\}^n$ we can write $\mu_{t+1}$ as a linear tilt of $\mu_t$,

    \begin{equation} \label{eq:lineartilt}
        \frac{\mu_{t+1}(x)}{\mu_t(x)} = 1+ \langle x-b(\mu_t),Z_t\rangle,
    \end{equation}
    for an appropriate vector $Z_t$. This is the discrete analog of the infinitesimal tilt appearing in \eqref{eq:SL:SDE}. Given a test function $\vphi:\{-1,+1\}^n\to\R$, the key quantity controlling the spectral gap is the dispersion of variance along the process,
    \[
    \frac{\E\left[\mathrm{Var}_{\mu_{t+1}}(\vphi)|\mu_t\right]}{\mathrm{Var}_{\mu_{t}}(\vphi)}\geq1 - \frac{1}{n-t}\|\Cor(\mu_t)\|_{\op}\,,
    \]
    where the inequality follows from \eqref{eq:lineartilt} (see \cite[Claim 3.3]{chen2025localization}). Here, for a measure $\nu$ with covariance matrix $\Sigma$,  $\Cor(\nu)=\diag(\Sigma)^{-1/2}\Sigma \diag(\Sigma)^{-1/2}$  denotes the correlation matrix. Thus, as long as $\|\Cor(\mu_t)\|_{\op}$ is small enough for all $t\geq 0$, the localization process approximately preserves variance, yielding a spectral gap on $\mu$. In the high-dimensional expanders framework, this condition is also known as \emph{spectral independence}, known to imply a bound on the spectral gap \cite{anari2024spectral}.

    For MLSI, one must control entropy rather than variance. Entropy dissipation can be analyzed in a similar, though technically more delicate, manner, and again reduces to bounding correlation matrices. The main difference is that we require bounds on the correlation matrix of $\mu_t$ under all linear tilts, i.e., for all possible external fields. See \cite[Section 3]{chen2025localization} for the connection to \emph{entropic independence} \cite{anari2022entropic}.

    A central challenge in this approach is that it is not enough to simply bound $\Cor(\mu_{\beta,u,h})$, but it requires to obtain sharp estimate $\|\Cor(\mu_{\beta,u,h})\|_{\op} = 1 +o(1)$. The main technical contribution of this paper is to obtain such bounds for $\mu_{\beta,u,h}$ for all tilts $h\in \R^n$. When $\beta \ll n$, we derive a highly accurate structural description of the correlation matrix (see Theorem~\ref{thm:corr:approx} for the full statement)
   \begin{equation}\label{eq:informalcovariance}
	 	\sup_{h\in \R^n}\left\|\Cor(\mu_{\be,u,h})-\left(I_n-\frac{\al_{\star}}{n}w_{\star}w_{\star}^{\sT}\right)\right\|_{\op}\leq \frac{C_{\eps}\be \log(1+\beta)^{C_{\eps}}}{n}\,,
	\end{equation}
    where $w_{\star}\in \R^n$ and $\alpha_\star=\frac{2\beta}{1+2\beta\|w_\star\|^2/n}>0$ are explicit functions of $(\beta,u,h)$, and $C_{\eps}>0$ is a constant that only depends on $\eps>0$. Since $I_n-\frac{\al_{\star}}{n}w_{\star}w_{\star}^{\sT}\preceq I_n$, the condition $\beta\leq n^{1-\eps}$ implies that the error is $o(1)$. Because covariance and correlation matrices are conjugate, this simultaneously provides a near-optimal operator norm description of $\Cov(\mu_{\beta,u,h})$.
	\subsubsection{A heuristic derivation of the covariance matrix} \label{sec:heuristic:cov:approx}
  Recall the partition function $Z_{\beta,u,h}$ from~\eqref{eq:antfernoJ}. Using the identity $\Cov(\mu_{\beta,u,h})=\nabla_h^2 \log Z_{\beta,u,h}$, we now give a heuristic derivation of the matrix $I_n-\frac{\al_{\star}}{n}w_{\star}w_{\star}^{\sT}$ appearing in Eq.~\eqref{eq:informalcovariance}. Expanding $Z_{\beta,u,h}$ according to the magnetization $m=\langle u,x\rangle /n$, we obtain
	\begin{equation}\label{eq:partition:reweight}
		\begin{split}
			Z_{\beta, u,h}
			&=\sum_{m} \exp\left(-n\beta m^2\right)\sum_{x\in \{\pm 1\}^{n}:\frac{\langle u,x\rangle}{n}=m}\exp\left(\langle h,x\rangle \right)\\
			&=\sum_{m} \exp\Big(n\Big(-\beta m^2+F(\lambda)-\lambda m\Big)\Big)\sum_{x\in \{\pm 1\}^{n}:\frac{\langle u,x\rangle}{n}=m}\prod_{k=1}^{n} \frac{\exp\left((h_k +\lambda u_k) x_k\right)}{2\cosh(h_k+\lambda u_k)}\,,
		\end{split}
	\end{equation}
	where $\lambda\in \R$ is a Lagrangian multiplier and the convex function $F(\cdot)\equiv F_{u,h}(\cdot)$ is defined by
	\begin{equation}\label{eq:def:F}
		F(\lambda)=\log 2+\frac{1}{n}\sum_{i=1}^{n}\log\cosh(h_i+\lambda u_i)\,.
	\end{equation}
    Observe that the final sum in \eqref{eq:partition:reweight} equals 
    \begin{equation}\label{eq:heuristics:tilted:law}
    \P\bigg(\sum_{k=1}^{n}u_k \xi_k=nm\bigg),\quad\textnormal{where}\quad \xi_k\in \{-1,+1\}~~\textnormal{are independent}~~\textnormal{and}~~\E \xi_k=\tanh(h_k+\lambda u_k)\,.
    \end{equation}
 Suppose that for each $m\in [-1,1]$, we take $\lambda = \lambda_m:=\argmin_{\lambda}\{F(\lambda)-\lambda m\}$. This choice corresponds to usual exponential tilting used in large deviations analysis~\cite{DZ10}. Then, 
	$$m = \frac{\partial F(\lambda_m)}{\partial\lambda} = \frac{1}{n}\sum\limits_{k=1}^n u_k\tanh(h_k + \lambda_m u_k) = \frac{1}{n}\sum\limits_{k=1}^n u_k\EE[\xi_k].$$
	Hence, the event $\{\sum\limits_{k=1}^n u_k \xi_k = nm\}$ can be approximated using the local central limit theorem (CLT). Thus, at least at an exponential scale it is negligible, and we expect that
	\[
	\begin{split}
		\frac{1}{n}\log Z_{\beta,u,h}
		=\sup_{m\in [-1,1]}\inf_{\lambda\in \R}\left\{-\beta m^2 +F(\lambda)-\lambda m\right\}+o(1)=\inf_{\lambda\in \R} \left\{F(\lambda)+\frac{\lambda^2}{4\beta}\right\}+o(1)\,,
	\end{split}
	\]
	where $\sup$ and $\inf$ can be interchanged since $(m,\lambda)\mapsto -\beta m^2+F(\lambda)-\lambda m$ is concave-convex. Let $\lambda^\star_h= \argmin_{\lambda\in \R} \{F(\lambda)+\lambda^2/(4\beta)\}$. Since $\Cov(\mu_{\beta,u,h}) = \nabla^2_h\log Z_{\beta,u,h}$, taking Hessian w.r.t. $h$ on both sides leads to the heuristic approximation
	\[
	\begin{split}
		\Cov(\mu_{\beta,u,h})
		&\approx n\cdot \nabla^2_h\left[\inf_{\lambda\in \R}\left\{F(\lambda)+\frac{\lambda^2}{4\beta}\right\}\right]=n\cdot \nabla_h \left[\nabla_h F(\la^\star_h)\right]\\
		&= n\cdot \left(\nabla^2_h F(\lambda^\star_h)+\nabla_h\partial_{\la} F(\la^\star_h)\cdot \left(\nabla_h \lambda^\star_h\right)^{\sT}\right)\\
		&=n\cdot \left(\nabla^2_h F(\lambda^\star_h)-\left(\partial_{\lambda}^2 F(\lambda^\star_h)+\frac{1}{2\beta}\right)^{-1} \cdot \left(\nabla_h\partial_{\la} F(\la^\star_h)\right)\left(\nabla_h\partial_{\la} F(\la^\star_h)\right)^{\sT}\right)\,,
	\end{split}
	\]
	where the first equality is by envelope theorem, and the final inequality holds by taking gradient with respect to $h$ in $\la^\star_h/(2\be) +\partial_{\la}F(\la^\star_h)=0$. Since $\Cor(\mu_{\beta,u,h})$ is conjugate to $\Cov(\mu_{\beta,u,h})$, this heuristic suggests $\Cor(\mu_{\beta,u,h})$ is approximately $I_n-c_{\star}c_{\star}^{\sT}$ for an explicit vector $c_{\star}\in \R^n$.
	
	However, we emphasize that this derivation above is purely heuristic. It does not give any quantitative control on the error, nor does it guarantee uniformity over $h\in \R^n$, even for $\beta=O(1)$. The main challenge is to quantify the error uniformly over $h, u\in \R^n$, where the most difficult regime is when $\beta$ is polynomially large, close to order $n$.
    
    One obstacle is that the argument relies on a different tilting parameter $\lambda_m$ for each value of $m$, which is commonly used in large deviations theory~\cite{DZ10}, but makes the distribution of $(\xi_k)_{k\leq n }$ dependent on $m$.  In our setting, however, available local CLT estimates are simply insufficient to yield the uniform in $h\in \R^n$ bounds required for the estimate~\eqref{eq:informalcovariance}. More refined approximations such as Edgeworth expansions (cf.~\cite{dolgopyat22edgeworth}) could in principle provide sharper control, but they are also not tractable since we need to accommodate essentially arbitrary external fields $h\in \R^n$ and $m\in [-1,1]$.
    
    To circumvent this issue, we will instead choose a \textit{single global tilt} $\lambda$ corresponding to a magnetization value $m_\star$, defined as the solution to a fixed point equation (see Lemma \ref{lem:elementary}). The upshot of this choice is that it allows us to replace the local CLT by a functional CLT, for which we can establish surprisingly tight estimates. See Remark~\ref{rmk:single:CLT} for further discussion. 
  
    \subsubsection{Normal approximation of the magnetization}
    \label{subsec:intro:normal}
    With the choice of the tilt $\lambda=\lambda_\star$, the cumulant generating function $\Psi(s):=\log \E[\exp(sM)]$, where $M=\langle u,X\rangle/n$ is the magnetization for $X\sim \mu_{\beta,u,h}$, admits the following representation (see Lemma~\ref{lem:express:Psi})
    \begin{equation}\label{eq:CGF}
    \Psi(s) = sm_\star+ \log\Bigg(\frac{\E\Big[\exp\Big(\frac{s}{\sqrt{n}}Z-\beta Z^2\Big)\Big]}{\E\Big[\exp\Big(-\beta Z^2\Big)\Big]}\Bigg),
    \end{equation}
    where $Z = \frac{1}{\sqrt{n}}\sum_{k=1}^{n}u_k (\xi_k-\E\xi_k)$ and $\xi_k\in \{-1,+1\}$ are as in \eqref{eq:heuristics:tilted:law} with $\lambda=\lambda_\star$. It turns out that $\Cov(\mu_{\beta,u,h})$ can be written in terms of cavity versions of $\Phi(s)$ (see Lemma~\ref{lem:cov:express}), and controlling $\Psi'(s), \Psi''(s)$ for $s=O(\beta)$ is a key step in approximating $\Cov(\mu_{\beta,u,h})$. These derivatives involve quantities of the form 
    \[
    \E\left[Z^K\exp\left(\frac{s}{\sqrt{n}}Z-\beta Z^2\right)\right]\,,~~K=0,1,2.
    \]
    As $n\to \infty$ we expect $Z$ to be approximately Gaussian, and if $Z\sim \mathcal{N}(0,\sigma^2)$, then the expectations above admit closed form formulas. The main point is to make this approximation quantitative since we need uniformity in $h\in \R^n$.
    
  To illustrate, consider $K=2$ and assume for simplicity that $\Var(Z)\geq 1/2$, although we'll ultimately have to treat the full range $\Var(Z)\in [0,1]$, including cases such as $\Var(Z)=n^{-1/2}$. Since the function $f(x)= x^2 e^{-\beta x^2}$ satisfies $\int |f'(x)|dx=O(1/\beta)$, a standard application of the Berry-Esseen inequality gives an approximation error of $1/(\sqrt{n}\beta)$. However, it turns out that the correlation approximation in~\eqref{eq:informalcovariance} requires accuracy of much finer scale $1/(n\beta^{3/2})$, necessitating bounds beyond the standard one.
  
  We achieve this improvement by applying Stein's method. Because the required error bounds are very fine --- on the order $1/(n\beta ^{3/2})$ for $K=2$ --- the argument must be tailored to the specific structure of the moment functionals $x \to x^K e^{-\beta x^2}$. Our proof exploits algebraic relations among these moments, their derivatives, and the corresponding Poisson equation solutions. This structure allows us to implement Stein's method in an inductive way, where we induct both on $K$ and on the approximation error itself to obtain the following bound (see Theorem \ref{thm:mainapprox} for a more formal and complete statement). 
    \begin{theorem} \label{thm:informalCLT}
    With the same notation as above, and assuming $\Var(Z)\geq 1/2$ and that $s$ is small enough, there exists a Gaussian random variable $G$ such that
    $$\left|\E\left[Z^2\exp\left(\frac{s}{\sqrt{n}}Z-\beta Z^2\right)\right] -\E\left[G^2\exp\left(\frac{s}{\sqrt{n}}G-\beta G^2\right)\right]\right| \lesssim \frac{1}{n\beta^{\frac{3}{2}}}.$$
    \end{theorem}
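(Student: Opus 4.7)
The plan is to implement Stein's method in a tightly coupled system for the three error quantities
\[
e_K(s) := \E\bigl[Z^K g_s(Z)\bigr] - \E\bigl[G^K g_s(G)\bigr], \quad K \in \{0,1,2\},
\]
where $g_s(x) := \exp(sx/\sqrt{n} - \beta x^2)$ and $G \sim \mathcal{N}(0,\sigma^2)$ with $\sigma^2 := \Var(Z)$. The central algebraic observation is that $(x g_s(x))' = g_s(x)(1 + sx/\sqrt{n} - 2\beta x^2)$, so the family $\{x^K g_s\}_{K \ge 0}$ is closed under differentiation modulo itself. Combined with Stein's identity $\E[G f(G)] = \sigma^2 \E[f'(G)]$ applied to $f(x) = xg_s(x)$, this yields the exact relation
\[
(1 + 2\beta \sigma^2)\,\E[G^2 g_s(G)] = \sigma^2 \E[g_s(G)] + (\sigma^2 s/\sqrt{n})\,\E[G g_s(G)].
\]
The analog for $Z$, proved via the leave-one-out decomposition $Z = Z^{(i)} + X_i$ with $X_i = u_i(\xi_i - \E \xi_i)/\sqrt{n}$, produces the same equation with an additive Stein remainder $\mathrm{Rem}(xg_s)$. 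Subtracting the two reduces the task to controlling $e_0$, $e_1$, and $\mathrm{Rem}(xg_s)$, each to order $1/(n\sqrt{\beta})$.

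For the base cases, I would apply Stein's method directly to the test functions $g_s$ and $xg_s$ via their Poisson solutions $\psi_K$ for the Gaussian $\mathcal{N}(0,\sigma^2)$. Using the explicit representation $\psi(x)=(\sigma^2\phi_\sigma(x))^{-1}\!\int_{-\infty}^x(h(y)-\E h(G))\phi_\sigma(y)\,dy$ together with the scaling $\|g_s^{(j)}\|_\infty \lesssim \beta^{j/2}$, $\|g_s^{(j)}\|_1 \lesssim \beta^{(j-1)/2}$, one gets bounds on $\|\psi_K^{(j)}\|_\infty$. A naive application of Stein yields $|e_K| \lesssim \beta^{\alpha_K}/\sqrt{n}$, which is too weak by a factor $\sqrt{n}$. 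The obstruction is the third-moment term in the Stein remainder: schematically
\[
\mathrm{Rem}(f) = \tfrac{1}{2}\sum_i \E[X_i^3]\,\E[f''(Z^{(i)})] + O\bigl(n^{-1}\|f'''\|_\infty\bigr),
\]
with $\sum_i |\E X_i^3| = O(n^{-1/2})$.

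The improvement comes from a bootstrap that exploits the quadratic-exponential structure of $g_s$. For $f \in \{g_s, xg_s\}$, the function $f''(x)$ is a polynomial in $x$ times $g_s(x)$, so $\E f''(Z^{(i)})$ is itself a linear combination of $\E[(Z^{(i)})^j g_s(Z^{(i)})]$, i.e., the same objects we are estimating. Under the tilted law proportional to $g_s(y)\phi_\sigma(y)$ the variance collapses to $\sim 1/(2\beta)$, and a direct Gaussian computation reveals an exact cancellation in the leading order: for instance, when $f = g_s$, the leading contribution $((2\beta G)^2 - 2\beta)g_s(G)$ integrates to $0$ against the tilted Gaussian up to $O(1/\beta)$. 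Substituting the preliminary bounds on $e_j$ and this Gaussian cancellation into the re-expression of $\mathrm{Rem}$ converts the $O(n^{-1/2})$ factor into $O(1/(n\sqrt{\beta}))$. A bounded number of iterations of this substitution closes the bootstrap, yielding $|e_2| \lesssim 1/(n\beta^{3/2})$.

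The main obstacle is precisely the coupling in the bootstrap step. The Gaussian cancellations must be quantified with constants that are uniform in the law of $Z$ (in particular, depending on $\sigma^2 \ge 1/2$ only through a scale factor, not on the specific distribution of the $\xi_k$), while the three quantities $e_0 \leftrightarrow e_1 \leftrightarrow e_2$ feed into each other through $\mathrm{Rem}$. Each iteration of the bootstrap slightly degrades the effective Poisson-solution constants, so the stopping rule has to be tight enough that only $O(1)$ rounds are needed. A secondary subtlety is that the algebraic identities for $(x^Kg_s)'$ must be tracked through the leave-one-out Taylor expansions so that no factor of $\beta$ is unwittingly lost at intermediate steps.
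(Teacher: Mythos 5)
Your overall strategy is the one the paper uses: Stein's method with a leave-one-out decomposition, the closure of $\{x^K e^{-\beta x^2}\}$ under differentiation and under the Poisson operator, cancellations in the Gaussian expectations of the second derivatives, and an iterated bootstrap in which the moment errors $e_K$ feed into one another until the third-moment term $\sum_i \E[X_i^3]=O(n^{-1/2})$ is beaten down to $O(1/(n\sqrt{\beta}))$. Your exact Stein relation $(1+2\beta\sigma^2)\E[G^2g_s(G)]=\sigma^2\E[g_s(G)]+(\sigma^2 s/\sqrt{n})\E[Gg_s(G)]$ is precisely the paper's identity $\cP \wt{h}_m=-h_{m-1}/(1+2\gamma)$ in disguise, and your "bounded number of iterations" is the paper's induction on the Taylor order $M$ (the number of rounds depends on $\eps$ through $\beta\le n^{1-\eps}$, but is $O_\eps(1)$).

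There is, however, one genuine gap: the base case $K=0$. Your exact relation reduces $e_2$ to $e_0$, $e_1$ and a remainder, so you need $|e_0|\lesssim 1/(n\sqrt{\beta})$, and you propose to get this by "applying Stein's method directly to $g_s$ via its Poisson solution $\psi_0$". This is exactly where the self-similar algebra breaks: unlike $\cP(xg_s)$, the Poisson solution $\cP(e^{-\beta x^2})$ is \emph{not} a polynomial multiple of $e^{-\beta x^2}$ (it involves the Gaussian error function), it decays only like $1/x$ at infinity, and $\|\psi_0''\|_\infty\asymp\sqrt{\beta}$, so the third-moment term in the Stein remainder is only $O(\sqrt{\beta/n})$ --- off by a factor $\sqrt{n\beta}\cdot\sqrt{\beta}$ from the target --- and the bootstrap cannot be restarted on $\psi_0''$ because it is no longer a linear combination of the objects $\E[Z^jg_s(Z)]$ you control. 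The paper's fix is a separate idea (Lemma~\ref{lem:zerototwotilde}): differentiate in the tilt parameter, $\partial_\gamma\E[e^{-\gamma Z^2}]=-\E[Z^2e^{-\gamma Z^2}]$, and integrate the $K=2$ bound over $\gamma\in[0,\beta]$ to transfer it to $K=0$; note this requires the $K=2$ estimate uniformly over all smaller tilts, which is why the paper's induction carries all $\gamma$ simultaneously. A secondary caution: keeping the linear tilt $sx/\sqrt{n}$ inside the test function throughout threatens the parity cancellations you invoke for the Gaussian expectations; the paper instead proves everything for the symmetric functions $x^me^{-\gamma x^2}$ with a centered comparison Gaussian and reinstates the shift only at the very end by a Taylor expansion in $\mu$.
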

    Observe that Theorem \ref{thm:informalCLT} improves on the standard Berry-Esseen rates both in the dependence on $n$ and the dependence on $\beta$. In fact, even in much more favorable settings, when $Z$ has an absolutely continuous and symmetric distribution, two conditions which are known to dramatically improve the rate of convergence yet are not satisfied above, the rate $1/(n\beta^{3/2})$ still does not follow from the accelerated Berry-Esseen bounds~\cite{johnston2023fast}. Thus, Theorem \ref{thm:informalCLT} captures a unique property of the function $x\mapsto x^2 e^{-\beta x^2}$ at hand.  In the course of proving our main estimate~\eqref{eq:informalcovariance}, we shall require the full power of Theorem~\ref{thm:informalCLT}. Any lesser bound would result in a worse final result in Theorem \ref{thm:main}. 
    
     Due to its connection to the quadratic exponential tilts appearing in the localization framework, such as in \eqref{eq:SL}, we believe that Theorem \ref{thm:informalCLT} and its proof could be of independent interest.
	
	\subsection{Further related literature and discussion} \label{sec:related}
	There are numerous works on sampling from Ising measures of the form \eqref{eq:antifer}. Bauerschmidt, Bodineau, and Dagallier~\cite{BBD25} study the Ising model on random 
	$d$-regular graphs as in Corollary \ref{cor:randomdregular}, but consider Kawasaki dynamics rather than Glauber dynamics. Kawasaki dynamics swaps two neighboring spins at each step and thus preserves magnetization. It can be viewed as an analog of Glauber dynamics for a fixed-magnetization slice of the hypercube with neighbor updates. This restriction can accelerate mixing by avoiding bottlenecks created by single-spin flips. In the anti-ferromagnetic regime, \cite{BBD25} shows that the Kawasaki dynamics mix in time $O_{d,h}(n\log^6(n))$, when $\beta < 1/(16\sqrt{d-1})$, although the constant in the big $O$ deteriorates with $d$, as well as for large fields $h$. They also ask whether Kawasaki dynamics on $G_{\reg}(n,d)$ becomes slow for $\beta>\arctanh(1/\sqrt{d-1})\sim 1/\sqrt{d-1}$, or at least for $\beta$ sufficiently large. Proposition~\ref{prop:slow:mixing:ER} answers this question affirmatively up to a universal constant, showing slow mixing once $\beta>C/\sqrt{d}$.

	A different Markov chain, the polarized walk, was introduced by Anari, Koehler, and Vuong~\cite{AKV24}. One of the main results~\cite[Theorem 86]{AKV24} establishes $O(n\log (n))$ mixing time for the polarized walk for Ising measures~\eqref{eq:antifer} where $u = {\bf 1}$, i.e., with an anti-ferromagnetic Curie-Weiss component. Crucially, they obtain bounds which are independent from the parameter $\beta$, for any choice of $\beta$, at the cost of using the more complicated polarized walk. They also raise the question of understanding the behavior of Glauber dynamics on these model and show that for anti-ferromagnetic Ising on random $d$-regular graphs, Glauber mixes in time $O(e^{e^{c\beta d}}n\log(n))$, in the same regime of Corollary \ref{cor:randomdregular}. 
	The doubly exponential dependence on $\beta d$ was later improved to a single exponential $O(e^{\beta d} n\log(n))$ up to $\beta <1/(8\sqrt{d-1})$ by Chen, Chen, Chen, Yin, and Zhang~\cite{CCCYZ25}, who employed a trickle-down theorem, inspired from the theory of high-dimensional expanders, along with the localization framework to refine the approach of \cite{AKV24}.

With respect to these two results, Theorem \ref{thm:main} essentially completes the picture concerning Glauber dynamics, by establishing uniform bounds on the mixing time within the admissible regime of $\beta\leq n^{1-\eps}$. Conceptually, the key methodological distinction is our use of coordinate-by-coordinate localization, whereas \cite{AKV24} employs stochastic localization and \cite{CCCYZ25} relies on negative field localization. Also, the results of \cite{AKV24, CCCYZ25} for $u={\bf 1}$ rely heavily on the theory of log-concave polynomials. As mentioned before, since negative association fail if the coordinates have $u$ have mixed signs as seen in Proposition~\ref{prop:cov:approx:entry}, we do not expect log-concavity based tools to extend for general $u\in \R^n$.

We also note that~\cite{AKV24} introduced a refined spectral width condition that yields quantitative improvements in many settings. Their approach analyzes the expected covariance matrix along stochastic localization trajectories. Since \eqref{eq:informalcovariance} provides an explicit covariance approximation, this framework could in principle be extended to accommodate a negative spectral outlier, potentially improving the constant $1/8$ in Corollary~\ref{cor:dregular}. We do not pursue this direction here.
	
	For more general interaction structures, Koehler, Lee, and Risteski~\cite{KLR22} propose a rejection sampling approach that can handle interaction matrices with an arbitrary number of negative eigenvalues $-\lambda_1,\dots, -\lambda_k$. Their algorithm samples in time $O(n^{d_{+}+1} \exp(c\sum\limits_{i=1}^k\lambda_i))$, where $d_{+}$ is the number of eigenvalues of the interaction matrix above $1-1/c$ for $c>1$. This again demonstrates the exponential dependence on the size of negative outliers as in \cite{CCCYZ25}. In the setting of Theorem \ref{thm:main}, this corresponds to allowing $k$ distinct outliers $u_1,\dots,u_k$, extending beyond the single outlier studied here, albeit the $\exp(c\sum_{i=1}^{k}\lambda_i)$ cost. We believe the localization framework combined with the Gaussian approximation approach developed in this work is well-suited to study the fast mixing of Glauber dynamics in the multi-outlier setting as well, although achieving near-optimal range of $\beta$ in this case is likely to require significant additional technical work.
    
	\paragraph{Organization}The rest of the paper is organized as follows. In Section \ref{sec:mainproof}, we utilize the localization framework and prove Theorem \ref{thm:main} while assuming the approximation estimate~\eqref{eq:informalcovariance} (or Theorem~\ref{thm:corr:approx}). Section~\ref{sec:covapprox} is devoted to the proof of \eqref{eq:informalcovariance}. One of the key components is the normal approximation from Theorem \ref{thm:informalCLT}, which we prove in Section~\ref{sec:CLT}. Finally, in Section \ref{sec:app} we provide proofs for the various corollaries of Theorem \ref{thm:main}, as well as mixing time lower bounds for anti-ferromagnetic models.
\paragraph{Acknowledgements} We thank Elchanan Mossel and Allan Sly for helpful discussions. D.M. was partially supported by the Brian and Tiffinie Pang Faculty Fellowship. Y.S. thanks Korean Institute for Advanced Study, where part of this work was carried out.
	\paragraph{Notation} We use  $C>0$ to denote an absolute constant whose value may change from line to line. The notation $C_{\eps}>0$ refers to a constant depending only on $\eps>0$. For quantities $f=f_{n,u,h,\eps}$ and $g=g_{n,u,h,\eps}$, we write $f=O_{\eps}(g)$ if there exists a constant $C_{\eps}>0$ such that $|f|\leq C_{\eps}g$. For a symmetric matrix $A\in \R^{n\times n}$, we let $\|A\|_{\op}:=\sup_{\|x\|_2\leq 1}\|Ax\|_2$ denote the operator norm of $A$.
	\section{Proof of Theorem~\ref{thm:main}} \label{sec:mainproof}
	As explained in the introduction, our proof of Theorem \ref{thm:main} has two distinct components.
	\begin{itemize}
		\item We first eliminate the matrix $J$ from the measure $\mu_{\beta,u,J,h}$ in \eqref{eq:antifer} leaving us with a measure of the form $\mu_{\beta,u,h'}$ as in \eqref{eq:antfernoJ}.
		\item We then estimate the possible values for $\Cov(\mu_{\beta,u,h'})$ and employ the localization framework to bound the MLSI constant.
	\end{itemize}
	
	The first step is handled in Subsection \ref{sec:elimnateJ}, where we apply the Hubbard–Stratonovich transform to obtain conditional bounds on $\mathrm{Cov}(\mu_{\beta,u,J,h})$. We then combine these bounds with the stochastic localization process to bound $\rho_{\mathrm{LS}}\left(\mu_{\beta,u,J,h}\right)$ in terms of $\rho_{\mathrm{LS}}\left(\mu_{\beta,u,h'}\right)$. 
	Next, we address the second part in Subsection \ref{sec:removingCW}, assuming appropriate bounds on $\Cov(\mu_{\beta,u,h'})$. Obtaining these bounds, which is the crux of the proof, is more technically involved, and we defer it to Section \ref{sec:covapprox}.
	\subsection{From general models to anti-ferromagnetic rank-one models} \label{sec:elimnateJ}
	We first introduce our main tool for this step, which is a consequence of the Hubbard–Stratonovich transform. This result is a straightforward generalization of \cite[Lemma 5.2]{chen2025localization}, which was based on the decomposition obtained by \cite{baurschmidt2019simple}. We include the proof for completeness.
	\begin{lemma}\label{lem:hubbard:stratonovich}
		Let $J_0$ be a symmetric $n \times n$ matrix, and suppose that there exists a constant $L>0$, for which
		\[
		\sup_{h\in \R^n} \left\|\Cov(\mu_{J_0,h})\right\|_{\op}\leq L.
		\]
		Then, for all positive semi-definite $J\in \R^{n\times n}$ with $\|J\|_\mathrm{op}<\frac{1}{2L}$, it holds that
		\[
		\sup_{h\in \R^n} \left\|\Cov(\mu_{J+J_0,h})\right\|_{\op}\leq \frac{L}{1-2L\|J\|_{\op}}.
		\]
	\end{lemma}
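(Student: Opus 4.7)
The plan is to use the Hubbard--Stratonovich identity to express $\mu_{J+J_0,h}$ as a Gaussian mixture of measures of the form $\mu_{J_0,h+Y}$, and then analyze the resulting decomposition of the covariance via the law of total covariance. Since adding a multiple of the identity to $J$ does not change $\mu_{J+J_0,h}$ on $\{-1,+1\}^n$, we may assume without loss of generality that $J$ is strictly positive definite (otherwise work with $J+\eps I$ and send $\eps\to 0$). For such $J$, the identity $\exp(\langle x,Jx\rangle)=\E_{Y\sim \mathcal{N}(0,2J)}[\exp(\langle Y,x\rangle)]$ realizes $\mu_{J+J_0,h}$ as a mixture indexed by the random field $Y$, with a tilted mixing law $d\tilde\mu(Y)\propto Z_{J_0,h+Y}\,d\mathcal{N}(0,2J)(Y)$, so that sampling $X\sim\mu_{J+J_0,h}$ amounts to first drawing $\tilde Y\sim\tilde\mu$ and then $X\sim\mu_{J_0,h+\tilde Y}$.

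Setting $\phi(Y):=\log Z_{J_0,h+Y}$, so that $\nabla\phi(Y)=b(\mu_{J_0,h+Y})$ and $\nabla^2\phi(Y)=\Cov(\mu_{J_0,h+Y})\preceq L\, I$, the law of total covariance decomposes
\[
\Cov(\mu_{J+J_0,h})=\E_{\tilde Y}\bigl[\Cov(\mu_{J_0,h+\tilde Y})\bigr]+\Cov_{\tilde Y}\bigl(\nabla\phi(\tilde Y)\bigr).
\]
The first term is bounded by $L\cdot I$ in operator norm by the hypothesis on $J_0$, so the problem reduces to controlling the covariance of the random barycenter $\nabla\phi(\tilde Y)$ under $\tilde\mu$.

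The core of the argument, and the only technically delicate point, is to bound this second term. The key observation is that $\tilde\mu$ has density proportional to $\exp\bigl(\phi(Y)-\tfrac14\langle Y,J^{-1}Y\rangle\bigr)$, whose negative log-density has Hessian at least $\tfrac12 J^{-1}-L\, I$. The hypothesis $\|J\|_\op<1/(2L)$ is exactly what makes this lower bound positive definite, with operator-norm estimate $\|(\tfrac12 J^{-1}-L\,I)^{-1}\|_\op\leq 2\|J\|_\op/(1-2L\|J\|_\op)$, so $\tilde\mu$ is strongly log-concave and admits a Brascamp--Lieb inequality with this constant. Applying Brascamp--Lieb to $f_v(Y):=\langle v,\nabla\phi(Y)\rangle$, whose gradient $\nabla^2\phi(Y)v$ has norm at most $L\|v\|$, yields a variance bound of the form $2L^2\|J\|_\op/(1-2L\|J\|_\op)\cdot\|v\|^2$. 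Summing this with the first term and simplifying gives $v^\sT \Cov(\mu_{J+J_0,h})\, v\leq L/(1-2L\|J\|_\op)$, uniformly in $h$. The only conceptual obstacle is this Brascamp--Lieb application, which fails exactly beyond the threshold $\|J\|_\op=1/(2L)$, where $\tilde\mu$ ceases to be log-concave; this explains why the hypothesis of the lemma is tight for the method.
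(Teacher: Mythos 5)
Your proposal is correct and follows essentially the same route as the paper: Hubbard--Stratonovich to realize $\mu_{J+J_0,h}$ as a mixture over a tilted Gaussian field, the law of total covariance to split off the $L\cdot I_n$ term, and a Brascamp--Lieb variance bound for the barycenter map $Y\mapsto \nabla\phi(Y)$ under the strongly log-concave mixing measure, with the identical Hessian lower bound $(2J)^{-1}-L I_n$ and the same final arithmetic $\frac{2L^2\|J\|_{\op}}{1-2L\|J\|_{\op}}+L=\frac{L}{1-2L\|J\|_{\op}}$.
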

	For the proof, we need the following well-known result which follows from the Bakry-Em\'ery theorem \cite[Corrolary 4.8.2]{bakry2014analysis}, or the Brascamp-Lieb inequality~\cite [Theorem 4.1]{brascamb76}.
	\begin{lemma}\label{lem:bl}
		Let $f:\RR^n\to \RR$ be a smooth function with $ \alpha I_n \preceq \nabla^2 f(x)$, for some $\alpha > 0$, and every $x \in \RR^n$. If $Y$ is a random vector with density $\frac{e^{-f(x)}}{\int\limits_{\RR^n}e^{-f(x)}dx}$, then for any continuously differentiable $h:\RR^n\to \RR^n$,
		\[
		\left\|\Cov(h(Y))\right\|_{\op}\leq \frac{1}{\alpha} \E\left[\left\|\nabla h(Y)\right\|_{\op}^2\right].
		\]
	\end{lemma}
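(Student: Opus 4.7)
The plan is to reduce the operator-norm bound on $\Cov(h(Y))$ to the scalar Poincar\'e inequality for $\mu$, and then deduce the latter via the standard Bakry-\'Emery semigroup calculation. By the variational characterization
\[
\|\Cov(h(Y))\|_{\op} = \sup_{\|v\|_2=1} \Var\bigl(\langle v, h(Y)\rangle\bigr),
\]
together with $\nabla\langle v,h\rangle=(\nabla h)^{\sT}v$ and hence $\|\nabla\langle v,h\rangle\|_2^2\leq \|\nabla h\|_{\op}^2$ for any unit $v$, the claim reduces to the scalar Poincar\'e inequality
\[
\Var_\mu(g)\leq \tfrac{1}{\alpha}\,\E_\mu\bigl[\|\nabla g\|_2^2\bigr]
\]
for every smooth $g:\R^n\to\R$, where $d\mu\propto e^{-f(x)}\,dx$.

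To prove the scalar Poincar\'e inequality, I would run the overdamped Langevin semigroup $(P_t)_{t\geq 0}$ with generator $Lg=\Delta g-\nabla f\cdot\nabla g$, which is reversible with respect to $\mu$. The decisive ingredient is the pointwise Bakry-\'Emery gradient bound
\[
\|\nabla P_t g\|_2^2 \leq e^{-2\alpha t}\,P_t\bigl(\|\nabla g\|_2^2\bigr),
\]
proven by differentiating the interpolation $s\mapsto e^{-2\alpha s}P_s\bigl(\|\nabla P_{t-s}g\|_2^2\bigr)$ along the semigroup and using the $\Gamma_2$-identity $\Gamma_2(u)=\|\nabla^2 u\|_{\HS}^2+\langle \nabla u,(\nabla^2 f)\nabla u\rangle$ together with the hypothesis $\nabla^2 f\succeq \alpha I_n$, which yields $\Gamma_2\geq \alpha\,\Gamma$.

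Finally, combining the integration-by-parts identity $\tfrac{d}{dt}\E_\mu[(P_tg)^2]=-2\E_\mu[\|\nabla P_t g\|_2^2]$ with the ergodic convergence $P_tg\to \E_\mu g$ and integrating in $t$ from $0$ to $\infty$ yields
\[
\Var_\mu(g) = 2\int_0^\infty \E_\mu\bigl[\|\nabla P_t g\|_2^2\bigr]\,dt \leq \frac{1}{\alpha}\,\E_\mu\bigl[\|\nabla g\|_2^2\bigr],
\]
using the gradient bound together with the $\mu$-invariance of $P_t$. Combined with the initial reduction this gives the lemma. There is no real obstacle: the ingredients are entirely classical, and a routine truncation handles any polynomial growth of $h$ at infinity needed to justify interchanging limits and integrals. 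Alternatively, one may invoke the Brascamp-Lieb inequality directly, which yields the sharper pointwise bound $\Cov_\mu(h)\preceq \E_\mu\bigl[(\nabla h)(\nabla^2 f)^{-1}(\nabla h)^{\sT}\bigr]$, from which the stated estimate follows via $(\nabla^2 f)^{-1}\preceq \alpha^{-1}I_n$.
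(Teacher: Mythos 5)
Your proof is correct and follows exactly the route the paper indicates: the paper states Lemma~\ref{lem:bl} without proof as a consequence of the Bakry--\'Emery theorem or the Brascamp--Lieb inequality, and your argument simply fleshes out that citation (reduction to the scalar Poincar\'e inequality via $\sup_{\|v\|=1}\Var(\langle v,h(Y)\rangle)$, then the standard semigroup/$\Gamma_2$ derivation, with Brascamp--Lieb as the alternative). No discrepancy with the paper.
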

	
	\begin{proof}[Proof of Lemma~\ref{lem:hubbard:stratonovich}]
		By adding a multiple of the identity matrix, we assume with no loss of generality that $0\prec J\prec \frac{1}{2C}I_n$. By Gaussian integration (also known as the Hubbard-Stratonovich transform), for any $x\in \{-1,1\}^n$,
		\[
		\exp(\langle x, Jx\rangle)=C(J)\int_{\R^n}e^{-\frac{1}{2}\langle y, (2J)^{-1} y\rangle +\langle x,y\rangle } dy,
		\]
		where $C(J)$ does not depend on $x\in \{+1,-1\}^n$. It follows that 
		\begin{equation}\label{eq:decomp:measure}
			\begin{split}
				\mu_{J+J_0,h}(x)
				&\propto \int_{\R^n}e^{-\frac{1}{2}\langle y, (2J)^{-1} y\rangle +\langle x,h+y\rangle +\langle x, J_0 x\rangle } dy\\
				&\propto \int_{\R^n}e^{-f(y)}\cdot \mu_{J_0,h+y}(x) dy,
			\end{split}
		\end{equation}
		where $f(y)=\frac{1}{2}\langle y, (2J)^{-1}y\rangle -\log Z_{J_0, h+y}$, and $Z_{J_0, h+y} = \sum\limits_{x \in \{-1,1\}^n} e^{\langle x, J_0x\rangle + \langle x, h+y\rangle}$.
		A key observation is that $f$ is strictly convex under the stated assumptions. In fact,
		\[
		\nabla^2 f(y) = (2J)^{-1}-\Cov(\mu_{J_0,h+y})\succ \bigg(\frac{1}{\|2J\|_{\op}}-L\bigg)I_n.
		\]
		Let $\nu$ denote the probability measure $\nu(dy)\propto \exp(-f(y))dy$. By the decomposition~\eqref{eq:decomp:measure}, we can draw $X\sim \mu_{J+J_0,h}$ by first drawing $Y\sim \nu$ and then drawing $X\sim \mu_{J_0,h+Y}$ conditional on $Y$. The law of total variance then gives
		\[
		\Cov(\mu_{J+J_0,h})=\Cov_{\nu}\left(\E[\mu_{J_0,h+Y}|Y]\right)+\E_{\nu}\left[\Cov(\mu_{J_0,h+Y}|Y)\right]\preceq \Cov_{\nu}\left(\E[\mu_{J_0,h+Y}|Y]\right)+L\cdot I_n.
		\]
		Consider the function $h(y)=\E[\mu_{J_0,h+y}]$. Note that $\|\nabla h(y)\|_{\op}=\|\Cov(\mu_{J_0,h+y})\|_{\op}\leq L$. An application of Lemma~\ref{lem:bl} with $\alpha = 1/(2\|J\|_{\op})-L$ gives
		\[
		\left\|\Cov_{\nu}\left(\E[\mu_{J_0,h+Y}|Y]\right)\right\|_{\op}+L = \left\|\Cov\left(h(Y)\right)\right\|_{\op}+L \leq \frac{2L^2\|J\|_{\op}}{1-2L\|J\|_{\op}}+L=\frac{L}{1-2L\|J\|_{\op}}.
		\]
		Combining the two displays above completes the proof.
	\end{proof}\vspace{-\baselineskip}
	The upshot of Lemma \ref{lem:hubbard:stratonovich} is that we can now bound the MLSI constant of $\mu_{\beta,u,J,h}$ from \eqref{eq:antifer} in terms of the MLSI constant of the, a-priori simpler, $\mu_{\beta,u,h}$. in which we set $J = 0$. We achieve this by combining the covariance bound afforded by Lemma \ref{lem:hubbard:stratonovich} with the stochastic localization process allowing to remove quadratic terms, as in \cite[Theorem 5.1]{chen2025localization}. 
	\begin{corollary} \label{cor:stepI}
		Let $\mu_{\beta,u,J,h}$ be as in \eqref{eq:antifer} and let $\mu_{\beta,u,h'}$ be as in \eqref{eq:antfernoJ}. Suppose that $\|J\|_{\op} < \frac{1}{2}$, and suppose that
		\begin{equation} \label{eq:covbound}
			\sup\limits_{h' \in \RR^n} \|\mathrm{Cov}(\mu_{\beta,u,h'})\|_{\mathrm{op}} = 1 + \alpha,
		\end{equation}
        where $\alpha>0$ satisfies $\alpha < \frac{1}{2\|J\|_{\op}} - 1$.
		Then, for large enough $n$,
			$$\rho_{\mathrm{LS}}(\mu_{\beta,u,J,h}) \geq \overline{\rho_{\mathrm{LS}}} \exp\left(-\int\limits_0^1\frac{2(1+\alpha)\|J\|_{\op}}{1- 2(1+\alpha)(1-\lambda)\|J\|_{\op}}d\lambda\right),$$   
		where 	
		\begin{equation} \label{eq:uniformLSbound}
			\overline{\rho_{\mathrm{LS}}} := \inf\limits_{h' \in \RR^n} \rho_{\mathrm{LS}}(\mu_{\beta,u,h'}).
		\end{equation}
	\end{corollary}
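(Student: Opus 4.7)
The plan is to carry out the two-step program announced at the start of Section~\ref{sec:mainproof}. First I would use Lemma~\ref{lem:hubbard:stratonovich} to bound the operator norm of the covariance of the partially--localized measures $\mu_{\beta,u,sJ,h}$ uniformly in $s\in[0,1]$ and $h\in\R^n$. Then I would run the stochastic localization process \eqref{eq:SL:SDE} with constant driving $C_t\equiv \sqrt{J}$, so that $Q_t=tJ$ and, at time $t=1$, the quadratic tilt exactly cancels the $J$--interaction; the Chen--Eldan entropy transfer \cite[Theorem 5.1]{chen2025localization} will then pass from the MLSI of the rank-one limit measure back to the MLSI of $\mu_{\beta,u,J,h}$, with an exponential price governed by the integrated covariance bound.

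For the covariance step, I apply Lemma~\ref{lem:hubbard:stratonovich} with $J_0:=-\tfrac{\beta}{n}uu^{\sT}$, so that $\mu_{J_0,h'}=\mu_{\beta,u,h'}$ and by hypothesis \eqref{eq:covbound} the constant $L$ of the lemma can be taken to be $1+\alpha$. For each $s\in[0,1]$, the matrix $sJ$ is positive semidefinite with $\|sJ\|_{\op}\leq\|J\|_{\op}$, and the assumption $\alpha<\tfrac{1}{2\|J\|_{\op}}-1$ rearranges to $(1+\alpha)\|J\|_{\op}<1/2$, which in particular guarantees $\|sJ\|_{\op}<\tfrac{1}{2L}$ uniformly in $s$. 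Lemma~\ref{lem:hubbard:stratonovich} then yields
\[
\sup_{h\in\R^n}\|\Cov(\mu_{\beta,u,sJ,h})\|_{\op}\;\leq\;\frac{1+\alpha}{1-2(1+\alpha)\, s\,\|J\|_{\op}}.
\]

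For the entropy transfer step, I use the identification in \eqref{eq:SL} to see that if stochastic localization is run on $\mu=\mu_{\beta,u,J,h}$ with $C_t\equiv\sqrt{J}$, then $\mu_t$ is almost surely of the form $\mu_{\beta,u,(1-t)J,H_t}$ for a random external field $H_t$, and $\mu_1$ is almost surely of the form $\mu_{\beta,u,H_1}$. The MLSI transfer of \cite[Theorem~5.1]{chen2025localization} then produces a bound of the shape
\[
\rho_{\mathrm{LS}}(\mu_{\beta,u,J,h})\;\geq\;\Big(\inf_{\omega}\rho_{\mathrm{LS}}(\mu_1^\omega)\Big)\exp\Big(-2\int_0^1\|J\|_{\op}\cdot\sup_{h'\in\R^n}\|\Cov(\mu_{\beta,u,(1-t)J,h'})\|_{\op}\,dt\Big).
\]
Since the a.s. infimum of $\rho_{\mathrm{LS}}(\mu_1^\omega)$ is at least $\overline{\rho_{\mathrm{LS}}}$ by definition~\eqref{eq:uniformLSbound}, substituting the covariance estimate from the previous paragraph and relabeling $\lambda=1-t$ produces exactly the integrand appearing in the statement.

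The main obstacle is matching the prefactor $2\|J\|_{\op}$ in the displayed integrand to the exact normalization of the Chen--Eldan entropy dissipation formula; this forces the choice $C_t=\sqrt{J}$ (rather than, say, $C_t=J$) and requires their transfer result in its sharp quantitative form. A secondary technicality, explaining the ``for large enough $n$'' hypothesis, is to verify that strict positivity of $\nabla^2 f$ in Lemma~\ref{lem:bl} holds uniformly in $t\in[0,1]$ along the trajectory and, if needed, to handle the discretization between continuous--time localization and the Glauber chain. Both of these should be routine once the uniform covariance bound from the first step is in hand.
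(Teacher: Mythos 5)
Your proposal is correct and follows essentially the same route as the paper: apply Lemma~\ref{lem:hubbard:stratonovich} with $J_0=-\tfrac{\beta}{n}uu^{\sT}$ and $L=1+\alpha$ to bound $\sup_{h'}\|\Cov(\mu_{\beta,u,(1-\lambda)J,h'})\|_{\op}$ along the trajectory, then invoke \cite[Theorem 5.1]{chen2025localization} as a black box for the entropy transfer and integrate. The only differences are cosmetic (the paper parametrizes the interpolation directly by $\lambda$ and does not spell out the choice of driving matrix $C_t$, which is already packaged inside the cited theorem).
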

	\begin{proof}
		For $\lambda \in [0,1],$ and $h' \in \RR^n$ define the measures $\mu_{\lambda,h'}$ by 
		$$\frac{d\mu_{\lambda,h'}}{d\mu_{\beta,u,J,h}}(x) = \exp\left(-\lambda \langle x, Jx\rangle + \langle h',x\rangle\right).$$
		Concretely,
		$$\mu_{\lambda,h'}(x) = \exp\left(-\beta \left(\langle x, u\rangle \right)^2 + (1-\lambda)\langle x, Jx\rangle + \langle h + h',x\rangle\right),$$
		and so we note that $\mu_{1,h'} = \mu_{\beta,u,h+h'}$ and $\mu_{0,0} = \mu_{\beta,u,J,h}$. Set now
		$$\alpha_\lambda := \sup\limits_{h' \in \RR^n} \|\Cov(\mu_{\lambda,h'})\|_{\mathrm{op}},$$
		and observe that
		$$\overline{\rho_{\mathrm{LS}}} = \inf\limits_{h' \in \RR^n} \rho_{\mathrm{LS}}(\mu_{1,h'}).$$
		With this notation, according to \cite[Theorem 5.1]{chen2025localization}, we have the following bound
		\begin{equation} \label{eq:stocloc}
			\rho_{\mathrm{LS}}(\mu_{\beta,u,J,h})\geq \overline{\rho_{\mathrm{LS}}} \exp\left(-2\|J\|_{\mathrm{\op}}\int\limits_0^1\alpha_\lambda d\lambda\right).
		\end{equation}
		Thus we now need to bound $\alpha_\lambda$ for $\lambda \in [0,1]$.
		Taking $J_0 = uu^T$ in Lemma \ref{lem:hubbard:stratonovich} combined with the assumption
		$$\sup\limits_{h' \in \RR^n} \|\mathrm{Cov}(\mu_{\beta,u,h'})\|_{\mathrm{op}} = 1 + \alpha < \frac{1}{2\|J\|_{\op}},$$
		shows that, since $\|J\|_{\mathrm{op}} < \frac{1}{2(1+\alpha)}$, for large enough $n$,
		\begin{align*}
			\alpha_\lambda = \sup\limits_{h' \in \RR^n} \|\mathrm{Cov}(\mu_{\lambda,h'})\|_{\mathrm{op}} &= \sup\limits_{h' \in \RR^n} \|\mathrm{Cov}(\mu_{(1-\lambda)J + J_0,h'})\|_{\mathrm{op}} \leq \frac{1+\alpha}{1-2(1+\alpha)(1-\lambda)\|J\|_{\mathrm{op}}}.
		\end{align*}
		Combined with \eqref{eq:stocloc} we arrive at
        	$$\rho_{\mathrm{LS}}(\mu_{\beta,u,J,h}) \geq \overline{\rho_{\mathrm{LS}}} \exp\left(-\int\limits_0^1\frac{2(1+\alpha)\|J\|_{\op}}{1- 2(1+\alpha)(1-\lambda)\|J\|_{\op}}d\lambda\right),$$   
		which is the desired bound.
	\end{proof}\vspace{-\baselineskip}
	\subsection{Handling anti-ferromagnetic rank-one models} \label{sec:removingCW}
	In light of Corollary \ref{cor:stepI}, our next step is to bound $\rho_{\mathrm{LS}}(\mu_{\beta,u,h'})$ for an arbitrary external field $h' \in \RR^n$. This step is facilitated by bounding the correlation matrix as in \eqref{eq:informalcovariance}, which also implies the condition \eqref{eq:covbound}. However, remark that naively applying the bound from \eqref{eq:stocloc} with $J$ replaced by $-\beta uu^T$ would necessarily result in the sub-optimal and $\beta$-dependent bound
	$\rho_{\mathrm{LS}}(\mu_{\beta,u,h'}) \geq e^{-2\beta},$ leading to potential exponential bounds, and missing the main estimates of Theorem \ref{thm:main}.
	Instead, we replace the use of the stochastic localization process, which results in the bound \eqref{eq:stocloc}, and replace it with an analysis based on coordinate-by-coordinate localization, exemplified by the following result from \cite{chen2025localization}. For completeness, we outline the main steps of the proof.
	\begin{proposition} \label{prop:coor-by-coord}
		Let $\mu$ be a measure on $\{-1,1\}^n$ and for $h' \in \RR^n$ define the tilted measure $$\frac{d\mathcal{T}_{h'}\mu}{d\mu}(x)\propto \exp(\langle h,x\rangle).$$ 
		Suppose that for some $\delta \in (0,1)$, the following uniform bound on the corelation matrices holds:
		\begin{equation} \label{eq:corbound}
			\sup\limits_{h'\in \RR^n} \|\Cor\left(\mathcal{T}_{h'}\mu\right)\|_{\mathrm{op}} \leq 1+\delta.
		\end{equation}
		Then $\rho_{\mathrm{LS}}(\mu) \geq \left(n^{1+\delta}\Gamma(1-\delta)\right)^{-1}$, where $\Gamma$ stands for the Gamma function.
	\end{proposition}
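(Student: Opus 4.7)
The plan is to invoke the coordinate-by-coordinate localization scheme $(\mu_t)_{t=0}^{n}$ described in Section~\ref{sec:comps} and convert the hypothesized operator-norm bound on correlations into an entropy-contraction estimate along the localization trajectory, which is then telescoped into a lower bound on $\rho_{\mathrm{LS}}(\mu)$. This is the MLSI-side analog of the spectral-gap argument sketched there, carried out at the entropic level using the framework of \cite{chen2025localization}.

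First, I would verify that the bound $\|\Cor(\mathcal{T}_{h'}\mu)\|_{\op}\leq 1+\delta$ propagates to every intermediate conditional law that appears along the localization. Fixing the coordinates $X_{\sigma(1)},\dots,X_{\sigma(t)}$ to prescribed $\pm 1$ values is the limit, as $h'_{\sigma(i)}\to\pm\infty$, of the external-field tilts in (\ref{eq:corbound}); since the conditional law on the unfixed coordinates is a marginal of $\mathcal{T}_{h'}\mu$, its correlation matrix inherits the same operator-norm bound in the limit. Any further external-field tilt on the unfixed coordinates is again a tilt of $\mu$, so one obtains $\|\Cor(\mathcal{T}_{h''}\mu_t)\|_{\op}\leq 1+\delta$ uniformly along the trajectory and over every additional tilt of $\mu_t$.

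Next I would use the linear-tilt representation (\ref{eq:lineartilt}) to derive a per-step entropy-contraction inequality of the schematic form
\[
\E\!\left[\Ent_{\mu_{t+1}}(\varphi)\,\big|\,\mu_t\right]\;\leq\; c_t\,\Ent_{\mu_t}(\varphi),
\]
where $c_t\in (0,1)$ depends only on $n-t$ and $\delta$ and is produced by the entropic-independence identity of \cite{chen2025localization} expressing the entropy dissipation as a quadratic form in covariance/correlation matrices of tilts of $\mu_t$. Telescoping these one-step contractions across $t=0,1,\dots,n-1$ yields a product of the form $\prod_{k=1}^n (1-\kappa(k,\delta))$, which after simplification via the recursion $\Gamma(z+1)=z\Gamma(z)$ matches the asymptotic $\bigl(n^{1+\delta}\Gamma(1-\delta)\bigr)^{-1}$ appearing in the conclusion. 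Identifying a single Glauber step with the down-up composition corresponding to the final localization step then converts this telescoped inequality into the claimed MLSI bound.

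The main obstacle lies in the per-step entropy-contraction inequality itself: correlation operator-norm bounds translate easily into variance-based spectral-gap statements, but the MLSI requires an entropic refinement that is not implied by spectral independence alone. This refinement is precisely the content of the entropic-independence machinery developed in \cite{chen2025localization}, which I would invoke as a black box; the rest of the argument is bookkeeping with the Gamma function to match the stated constant.
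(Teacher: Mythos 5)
Your proposal is correct and follows essentially the same route as the paper: the paper likewise treats the entropic-independence machinery of \cite{chen2025localization} as a black box (its Lemma 3.17 converts the uniform correlation bound into $(1+\delta)$-entropic stability, and its Lemma 3.16 gives the telescoped product $\prod_{k=2}^{n}\bigl(1-\tfrac{1+\delta}{k}\bigr)$), after which the Gamma-function recursion together with Wendel's inequality yields the stated constant.
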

	\begin{proof}
		By \cite[Lemma 3.17]{chen2025localization} the condition \eqref{eq:corbound} implies that $\mu$ is $(1+\delta)$-entropically stable with respect to the binary entropy function (see \cite[Definition 3.10]{chen2025localization}). 
		According to \cite[Lemma 3.16]{chen2025localization}, since $\mu$ is entropically stable, we have the following bound for single-site Glauber dynamics,
		$$\rho_{\mathrm{LS}}(\mu) \geq \prod\limits_{k=2}^n \left(1-\frac{1+\delta}{k}\right).$$
		The product has a closed form using the  functional equation of the Gamma function, $\Gamma(x)x = \Gamma(1+x),$
		$$\prod\limits_{k=2}^n \left(1-\frac{1+\delta}{k}\right) =  \prod\limits_{k=2}^n \frac{k-1-\delta}{k} = \frac{1}{n!}\prod\limits_{k=2}^n (k-1-\delta) = \frac{\Gamma(n-\delta)}{n!\Gamma(1-\delta)}.$$
		Finally, applying Wendel's inequality for Gamma functions,
		$$\frac{\Gamma(n-\delta)}{n!} = \frac{1}{n}\frac{\Gamma(n-\delta)}{\Gamma(n)} \geq \frac{1}{n}\frac{1}{(n-\delta)^\delta}\geq \frac{1}{n^{1+\delta}}.$$
	\end{proof}\vspace{-\baselineskip}
    In Section \ref{sec:covapprox} we will establish an appropriate bound on $\Cor(\mathcal{T}_{h'}\mu)$ validating the condition in \eqref{eq:corbound}. This bound is contained in Theorem \ref{thm:corr:approx}, which in particular gives the estimate \eqref{eq:informalcovariance}, that we restate as
    $$\sup_{h\in \R^n}\left\|\Cor(\mu_{\be,u,h})\right\|_{\op}\leq 1+ \frac{C_{\eps}\be \log(1+\beta)^{C_{\eps}}}{n}\, ,$$
    where $C_\eps$ depends only on $\eps$.
	Combining Proposition \ref{prop:coor-by-coord} with the above estimate we can now bound $\rho_{\mathrm{LS}}(\mu_{\beta,u,h'})$.
	\begin{corollary} \label{cor:CWLSbound}
		Let $\mu_{\beta,u,h}$ be defined as in \eqref{eq:antfernoJ}, where $h \in \RR^n$ is arbitrary, $u\in \RR^n$ satisfies $\|u\|_{\infty} \leq 1$ and $0\leq -\beta\leq n^{1-\eps}$ for some $\eps > 0$. Then,
		$$\rho_{\mathrm{LS}}(\mu_{\beta,u,h}) \geq \left(1-\frac{C_\eps}{n^{\frac{\eps}{4}}}\right)n^{-1},$$
        where $C_\eps>0$ depends only on $\eps.$
	\end{corollary}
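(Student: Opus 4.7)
The plan is to apply Proposition~\ref{prop:coor-by-coord} directly, using the correlation estimate \eqref{eq:informalcovariance} (which is what Theorem~\ref{thm:corr:approx} will furnish in Section~\ref{sec:covapprox}) to verify the hypothesis. The crucial structural observation is that the family $\{\mu_{\beta,u,h}\}_{h\in\R^n}$ is closed under linear tilts: for any $h'\in\R^n$, a direct computation from the definition \eqref{eq:antfernoJ} yields
\[
\mathcal{T}_{h'}\mu_{\beta,u,h} \;=\; \mu_{\beta,u,h+h'}.
\]
Consequently, the supremum over external fields $h'$ of the tilted correlation matrix of $\mu_{\beta,u,h}$ coincides with the supremum over $h\in\R^n$ of $\|\Cor(\mu_{\beta,u,h})\|_{\op}$, which is exactly the quantity controlled by \eqref{eq:informalcovariance}.

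Using the bound $\beta\leq n^{1-\eps}$ together with \eqref{eq:informalcovariance}, I would set
\[
\delta \;:=\; \frac{C_{\eps}\,\beta\,\log(1+\beta)^{C_{\eps}}}{n} \;\leq\; \frac{C_{\eps}\,\log(n)^{C_{\eps}}}{n^{\eps}},
\]
so that $\sup_{h'}\|\Cor(\mathcal{T}_{h'}\mu_{\beta,u,h})\|_{\op}\leq 1+\delta$ with $\delta=o(1)$. Proposition~\ref{prop:coor-by-coord} then gives
\[
\rho_{\mathrm{LS}}(\mu_{\beta,u,h}) \;\geq\; \bigl(n^{1+\delta}\,\Gamma(1-\delta)\bigr)^{-1} \;=\; n^{-1}\cdot\bigl(n^{\delta}\,\Gamma(1-\delta)\bigr)^{-1}.
\]

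It remains to simplify the right-hand side. Since $n^{\delta} = \exp(\delta\log n)$ and $\delta\log n \leq C_{\eps}\log(n)^{C_{\eps}+1}/n^{\eps}=o(1)$, a Taylor expansion yields $n^{\delta}=1+O_{\eps}(\delta\log n)$. Similarly, since $\Gamma(1)=1$ and $\Gamma$ is smooth near $1$, $\Gamma(1-\delta)=1+O(\delta)$. Therefore
\[
\bigl(n^{\delta}\,\Gamma(1-\delta)\bigr)^{-1} \;\geq\; 1 \;-\; O_{\eps}\!\left(\frac{\log(n)^{C_{\eps}+1}}{n^{\eps}}\right).
\]
Absorbing the polylogarithmic factor into the polynomial by replacing the exponent $\eps$ with the slightly smaller $\eps/4$ (any fixed constant less than $\eps$ would do), one obtains $\rho_{\mathrm{LS}}(\mu_{\beta,u,h})\geq (1-C_{\eps}/n^{\eps/4})n^{-1}$, as desired.

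There is no real obstacle in this step: the entire argument is a clean assembly of the tilt-invariance of the rank-one anti-ferromagnetic family, the correlation estimate of Theorem~\ref{thm:corr:approx}, and the coordinate-by-coordinate localization bound of Proposition~\ref{prop:coor-by-coord}. All the actual work — and in particular the hard part of controlling $\|\Cor(\mu_{\beta,u,h})\|_{\op}$ uniformly in $h$ through the single-global-tilt strategy and the sharpened normal approximation of Theorem~\ref{thm:informalCLT} — is deferred to Section~\ref{sec:covapprox} and Section~\ref{sec:CLT}.
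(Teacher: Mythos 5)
Your proposal is correct and follows essentially the same route as the paper: identify the tilted family with $\mu_{\beta,u,h+h'}$, invoke Theorem~\ref{thm:corr:approx} to verify the hypothesis of Proposition~\ref{prop:coor-by-coord} with $\delta = C_\eps\beta\log(1+\beta)^{C_\eps}/n = o(1)$, and Taylor-expand $n^{\delta}\Gamma(1-\delta)$. The only point the paper spells out that you leave implicit is the verification that $\|I_n-\frac{\alpha_\star}{n}w_\star w_\star^{\sT}\|_{\op}\leq 1$ (via $0\preceq \frac{\alpha_\star}{n}w_\star w_\star^{\sT}\preceq I_n$), which is needed to pass from the approximation in Theorem~\ref{thm:corr:approx} to the bound $\|\Cor\|_{\op}\leq 1+\delta$.
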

	\begin{proof}
		To apply Proposition \ref{prop:coor-by-coord} we first need to verify the condition \eqref{eq:corbound}. For $h' \in \RR^n$, by Theorem \ref{thm:corr:approx}, and in particular Eq. \eqref{eq:informalcovariance}, along with the triangle inequality,
		\begin{equation} \label{eq:corrdecomp}
			\|\Cor(\mathcal{T}_{h'} \mu_{\beta,u,h})\|_{\mathrm{op}} = \|\Cor(\mu_{\beta,u,h+h'})\|_{\mathrm{op}} \leq \|I_n-\frac{\alpha_*}{n}w_*w_*^T\|_{\mathrm{op}} + \frac{C_\eps\beta \log(1+\beta)^{C_\eps}}{n},
		\end{equation}
		where $C_\eps$ depends only on $\eps$. Moreover, since $\alpha_* = \frac{2\beta}{1+\frac{2\beta}{n}\|w_*\|_2^2}$, we get 
		$$0\preceq \frac{\alpha_*}{n}w_*w_*^T \preceq  \frac{2\beta \|w_*\|_2^2}{n(1+\frac{2\beta}{n}\|w_*\|_2^2)}I_n\preceq I_n \implies \|I_n-\frac{\alpha_*}{n}w_*w_*^T\|_{\mathrm{op}} \leq 1.$$ 
        Since $\beta \leq n^{1-\eps}$, we get that
		and \eqref{eq:corbound} holds with $\delta = \frac{C_\eps\beta \log(1+\beta)^{C_\eps}}{n}$, and that for large enough $n$, $\delta < 1/2$. So, Proposition \ref{prop:coor-by-coord} implies
		$$\rho_{\mathrm{LS}}(\mu_{\beta,u,h}) \geq \frac{1}{\Gamma(1-\delta)n^{\delta}}n^{-1}.$$
        To simplify the expression, for large $n$, we bound $\delta \leq \frac{C'_\eps}{n^{\frac{\eps}{2}}}$, for some other constant $C'_\eps>0$.
        With a Taylor expansion $\Gamma(1-\delta) \leq 1 + 10\delta \leq 1+\frac{10C_\eps'}{n^{\frac{\eps}{2}}}$, for small enough $\delta$. Similarly, we can bound $n^{\delta}\leq 1 + \frac{C'_\eps}{n^{\frac{\eps}{4}}}$.
		Combining the bounds yields the result
	\end{proof}\vspace{-\baselineskip}
	\subsection{Proof of Theorem \ref{thm:main}}
	Given Corollary \ref{cor:stepI} and Corollary \ref{cor:CWLSbound}, the proof of Theorem \ref{thm:main} is now immediate.
	\begin{proof}[Proof of Theorem \ref{thm:main}]
		By Corollary \ref{cor:CWLSbound} if $\overline{\rho_{\mathrm{LS}}}$ is defined as in \eqref{eq:uniformLSbound}, then $\overline{\rho_{\mathrm{LS}}} \geq \left(1-\frac{C_\eps}{n^{\frac{\eps}{4}}}\right)n^{-1}$. This in particular implies the $\sup\limits_{h' \in \RR^n}\|\Cov(\mu_{\beta,u,h'})\|_{\op} = \left(1+\frac{C_\eps}{n^{\frac{\eps}{4}}}\right)$, as required in \eqref{eq:covbound}. Alternatively, since $\mu_{\beta,u,h'}$ is supported on $\{-1,1\}^n$
		$$\Cov(\mu_{\beta,u,h'}) \leq \Cor(\mu_{\beta,u,h'})\leq 1+\frac{C_\eps}{n^{\frac{\eps}{4}}},$$
		where the second inequality follows from the bound in \eqref{eq:corrdecomp}.
		Applying the above two bounds in Corollary \ref{cor:stepI} yields with $\alpha = \alpha_{n,\eps}  = \frac{C_\eps}{n^{\frac{\eps}{4}}}$ for large enough $n$,
        \begin{align*}
            \rho_{\mathrm{LS}}(\mu_{\beta,u,J,h)}) 	&\geq (1-\alpha_{n,\eps}) \exp\left(-\int\limits_0^1\frac{2(1+\alpha_{n,\eps})\|J\|_{\op}}{1- 2(1+\alpha_{n,\eps})(1-\lambda)\|J\|_{\op}}d\lambda\right)n^{-1}\\
            &= (1+o(1))\exp\left(-\int\limits_0^1\frac{2\|J\|_{\op}}{1- 2(1-\lambda)\|J\|_{\op}}d\lambda\right)n^{-1} = (1+o(1))(1-2\|J\|_{\op})n^{-1}, 
        \end{align*}
       where we've used that for every $\eps > 0$, $\alpha_{n,\eps} \xrightarrow{n\to \infty} 0,$ and the applied dominated convergence in the integral.
	\end{proof}\vspace{-\baselineskip}

	\section{Covariance approximation of negative rank one Ising models} \label{sec:covapprox}
	In this section, we prove the correlation approximation~\eqref{eq:informalcovariance}. Throughout, we fix $\beta\leq n^{1-\eps}$, $u\in \R^{n}$ such that $\|u\|_{\infty}\leq 1$, and consider arbitrary $h\in \R^{n}$. Define $m_\star\equiv m_{\star,u,h}\in [-1,1]$ as the unique solution to the fixed point equation
	\begin{equation}\label{eq:def:m:star}
		m_{\star}=\frac{1}{n}\sum_{i=1}^{n}u_i\tanh(h_i-2\beta m_{\star} u_i)\,.
	\end{equation}
	The existence and uniqueness of $m_\star$ is established in Lemma~\ref{lem:elementary} below. Next, define the vectors $w_{\star,h} \equiv w_{\star} = (w_{\star,i})_{i\leq n} \in \R^n, v_{\star,h} \equiv v_\star = (v_{\star,i})_{i\leq n}\in \R^n$, and the scalar $\al_{\star,h}\equiv\al_\star$ by
	\begin{equation}\label{eq:def:v:w:alpha}
	v_{\star,i}=\sech^2(h_i-2\beta m_\star u_i)\,,\quad\quad w_{\star,i}=u_i\sech(h_i-2\beta m_{\star} u_i)\,,\quad\quad\al_{\star}=\frac{2\be}{1+\frac{2\be}{n}\sum_{i=1}^{n}w_{\star,i}^2}\,,
	\end{equation}
	where we suppress the dependence on $h$ for simplicity. The main result of this section is as follows.
	\begin{theorem}\label{thm:corr:approx}
		Let $0<\beta\leq n^{1-\eps}$ for some $\eps>0$.  There exists a constant $C_\eps > 0$, depending only on $\eps$, such that for $n\geq C_{\eps}$,
		\[
		\sup_{h\in \R^n}\left\|\Cov(\mu_{\be,u,h})-\left(\diag(v_{\star})-\frac{\al_{\star}}{n}\left(v_{\star}\odot u\right)\left(v_{\star}\odot u\right)^{\sT}\right)\right\|_{\op}\leq \frac{C_{\eps}\be \log(1+\beta)^{C_{\eps}}}{n},
		\]
		where $\odot$ denotes the Hadamard (entry-wise) product. Moreover,
		\[
		\sup_{h\in \R^n}\left\|\Cor(\mu_{\be,u,h})-\left(I_n-\frac{\al_{\star}}{n}w_{\star}w_{\star}^{\sT}\right)\right\|_{\op}\leq \frac{C_{\eps}\be \log(1+\beta)^{C_{\eps}}}{n}\,.
		\]
	\end{theorem}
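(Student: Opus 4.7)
The plan is to compute $\Cov(\mu_{\beta,u,h})$ entrywise via the single global tilt $\lambda_\star = -2\beta m_\star$, and then invoke Theorem~\ref{thm:informalCLT} essentially as a black box to replace the resulting discrete moments by their Gaussian counterparts. Completing the square around $m_\star$ rewrites the measure as
\[
\E_{\mu_{\beta,u,h}}[f(X)] = \frac{\E_\pi[f(\xi)\,e^{-\beta Z^2}]}{\E_\pi[e^{-\beta Z^2}]},\qquad Z := \frac{\langle u,\xi\rangle - nm_\star}{\sqrt{n}},
\]
where $\pi$ is the product Bernoulli measure on $\{\pm1\}^n$ with $\E_\pi\xi_k = \tanh(h_k+\lambda_\star u_k)$. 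The fixed point equation~\eqref{eq:def:m:star} guarantees $\E_\pi Z = 0$, and $\Var_\pi(Z) = \sigma_\star^2 := \frac{1}{n}\sum_k w_{\star,k}^2$.

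Next, for each pair $(i,j)$ I would use the cavity decomposition $Z = Z^{(i,j)} + n^{-1/2}(u_i\tilde\xi_i + u_j\tilde\xi_j)$ with $\tilde\xi_k := \xi_k - \E_\pi\xi_k$ and $Z^{(i,j)}$ independent of $(\xi_i,\xi_j)$. Expanding $e^{-\beta Z^2}$ to second order in the cavity perturbation and integrating out the two Bernoullis reduces $\Cov_{ij}$ to an explicit linear combination, with coefficients that are polynomials in $(\beta, u_i, u_j, v_{\star,i}, v_{\star,j})$, of the three cavity moments $R_K := \E_\pi[(Z^{(i,j)})^K e^{-\beta (Z^{(i,j)})^2}]$ for $K=0,1,2$. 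Applying Theorem~\ref{thm:informalCLT} (and its minor extensions to $K=0,1$ and to slightly shifted exponents) to replace each $R_K$ by its Gaussian analogue $\E[G^K e^{-\beta G^2}]$ with $G\sim\mathcal{N}(0,\sigma_\star^2)$, and using the closed forms
\[
\frac{\E[e^{-\beta G^2}]}{1} = \frac{1}{\sqrt{1+2\beta\sigma_\star^2}},\qquad \frac{\E[G^2 e^{-\beta G^2}]}{\E[e^{-\beta G^2}]} = \frac{\sigma_\star^2}{1+2\beta\sigma_\star^2},\qquad \E[G e^{-\beta G^2}] = 0,
\]
produces exactly $\diag(v_\star) - \frac{\alpha_\star}{n}(v_\star\odot u)(v_\star\odot u)^{\sT}$, since $\alpha_\star = 2\beta/(1+2\beta\sigma_\star^2)$.

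The third task is to convert the per-entry Gaussian approximation error into an operator-norm bound. The crucial observation is that the dominant error inherits the rank-one factor $u_i\sqrt{v_{\star,i}}\cdot u_j\sqrt{v_{\star,j}}$ from the cavity coefficients, so it factors as a scalar times $(u\odot\sqrt{v_\star})(u\odot\sqrt{v_\star})^{\sT}$, whose operator norm is bounded by that scalar times $\|u\odot\sqrt{v_\star}\|_2^2 = n\sigma_\star^2 \leq n$. Combined with the rate $O_\eps(\log(1+\beta)^{C_\eps}/(n\beta^{3/2}))$ from Theorem~\ref{thm:informalCLT} and the polynomial-in-$\beta$ weights from the cavity expansion, this yields the claimed $O_\eps(\beta\log(1+\beta)^{C_\eps}/n)$ error. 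The remaining higher-order terms from the cavity expansion are of size $1/n^{1+\eta}$ entrywise and are controlled by a trivial row-sum estimate. The correlation estimate then follows by conjugating with $\diag(\Cov)^{-1/2}$, which agrees with $\diag(v_\star)^{-1/2}$ to the same precision and converts $v_\star\odot u$ into $w_\star = u\odot\sqrt{v_\star}$.

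The main obstacle is matching the $1/(n\beta^{3/2})$ rate of Theorem~\ref{thm:informalCLT} against the polynomial $\beta$-weights from differentiating $e^{-\beta Z^2}$ twice, so that the final operator-norm error is exactly $\beta/n$ and not worse; any loss of a single extra power of $\beta$ would destroy the near-optimal window $\beta\leq n^{1-\eps}$. A secondary obstacle is that Theorem~\ref{thm:informalCLT} is stated under $\Var(Z)\geq 1/2$, while uniformity over $h\in\R^n$ forces us to accommodate the full range $\sigma_\star^2 \in [0,1]$. In the small-variance regime $\beta\sigma_\star^2\lesssim 1$, I would replace the CLT input by a direct Taylor expansion of $e^{-\beta Z^2}$ in $\beta Z^2$ together with simple moment bounds on $Z$, which delivers a finer error and agrees with the formula $\alpha_\star\approx 2\beta$ in that regime.
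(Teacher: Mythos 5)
Your overall strategy --- a single global tilt $\lambda_\star=-2\beta m_\star$, a two-site cavity decomposition, Gaussian approximation of the $e^{-\beta Z^2}$-weighted moments, and a rank-one factorization of the error to pass to operator norm --- is the same as the paper's. The gap is in the middle step, and it sits exactly where the difficulty of the theorem lies. You propose to Taylor-expand $e^{-\beta Z^2}$ to second order in the cavity perturbation $\delta=n^{-1/2}(u_i\tilde\xi_i+u_j\tilde\xi_j)$, reducing $\Cov(X_i,X_j)$ to the three \emph{untilted} cavity moments $R_0,R_1,R_2$. But the cross term in the exponent is $-2\beta Z^{(ij)}\delta$, whose coefficient is of order $\beta/\sqrt{n}$, as large as $n^{1/2-\eps}$; truncating it at second order and controlling the remainder at the required absolute precision --- of order $\frac{\alpha_\star u_iu_j}{n}\cdot\frac{\beta\log(1+\beta)^{C}}{n}$, since the leading term $-\frac{\alpha_\star}{n}u_iu_jv_{\star,i}v_{\star,j}$ must be recovered with multiplicative error $1+O(\beta\log(1+\beta)^{C}/n)$ --- is not something you verify, and a direct estimate of the discarded third- and fourth-order terms (e.g.\ $\beta^4\delta^4\,\E[Z^4e^{-\beta Z^2}]/\E[e^{-\beta Z^2}]\asymp u_iu_j\beta^2/n^2$) shows they generically exceed this threshold by a factor of $\beta$ unless further cancellations are exploited. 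Relatedly, your ``crucial observation'' that the dominant error carries the factor $u_iu_j$ is asserted rather than proved: the second-order terms $-\beta\delta^2$ and $2\beta^2(Z^{(ij)})^2\delta^2$ contain $u_i^2$ and $u_j^2$ contributions with no $u_iu_j$ factor, and showing that these cancel in the covariance to the stated precision is the hard part. The paper flags this explicitly: a ``second order Taylor approximation to $\Psi^{(ij)}$ around $0$ \dots will not suffice since it could be that $u_i^2+u_j^2\gg u_iu_j$.''

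The paper's route avoids both issues. Lemma~\ref{lem:cov:express} gives an \emph{exact} formula for $\Cov(X_i,X_j)$ in terms of the cavity cumulant generating function $\Psi^{(ij)}$ evaluated at the full tilts $\pm 2\beta(u_i\pm u_j)$; no expansion of the cross term is performed. The Gaussian approximation (Theorem~\ref{thm:gaussian:approx:reweighted}) is then proved \emph{uniformly over tilts} $|t|\leq C\beta/\sqrt{n}$, and the quantity it controls to precision $\log(1+\beta)^{C}/(n(1+2\beta\zeta))$ is the tilted \emph{variance} $\Var^{W}_{t,\beta}(W)$; the paper points out that this precision fails for $\E^{W}_{t,\beta}[W^2]$ alone, so a reduction to the raw moments $R_K$ discards exactly the cancellation that makes the rate achievable. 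Finally, the $u_iu_j$ factor is extracted by the integral representation $\Delta_{ij}\Psi^{(ij)}=\int_0^{p_{ij}-q_{ij}}(p_{ij}+q_{ij})\,(\Psi^{(ij)})''(v_t)\,dt$, so that both the main term and the error are manifestly proportional to $(p_{ij}-q_{ij})(p_{ij}+q_{ij})=16\beta^2u_iu_j$ (Lemma~\ref{lem:Xi}). Your small-variance fallback (Taylor expansion in $\beta Z^2$) also needs care, since $\beta Z^2$ is small only in probability and not uniformly; the paper instead uses Chebyshev's association inequality there. To repair your argument you would essentially have to reinstate the tilt in the exponent and prove the tilt-uniform variance approximation, which is precisely the content of Theorem~\ref{thm:gaussian:approx:reweighted}.
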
	
	The proof of Theorem~\ref{thm:corr:approx} is based on the following result. 
	\begin{proposition}\label{prop:cov:approx:entry}
		Uniformly over $0<\beta \leq n^{1-\eps}$, $u\in \R^{n}$ such that $\|u\|_{\infty}\leq 1$, $h\in \R^n$, and $i\neq j \in [n]$, we have for $X=(X_i)_{i\leq n} \sim \mu_{\beta,u,h}$ that
		\begin{align}
			\Var(X_i)
			&=v_{\star,i} \left(1+O_{\eps}\left(\frac{\beta\log(1+\beta)^{C_{\eps}}}{n}\right)\right)\,,\\
			\Cov(X_i,X_j)
			&=-\frac{\alpha_{\star}}{n}u_iu_j v_{\star,i}v_{\star,j} \left(1+O_{\eps}\left(\frac{\beta\log(1+\beta)^{C_{\eps}}}{n}\right)\right)\,.
		\end{align}
	\end{proposition}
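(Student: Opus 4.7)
The plan is to reduce the computation of $\E[X_i]$ and $\E[X_iX_j]$ to moment estimates under an independent product measure, and then invoke the Gaussian approximation of Theorem~\ref{thm:informalCLT}. Following the heuristic of Section~\ref{sec:heuristic:cov:approx}, I would fix the global tilt $\lambda_\star=-2\beta m_\star$ and let $\nu$ be the product measure on $\{-1,+1\}^n$ with $\E_\nu\xi_k=\tanh(h_k-2\beta m_\star u_k)$. Writing $Z(x)=\frac{1}{\sqrt n}\sum_k u_k(\xi_k-\E_\nu\xi_k)$ and $\langle u,x\rangle/\sqrt n=\sqrt n\,m_\star+Z(x)$, a direct computation using the fixed-point equation~\eqref{eq:def:m:star} shows that all linear-in-$x$ interactions cancel and
\[
\frac{d\mu_{\beta,u,h}}{d\nu}(x)=\frac{e^{-\beta Z(x)^2}}{\E_\nu[e^{-\beta Z^2}]}.
\]
Thus $\mu_{\beta,u,h}$ is the product measure $\nu$ reweighted only by the quadratic factor $e^{-\beta Z^2}$, which is exactly the setting in which the cumulant representation~\eqref{eq:CGF} applies.

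Next, I would perform a cavity expansion around the $i$-th (and, for covariance, the $j$-th) coordinate. Set $Z_{-i}=Z-\tfrac{u_i}{\sqrt n}(\xi_i-\E_\nu\xi_i)$, which is independent of $\xi_i$. Expanding
\[
-\beta Z^2=-\beta Z_{-i}^2-\tfrac{2\beta u_i}{\sqrt n}(\xi_i-\E_\nu\xi_i)Z_{-i}-\tfrac{\beta u_i^2}{n}(\xi_i-\E_\nu\xi_i)^2
\]
and conditioning on $\xi_i$, the ratio $\E_\mu[X_i]=\E_\nu[X_i e^{-\beta Z^2}]/\E_\nu[e^{-\beta Z^2}]$ becomes a weighted combination, indexed by $\xi_i\in\{\pm 1\}$, of expressions of the form $\E_\nu[Z_{-i}^K e^{-\beta Z_{-i}^2+s_iZ_{-i}/\sqrt n}]$ with $K\in\{0,1\}$ and $s_i=-2\beta u_i(\xi_i-\E_\nu\xi_i)=O(\beta)$. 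The analogous expansion around the pair $(i,j)$ reduces $\E_\mu[X_iX_j]$ to moments with $K\in\{0,1,2\}$. The remaining $O(1/n)$ contribution from $\tfrac{\beta u_i^2}{n}(\xi_i-\E_\nu\xi_i)^2$ is further linearized and absorbed into the final error.

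I would then apply Theorem~\ref{thm:informalCLT} (together with its $K=0,1$ analogs) to replace each such moment by its Gaussian counterpart, i.e.\ replace $Z_{-i}$ by $G\sim\mathcal N(0,\sigma_i^2)$ with $\sigma_i^2=\tfrac{1}{n}\sum_{k\ne i}u_k^2 v_{\star,k}$. The Gaussian expectations $\E[G^K e^{-\beta G^2+sG/\sqrt n}]$ are computed in closed form by completing the square, yielding explicit rational functions of $\sigma_i^2$, $\beta$, and $s$. A short algebraic simplification, using $1-(\E_\nu\xi_i)^2=v_{\star,i}$ and the definition $\alpha_\star=2\beta/(1+2\beta\|w_\star\|_2^2/n)$, identifies the leading term as exactly $v_{\star,i}$ for the variance and $-\alpha_\star u_iu_j v_{\star,i}v_{\star,j}/n$ for the covariance. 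The approximation errors from Theorem~\ref{thm:informalCLT} propagate additively and give the stated bound $O_\eps(\beta\log(1+\beta)^{C_\eps}/n)$.

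The main obstacle is uniformity in $h\in\R^n$. In particular, $\sigma_i^2$ can be as small as $n^{-1/2}$ when $h$ saturates most coordinates, placing the problem outside the favorable regime $\Var(Z)\asymp 1$ in which the standard Berry-Esseen machinery is sharp enough. Handling this small-variance regime, together with the fact that the tilt parameter $s_i$ scales as $O(\beta)$ and can be large throughout the admissible range $\beta\le n^{1-\eps}$, is precisely what Theorem~\ref{thm:informalCLT} is engineered to cover. The technical bookkeeping, splitting into cases according to the size of $\beta\sigma_i^2$ and recovering the matching $\log(1+\beta)$ exponents in each case, is the delicate step; once this is in place, the entry-wise estimates of the proposition follow by assembling the algebraic computations above.
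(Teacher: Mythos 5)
Your setup is essentially the paper's: the single global tilt $\lambda_\star=-2\beta m_\star$, the representation of $\mu_{\beta,u,h}$ as the product measure $\nu$ reweighted by $e^{-\beta Z^2}$ (this is exactly Lemma~\ref{lem:express:Psi}), the cavity expansion around $\{i\}$ and $\{i,j\}$ (Lemma~\ref{lem:cov:express}), and the use of the sharp Stein-method estimates to control the tilted moments. The variance estimate goes through along the lines you describe, since only first-order control of the cavity cumulant generating function is needed there.

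However, there is a genuine gap in your treatment of the covariance. The claimed statement is a \emph{multiplicative} error relative to $-\frac{\alpha_\star}{n}u_iu_jv_{\star,i}v_{\star,j}$, so the absolute error must carry the factor $u_iu_j$ (and in fact the factor $\beta^2 u_iu_j/n^2$ up to logs). The covariance arises as a near-cancellation of $O(1)$ quantities --- in your formulation, $\E_\mu[X_iX_j]-\E_\mu[X_i]\E_\mu[X_j]$, or equivalently the combination $\Gamma_{1,1}\Gamma_{-1,-1}-\Gamma_{1,-1}\Gamma_{-1,1}$ of the four conditional weights --- and letting the Gaussian approximation errors ``propagate additively'' destroys this structure: the individual moment errors are of order $\frac{\log(1+\beta)^{C}}{n(1+2\beta\zeta)}$ and do not come with a factor of $u_iu_j$, so for small $\beta$ or small $|u_iu_j|$ they swamp the target $\frac{\beta^2 u_iu_j}{n^2}$. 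The paper explicitly notes that even a second-order Taylor expansion of $\Psi^{(ij)}$ at $0$ combined with the derivative estimates fails, because it produces errors controlled by $u_i^2+u_j^2$, which can vastly exceed $u_iu_j$. The missing ingredient is the identity
\[
\Psi^{(ij)}(p_{ij})+\Psi^{(ij)}(-p_{ij})-\Psi^{(ij)}(q_{ij})-\Psi^{(ij)}(-q_{ij})
=\int_{0}^{p_{ij}-q_{ij}}(p_{ij}+q_{ij})\,\big(\Psi^{(ij)}\big)''(v_t)\,dt ,
\]
with $p_{ij}=2\beta(u_i+u_j)$, $q_{ij}=2\beta(u_i-u_j)$, which makes the prefactor $16\beta^2u_iu_j$ explicit and lets the uniform second-derivative estimate (valid for all tilts $|s|\le 4\beta$) multiply it, so that both the leading term and the error inherit the factor $u_iu_j$. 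Without this (or an equivalent differencing argument exploiting the correlation of the approximation errors across the four tilts), your assembly step does not yield the stated multiplicative bound.
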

	\begin{proof}[Proof of Theorem~\ref{thm:corr:approx}]
		We establish the approximation for the correlation matrix.
	By bounding the off-diagonal contribution in the correlation matrix by its Frobenius norm, we have
		\[
		\begin{split}
			&\left\|\Cor(\mu_{\be,u,h})-\left(I_n-\frac{\al_{\star}}{n}w_{\star}w_{\star}^{\sT}\right)\right\|_{\op}\\
			&\leq \left\|\diag\left(\frac{\alpha_{\star}}{n}w_{\star,i}^2\right)_{i\in [n]}\right\|_{\op}+\left(\sum_{i\neq j}\left(\Cor(X_i,X_j)+\frac{\alpha_{\star}}{n}w_{\star,i}w_{\star,j}\right)^2\right)^{1/2}.
		\end{split}
		\]
		Since $|w_{\star,i}|\leq 1$ and $\alpha_{\star}\leq 2\beta$, the first term is at most $2\beta/n$. For the second term, Proposition~\ref{prop:cov:approx:entry} yields
		\[
		\begin{split}
			\left(\sum_{i\neq j}\left(\Cor(X_i,X_j)+\frac{\alpha_{\star}}{n}w_{\star,i}w_{\star,j}\right)^2\right)^{1/2}
			&\leq	\frac{\alpha_{\star}}{n}\left(\sum_{i\neq j} w_{\star,i}^2 w_{\star,j}^2\right)^{1/2}\cdot O_{\eps}\left(\frac{\beta\log(1+\beta)^{C_{\eps}}}{n}\right)\\
			&\leq
			\frac{2\beta\|w_{\star}\|^2}{n+2\beta \|w_{\star}\|^2}\cdot O_{\eps}\left(\frac{\beta\log(1+\beta)^{C_{\eps}}}{n}\right).
		\end{split}
		\]
        Since $2\beta \|w_{\star}\|^2/(n+2\beta \|w_\star\|^2)\leq 1$, this proves the correlation approximation. The covariance approximation follows by the same argument where we use $v_{\star,i}\leq 1$ and $\sum_{i=1}^{n}u_i^2 v_{\star,i}=\|w_{\star}\|^2$.
	\end{proof}\vspace{-\baselineskip}
	
	\subsection{Fundamental Identities}
	The rest of this section is devoted to the proof of Proposition~\ref{prop:cov:approx:entry}. For $X\sim \mu_{\beta,u,h}$, define the \textit{magnetization} $M:=\langle u, X \rangle/n$. We relate $\Var(X_i)$ and $\Cov(X_i,X_j)$ with the law of the magnetization in cavity systems where one or two coordinates are removed. For $I\subset [n]$, let
	\[
	u^{(I)}=(u_i)_{i\in [n]\setminus I}\in \R^{n-|I|},\quad h^{(I)}=(h_i)_{i\in [n]\setminus I}\in \R^{n-|I|}.
	\]
	For $I=\{i\}$ or $I=\{i,j\}$, we simply write $(i)$ and $(ij)$. Denote the cavity magnetization
	\[
	M^{(I)}=\frac{\langle u^{(I)},X^{(I)}\rangle}{n}, \quad\textnormal{where}\quad X^{(I)}\sim \mu_{\frac{n-|I|}{n}\beta,u^{(I)},h^{(I)}}\,.
	\]
	Note that the normalization for $ \mu_{\frac{n-|I|}{n}\beta,u^{(I)},h^{(I)}}$ is taken so that 
    \[
    \mu_{\frac{n-|I|}{n}\beta,u^{(I)},h^{(I)}}(x)\propto \exp\Big(-\frac{\beta}{n}\big(\langle u^{(I)}, x\rangle\big)^2+\langle h^{(I)},x\rangle\Big),\quad x\in \{+1,-1\}^{n-|I|},
    \]
    and that the normalization for $M^{(I)}$ remains $1/n$ rather than $1/(n-|I|)$. Denote the cumulant generating function of $M^{(I)}$ by
	\[
	\Psi^{(I)}(s)=\log \E \left[\exp\left(sM^{(I)}\right)\right],\quad s\in \R.
	\]
	
	\begin{lemma}\label{lem:cov:express}
		For $u=(u_i)_{i\leq n}\in \R^n$ and $h=(h_i)_{i\leq n}\in \R^n$, let $X=(X_i)_{i\leq n}\sim \mu_{\beta,u,h}$. For $i\in [n]$, 
		\[
		\Var(X_i)=\frac{4\exp\big(\Psi^{(i)}(2\beta u_i)+\Psi^{(i)}(-2\beta u_i)\big)}{\Big(\exp\big(h_i+\Psi^{(i)}(-2\beta u_i)\big)+\exp\big(-h_i+\Psi^{(i)}(2\beta u_i)\big)\Big)^2}.
		\]
		For $i\neq j \in [n]$, we have
		\[
		\Cov(X_i,X_j)=\frac{4\Xi}{\Theta^2},
		\]
		where $\Xi\equiv \Xi_{ij}$ and $\Theta\equiv\Theta_{ij}$ are defined as follows. Letting $p_{ij}=2\beta(u_i+u_j)$ and $q_{ij}=2\beta(u_i-u_j)$,
		\[
		\begin{split}
			&\Xi=\exp\left(-\frac{4\beta}{n}u_iu_j+\Psi^{(ij)}(p_{ij})+\Psi^{(ij)}(-p_{ij})\right)-\exp\left(\frac{4\beta}{n}u_iu_j+\Psi^{(ij)}(q_{ij})+\Psi^{(ij)}(-q_{ij})\right),\\
			&\Theta = \exp\left(-\frac{2\beta}{n}u_iu_j+h_i+h_j+\Psi^{(ij)}(-p_{ij})\right)+\exp\left(-\frac{2\beta}{n}u_iu_j-h_i-h_j+\Psi^{(ij)}(p_{ij})\right)\\
			&\quad\quad+\exp\left(\frac{2\beta}{n}u_iu_j+h_i-h_j+\Psi^{(ij)}(-q_{ij})\right)+\exp\left(\frac{2\beta}{n}u_iu_j-h_i+h_j+\Psi^{(ij)}(q_{ij})\right).
		\end{split}
		\]
	\end{lemma}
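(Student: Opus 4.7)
The plan is to derive both identities by conditioning on the value(s) of the coordinate(s) $X_i$ (and $X_j$), reducing the remaining sum to a quadratic-tilted partition function over the cavity coordinates, and then recognizing this tilt as a moment generating function of the cavity magnetization $M^{(I)}$. For the variance, I would start from the elementary identity
\[
\Var(X_i)=4\,\P(X_i=1)\,\P(X_i=-1),
\]
valid for any $\pm 1$-valued random variable. For the covariance, writing $p_{\epsilon_i\epsilon_j}=\P(X_i=\epsilon_i,X_j=\epsilon_j)$, a short computation using $4p_{\epsilon_i\epsilon_j}=1+\epsilon_i\E X_i+\epsilon_j\E X_j+\epsilon_i\epsilon_j\E[X_iX_j]$ gives the quadratic identity
\[
\Cov(X_i,X_j)=4\bigl(p_{++}p_{--}-p_{+-}p_{-+}\bigr),
\]
and reduces the task to a careful evaluation of the four joint probabilities.

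The core algebraic step is the expansion $\langle u,x\rangle = \epsilon_i u_i+\epsilon_j u_j+\langle u^{(ij)},x^{(ij)}\rangle$ (or its single-coordinate analogue), which upon squaring yields
\[
-\tfrac{\beta}{n}\langle u,x\rangle^2 = -\tfrac{\beta}{n}(\epsilon_i u_i+\epsilon_j u_j)^2-\tfrac{2\beta}{n}(\epsilon_i u_i+\epsilon_j u_j)\langle u^{(ij)},x^{(ij)}\rangle -\tfrac{\beta}{n}\langle u^{(ij)},x^{(ij)}\rangle^2.
\]
Summing $\exp\bigl(-\tfrac{\beta}{n}\langle u,x\rangle^2+\langle h,x\rangle\bigr)$ over the coordinates in $[n]\setminus\{i,j\}$ with $X_i,X_j$ fixed therefore produces the cavity partition function $Z^{(ij)}$ times
\[
\E\Bigl[\exp\bigl(-2\beta(\epsilon_i u_i+\epsilon_j u_j)M^{(ij)}\bigr)\Bigr]=\exp\!\bigl(\Psi^{(ij)}\!\bigl(-2\beta(\epsilon_i u_i+\epsilon_j u_j)\bigr)\bigr),
\]
by the very definition of $\Psi^{(ij)}$. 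Collecting terms, the factors $\exp(-\beta(u_i^2+u_j^2)/n)$, $Z^{(ij)}/Z$ and any other pieces independent of $(\epsilon_i,\epsilon_j)$ cancel in the ratios defining $p_{\epsilon_i\epsilon_j}$. What survives is exactly the four exponentials listed in the definition of $\Theta$, with common denominator $\Theta$, and the single-coordinate analogue yields the two exponentials appearing in the denominator of the $\Var(X_i)$ formula.

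The final step is to plug these explicit joint probabilities into the two elementary identities above. For the variance, the numerator $\P(X_i=1)\P(X_i=-1)$ produces $\exp\!\bigl(\Psi^{(i)}(2\beta u_i)+\Psi^{(i)}(-2\beta u_i)\bigr)$ after the $\pm h_i$ cancel in the product, matching the stated formula exactly. For the covariance, the product $q_{++}q_{--}$ contributes $\exp(-\tfrac{4\beta}{n}u_iu_j+\Psi^{(ij)}(p_{ij})+\Psi^{(ij)}(-p_{ij}))$, since the $\pm(h_i+h_j)$ cancel, while $q_{+-}q_{-+}$ contributes $\exp(\tfrac{4\beta}{n}u_iu_j+\Psi^{(ij)}(q_{ij})+\Psi^{(ij)}(-q_{ij}))$; their difference is $\Xi$.

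There is no genuine obstacle here: the statement is a purely algebraic identity, and the only real risk is bookkeeping sign and normalization conventions, in particular ensuring that the cavity measure $\mu_{(n-|I|)\beta/n,u^{(I)},h^{(I)}}$ carries the correct prefactor $\beta/n$ (rather than $\beta/(n-|I|)$) so that the exponential $\exp(-\tfrac{\beta}{n}\langle u^{(I)},x\rangle^2)$ matches precisely the residual term from the expansion of $\langle u,x\rangle^2$, and that the normalization of $M^{(I)}$ by $1/n$ aligns with the $-2\beta/n$ in the cross term so that the resulting MGF argument is exactly $-2\beta(\epsilon_iu_i+\epsilon_ju_j)$. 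Once these conventions are threaded through, the two formulas drop out without further work.
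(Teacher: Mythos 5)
Your proposal is correct and follows essentially the same route as the paper: condition on the spin(s), expand the square to isolate the cavity Gibbs weight and the cross term $-2\beta(\epsilon_iu_i+\epsilon_ju_j)M^{(ij)}$, identify the resulting sum as $\exp(\Psi^{(ij)}(\cdot))$, and plug the four joint probabilities into the identity $\Cov(X_i,X_j)=4(p_{++}p_{--}-p_{+-}p_{-+})$, which is exactly the formula the paper uses with the unnormalized weights $\Gamma_{x_i,x_j}$.
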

	\begin{proof}
		We only prove the second identity since the first identity follows from a simpler argument. For $x_i,x_j\in \{\pm 1\}$, $\P(X_i=x_i, X_j=x_j)$ is proportional to
		\[
		\begin{split}
			&\sum_{(x_k)_{k\notin\{i,j\}}\in \{\pm 1\}^{n-2}}\exp\bigg(-\frac{\beta}{n}\Big(u_ix_i+u_jx_j+\sum_{k\notin\{i,j\}}u_k x_k\Big)^2+h_ix_i+h_jx_j+\sum_{k\notin\{i,j\}}h_k x_k\bigg)\\
			&\propto e^{-\frac{2\beta}{n}u_iu_j x_i x_j +h_i x_i+h_j x_j}\sum_{x_0\in \{\pm 1\}^{n-2}}e^{-\frac{2\beta}{n}(u_ix_i+u_j x_j)\sum_{k\notin\{i,j\}}u_k x_k}\cdot \mu_{\frac{n-2}{n}\beta, u^{(ij)},h^{(ij)}}(x_0)\\
			&=e^{-\frac{2\beta}{n}u_iu_j x_i x_j +h_i x_i+h_j x_j} \E \left[e^{-2\beta(u_ix_i+u_jx_j)M^{(ij)}}\right]=:\Gamma_{x_i,x_j},
		\end{split}
		\]
		where we used $x_i^2=x_j^2=1$ in the first proportionality. Since $\Cov(X_i,X_j)=\frac{4(\Gamma_{1,1}\Gamma_{-1,-1}-\Gamma_{1,-1}\Gamma_{-1,1})}{(\Gamma_{1,1}+\Gamma_{-1,-1}+\Gamma_{1,-1}+\Gamma_{-1,1})^2}$, the conclusion follows.
	\end{proof}\vspace{-\baselineskip}
	Using Lemma~\ref{lem:cov:express}, we approximate $\Var(X_i)$ and $\Cov(X_i,X_j)$ by estimating the derivatives of $\Psi^{(I)}(\cdot)$ for $|I|\leq 2$. The most delicate contribution is the term $\Xi$, which requires second order control on $\Psi^{(ij)}$, in contrast with the first order estimates being sufficient for the remaining terms. Furthermore, while the factors $\frac{2\beta}{n}u_iu_j$ in $\Theta$ are negligible, the corresponding terms $\pm\frac{4\beta}{n}u_iu_j$ in $\Xi$ are essential and must be retained to obtain the correct leading-order covariance.
    
	We begin by introducing the global tilt parameter $\lambda_\star$ that we'll use. Recall the convex function $F(\lambda)\equiv F_{u,h}(\lambda)$ in \eqref{eq:def:F}. Define $\la_\star\equiv \la_\star(u,h)$ and $m_\star\equiv m_\star(u,h)$ by
	\begin{equation}  \label{eq:lambdastardef}
		\la_\star =\argmin_{\la\in \R}\left\{F(\la)+\frac{\la^2}{4\beta}\right\},\quad m_\star\equiv -\frac{\la_\star}{2\beta}.
	\end{equation}
	\begin{lemma}\label{lem:elementary}
		For $u,h\in \R^n$ such that $u\neq 0$, let $(\la_\star,m_\star)$ be defined as above. Then, $m_\star\in [-1,1]$ is the unique solution to the fixed point equation~\eqref{eq:def:m:star}, and $m_\star= F'(\la_\star)$ holds.
	\end{lemma}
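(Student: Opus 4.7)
The plan is to argue directly from the explicit form of $F$ and the first-order optimality condition for $\lambda_\star$. First I would observe that the function $G(\lambda) := F(\lambda) + \lambda^2/(4\beta)$ is strictly convex and coercive: $F$ is convex (being a sum of $\log\cosh$ terms precomposed with affine maps), and the quadratic term $\lambda^2/(4\beta)$ is strictly convex and forces $G(\lambda) \to +\infty$ as $|\lambda| \to \infty$. Therefore $\lambda_\star = \argmin_\lambda G(\lambda)$ exists and is unique, and it is characterized by the stationarity condition $G'(\lambda_\star) = 0$, that is,
\[
F'(\lambda_\star) + \frac{\lambda_\star}{2\beta} = 0.
\]
By the definition $m_\star = -\lambda_\star/(2\beta)$, this immediately rearranges to $m_\star = F'(\lambda_\star)$, giving the second claim of the lemma.

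Next, I would compute $F'(\lambda) = \frac{1}{n}\sum_i u_i \tanh(h_i + \lambda u_i)$ from \eqref{eq:def:F}, substitute $\lambda = \lambda_\star = -2\beta m_\star$ into the identity $m_\star = F'(\lambda_\star)$, and obtain precisely the fixed-point equation \eqref{eq:def:m:star}. So $m_\star$ solves \eqref{eq:def:m:star}.

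For uniqueness of the solution to \eqref{eq:def:m:star}, define $g(m) := \frac{1}{n}\sum_i u_i \tanh(h_i - 2\beta m u_i)$. A direct differentiation gives
\[
g'(m) = -\frac{2\beta}{n}\sum_{i=1}^{n} u_i^2\sech^2(h_i - 2\beta m u_i) \le 0,
\]
so $m \mapsto m - g(m)$ is strictly increasing (its derivative is at least $1$), and hence has at most one zero. Finally, since $\|u\|_\infty \le 1$ (the standing assumption of Section~\ref{sec:covapprox}) and $|\tanh|\le 1$, one has $|g(m)| \le \frac{1}{n}\sum_i |u_i| \le \|u\|_\infty \le 1$ uniformly in $m$, so the fixed-point equation $m_\star = g(m_\star)$ forces $m_\star \in [-1,1]$.

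There is no real obstacle here: the entire argument is a one-line envelope/first-order condition together with the elementary identity $F'(\lambda) = \frac{1}{n}\sum_i u_i \tanh(h_i + \lambda u_i)$ and a monotonicity check. The only minor point worth flagging is that the bound $m_\star \in [-1,1]$ does use $\|u\|_\infty \le 1$ (through $\|u\|_1/n \le \|u\|_\infty$); without this normalization one only gets $|m_\star| \le \|u\|_1/n$.
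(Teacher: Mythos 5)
Your proof is correct and follows essentially the same route as the paper: strict convexity and coercivity of $\lambda\mapsto F(\lambda)+\lambda^2/(4\beta)$ give a unique $\lambda_\star$ characterized by $F'(\lambda_\star)+\lambda_\star/(2\beta)=0$, which rearranges to $m_\star=F'(\lambda_\star)$ and, upon substituting $\lambda_\star=-2\beta m_\star$, to the fixed-point equation \eqref{eq:def:m:star}. The one place you diverge is uniqueness of the fixed point: the paper deduces it from the uniqueness of $\lambda_\star$ via the bijection $m=-\lambda/(2\beta)$ between solutions of the stationarity equation and solutions of \eqref{eq:def:m:star}, whereas you argue directly that $m\mapsto m-g(m)$ has derivative at least $1$ and hence at most one zero; both are valid, and your version has the small advantage of also making the containment $m_\star\in[-1,1]$ explicit (via $|g|\le\|u\|_\infty\le 1$), a point the paper's proof leaves implicit.
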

	\begin{proof}
		Note that $\la\to F(\la)+\la^2/(4\beta)$ is strictly convex which tends to infinity as $|\la|\to\infty$ since $F(\cdot)$ is bounded below and convex. Thus, $\la=\la_\star$ is the unique solution to $F'(\la)+\la/(2\beta)=0$. Note that the fixed point for $m=m_\star$ in equation~\eqref{eq:def:m:star} can be written as $m=F'(-2\beta m)$. This shows that $\la$ solves $F'(\la)+\la/(2\beta)=0$ if and only if $m=-\la/(2\beta)$ solves~\eqref{eq:def:m:star}, which concludes the proof.
	\end{proof}\vspace{-\baselineskip}
	Consider independent random variables $\xi_i\in \{\pm 1\}$ distributed as
	\[
	\P(\xi_i=1)=1-\P(\xi_i=-1)=\frac{\exp(h_i+\la_\star u_i)}{2\cosh(h_i+\la_\star u_i)}.
	\]
	By definition of $(\la^\star,m^\star)$,
	\[
	m_\star=\frac{1}{n}\sum_{i=1}^{n} u_i\E\left[\xi_i\right],\qquad F''(\la_\star)=\frac{1}{n}\sum_{i=1}^{n}u_i^2 \Var(\xi_i)= \frac{1}{n}\sum_{i=1}^{n}w_{\star,i}^2,
	\]
    where the last equality holds since $w_{\star,i}=u_i^2\sech^2(h_i+\la_\star u_i)$ (cf. \eqref{eq:def:v:w:alpha}). For $I\subset [n]$, define 
	\[
	Z\equiv Z^{(I)}=\frac{1}{\sqrt{n}}\left(\sum_{i\in [n]\setminus I}u_i \xi_i -nm_\star \right).
	\]
	Although $Z$ doesn't have mean zero in general, we have $\E [Z]= O(1/\sqrt{n})$ for $|I|=O(1)$, which will be sufficient for our purposes. The following lemma provides a key representation of $\Psi^{(I)}$, equivalently, the law of $M^{(I)}$.
	
	\begin{lemma}\label{lem:express:Psi}
		For $I\subset[n]$, let $Z\equiv Z^{(I)}$ as defined above. For $s\in \R$, we have
		\[
		\Psi^{(I)}(s)=sm_\star+ \log\Bigg(\frac{\E\Big[\exp\big(\frac{s}{\sqrt{n}}Z-\beta Z^2\big)\Big]}{\E\Big[\exp\big(-\beta Z^2\big)\Big]}\Bigg).
		\]
	\end{lemma}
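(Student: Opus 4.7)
The plan is to rewrite the numerator and denominator of $\E[\exp(sM^{(I)})]$ as ratios of partition functions and then complete the square in the quadratic term $-\frac{\beta}{n}\langle u^{(I)},x\rangle^2$ around the fixed point $m_\star$. Writing $S:=\langle u^{(I)},x\rangle$ and $\tilde Z:=(S-nm_\star)/\sqrt{n}$, I would expand
\[
-\frac{\beta}{n}S^2=-\beta n m_\star^2-2\beta m_\star\sqrt{n}\,\tilde Z-\beta \tilde Z^{2}=-\beta n m_\star^{2}+\la_\star\bigl(S-nm_\star\bigr)-\beta \tilde Z^{2},
\]
using the defining relation $\la_\star=-2\beta m_\star$ from \eqref{eq:lambdastardef}. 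The crucial feature is that the linear-in-$x$ piece $\la_\star S=\langle \la_\star u^{(I)},x\rangle$ is a linear function of $x$ that can be absorbed directly into the external field, shifting $h^{(I)}$ to $h^{(I)}+\la_\star u^{(I)}$, which is precisely the field that defines the independent $\xi_i$'s.

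Similarly I would split the source term as $sM^{(I)}=\frac{s}{n}S=sm_\star+\frac{s}{\sqrt{n}}\tilde Z$. Combining both rewritings, the Boltzmann weight factors as
\[
\exp\Bigl(sM^{(I)}-\tfrac{\beta}{n}S^{2}+\langle h^{(I)},x\rangle\Bigr)=C\,e^{sm_\star}\exp\bigl(\langle h^{(I)}+\la_\star u^{(I)},x\rangle\bigr)\exp\Bigl(\tfrac{s}{\sqrt{n}}\tilde Z-\beta \tilde Z^{2}\Bigr),
\]
where $C$ is a deterministic constant independent of $x$ and $s$. Summing over $x\in\{\pm 1\}^{n-|I|}$ and normalizing by $\prod_{i\notin I}2\cosh(h_i+\la_\star u_i)$ converts the sum over $x$ into an expectation under the product measure of the $\xi_i$'s; under this measure $\tilde Z$ becomes exactly the random variable $Z^{(I)}$ in the statement.

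Finally, I would form the ratio of the sums corresponding to $s$ and $s=0$ to obtain $\E[\exp(sM^{(I)})]$: the constant $C$ and the product $\prod 2\cosh(h_i+\la_\star u_i)$ cancel between numerator and denominator, yielding
\[
\E\bigl[\exp(sM^{(I)})\bigr]=e^{sm_\star}\cdot\frac{\E\bigl[\exp\bigl(\tfrac{s}{\sqrt{n}}Z-\beta Z^{2}\bigr)\bigr]}{\E\bigl[\exp(-\beta Z^{2})\bigr]},
\]
and taking logarithms produces the claimed identity. There is no substantive obstacle: the argument is a deterministic algebraic rearrangement, and the only points requiring care are tracking the deterministic constant $C$ across the rewriting, verifying that the shifted field $h^{(I)}+\la_\star u^{(I)}$ indeed reproduces the law of the $\xi_i$'s on $[n]\setminus I$, and confirming that $Z^{(I)}$ as defined just above the lemma matches $\tilde Z$ under the $\xi$ measure. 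The key conceptual point that makes the formula clean is that $\la_\star$ is tuned precisely to cancel the linear-in-$x$ term generated by completing the square, which is exactly the global-tilt choice advertised in the heuristic derivation of Section~\ref{sec:heuristic:cov:approx}.
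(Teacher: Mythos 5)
Your proposal is correct and is essentially the paper's own argument: both complete the square in the quadratic term using $\la_\star=-2\beta m_\star$, absorb the resulting linear tilt $\la_\star\langle u^{(I)},x\rangle$ into the external field to produce the product measure of the $\xi_i$'s, and cancel all deterministic prefactors by taking the ratio with the $s=0$ case. The only cosmetic difference is that the paper organizes the computation by first summing over the magnetization value $m$ (via the reweighting identity \eqref{eq:partition:reweight}) before completing the square, whereas you work directly at the level of configurations $x$; the two bookkeepings are equivalent.
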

	\begin{proof}
		Fix $u,h\in \R^n$ and $I\subset [n]$. Define 
		\[
		F^{(I)}(\la)\equiv F^{(I)}_{u^{(I)},h^{(I)}}(\la)=\log 2+\frac{1}{n}\sum_{i\in [n]\setminus I}\log\cosh(h_i+\la u_i).
		\]
		The partition function $Z_{\frac{n-|I|}{n}\beta, u^{(I)},h^{(I)}}$ for the measure $\mu_{\frac{n-|I|}{n}\beta, u^{(S)},h^{(S)}}$ can be expressed analogously as in $I=\emptyset$ case in~\eqref{eq:partition:reweight}. Taking $\la=\la_\star$ (not depending on $I$), we have
		\[
		Z_{\frac{n-|I|}{n}\beta, u^{(I)},h^{(I)}}=\sum_{m}\exp\bigg(n\Big(-\beta m^2+F^{(I)}(\la_\star)-\la_\star m \Big)\bigg)\P\bigg(\sum_{i\in [n]\setminus I} u_i \xi_i = nm\bigg).
		\]
        Since $\la_\star = -2\beta m_\star$, we have $-\beta m^2 -\la_\star m=-\beta(m-m_\star)^2+\beta m_\star^2$. Hence,
		\[
		Z_{\frac{n-|I|}{n}\beta, u^{(I)},h^{(I)}}=e^{nF^{(I)}(\la^\star)+\beta n m_\star^2}\sum_m \exp\Big(-\beta n(m-m_\star)^2\Big)\P\bigg(\sum_{i\in [n]\setminus I} u_i \xi_i = nm\bigg).
		\]
		A similar computation with adding the factor $e^{sm}$ shows that
		\[
		\begin{split}
			&\sum_{x\in \{\pm 1\}^{n-|I|}}\exp\bigg(s\frac{\langle u^{(I)},x\rangle}{n}-\frac{\beta}{n}\big(\langle u^{(I)}, x\rangle\big)^2+\langle h^{(I)},x\rangle\bigg)\\
			&=e^{nF^{(I)}(\la^\star)+\beta n m_\star^2}\sum_m \exp\Big(sm-\beta n (m-m_\star)^2\Big)\P\bigg(\sum_{i\in [n]\setminus I} u_i \xi_i = nm\bigg).
		\end{split}
		\]
		Recalling that $\frac{1}{n}\sum_{i\in [n]\setminus I}u_i \xi_i=m_\star+\frac{1}{\sqrt{n}}Z$, taking the ratios of the previous two displays yields the claimed expression for $\Psi^{(I)}(s)\equiv \log \E [\exp(sM^{(I)})]$.
	\end{proof}\vspace{-\baselineskip}
	\begin{remark}\label{rmk:single:CLT}
	    A major benefit of Lemma~\ref{lem:express:Psi} is that it expresses the law of the cavity magnetization $M^{(I)}$ as a perturbation of $m_\star$ through a single random variable $Z$ subject to a negative quadratic tilt. Consequently, derivative estimates for $\Psi^{(I)}$ reduce to controlling the moments of $Z$ under this tilted law. This is in sharp contrast with the local CLT approach discussed in Section~\ref{sec:heuristic:cov:approx}, which requires analyzing a different tilted law for each possible value of the magnetization $m$.
	\end{remark}
	
	\subsection{Cumulant generating function of the magnetization}
	As a consequence of Lemma~\ref{lem:express:Psi}, the derivatives $\Psi^{(I)}(\cdot)$ can be expressed in terms of the expectations under a quadratic tilt. In a more general setting, given a bounded random variable $W$ and $t\in \R$, introduce the tilted measure $\P^{W}_{t,\beta}$ where by for an event $A$,
	\[
	\P^{W}_{t,\beta}(A)= \frac{\E\left[\one_{A}\exp\left(tW-\beta W^2\right)\right]}{\E\left[\exp\left(tW-\beta W^2\right)\right]},
	\]
	and denote the corresponding expectation (resp. variance) by $\E^{W}_{t,\beta}$ (resp. $\Var^{W}_{t,\beta}$).
    The usefulness of this representation comes from a strong \emph{universality} result. When $W$ is a sum of independent bounded random variables, the tilted measure $\P^{W}_{t,\beta}$ depends only weakly on the distribution of $W$. In particular, its moments can be estimated to high accuracy if we replace $W$ with a Gaussian matching its variance. This is formalized in the next theorem.
	\begin{theorem}\label{thm:gaussian:approx:reweighted}
		Let $0<\beta \leq n^{1-\eps}$ for some $\eps \in (0,1)$. Consider independent random variables $(Y_i)_{i\leq n}$ such that $\E Y_i=0$ and $|Y_i|\leq C$ almost surely. Let
		\begin{equation} \label{eq:rvW}
			W=\frac{1}{\sqrt{n}}\sum_{i=1}^{n}Y_i,\qquad\zeta=\Var(W).
		\end{equation}
		Then, uniformly over $t\in \R$ such that $|t|\leq C\beta/\sqrt{n}$, 
		\[
		\left|\Var^{W}_{t,\beta}\left(W\right)-\frac{\zeta}{1+2\beta\zeta}\right| \leq \frac{C_{\eps}\log(1+\beta)^{C_\eps}}{n(1+2\beta\zeta)}\,,\qquad \Big|\E^{W}_{t,\beta}[W]\Big|\leq \frac{C_{\eps}\log(1+\beta)^{C_\eps}}{\sqrt{n}}\,.
		\]
	\end{theorem}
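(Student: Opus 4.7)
The plan is to express both quantities in the statement as ratios of the exponentially-tilted raw moments
\[
I_K(t) := \E\bigl[W^K \exp(tW-\beta W^2)\bigr], \qquad K \in \{0,1,2\},
\]
compare them with their Gaussian analogues
\[
J_K(t) := \E\bigl[G^K \exp(tG-\beta G^2)\bigr], \qquad G \sim \mathcal{N}(0,\zeta),
\]
and then invoke the normal approximation of Theorem~\ref{thm:informalCLT} (in its full form Theorem~\ref{thm:mainapprox}) to bound the differences $|I_K - J_K|$ at the sharp rate $1/(n\beta^{3/2})$ produced by the iterative Stein argument. The first step is purely computational: completing the square in a Gaussian integral gives
\[
J_0 = \frac{\exp\!\bigl(t^2\zeta/(2(1+2\beta\zeta))\bigr)}{\sqrt{1+2\beta\zeta}},
\qquad \frac{J_1}{J_0} = \frac{t\zeta}{1+2\beta\zeta},
\qquad \frac{J_2 J_0 - J_1^2}{J_0^2} = \frac{\zeta}{1+2\beta\zeta},
\]
so that if $W$ were exactly Gaussian the theorem would hold with zero error. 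Under the hypothesis $|t| \leq C\beta/\sqrt{n}$ and $\beta \leq n^{1-\eps}$, one checks that the exponential factor in $J_0$ is $O(1)$; in particular $J_0 \asymp (1+2\beta\zeta)^{-1/2}$, which is the correct scale for normalizing errors later.

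The second step is the heart of the argument. Applying Theorem~\ref{thm:informalCLT} and its analogues for $K \in \{0,1\}$, one gets bounds $|I_K(t) - J_K(t)| \leq \eps_K(n,\beta,\zeta)$ whose $\beta$- and $\zeta$-dependence is engineered so that $\eps_K/J_0^2$ is controlled by $\log(1+\beta)^{C_\eps}/(n(1+2\beta\zeta))$. The reason standard Berry--Esseen is not enough is that we need moment comparison at rate $1/(n\beta^{3/2})$ rather than $1/\sqrt n$; this is precisely what the iterative Stein's method of Section~\ref{sec:CLT} delivers, by exploiting the algebraic identities among the functions $x \mapsto x^K e^{-\beta x^2}$ and their derivatives, together with induction on $K$ and on the size of the error itself.

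The third step is algebraic assembly. Writing
\[
\E^W_{t,\beta}[W] = \frac{I_1}{I_0}, \qquad \Var^W_{t,\beta}(W) = \frac{I_2 I_0 - I_1^2}{I_0^2},
\]
and expanding each ratio to first order around the Gaussian value using $I_K = J_K + \eps_K$, the identities of Step~1 produce the main terms $t\zeta/(1+2\beta\zeta)$ and $\zeta/(1+2\beta\zeta)$. For the mean bound, even the main term itself is already $O(1/\sqrt n)$ because $|t|\zeta/(1+2\beta\zeta) \leq |t|/(2\beta) \leq C/\sqrt n$, and the $\eps_K$-contributions are of the same or smaller order. For the variance, one needs to check that the combinations $\eps_2 J_0$, $J_2 \eps_0$, and $J_1 \eps_1$ all divide into $J_0^2 \asymp (1+2\beta\zeta)^{-1}$ to yield a remainder of the promised size $\log(1+\beta)^{C_\eps}/(n(1+2\beta\zeta))$.

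The main obstacle is Step~2 uniformly over $\zeta \in [0,1]$, particularly in the small-variance regime $\zeta = o(1)$ (including $\zeta \asymp n^{-1/2}$), where the Gaussian approximation for $W$ is not yet in effect at scale one. Here the correct move is to work at the natural scale $W/\sqrt{\zeta}$ (so that the effective tilt parameters become $t\sqrt\zeta$, $\beta\zeta$) or, equivalently, to replace the CLT step by a direct moment expansion exploiting that $W$ is already sharply concentrated at scale $\sqrt{\zeta/n}$. Interpolating between this regime and the regime $\zeta\gtrsim 1$ where Theorem~\ref{thm:informalCLT} applies directly, while keeping the $(1+2\beta\zeta)^{-1}$ normalization intact in the error, is the most delicate part of the bookkeeping.
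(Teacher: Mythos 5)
Your proposal is correct and follows essentially the same route as the paper: reduce both quantities to ratios of the tilted moments $K=0,1,2$, invoke the sharp Stein-based normal approximation (Theorem~\ref{thm:mainapprox}/Corollary~\ref{corr:epsapprox}) at the rate needed for the $(1+2\beta\zeta)^{-1}$ normalization, and treat the small-variance regime separately via rescaling and a direct a priori moment bound (the paper uses Chebyshev's association inequality there). The only cosmetic difference is that you keep the linear tilt on both sides and compare to a linearly tilted Gaussian, whereas the paper completes the square, absorbs $t$ into a shift $\mu$ of the argument, and compares to a centered standard Gaussian.
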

    It is worth noting that $\Var^G_{0,\beta}(G)=\frac{\zeta}{1+2\beta\zeta}$, where $G \sim \mathcal{N}(0,\zeta).$ Thus, Theorem \ref{thm:gaussian:approx:reweighted} is a universality result allowing to approximate the tilted moments of $W$ by those of a \emph{centered} Gaussian, even when $W$ is not centered. In light of this, the precision of $\frac{\log(1+\beta)^{C_{\eps}}}{n(1+2\beta\zeta)}$ is specific to the tilted variance and does not extend to $\E_{t,\beta}^{W}[W^2]$. Indeed, if $W$ were replaced by $G$ and $\zeta=1$ for simplicity, then under the tilted measure $    \E^{G}_{t,\beta}[G^2]=\big(\frac{ t}{1+2\beta}\big)^2+ \frac{1}{1+2\beta }$. Taking $t=\beta/\sqrt{n}$ and $\beta=n^{1-\eps}$ shows that $|\E^{G}_{t,\beta}[G^2] - \E^{G}_{0,\beta}[G^2]|$ is much larger than $O(1/(n\beta))$ scale of Theorem~\ref{thm:gaussian:approx:reweighted}. Thus, the variance estimate relies on a non-trivial cancellation between $\E^{W}_{t,\beta}[W^2]$ and $\E^{W}_{t,\beta}[W]^2$, yielding an additional factor $1/\beta$.
    
    The proof of Theorem \ref{thm:gaussian:approx:reweighted} is technically involved and is based on a careful implementation of Stein's method. We therefore postpone the proof and further discussion to Section \ref{sec:CLT}. For now, we state its corollary that provides estimates on the derivatives of $\Psi^{(I)}$. For $I\subset [n]$, define $F_{I}(\cdot)\equiv F_{I,u,h}(\cdot)$ by
	\[
	F_I(\la)=F(\la)-\frac{1}{n}\sum_{i\in I} \log \cosh(h_i+\la u_i)= \log 2+\frac{1}{n}\sum_{i\in [n]\setminus I} \log \cosh(h_i+\la u_i).
	\]
	\begin{lemma}\label{lem:psi:estimates}
		Let $0<\beta\leq n^{1-\eps}$. Consider $I\subset [n], s\in \R$ such that $|I|\leq C$ and $|s|\leq C\beta$. Then, we have the estimates
		\[
		\begin{split}
			&\left|\Psi^{(I)}(s)-sm_\star\right|
			\leq\frac{C_{\eps}\beta\log(1+\beta)^{C_{\eps}}}{n}\,,\qquad
			\bigg|\left(\Psi^{(I)}\right)^\prime (s)-m_\star\bigg|
			\leq\frac{C_{\eps}\log(1+\beta)^{C_{\eps}}}{n}\,,\\
			&\qquad~~~~\bigg|\left(\Psi^{(I)}\right)^{\prime\prime}(s)-\frac{F_I''(\la_\star)}{n\big(1+2\beta F_I''(\la_\star)\big)}\bigg|
			\leq\frac{C_{\eps}\log(1+\beta)^{C_{\eps}}}{n^2\big(1+2\beta F_I''(\la_\star)\big)}.
		\end{split}
		\]
		In particular, taking $s=0$, we have
		\[
		\left|\E M^{(I)}-m_\star\right|\leq \frac{C_{\eps}\log(1+\beta)^{C_{\eps}}}{n}\,,\qquad \bigg|\Var(M^{(I)})-\frac{F_I''(\la_\star)}{n\big(1+2\beta F_I''(\la_\star)\big)}\bigg| \leq\frac{C_{\eps}\log(1+\beta)^{C_{\eps}}}{n^2\big(1+2\beta F_I''(\la_\star)\big)}.
		\]
	\end{lemma}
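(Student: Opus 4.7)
The plan is to differentiate the representation of $\Psi^{(I)}$ furnished by Lemma~\ref{lem:express:Psi} and then invoke Theorem~\ref{thm:gaussian:approx:reweighted} on the resulting quadratically tilted moments. The key preliminary step will be to rewrite $Z^{(I)}$ as a deterministic shift of a centered sum to which that theorem applies. Since Lemma~\ref{lem:elementary} gives $m_\star = \tfrac{1}{n}\sum_{i=1}^n u_i\E\xi_i$, one can write
\[
Z^{(I)} = W^{(I)} - c^{(I)}, \qquad W^{(I)} := \frac{1}{\sqrt{n}}\sum_{i\in[n]\setminus I} u_i(\xi_i - \E\xi_i), \qquad c^{(I)} := \frac{1}{\sqrt{n}}\sum_{i\in I} u_i\E\xi_i.
\]
Here $W^{(I)}$ is a normalized sum of independent centered variables each bounded by $2$, with variance $\Var(W^{(I)}) = \tfrac{1}{n}\sum_{i\notin I} u_i^2 \sech^2(h_i+\la_\star u_i) = F_I''(\la_\star)$, and $|c^{(I)}| \leq |I|/\sqrt n = O(1/\sqrt n)$.

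Next, completing the square in the quadratic exponent gives
\[
\tfrac{s}{\sqrt n}\, Z^{(I)} - \beta (Z^{(I)})^2 \;=\; t\, W^{(I)} - \beta (W^{(I)})^2 + \gamma, \qquad t := \tfrac{s}{\sqrt n} + 2\beta c^{(I)},
\]
with $\gamma$ a deterministic constant that cancels in ratios. Thus the $Z^{(I)}$-tilt at parameters $(s/\sqrt n,\beta)$ agrees with the $W^{(I)}$-tilt at $(t,\beta)$, and for $|s| \leq C\beta$ one obtains $|t| = O(\beta/\sqrt n)$, meeting the hypothesis of Theorem~\ref{thm:gaussian:approx:reweighted}. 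Differentiating the identity in Lemma~\ref{lem:express:Psi} then gives
\[
(\Psi^{(I)})'(s) = m_\star + \tfrac{1}{\sqrt n}\bigl(\E^{W^{(I)}}_{t,\beta}[W^{(I)}] - c^{(I)}\bigr), \qquad (\Psi^{(I)})''(s) = \tfrac{1}{n}\Var^{W^{(I)}}_{t,\beta}(W^{(I)}),
\]
where the variance identity uses that translation commutes with the second central moment. Applying Theorem~\ref{thm:gaussian:approx:reweighted} with $\zeta = F_I''(\la_\star)$ yields the claimed derivative bounds, after absorbing $|c^{(I)}|/\sqrt n = O(1/n)$ into the error term $C_\eps\log(1+\beta)^{C_\eps}/n$.

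Finally, integrating the first-derivative estimate on $[0,s]$ with $|s| \leq C\beta$ produces the function-value bound $|\Psi^{(I)}(s) - sm_\star| \leq C\beta \cdot C_\eps\log(1+\beta)^{C_\eps}/n$, matching the stated form. The expressions for $\E M^{(I)}$ and $\Var(M^{(I)})$ then follow by specializing the first and second derivative estimates to $s=0$. The reduction outlined above is essentially algebraic and should be routine once the representation of Lemma~\ref{lem:express:Psi} is in hand; the substantive difficulty lies inside Theorem~\ref{thm:gaussian:approx:reweighted}, whose sharp $1/(n(1+2\beta\zeta))$-scale precision for the tilted variance is precisely what makes the second-derivative estimate strong enough to drive the off-diagonal covariance approximation in Proposition~\ref{prop:cov:approx:entry}, and this rests on the nontrivial cancellation between $\E^W_{t,\beta}[W^2]$ and $\E^W_{t,\beta}[W]^2$ highlighted in the discussion immediately after Theorem~\ref{thm:gaussian:approx:reweighted}.
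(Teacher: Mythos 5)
Your proposal is correct and follows essentially the same route as the paper's proof: decompose $Z^{(I)}$ into the centered sum $W^{(I)}$ plus a deterministic $O(1/\sqrt{n})$ shift, complete the square so the quadratic tilt of $Z^{(I)}$ becomes the tilt of $W^{(I)}$ at parameter $t=(s-2\eta\beta)/\sqrt{n}=O(\beta/\sqrt n)$, apply Theorem~\ref{thm:gaussian:approx:reweighted} with $\zeta=F_I''(\la_\star)$, and integrate the derivative bound to get the zeroth-order estimate. The only cosmetic difference is that you explicitly carry the $-c^{(I)}/\sqrt{n}=O(1/n)$ term in $(\Psi^{(I)})'$ and absorb it into the error, which the paper does implicitly.
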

	\begin{proof}
		We first prove the second estimate since the first estimate follows from the second by integration. Recall the expression for $\Psi^{(I)}(s)$ in Lemma~\ref{lem:express:Psi}. Since $m_\star=F'(\la_\star)$ by Lemma~\ref{lem:elementary}, we have
		\[
		\E Z \equiv \E Z^{(I)}= -\frac{1}{\sqrt{n}}\sum_{i\in I}u_i \E[\xi_i]=:\frac{\eta}{\sqrt{n}}.
		\]
		Note that $|\eta|\leq |I|\leq C$. Also, 
		\[
		Z=\frac{\eta}{\sqrt{n}}+W, \quad\textnormal{where}\quad W=\frac{1}{\sqrt{n}}\sum_{i\in [n]\setminus I} u_i \big(\xi_i-\E[\xi_i]\big).
		\]
        Thus, $\sqrt{n}W$ is sum of $n-|I|$ bounded random variables, and $\Var(W)=F_I''(\la_\star)$. By Lemma~\ref{lem:express:Psi},
		\[
		\left(\Psi^{(I)}\right)^\prime (s)=m_\star+\frac{1}{\sqrt{n}}\cdot\frac{\E\left[Z\exp\left(\frac{s}{\sqrt{n}}Z-\beta Z^2\right)\right]}{\E\left[\exp\left(\frac{s}{\sqrt{n}}Z-\beta Z^2\right)\right]}=m_\star+\frac{1}{\sqrt{n}}\E^{W}_{t,\beta}[W],
		\]
		where $t= (s-2\eta\beta)/\sqrt{n}$, which satisfies $|t|\leq C\beta/\sqrt{n}$. Applying Theorem~\ref{thm:gaussian:approx:reweighted} yields the desired bound on $(\Psi^{(I)})'(s)$. Proceeding in the same manner for the third estimate, we have by Lemma~\ref{lem:express:Psi}
		\[
		\left(\Psi^{(I)}\right)^{\prime\prime}(s)=\frac{1}{n}\Var^{W}_{t,\beta}(W),
		\]
		and the bound follows directly from Theorem~\ref{thm:gaussian:approx:reweighted} since $\Var(W)= F_I''(\la_\star)$.
	\end{proof}\vspace{-\baselineskip}
	
	\subsection{Estimation of $\Xi, \Theta$}
	We now prove Proposition~\ref{prop:cov:approx:entry}. Introduce the error parameter
	\[
	\Upsilon = \frac{\beta \log(1+\beta)^{C_{\eps}} }{n}<\frac{1}{2},
	\]
	where $C_{\eps}>0$ denotes the constant from Lemma~\ref{lem:psi:estimates}. Since $\beta \leq n^{1-\eps}$, the inequality $\Upsilon<1/2$ holds for sufficiently large $n\geq C_{\eps}$. We henceforth assume $\Upsilon<1/2$.
	
	\begin{lemma}\label{lem:Theta}
		With the same notation of Proposition \ref{prop:cov:approx:entry}, for $X\sim \mu_{\beta,u,h}$ and $i\neq j \in [n]$, we have the estimates $\Var(X_i)=v_{\star,i}(1+O_{\eps}(\Upsilon))$, and $\Theta = 4(v_{\star,i}v_{\star,j})^{-1/2}(1+O_{\eps}(\Upsilon))$.
	\end{lemma}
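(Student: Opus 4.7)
The plan is to plug the first order estimate from Lemma~\ref{lem:psi:estimates}, namely $\Psi^{(I)}(s) = s m_\star + O_\eps(\Upsilon)$ valid for $|I|\le 2$ and $|s|\le 2\beta$, directly into the explicit expressions for $\Var(X_i)$ and $\Theta$ provided by Lemma~\ref{lem:cov:express}, and then observe the cancellations that reveal the target quantities $v_{\star,i}$ and $(v_{\star,i}v_{\star,j})^{-1/2}$. Throughout, the shorthand $a_k := h_k - 2\beta u_k m_\star$ will be convenient, since by definition $v_{\star,k} = \sech^2(a_k)$.

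First, for $\Var(X_i)$, I would take $I = \{i\}$ and $s = \pm 2\beta u_i$, which is admissible since $\|u\|_\infty \le 1$. Summing the two values,
\[
\Psi^{(i)}(2\beta u_i) + \Psi^{(i)}(-2\beta u_i) = (2\beta u_i)m_\star + (-2\beta u_i)m_\star + O_\eps(\Upsilon) = O_\eps(\Upsilon),
\]
so the numerator in Lemma~\ref{lem:cov:express} equals $4(1+O_\eps(\Upsilon))$. Individually $\Psi^{(i)}(\mp 2\beta u_i) = \mp 2\beta u_i m_\star + O_\eps(\Upsilon)$, so the denominator equals
\[
\bigl(e^{a_i} + e^{-a_i}\bigr)(1+O_\eps(\Upsilon)) = 2\cosh(a_i)(1+O_\eps(\Upsilon)),
\]
giving $\Var(X_i) = \sech^2(a_i)(1+O_\eps(\Upsilon)) = v_{\star,i}(1+O_\eps(\Upsilon))$.

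Next, for $\Theta$, I would apply the same estimate with $I = \{i,j\}$ to each of the four terms. The prefactors $e^{\pm 2\beta u_i u_j/n}$ equal $1 + O(1/n) = 1 + O_\eps(\Upsilon)$, and
\[
\Psi^{(ij)}(\pm p_{ij}) = \pm 2\beta(u_i+u_j)m_\star + O_\eps(\Upsilon),\qquad \Psi^{(ij)}(\pm q_{ij}) = \pm 2\beta(u_i-u_j)m_\star + O_\eps(\Upsilon).
\]
Substituting into the definition of $\Theta$, each summand becomes one of $e^{\pm a_i \pm a_j}$ up to a multiplicative $(1+O_\eps(\Upsilon))$, and the four combinations factor as
\[
\Theta = (1+O_\eps(\Upsilon))\bigl(e^{a_i}+e^{-a_i}\bigr)\bigl(e^{a_j}+e^{-a_j}\bigr) = 4\cosh(a_i)\cosh(a_j)(1+O_\eps(\Upsilon)),
\]
and since $\cosh(a_i)\cosh(a_j) = (v_{\star,i}v_{\star,j})^{-1/2}$, the stated estimate for $\Theta$ follows.

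There is no serious obstacle beyond careful bookkeeping: one only has to verify that all arguments of $\Psi^{(I)}$ lie in the range $|s|\le 2\beta$ required by Lemma~\ref{lem:psi:estimates}, that the $O(1/n)$ factors from $e^{\pm 2\beta u_i u_j/n}$ are dominated by $\Upsilon$ (which is immediate since $\beta\ge 0$ and $\Upsilon \ge \beta/n$), and that the additive $O_\eps(\Upsilon)$ errors on the exponents exponentiate to multiplicative $1+O_\eps(\Upsilon)$ (which uses $\Upsilon < 1/2$). The substantive content of the lemma is already packaged into the uniform derivative estimate of Lemma~\ref{lem:psi:estimates}, so this step is essentially an algebraic reduction.
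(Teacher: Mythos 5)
Your proposal is correct and follows essentially the same route as the paper: substitute the first-order estimate $\Psi^{(I)}(s)=sm_\star+O_\eps(\Upsilon)$ from Lemma~\ref{lem:psi:estimates} into the exact formulas of Lemma~\ref{lem:cov:express} and observe the resulting factorizations. The only nitpick is that $|p_{ij}|$ can be as large as $4\beta$ rather than $2\beta$, but this is harmless since Lemma~\ref{lem:psi:estimates} allows $|s|\le C\beta$.
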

	\begin{proof}
		Recall the expression for $\Var(X_i)$ in Lemma~\ref{lem:cov:express}. Using the first estimate in Lemma~\ref{lem:psi:estimates} with $I=\{i\}$ and $s\in \{\pm 2\beta u_i\}$, we have $\Psi^{(i)}(\pm 2\beta u_i)=\pm 2\beta u_i m_\star+O_{\eps}(\Upsilon)$, so $ \exp\left(\Psi^{(i)}(2\beta u_i)+\Psi^{(i)}(-2\beta u_i)\right)= 1+O_{\eps}(\Upsilon)$ and
		\[
		\exp\big(h_i+\Psi^{(i)}(-2\beta u_i)\big)+\exp\big(-h_i+\Psi^{(i)}(2\beta u_i)\big)=\big(1+O_{\eps}(\Upsilon)\big)\cdot \big(2\cosh(h_i-2\beta u_i m_\star)\big).
		\]
		Recalling $v_{\star,i}\equiv \sech^2(h_i-2\beta u_i m_\star)$, applying these estimates to the expression for $\Var(X_i)$ in Lemma~\ref{lem:cov:express} concludes the proof of the first claim. Since the terms $\pm \frac{2\beta}{n}u_iu_j$ in the definition of $\Theta$ is $O_{\eps}(\Upsilon)$, the second claim follows by the same argument where we apply Lemma~\ref{lem:psi:estimates} with $I=\{i,j\}$ and $s\in \{\pm p_{ij},\pm q_{ij}\}$.
	\end{proof}\vspace{-\baselineskip}
	It thus remains to approximate $\Xi$. To begin with, note that
	\begin{equation}\label{eq:Xi}
		\Xi=\exp\left(\frac{4\beta}{n}u_iu_j +\Psi^{(ij)}(q_{ij})+\Psi^{(ij)}(-q_{ij})\right)\left(\exp\left(-\frac{8\beta}{n}u_iu_j+\Delta_{ij}\Psi^{(ij)}\right)-1\right),
	\end{equation}
	where we recall $p_{ij}=2\beta(u_i+u_j), q_{ij}=2\beta (u_i-u_j)$, and $\Delta_{ij}\Psi^{(ij)}$ is defined by
	\[
	\Delta_{ij}\Psi^{(ij)}=\Psi^{(ij)}(p_{ij})+\Psi^{(ij)}(-p_{ij})-\Psi^{(ij)}(q_{ij})-\Psi^{(ij)}(-q_{ij}).
	\]
	This term is more delicate to approximate than $\Theta$. We note that second order Taylor approximation to $\Psi^{(ij)}$ around $0$ and applying the derivative estimates from Lemma~\ref{lem:psi:estimates} will not suffice since it could be that $u_i^2+u_j^2\gg u_i u_j$ while we need the error control in terms of $u_i u_j$ in order to prove Proposition~\ref{prop:cov:approx:entry}.
	
	\begin{lemma}\label{lem:Xi}
		We have $\Xi= -\frac{4\al_\star}{n}u_iu_j(1+O_{\eps}(\Upsilon))$.
	\end{lemma}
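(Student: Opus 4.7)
The strategy is to extract an explicit factor of $u_iu_j$ from $\Delta_{ij}\Psi^{(ij)}$ by exploiting the evenness of $g(s):=\Psi^{(ij)}(s)+\Psi^{(ij)}(-s)$, and then combine this with the algebraic identity $-8\beta+\tfrac{16\beta^{2}x}{1+2\beta x}=-\tfrac{8\beta}{1+2\beta x}$. Starting from~\eqref{eq:Xi}, rewrite $\Xi=\exp(E_-)\cdot(e^\delta-1)$ with $E_-:=\tfrac{4\beta}{n}u_iu_j+\Psi^{(ij)}(q_{ij})+\Psi^{(ij)}(-q_{ij})$ and $\delta:=-\tfrac{8\beta}{n}u_iu_j+\Delta_{ij}\Psi^{(ij)}$. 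The first estimate of Lemma~\ref{lem:psi:estimates} applied at $s=\pm q_{ij}$ gives $E_-=O_\eps(\Upsilon)$ and hence $\exp(E_-)=1+O_\eps(\Upsilon)$, so the task reduces to showing $\delta=-\tfrac{4\alpha_\star}{n}u_iu_j(1+O_\eps(\Upsilon))$; since this forces $|\delta|=O_\eps(\Upsilon)$, one has $e^\delta-1=\delta(1+O_\eps(\Upsilon))$ and the conclusion follows.

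The main step introduces $H:[0,\infty)\to\R$ by $H(t):=g(\sqrt t)$, which is smooth at $0$ because $g$ is even (so $g'(0)=0$). Writing
\[
\Delta_{ij}\Psi^{(ij)} \;=\; H(p_{ij}^{2})-H(q_{ij}^{2}) \;=\; (p_{ij}^{2}-q_{ij}^{2})\int_{0}^{1} H'\bigl(q_{ij}^{2}+\theta(p_{ij}^{2}-q_{ij}^{2})\bigr)\,d\theta
\]
mechanically produces the factor $p_{ij}^{2}-q_{ij}^{2}=16\beta^{2}u_iu_j$. A direct computation gives $H'(t)=\tfrac{1}{2\sqrt t}\int_{0}^{\sqrt t} g''(r)\,dr$, so $H'(t)$ is half the mean of $g''(r)=(\Psi^{(ij)})''(r)+(\Psi^{(ij)})''(-r)$ on $[0,\sqrt t]\subset[0,4\beta]$. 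Applying the third estimate of Lemma~\ref{lem:psi:estimates} at arguments of absolute value at most $4\beta$ then yields, uniformly in $t\in[0,16\beta^{2}]$,
\[
H'(t) \;=\; \frac{F_{ij}''(\lambda_\star)}{n\bigl(1+2\beta F_{ij}''(\lambda_\star)\bigr)} \;+\; O_\eps\Bigl(\frac{\log(1+\beta)^{C_\eps}}{n^{2}\bigl(1+2\beta F_{ij}''(\lambda_\star)\bigr)}\Bigr).
\]

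Integrating over $\theta$ and combining with $-\tfrac{8\beta u_iu_j}{n}$, the algebraic identity above collapses the leading terms to $-\tfrac{8\beta u_iu_j}{n(1+2\beta F_{ij}''(\lambda_\star))}$, while the remainder is $O_\eps\bigl(\tfrac{\beta^{2}|u_iu_j|\log(1+\beta)^{C_\eps}}{n^{2}(1+2\beta F_{ij}''(\lambda_\star))}\bigr)$, which is $O_\eps(\Upsilon)$ times the leading term \emph{precisely because} the factor $|u_iu_j|$ now appears explicitly. A final swap of $F_{ij}''(\lambda_\star)=F''(\lambda_\star)-\tfrac{w_{\star,i}^{2}+w_{\star,j}^{2}}{n}$ for $F''(\lambda_\star)=\tfrac{1}{n}\sum_{k}w_{\star,k}^{2}$ costs only a $(1+O_\eps(\Upsilon))$ factor, identifying $\tfrac{8\beta}{1+2\beta F_{ij}''(\lambda_\star)}=4\alpha_\star(1+O_\eps(\Upsilon))$ and completing the proof. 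The delicate point, flagged in the paragraph preceding the lemma, is that a naive second-order Taylor expansion of $\Psi^{(ij)}$ around $0$ yields an error of order $\beta^{2}(u_i^{2}+u_j^{2})$, which is unbounded relative to the target $\beta^{2}|u_iu_j|$; the substitution $H(t)=g(\sqrt t)$ resolves this by making the factor $p_{ij}^{2}-q_{ij}^{2}=16\beta^{2}u_iu_j$ appear by construction, so that Lemma~\ref{lem:psi:estimates}'s uniform bound on $(\Psi^{(ij)})''$ produces a multiplicative rather than additive error.
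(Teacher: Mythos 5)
Your proof is correct and follows essentially the same route as the paper's: the same decomposition of $\Xi$ via \eqref{eq:Xi}, the same uniform second-derivative bound from Lemma~\ref{lem:psi:estimates} on $[-4\beta,4\beta]$, and the same algebraic cancellation $-8\beta+\frac{16\beta^2 x}{1+2\beta x}=-\frac{8\beta}{1+2\beta x}$ followed by the swap of $F_{ij}''(\la_\star)$ for $F''(\la_\star)$. The only difference is cosmetic: the paper extracts the factor $(p_{ij}-q_{ij})(p_{ij}+q_{ij})=16\beta^2u_iu_j$ by writing $\Delta_{ij}\Psi^{(ij)}$ as an integral of first-derivative differences and applying the mean value theorem, whereas you obtain it via the substitution $H(t)=g(\sqrt{t})$ for the even function $g(s)=\Psi^{(ij)}(s)+\Psi^{(ij)}(-s)$; both yield exactly the estimate \eqref{eq:Delta:Psi:estimate}.
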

	\begin{proof}
		We begin by approximating $\Delta_{ij}\Psi^{(ij)}$. Rearranging terms, 
		\[
		\begin{split}
			\Delta_{ij}\Psi^{(ij)}
			&=\int_{0}^{p_{ij}-q_{ij}}\left(\Psi^{(ij)}\right)^\prime(q_{ij}+t)-\left(\Psi^{(ij)}\right)^\prime(-p_{ij}+t)dt\\
			&=\int_{0}^{p_{ij}-q_{ij}}(p_{ij}+q_{ij})\left(\Psi^{(ij)}\right)^{\prime\prime}(v_t)dt,
		\end{split}
		\]
		where for each $t$ the point $v_t$ lies between $q_{ij}+t$ and $-p_{ij}+t$ by the mean value theorem. Since $p_{ij}=2\beta(u_i+u_j)$ and $q_{ij}=2\beta(u_i-u_j)$, we have $|v_t|\leq 4\beta$. Hence, by Lemma~\ref{lem:psi:estimates}, uniformly over $t\in [0,p_{ij}-q_{ij}]$,
		\[
		\left(\Psi^{(ij)}\right)^{\prime\prime}(v_t)=\frac{F_{ij}''(\la_\star)}{n(1+2\beta F_{ij}''(\la_\star))}+O_{\eps}\left(\frac{\log(1+\beta)^{C_{\eps}}}{n^2(1+2\beta F_{ij}''(\la_\star))}\right),
		\]
		Substituting this into the integral yields
		
		\begin{equation}\label{eq:Delta:Psi:estimate}
			\Delta_{ij}\Psi^{(ij)}=\frac{16\beta^2 F_{ij}''(\la_\star)u_iu_j}{n(1+2\beta F_{ij}''(\la_\star))}+O_{\eps}\left(\frac{\beta u_i u_j}{n(1+2\beta F_{ij}''(\la_\star) )}\Upsilon\right).
		\end{equation}
		We next replace $F_{ij}''(\la_\star)$ by $F''(\la_\star)$. Since $F_{ij}''(\la_\star)=F''(\la_\star)-(w_{\star,i}^2+w_{\star,j}^2)/n$,
		\[
		\left|\frac{2\beta F_{ij}''(\la_\star)}{1+2\beta F_{ij}''(\la_\star)}-\frac{2\beta F''(\la_\star)}{1+2\beta F''(\la_\star)}\right|=\left|\frac{1}{1+2\beta F_{ij}''(\la_\star)}-\frac{1}{1+2\beta F''(\la_\star)}\right|\leq \frac{4\beta}{n(1+2\beta F''(\la_\star))}.
		\]
        In particular, we have $(1+2\beta F_{ij}''(\la_\star))^{-1}\leq C(1+2\beta F''(\la_\star))^{-1}$ for large enough $n$. Thus, the estimate~\eqref{eq:Delta:Psi:estimate} remains valid with $F_{ij}''(\la_\star)$ replaced everywhere by $F''(\la_\star)$. Substituting this estimate for $\Delta_{ij}\Psi^{(ij)}$ into~\eqref{eq:Xi}, and using the fact $\Psi^{(ij)}(q_{ij})+\Psi^{(ij)}(-q_{ij})=1+O_{\eps}(\Upsilon)$ by Lemma~\ref{lem:psi:estimates}, we obtain
		\[
		\begin{split}
			\Xi
			&=\big(1+O_{\eps}(\Upsilon)\big)\left(-\frac{8\beta}{n}u_iu_j+\frac{16\beta^2 F''(\la_\star) u_iu_j}{n(1+2\beta F''(\la_\star))}+O_{\eps}\left(\frac{\beta u_i u_j}{n(1+2\beta F_{ij}''(\la_\star) )}\Upsilon\right)\right)\\
			&=-\frac{4\al_\star}{n}u_iu_j\big(1+O_{\eps}(\Upsilon)\big),
		\end{split}
		\]
		where the last equality holds since $\al_\star=\frac{2\beta}{1+2\beta F''(\la_\star)}$. 
	\end{proof}\vspace{-\baselineskip}
	\begin{proof}[Proof of Proposition~\ref{prop:cov:approx:entry}]
		The result follows from combining Lemma~\ref{lem:cov:express}, Lemma~\ref{lem:Theta}, and Lemma~\ref{lem:Xi}.
	\end{proof}\vspace{-\baselineskip}
	
	\section{Normal approximations for moments of tilted measures} \label{sec:CLT}
	In this section, we prove Theorem \ref{thm:gaussian:approx:reweighted}. Although that theorem concerns only the first two moments of a quadratically tilted measure, the argument requires an appropriate control on \emph{all moments} of the form $\E[W^{m}e^{-\beta W^2}]$ for all $m\geq 0$ in order to carry out an induction based on Stein's method. We begin by explaining the general setting. To apply Stein's method, it is convenient to normalize $W$ to have unit variance. For $\v > 0$, let $\mathcal{S}_{\bound}^\v$ denote the family of random variables $Z$ which can be expressed as follows for some $n\geq 1$:
    \begin{equation}\label{eq:def:mathcal:S}
    Z = \frac{1}{\sqrt{\v}}\sum\limits_{i=1}^n Y_i,\qquad \mathrm{Var}\bigg(\sum\limits_{i=1}^nY_i\bigg)=\v,
    \end{equation}
    where the $Y_i$ are independent, $\E[Y_i]=0$, and $|Y_i|\leq \bound$, almost surely.    
    For $\gamma>0$ and $m\in \{0,1,2,\ldots\}$, let
    \[
    h_{m}(x) = x^me^{-\gamma x^2},
    \]
    where we suppress the dependence on $\gamma$. Our main objective is to approximate $\EE[h_{m}(Z)]$ for $Z\in \mathcal{S}_{\bound}^{\v}$ by $\EE[h_{m}(G)]$ for a standard Gaussian $G$. Since in Theorem \ref{thm:gaussian:approx:reweighted} we need to consider $t\neq 0$, potentially as large as $\beta/\sqrt{n}$, we will allow $Z$ to have a non-trivial mean. To that end, let
\[
\Delta_{m,\mu}^\v(\gamma) = \sup\limits_{Z \in \mathcal{S}^\v}\Big|\EE\big[h_{m}(Z+\mu)\big]- \E\big[h_{m}(G)\big]\Big|,~~~G\sim \mathcal{N}(0,1),\qquad \Delta_{m}^{\v}(\gamma)\equiv \Delta_{m,0}^{\v}(\gamma).
\]
    Here, we further suppress the dependence on $\bound$. Remark that the standard Gaussian $G$ remains centered, and we will prove that as long as $|\mu|\leq C/\sqrt{\v}$ and $\v >\gamma$, this addition of $\mu$ does not affect our error bounds. 	
   The connection to Theorem~\ref{thm:gaussian:approx:reweighted} is as follows. Let $W = \frac{1}{\sqrt{n}}\sum\limits_{i=1}^nY_i$ be as in \eqref{eq:rvW}. Note that $W=\sqrt{\frac{\v}{n}} Z$, and set $\gamma = \frac{\v}{n}\beta$ and $\mu = -\frac{t}{2\beta}\sqrt{\frac{n}{\v}}$. Here, $\gamma, \mu$ are taken so that $W-\frac{t}{2\beta}=\sqrt{\frac{\v}{n}}(Z+\mu)$. With these parameters, note the crucial identity
    \begin{align} \label{eq:varh}
    \Var^{W}_{t,\beta}(W)= \Var^{W-\frac{t}{2\beta}}_{0,\beta}\bigg(W-\frac{t}{2\beta}\bigg)=\frac{\v}{n}\left(\frac{\E\left[h_2(Z+\mu)\right]}{\E\left[h_0(Z+\mu)\right]}-\left(\frac{\E\left[h_1(Z+\mu)\right]}{\E\left[h_0(Z+\mu)\right]}\right)^2\right),
    \end{align}
    with a similar identity for $\E_{t,\beta}^W[W]$.
Using \eqref{eq:varh}, an upper bound on $\Delta_{m,\mu}^{\omega}(\gamma)$ for $m\in \{0,1,2\}$ translates to the approximation of $\Var_{t,\beta}^{W}(W)$.

	By standard central limit theorem arguments, it is clear that $\Delta_m^{\v}(\gamma) \xrightarrow{\v \to \infty} 0.$ However, Theorem~\ref{thm:gaussian:approx:reweighted} asks us for an effective rate of convergence, which is the content of the main result of this section. 
	\begin{theorem} \label{thm:mainapprox}
		For every $m,M \in \mathbb{N}$, there exists a constant $C_{m,M,\bound}>0$ such that, if the variance satisfies  $\v > (M+1)\bound^2$ and $\v > \gamma\bound^2$, and the mean satisfies $|\mu| \leq C\bound/ \sqrt{\v}$,  then
		$$\Delta_{m,\mu}^{\v}(\gamma) \leq C_{m,M,\bound}\left(\frac{\log(2+2\gamma)^{M}}{\sqrt{\v}(1+2\gamma)^{\frac{m+2}{2}}} +  \frac{1}{\gamma^{\frac{m}{2}}}\left(\frac{\gamma}{\v}\right)^{\frac{M}{2}}\right),$$
		when $m$ is odd, and
		$$\Delta_{m,\mu}^{\v}(\gamma) \leq C_{m,M,\bound}\left(\frac{\log(2+2\gamma)^{M}}{\v(1+2\gamma)^{\frac{m+1}{2}}} +  \frac{1}{\gamma^{\frac{m}{2}}}\left(\frac{\gamma}{\v}\right)^{\frac{M}{2}}\right),$$
		when $m$ is even.
	\end{theorem}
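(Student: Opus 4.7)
The argument proceeds by induction on $M$, with $m$ treated as a fixed parameter. The base case $M=0$ follows from the trivial estimate $\|h_m\|_\infty\leq C_m\gamma^{-m/2}$, attained near $x=\sqrt{m/(2\gamma)}$; the triangle inequality then yields $\Delta^{\v}_{m,\mu}(\gamma)\leq 2\|h_m\|_\infty\leq C_m\gamma^{-m/2}$, matching the second term of the claim at $M=0$ (the first term is not needed when $M=0$).

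For the inductive step I would implement Stein's method tailored to $h_m$. Let $\phi_m:=h_m-\E[h_m(G)]$ and let $f=f_m$ be the Stein transform solving $f'(x)-xf(x)=\phi_m(x)$, so that
\[
\E[h_m(Z+\mu)]-\E[h_m(G)]=\E[f'(Z+\mu)-Zf(Z+\mu)]-\mu\E[f(Z+\mu)].
\]
Decomposing $Z=Z_i+Y_i/\sqrt{\v}$ with $Z_i$ independent of $Y_i$, using $\E[Y_i]=0$, and Taylor-expanding $f$ around $Z_i+\mu$ to order $K\leq M$ gives
\[
\E[Zf(Z+\mu)]=\sum_{k=1}^{K}\frac{1}{k!\,\v^{(k+1)/2}}\sum_{i=1}^{n}\E[Y_i^{k+1}]\,\E[f^{(k)}(Z_i+\mu)]+R_K,
\]
with Taylor remainder of size $\bound^{K+1}\v^{-K/2}\|f^{(K+1)}\|_\infty$; the hypothesis $\v>(M+1)\bound^2$ is what permits pushing $K$ up to order $M$. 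The $k=1$ term cancels $\E[f'(Z+\mu)]$ up to a subleading leave-one-out correction that is itself controlled by induction, so the substance of the proof is to estimate the terms with $k\geq 2$ sharply in both $\v$ and $\gamma$.

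The rate improvement beyond classical Berry--Esseen arises from an algebraic structure on the derivatives of $f$. Iterating the Stein equation shows that each $f^{(k)}$ decomposes as $p_k(x)f(x)+q_k(x)$, where $p_k$ is an explicit polynomial and $q_k$ is a finite linear combination of functions $h_{m'}$, the latter a consequence of the identity $h_j'=jh_{j-1}-2\gamma h_{j+1}$, which keeps the family $\{h_{m'}\}$ closed under differentiation. Hence $\E[f^{(k)}(Z_i+\mu)]-\E[f^{(k)}(G)]$ splits into a finite combination of $\Delta^{\v}_{m',\mu}(\gamma)$ values at inductive level $M-1$, directly accessible by the hypothesis, plus a single remainder $\E[p_k(Z_i+\mu)f(Z_i+\mu)]-\E[p_k(G)f(G)]$. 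This remainder is the source of the factor $\log(2+2\gamma)^M$: the Stein transform $f$ decays only like $1/|x|$ at infinity, so each iteration contributes one logarithmic factor when $p_k\cdot f$ is tested against the concentration density at scale $1/\sqrt{1+2\gamma}$.

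The faster $1/\v$ rate in the even-$m$ case comes from a parity cancellation at $k=2$: when $m$ is even, $\phi_m$ is even, hence $f$ is odd and $f''$ is odd, so $\E[f''(G)]=0$. Consequently the $k=2$ contribution $\v^{-3/2}\sum_i\E[Y_i^3]\E[f''(Z_i+\mu)]$ is itself a $\Delta$-quantity at the inductive level and picks up an extra $\v^{-1/2}$ factor; when $m$ is odd this parity cancellation is unavailable and one retains only $1/\sqrt{\v}$. The mean term $\mu\E[f(Z+\mu)]$, together with the restriction $|\mu|\leq C\bound/\sqrt{\v}$, contributes at exactly the same order and is absorbed by applying the same scheme to $f$, which is itself a linear combination of $h_{m'}$ modulo a smooth correction. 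The main obstacle will be the simultaneous bookkeeping of the two inductions---in $m$ via the $h_m$ recursion and in $M$ via the Stein iteration---while maintaining uniform control on the polynomial multipliers $p_k$ and on the iterated sup-norms of $f$, so that the $\log(2+2\gamma)^M$ factor emerges with the correct exponent and the Taylor remainder at order $K\sim M$ is controlled under the hypothesis $\v>(M+1)\bound^2$.
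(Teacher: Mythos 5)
Your overall architecture --- induction on $M$, Stein's method with a leave-one-out Taylor expansion, the closure of $\{h_{m'}\}$ under differentiation via $h_j' = jh_{j-1}-2\gamma h_{j+1}$, and parity cancellations of Gaussian expectations to gain the extra $1/\sqrt{\v}$ for even $m$ --- matches the paper's. But there is a genuine gap at the heart of your induction. You take $f=\cP h_m$ and decompose $f^{(k)}=p_k f + q_k$ with $q_k$ a combination of $h_{m'}$'s, and you then must bound $\E[p_k(Z_i+\mu)f(Z_i+\mu)]-\E[p_k(G)f(G)]$. This is a CLT deficit for a test function \emph{outside} the family $\{h_{m'}\}$, so the induction hypothesis does not apply to it, and your only stated handle --- the $1/|x|$ decay of $f$ and ``one logarithmic factor per iteration'' --- yields no dependence on $\v$ at all (a sup-norm bound gives an $O(1)$ error, destroying the $\v^{-1/2}$ and $\v^{-1}$ rates). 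Concretely, the recursion $\cP h_m=\frac{m-1}{1+2\gamma}\cP h_{m-2}-\frac{1}{1+2\gamma}h_{m-1}$ shows that for even $m$ the transform $\cP h_m$ contains a multiple of $\cP h_0$, a Mills-ratio-type function that is not a polynomial times a Gaussian; this component is exactly what your scheme cannot absorb, and it is also the case ($m$ even) where the claimed improvement to $1/\v$ must be proved.

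The paper closes this gap with two devices you would need. First, it replaces $h_m$ by the shifted function $\wt{h}_m=h_m-\frac{m-1}{1+2\gamma}h_{m-2}$, whose Stein solution is exactly $\cP\wt{h}_m=-\frac{1}{1+2\gamma}h_{m-1}$; hence every derivative appearing in the Taylor expansion is again a linear combination of $h_{m'}$'s and the induction genuinely closes within the family. Second, the deficit $\Delta_0^{\v}(\gamma)$, whose Stein transform is the problematic $\cP h_0$, is never attacked by Stein's method at all: it is recovered from $\wt{\Delta}_2^{\v}$ by integrating the identity $\frac{d}{d\gamma}\E[e^{-\gamma Z^2}]=-\E[Z^2e^{-\gamma Z^2}]$ in $\gamma$, and this integration --- not the tail decay of the Stein transform --- is where the $\log(2+2\gamma)^{M}$ factor actually originates. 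Two smaller points: your base case $M=0$ via $\|h_m\|_\infty\leq C_m\gamma^{-m/2}$ matches the theorem's second term but is weaker than what the paper's induction propagates (its intermediate statement carries an extra factor of order $\sqrt{\gamma/(1+2\gamma)}\cdot\sqrt{\gamma/\v}$), so you should verify your induction closes with the weaker input; and the crude bound $|\mu|\,\|f\|_\infty$ for the mean term loses a factor of $\sqrt{1+2\gamma}$ against the target, so the $\mu$-correction also requires the Gaussian comparison plus parity treatment (the paper handles $\mu$ separately at the very end, Taylor expanding $h_m(Z+\mu)$ in $\mu$ and using Hermite-polynomial estimates for $\E[h_m^{(\ell)}(G)]$).
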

     The bounds in Theorem~\ref{thm:mainapprox} differ by parity of $m$, a distinction that underlies the different bounds of $\Var_{t,W}^{W}(W)$ and $\E_{t,W}^{W}[W]$ in Theorem~\ref{thm:gaussian:approx:reweighted}. As will be clear in the proof, the parameter $M\in \mathbb{N}$ serves as an induction parameter: the argument proceeds by induction on $M$ based on $M+2$'th order Taylor approximation of the solution to the Poisson equation associated with $h_m$. 

Recalling $\gamma=\beta\v/n$, we have $\gamma/\v=\beta/n$. Under the assumption $\beta\le n^{1-\eps}$, we have $\gamma/\v\leq n^{-\eps}\lesssim \v^{-\eps}$ since $\v\leq \bound^2 n$. In this case where $\gamma/\v\lesssim \v^{-\eps}$, the second term in the right-hand side above is at most $\v ^{-\eps M/2}$. Choosing $M=m+C_\eps$, we obtain the following corollary providing a more accessible bound. 

	\begin{corollary} \label{corr:epsapprox}
		With the same assumptions of Theorem \ref{thm:mainapprox}, suppose further that $\frac{\gamma}{\v} \leq \frac{1}{\v^\eps}$, for some $\eps > 0.$ Then, for a constant $C_{m,\eps,\bound} > 0$ such that if $\v \geq C_{m,\eps}$ and $|\mu|\leq C/\sqrt{\v}$, 
		$$\Delta^\v_{m,\mu}(\gamma) \leq C_{m,\eps,\bound}\frac{\log(2+2\gamma)^{C_{m,\eps,\bound}}}{\sqrt{\v}(1+2\gamma)^{\frac{m+2}{2}}},$$
		when $m$ is odd, and 
		$$\Delta^\v_{m,\mu}(\gamma) \leq C_{m,\eps,\bound}\frac{\log(2+2\gamma)^{C_{m,\eps,\bound}}}{\v(1+2\gamma)^{\frac{m+1}{2}}},$$
		when $m$ is even.
	\end{corollary}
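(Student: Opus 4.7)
The plan is to derive Corollary~\ref{corr:epsapprox} directly from Theorem~\ref{thm:mainapprox} by choosing the free parameter $M$ large enough, depending on $m$ and $\eps$, so that the second (``remainder'') term in that theorem is absorbed into the first (``main'') term under the additional hypothesis $\gamma/\v \leq \v^{-\eps}$. Denote the remainder by $R := \gamma^{-m/2}(\gamma/\v)^{M/2}$ and the main term (without the constant prefactor) by $T$. It suffices to show that $R \leq C\,T$ for a constant depending only on $m,\eps,\bound$, provided $M = M(m,\eps)$ is chosen appropriately and $\v \geq C_{m,\eps}$.

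In the odd-$m$ case, $T = \log(2+2\gamma)^M / \big[\sqrt{\v}\,(1+2\gamma)^{(m+2)/2}\big]$, so
$$\frac{R}{T} \;=\; \gamma^{(M-m)/2}\,\v^{(1-M)/2}\,\frac{(1+2\gamma)^{(m+2)/2}}{\log(2+2\gamma)^M}.$$
I would split into two cases based on whether $\gamma \leq 1$ or $\gamma > 1$. For $\gamma \leq 1$, the ratio $(1+2\gamma)^{(m+2)/2}/\log(2+2\gamma)^M$ is bounded by a constant depending only on $m$ and $M$, and $\gamma^{(M-m)/2} \leq 1$ as soon as $M \geq m$, giving $R/T \leq C\,\v^{(1-M)/2}$, which is small for $M \geq 2$ and $\v$ large. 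For $\gamma > 1$, I would use $(1+2\gamma) \leq 3\gamma$ together with $\gamma \leq \v^{1-\eps}$ (which follows from the hypothesis) to collapse everything onto powers of $\v$, obtaining $R/T \leq C\,\v^{3/2 - \eps(M+2)/2}$; the exponent is negative once $M > 3/\eps - 2$, and so $R/T$ is small for $\v$ sufficiently large.

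The even-$m$ case is analogous: $T$ picks up an extra factor of $\sqrt{\v}$ in the denominator, so in Case~1 the relevant power becomes $\v^{(2-M)/2}$ (requiring $M \geq 3$), and in Case~2 the parallel computation yields the threshold $M > 3/\eps - 1$. Taking $M = M(m,\eps)$ larger than every threshold above (and at least $m$), and absorbing $M$ into the final constant $C_{m,\eps,\bound}$ --- including into the exponent of $\log(2+2\gamma)$ --- yields the corollary. I do not expect any substantive obstacle in this argument; it is essentially a careful bookkeeping exercise. The only genuine subtlety is that both the prefactor and the exponent on the logarithm must be allowed to depend on $\eps$, which is precisely why the stated constant $C_{m,\eps,\bound}$ appears in both roles.
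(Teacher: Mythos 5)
Your proposal is correct and follows essentially the same route as the paper, which simply notes that under $\gamma/\v\leq \v^{-\eps}$ one may choose $M=m+C_\eps$ in Theorem~\ref{thm:mainapprox} so that the remainder term is absorbed into the main term. Your case analysis in $\gamma\lessgtr 1$ is a more careful version of the paper's one-line derivation, and the thresholds you compute for $M$ are accurate.
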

        We make a few remarks concerning Corollary~\ref{corr:epsapprox}.
        \begin{itemize}
                \item As noted in Section~\ref{subsec:intro:normal}, a standard Berry-Esseen inequality yields $|\Delta_m(\gamma)|\lesssim 1/(\sqrt{\v}\gamma^{m/2})$. Corollary~\ref{thm:corr:approx} therefore improves this by a factor $1/\gamma$ for odd $m$, and by a factor $1/\sqrt{\v \gamma}$ for even $m$.
                 \item    For Theorem~\ref{thm:gaussian:approx:reweighted}, we only require the cases $m\in \{0,1,2\}$, but we require these bounds at full strength where any weakening (aside from log factors) would lead to a strictly smaller range of $\beta$ in Theorem~\ref{thm:main}. At the same time, the inductive structure of our proof of Theorem~\ref{thm:mainapprox} necessitates establishing bounds for \textit{all} $m\geq 0$, since each induction step at level $M$ depends on uniform control of the errors for every moments.
         
            \item  Since $\E[G^m e^{-\gamma G^2}]=C_m(1+2\gamma)^{-(m+1)/2}$ where $C_m$ is the $m$'th moment of a standard Gaussian, the bound in Corollary~\ref{corr:epsapprox} for even $m$ can be written as
        \[
        \E[Z^m e^{-\gamma Z^2}]= \Big(1+\tilde{O}_{\eps}\left(\v^{-1}\right)\Big)\cdot \E [G^m e^{-\gamma G^2}]\,,
        \]
        where $\tilde{O}_{\eps}$ hides log factors and dependence on $m$. We expect that this multiplicative error is optimal up to log factors: for instance, when $Y_1,\ldots, Y_n \sim \textnormal{Unif}\{+1,-1\}$ then Stirling's approximation already yields a relative error of order $O(1/n)$. 
            \item This improved error rate $O(\v^{-1})$ instead of $O(\v^{-1/2})$ ultimately stems from the fact that $h_{m}(x)$ is an even function for even $m$. In many settings, capitalizing on the symmetry requires, at the very least, some symmetry of the random variable $Z$. Such symmetries are unavailable here as we are only assuming $Y_i$ are bounded. Furthermore, the addition of a non-trivial mean $\mu$ amounts to shifting $h_m(\cdot)\to h_m(\cdot + \mu)$, and destroys the symmetry of $h_m$.
            
            One of the key ideas that allows us to overcome these obstacles is to boost the normal approximation through induction. At a high level, we can bound $\Delta_{m,\mu}(\gamma)$ by a linear combination of terms involving $\Delta_{m'}(\gamma)$ and $|\E[h_{m'}^{(\ell)}(G)\}|$ over a suitable range of indices $m'$ and $\ell$. $\Delta_{m'}(\gamma)$ is handled through induction, while Gaussian integration by parts yields a good bound on $|\E[h_{m'}^{(\ell)}(G)]|$.
        \end{itemize}

We first show how Theorem~\ref{thm:gaussian:approx:reweighted} follows from Corollary~\ref{corr:epsapprox}. The proof of Theorem~\ref{thm:mainapprox} is based on an inductive implementation of Stein’s method, presented in Section~\ref{sec:stein}.
    
	\begin{proof} [Proof of Theorem \ref{thm:gaussian:approx:reweighted}]
	   Recall that $\zeta =\Var(W)=\v/n$. We use Corollary~\ref{corr:epsapprox} by setting
       \[
       Z= \frac{1}{\sqrt{\zeta}} W\,,\qquad
       \gamma = \zeta \beta\,,\qquad \mu = -\frac{t}{2\beta}\sqrt{\frac{n}{\v}}\,,\qquad \bound=2.
       \]
       We first establish the bound on the variance using the identity~\eqref{eq:varh}. To begin with, let's first consider the case $\v \geq C_{\eps} \log(2+\beta)^{C_\eps}$. Since $|t| \leq C\beta/\sqrt{n}$, we have $\mu \leq C/\sqrt{\v}$, so Corollary~\ref{corr:epsapprox} applies.
       
       Using that $\E[h_1(G)] = 0$ since $h_1$ is odd, Corollary \ref{corr:epsapprox} for $m=1$ yields
        $$\left|\E\left[h_1(Z+\mu)\right]\right| \leq C_{\eps} \frac{\log(2+\beta)^{C_\eps}}{\sqrt{\v}(1+2\zeta\beta)^{\frac{3}{2}}},$$
        where we used $\gamma = \zeta\beta$. 
        Moreover, since $\EE[h_0(G)] = (1+2\gamma)^{-1/2}$, Corollary \ref{corr:epsapprox} for $m=0$ implies
        $$\left|\EE[h_0(Z+\mu)]\right| \geq \left|\EE[h_0(G)]\right| - \Delta_{0,\mu}(\gamma) \geq \frac{1}{\sqrt{1+2\zeta\beta}} - \frac{C_\eps\log(2+\beta)^{C_\eps}}{\v \sqrt{1+2\zeta\beta}} \geq \frac{1}{2}\frac{1}{\sqrt{1+2\zeta\beta}},$$
        where we used the lower bound on $\v$.
        Combining the two bounds, we deduce
        \begin{equation}\label{eq:bound:first:tilted:order}
            \frac{\omega}{n}\left(\frac{\E\left[h_1(Z+\mu)\right]}{\E\left[h_0(Z+\mu)\right]}\right)^2 \leq 4\frac{C_\eps^2\log(2+\beta)^{2C_\eps}}{n(1+2\zeta\beta)^2}.
        \end{equation}
        It remains to consider $\frac{\omega}{n}\frac{\E[h_2(Z+\mu)]}{\E[h_0(Z+\mu)]}$ term in \eqref{eq:varh}. Since $\frac{\E\left[h_2(G)\right]}{\E\left[h_0(G)\right]} =\frac{(1+2\gamma)^{-3/2}}{(1+2\gamma)^{-1/2}}= \frac{1}{(1+2\zeta\beta)}$,
       another application of Corollary~\ref{corr:epsapprox} gives
        \begin{align*}
            \left|\frac{\v}{n}\frac{\E\left[h_2(Z+\mu)\right]}{\E\left[h_0(Z+\mu)\right]} - \frac{\zeta}{1+2\beta\zeta}\right| &= \frac{\v}{n}\left|\frac{\E\left[h_2(Z+\mu)\right]}{\E\left[h_0(Z+\mu)\right]} - \frac{\E\left[h_2(G)\right]}{\E\left[h_0(G)\right]}\right|\\
            &\leq \frac{\v}{n}\frac{\Delta_{2,\mu}(\gamma)}{\left|\EE\left[h_0(Z+\mu)\right]\right|} +  \frac{\v}{n}\frac{\EE[h_2(G)]}{\EE[h_0(G)]}\frac{\Delta_{0,\mu}(\gamma)}{\left|\EE\left[h_0(Z+\mu)\right]\right|}\\
            &\leq  \frac{2\v}{n} \frac{C_\eps\log(2+\beta)^{C_\eps}\sqrt{1+2\zeta\beta}}{\v(1+2\zeta\beta)^{\frac{3}{2}}} + \frac{2\v}{n(1+2\zeta\beta)}\frac{C_\eps\log(2+\beta)^{C_\eps}}{\v}\\
            &\leq \frac{4 C_\eps\log(2+\beta)^{C_\eps}}{n(1+2\zeta\beta)}.
        \end{align*}
        This establishes the variance bound when $\v\ge C_\eps\log(2+\beta)^{C_\eps}$. Now consider $\v \leq C_{\eps} \log(2+\beta)^{C_\eps}$, equivalently $\zeta\leq \frac{C_{\eps}\log (2+\beta)^{C_{\eps}}}{n}$. Since $\beta \leq n^{1-\eps}$, we have the a priori bounds
        \[
        \frac{\zeta}{1+2\beta \zeta}\leq \frac{C_{\eps}\log(2+\beta)^{C_{\eps}}}{n}\,,\qquad
        1+2\beta\zeta\leq 1+\frac{2C_{\eps} \log(2+\beta)^{C_{\eps}}}{n^{\eps}}.
        \]
        Thus, recalling the identity~\eqref{eq:varh}, it suffices to prove $\frac{\v}{n}\frac{\E\left[h_2(Z+\mu)\right]}{\E\left[h_0(Z+\mu)\right]}\leq \frac{C_\eps\log(2+\beta)^{C_\eps}}{n}$.
        Observe that 
        $$\EE[h_2(Z + \mu)]  \leq \EE\big[(Z+\mu)^2\big]\EE\Big[e^{-\gamma(Z+\mu)^2}\Big] = \EE\left[(Z+\mu)^2\right]\EE\left[h_0(Z+\mu)\right],$$
        where the inequality is due to Chebyshev's association inequality \cite[Chapter IX]{mintronovic1993cassical}. Thus, 
        \begin{equation}\label{eq:chebyshev}
            \frac{\v}{n}\frac{\E\left[h_2(Z+\mu)\right]}{\E\left[h_0(Z+\mu)\right]} \leq \frac{\v(1+\mu^2)}{n}\leq\frac{C_\eps\log(2+\beta)^{C_\eps}}{n},
        \end{equation}
        where the last inequality holds since $|\mu|\leq C/\sqrt{\v}$ and we assumed $\v \leq C_{\eps}\log(2+\beta)^{C_{\eps}}$. This proves the desired bound on the variance.
        
        Finally, we bound the expectation $\E^{W}_{t,\beta}[W]$. First assume $\v\geq C_{\eps}\log(2+\beta)^{C_{\eps}}$. Using the estimate~\eqref{eq:bound:first:tilted:order}, 

        $$\E_{t,\beta}^W[W]=\sqrt{\frac{\v}{n}}\frac{\EE[h_1(Z+\mu)]}{\EE[h_0(Z+\mu)]} - \frac{t}{\beta} \leq\frac{2C_\eps\log(2+\beta)^{C_\eps}}{\sqrt{n}(1+2\zeta\beta)} + \frac{C}{\sqrt{n}},$$
        which establishes the desired bound. In the complementary regime $\v\geq C_{\eps}\log(2+\beta)^{C_{\eps}}$, Jensen's inequality yields
        \[
        \left|\E^{W}_{t,\beta}[W]\right|\leq \Big(\E^{W}_{t,\beta}[W^2]\Big)^{1/2}=  \bigg(\frac{\v}{n}\frac{\E\left[h_2(Z+\mu)\right]}{\E\left[h_0(Z+\mu)\right]}\bigg)^{1/2},
        \]
        so combining with \eqref{eq:chebyshev} concludes the proof.
	\end{proof}\vspace{-\baselineskip}
	
	\subsection{Approaching Theorem \ref{thm:mainapprox} using Stein's method}  \label{sec:stein}
	
  As discussed above, the proof of Theorem~\ref{thm:mainapprox} is based on a refined application of Stein’s method. This framework provides a flexible and effective way to approximate functionals of sums of independent random variables by their Gaussian counterparts, while allowing one to exploit structural features and symmetries of the involved function $h_m$. In particular, $h_m$, their derivatives, and the auxiliary functions arising from Stein’s method possess favorable algebraic and analytic properties that are central to the inductive argument establishing Theorem~\ref{thm:mainapprox}.
  
It is considerably simpler to first work in the centered case $\mu=0$ and analyze the corresponding deficit $\Delta_m^{\v}(\gamma)$. Once near-optimal bounds in Theorem~\ref{thm:mainapprox} for $\Delta_m^{\v}(\gamma)$ are established, extending the analysis to the general case $\mu\neq 0$ is relatively simple.

Throughout the proof, we assume w.l.o.g. that $\bound =1$. Indeed, the class $\mathcal{S}_\bound^{\v}$ in \eqref{eq:def:mathcal:S} is invariant under the rescaling $\bound\mapsto 1$ and $\v\mapsto \v/\bound^2$. We therefore fix $\bound =1$ and write $\mathcal{S}^{\v}\equiv \mathcal{S}_1^{\v}$ for simplicity.
	
	\subsubsection{Preliminaries on Stein's method} We begin by providing the necessary background to apply Stein's method.
	For a test function $h:\RR\to\RR$, consider the Poisson equation with unknown $\cP h$,
	$$(\cP h)'(x) -x\cP h(x) = h(x) - \EE[h(G)],$$
    where $G\sim \mathcal{N}(0,1)$.
	Integrating this identity yields
	\begin{equation} \label{eq:stein}
		\left|\EE\left[h(Z)\right] - \EE\left[h(G)\right]\right| = \left|\EE\left[(\cP h)'(Z)\right] - \EE\left[Z\cP h(Z)\right]\right|
	\end{equation}
	The solution $\cP h$ admits the following representation~\cite[Proposition 3.2.2]{nourdin2012normal}
	\begin{align} \label{eq:steinsol}
		\cP h(x) = e^{x^2/2}\int_{-\infty}^x\left(h(y) - \EE\left[h(G)\right]\right)e^{-y^2/2}dy. 
	\end{align}	
	Next, for each $i \in [n]$, define $Z_i:= Z - \frac{Y_i}{\sqrt{\v}}.$ Since $\EE\left[Y_i\right] = 0$ and $Y_i$ is independent from $Z_i$, we have that $\E[Z\cP h(Z)]$ equals
	\begin{align*}
	\frac{1}{\sqrt{\v}}\sum_i\EE\left[Y_i\cP h(Z)\right]
		&=\frac{1}{\sqrt{\v}}\sum_i \left(\EE\left[Y_i\cP h(Z)\right] - \EE\left[Y_i\cP h(Z_i)\right] - \EE\left[Y_i(\cP h)'(Z_i)(Z-Z_i)\right]\right)\\
		&\ \ \ \ + \frac{1}{\sqrt{\v}}\sum_i \EE\left[Y_i(\cP h)'(Z_i)(Z-Z_i)\right].
	\end{align*}
	Also, $Z-Z_i = \frac{Y_i}{\sqrt{\v}}$ and  $\frac{1}{\v}\sum\EE\left[Y_i^2\right] = 1,$ hence,
	\begin{align} \label{eq:taylorBasis}
		\EE\left[(\cP h)'(Z)\right] - \EE\left[Z\cP h(Z)\right] &= \frac{1}{\sqrt{\v}}\sum_i\left(\EE\left[Y_i\cP h(Z_i)\right] + \EE\left[Y_i(\cP h)'(Z_i)(Z-Z_i)\right]-\EE\left[Y_i\cP h(Z)\right]\right)\nonumber\\
		&+\ \ \  \frac{1}{\v}\sum_i \EE\left[Y_i^2\right]\left(\EE\left[(\cP h)'(Z)\right] - \EE\left[(\cP h)'(Z_i)\right]\right).
	\end{align} 
	The main point is that both sums are particularly amenable to a Taylor approximation, where a better control of higher derivatives translates to a better control of the approximation error. 
    
	\subsubsection{Properties of the functions $h_m$}
    To apply Stein’s method, we first collect several basic properties of the functions 
    \[
    h_m(x)\equiv x^m e^{-\gamma x^2},\qquad m\geq 0.
    \]
    These bounds will be used repeatedly to control Taylor remainders and derivatives appearing in the Poisson equation.
	\begin{lemma} \label{lem:hdervbounds}
		For $m,\ell \in \mathbb{N}$, $$\|h_m^{(\ell)}\|_{\infty} \leq \frac{3^\ell(m+1)^{m+1}}{\sqrt{\gamma}^{m-\ell}}.$$
	\end{lemma}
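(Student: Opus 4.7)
The plan is to reduce the estimate to a $\gamma$-independent question by a simple rescaling, then expand $h_m^{(\ell)}$ using Leibniz's rule and bound each term carefully. Specifically, writing $y = x\sqrt{\gamma}$ and $g(y) = y^m e^{-y^2}$, one has $h_m(x) = \gamma^{-m/2} g(x\sqrt{\gamma})$, so $h_m^{(\ell)}(x) = \gamma^{(\ell-m)/2} g^{(\ell)}(x\sqrt{\gamma})$ and therefore $\|h_m^{(\ell)}\|_\infty = \gamma^{(\ell-m)/2}\|g^{(\ell)}\|_\infty$. This absorbs the factor $\sqrt{\gamma}^{\,\ell-m}$ in the target bound and reduces matters to proving $\|g^{(\ell)}\|_\infty \leq 3^\ell (m+1)^{m+1}$.

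To bound $\|g^{(\ell)}\|_\infty$, I would apply Leibniz's rule to $g(y) = y^m \cdot e^{-y^2}$:
\[
g^{(\ell)}(y) = \sum_{k=0}^{\min(\ell,m)} \binom{\ell}{k}\, \frac{m!}{(m-k)!}\, y^{m-k}\, \frac{d^{\ell-k}}{dy^{\ell-k}} e^{-y^2}.
\]
Using the identity $\frac{d^j}{dy^j}e^{-y^2} = (-1)^j H_j(y) e^{-y^2}$, where $H_j$ is the physicist's Hermite polynomial, the problem is reduced to bounding sums of the form $\big|y^{m-k} H_{\ell-k}(y) e^{-y^2}\big|$ uniformly in $y$. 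For each such term, I would expand $H_{\ell-k}(y) = \sum_{r} c_r y^{\ell-k-2r}$ with the explicit coefficients $|c_r| \leq (\ell-k)!\, 2^{\ell-k}/(r!(\ell-k-2r)!)$, and use the elementary bound $\sup_y |y^a e^{-y^2}| = (a/(2e))^{a/2} \leq (m+1)^{m+1}$ once $a \leq m + \ell$ is combined with the exponential decay from $e^{-y^2}$ against the $\gamma$-rescaling already extracted.

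The combinatorial step is to sum these contributions. Using $\binom{\ell}{k} \leq 2^\ell$, $\tfrac{m!}{(m-k)!} \leq m^k$, and the standard estimate $\sum_{r=0}^{\lfloor j/2\rfloor} \tfrac{j!\,2^{j-2r}}{r!(j-2r)!} \leq C \cdot 2^j \cdot (\text{poly in } j)$ for the coefficients of $H_j$ together with the Gaussian decay bound above, the total contribution from each $k$ is at most a constant multiple of $(m+1)^{m+1}$ times $2^{\ell-k}\binom{\ell}{k}$. Summing over $k \leq \min(\ell,m)$ gives a geometric bound of the form $3^\ell (m+1)^{m+1}$, where the factor $3^\ell$ absorbs the combined combinatorial growth from $2^\ell$ (from $\binom{\ell}{k}$) and the Hermite coefficient sums.

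The main obstacle is the precise tracking of constants: the Hermite polynomial $H_{\ell-k}$ grows in $\ell$, and a naive application of Cramér's inequality $|H_j(y)| \leq 2^{j/2}\sqrt{j!}\,e^{y^2/2}$ yields bounds of order $(C\ell)^{\ell/2}$, which are too weak when $\ell \gg m$. The point is that the extra $y^{m-k}$ factor combined with the $e^{-y^2}$ decay produces cancellations which are better captured by organizing the full polynomial $y^{m-k} H_{\ell-k}(y) e^{-y^2}$ as a linear combination of $y^a e^{-y^2}$ with $a \leq m+\ell$ and controlling the coefficients directly via the Leibniz expansion, rather than combining Hermite bounds with polynomial bounds separately. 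This direct coefficient accounting is what keeps the polynomial growth at $(m+1)^{m+1}$ rather than a function of $\ell$.
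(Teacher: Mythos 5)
Your first step (rescaling $y=x\sqrt{\gamma}$ to reduce to $\|g^{(\ell)}\|_\infty\le 3^\ell(m+1)^{m+1}$ for $g(y)=y^me^{-y^2}$) is correct, and the Leibniz/Hermite setup is a legitimate way to organize $g^{(\ell)}$. But the quantitative claims that follow are where the argument breaks. The ``standard estimate'' $\sum_{r}\frac{j!\,2^{j-2r}}{r!(j-2r)!}\le C\cdot 2^j\cdot(\text{poly in }j)$ is false: the single term $r=\lfloor j/2\rfloor$ already equals $j!/\lfloor j/2\rfloor!\sim(2j/e)^{j/2}$, which is super-exponential in $j$. Similarly, $\sup_y|y^ae^{-y^2}|=(a/(2e))^{a/2}$ with $a$ as large as $m+\ell$ is not bounded by $(m+1)^{m+1}$ once $\ell\gg m$. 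Your final paragraph correctly senses this tension but resolves it by appealing to ``cancellations'' that do not exist: evaluate $g^{(\ell)}$ at $y=0$ with $\ell-m$ even. Every Leibniz term with $k<m$ carries a factor $y^{m-k}$ and vanishes there, so
\[
\bigl|g^{(\ell)}(0)\bigr|=\binom{\ell}{m}m!\,\bigl|H_{\ell-m}(0)\bigr|=\frac{\ell!}{\bigl((\ell-m)/2\bigr)!},
\]
which for fixed $m$ grows like $\ell^{\ell/2}$. For instance, $m=0$, $\ell=20$, $\gamma=1$ gives $\|h_0^{(20)}\|_\infty\ge 20!/10!\approx 6.7\times 10^{11}$, whereas the claimed bound is $3^{20}\approx 3.5\times 10^{9}$. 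So no reorganization of the Leibniz expansion can deliver an $\ell$-independent polynomial prefactor $(m+1)^{m+1}$; the inequality you are trying to prove in that form actually fails for $\ell$ large relative to $m$, and the concluding ``geometric summation'' step cannot be repaired.

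For comparison, the paper proves this by a two-line induction on $\ell$ using the recursion $h_m'=mh_{m-1}-2\gamma h_{m+1}$ and reapplying the stated bound to $h_{m-1}^{(\ell-1)}$ and $h_{m+1}^{(\ell-1)}$ — a completely different and much shorter route than yours. (That induction is itself slightly loose: applying the hypothesis to $h_{m+1}^{(\ell-1)}$ produces $(m+2)^{m+2}$, not $(m+1)^{m+1}$, so the clean constant $3^\ell(m+1)^{m+1}$ is not really justified there either; a correct statement needs a prefactor growing with $\ell$, e.g.\ $(m+\ell+1)^{m+\ell+1}$, or should simply be restricted to the bounded range $\ell\le M+2$ in which the lemma is actually used, where an $\ell$-dependent constant $C_{m,\ell}\,\sqrt{\gamma}^{\,\ell-m}$ is all the paper needs.) If you want to salvage your approach, state and prove the bound with such an $\ell$-dependent constant; your rescaling plus crude term-by-term bounds then goes through without any cancellation claims.
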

	\begin{proof}
    We proceed by induction on $\ell$. Since $h_m(x)$ attains maximum at $x=\sqrt{\frac{m}{2\gamma}}$ with value $\sqrt{\frac{m}{2\gamma}}^{m}e^{-m/2}$, the inequality holds for $\ell=0$. For the induction step, note that
		\begin{align*}
			|h^{(\ell)}_m(x)| &= \left|mh^{(\ell-1)}_{m-1}(x) - 2\gamma h^{(\ell-1)}_{m+1}(x)\right| \leq m\cdot \frac{3^{\ell-1}m^{m}}{\sqrt{\gamma}^{m-\ell}} + 2\gamma \frac{3^{\ell-1}(m+1)^{m+1}}{\sqrt{\gamma}^{m+2-\ell}}\\
			& \leq \frac{3^{\ell-1}(m+1)^{m+1}}{\sqrt{\gamma}^{m-\ell}} + \frac{2\cdot 3^{\ell-1}(m+1)^{m+1}}{\sqrt{\gamma}^{m-\ell}} = \frac{3^\ell(m+1)^{m+1}}{\sqrt{\gamma}^{m-\ell}},
		\end{align*}
	which concludes the proof.
	\end{proof}\vspace{-\baselineskip}
    \begin{lemma} \label{lem:hdervs}
		Let $m,\ell \in \mathbb{N}$. Then, there exists constants $c_{m,\ell,i}$, such that 
		$$h_m^{(\ell)} = \sum\limits_{i=-\ell }^{\ell} c_{m,\ell,i}\gamma^{\frac{\ell+i}{2}}{h_{m+i}}$$
       with the convention $h_{k}\equiv 0$ when $k<0$. 
	\end{lemma}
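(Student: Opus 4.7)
The plan is to prove the identity by induction on the order of differentiation $\ell$, using the fundamental recurrence
\begin{equation*}
h_m'(x) = m\,x^{m-1}e^{-\gamma x^2} - 2\gamma\,x^{m+1}e^{-\gamma x^2} = m\,h_{m-1}(x) - 2\gamma\,h_{m+1}(x),
\end{equation*}
which was already implicitly used in the proof of Lemma~\ref{lem:hdervbounds}. This recurrence is precisely what lets each differentiation shift the index $m$ by $\pm 1$ while adjusting the power of $\gamma$ accordingly.

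For the base case $\ell = 0$, setting $c_{m,0,0} = 1$ and $c_{m,0,i} = 0$ for $i \neq 0$ gives $h_m^{(0)} = h_m = \gamma^0 h_m$, as required. For the inductive step, assume the claim holds at order $\ell-1$ for every $m \geq 0$, so that
\begin{equation*}
h_{m-1}^{(\ell-1)} = \sum_{j=-(\ell-1)}^{\ell-1} c_{m-1,\ell-1,j}\,\gamma^{(\ell-1+j)/2}\,h_{m-1+j}, \qquad h_{m+1}^{(\ell-1)} = \sum_{j=-(\ell-1)}^{\ell-1} c_{m+1,\ell-1,j}\,\gamma^{(\ell-1+j)/2}\,h_{m+1+j}.
\end{equation*}
Differentiating the recurrence $\ell-1$ times yields $h_m^{(\ell)} = m\,h_{m-1}^{(\ell-1)} - 2\gamma\,h_{m+1}^{(\ell-1)}$. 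Substituting the inductive hypothesis, the first sum produces terms of the form $\gamma^{(\ell-1+j)/2}\,h_{m-1+j}$ for $j \in \{-(\ell-1),\ldots,\ell-1\}$, i.e.\ shifted indices $i = j-1 \in \{-\ell,\ldots,\ell-2\}$ with a coefficient $\gamma^{(\ell+i)/2}$; the second sum produces terms $\gamma^{1 + (\ell-1+j)/2}\,h_{m+1+j}$, corresponding to $i = j+1 \in \{-(\ell-2),\ldots,\ell\}$ with coefficient $\gamma^{(\ell+i)/2}$. Reindexing and combining the two contributions gives an expression of the claimed form, with new coefficients
\begin{equation*}
c_{m,\ell,i} = m\,c_{m-1,\ell-1,i+1} - 2\,c_{m+1,\ell-1,i-1},
\end{equation*}
where we adopt the convention that $c_{m-1,\ell-1,j} = 0$ when $|j| > \ell-1$.

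The only place where caution is needed is the case $m = 0$: here $h_{-1}$ is undefined, but the coefficient $m = 0$ in the recurrence kills this term entirely, so the convention $h_k \equiv 0$ for $k < 0$ is consistent and no boundary issue arises. There is no real obstacle; the statement is essentially a bookkeeping consequence of the one-step recurrence, and the induction closes immediately.
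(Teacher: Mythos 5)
Your proof is correct and follows essentially the same route as the paper: both establish the one-step recurrence $h_m' = m h_{m-1} - 2\gamma h_{m+1}$ and then induct on $\ell$, with your version merely spelling out the coefficient recursion $c_{m,\ell,i} = m\,c_{m-1,\ell-1,i+1} - 2\,c_{m+1,\ell-1,i-1}$ that the paper leaves implicit.
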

	\begin{proof}
		Note that $h_m'(x) = mx^{m-1}e^{-\gamma x^2} - 2\gamma x^{m+1}e^{-\gamma x^2} = mh_{m-1}(x) -2\gamma h_{m+1}(x)$ for $m\geq 1$ and $h_0'(x)=-2\gamma h_1(x)$. We can thus take $c_{m,1,-1} = m$, $c_{m,1,0} = 0$, and $c_{m,1,1} = -2$. The general case follows by induction on $\ell$ in the same manner as above.
	\end{proof}\vspace{-\baselineskip}
    We will bound $\Delta_m^\v(\gamma)$ via an iterative use of Stein’s method, which requires understanding the structure of the Stein solutions $\mathcal P h_m$.
	\begin{lemma} \label{lem:poissol}
    For any $m\geq 1$, 
    \[
    \cP h_m=\frac{m-1}{1+2\gamma} \cP h_{m-2} -\frac{1}{1+2\gamma} h_m,
    \]
    with the convention $h_{k}\equiv 0$ when $k<0$. 
	\end{lemma}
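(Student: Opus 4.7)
The plan is to verify the identity directly by substituting the claimed right-hand side into the defining Stein equation $(\cP h)'(x) - x\cP h(x) = h(x) - \E[h(G)]$. Since this equation, together with the growth/decay condition implicit in the integral representation~\eqref{eq:steinsol}, uniquely determines $\cP h_m$, it is enough to apply the Stein operator $\mathcal{L}f := f' - xf$ to the proposed formula and check that the output equals $h_m - c_m$, where $c_m := \E[h_m(G)]$.

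Two elementary ingredients drive the calculation. First, the differentiation rule $h_k'(x) = kh_{k-1}(x) - 2\gamma h_{k+1}(x)$ established in the proof of Lemma~\ref{lem:hdervbounds}, together with the trivial $x h_k(x) = h_{k+1}(x)$, expresses $\mathcal{L}h_k$ as an explicit combination of $h_{k-1}$ and $h_{k+1}$. Second, Gaussian integration by parts applied to $c_m = \E[G\cdot G^{m-1}e^{-\gamma G^2}]$ gives $c_m = (m-1)c_{m-2} - 2\gamma c_m$, equivalently the recurrence
\[
c_m = \frac{m-1}{1+2\gamma}\,c_{m-2},\qquad m \geq 1,
\]
with the convention $c_{-1} = 0$ for odd $m$. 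This recurrence is precisely the Gaussian moment identity that mirrors the coefficient $\frac{m-1}{1+2\gamma}$ appearing in the claimed formula.

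The bulk of the argument is then a short linear computation. Applying $\mathcal{L}$ term-by-term: the contribution of $\frac{m-1}{1+2\gamma}\cP h_{m-2}$ equals $\frac{m-1}{1+2\gamma}(h_{m-2} - c_{m-2})$ by the Stein equation defining $\cP h_{m-2}$, while the contribution of the polynomial correction is computed via the derivative rule, producing a linear combination of $h_{m-2}$ and $h_m$. The coefficient $\frac{m-1}{1+2\gamma}$ is exactly what is needed to cancel the two $h_{m-2}$ pieces, and the residual constant $-\frac{(m-1)c_{m-2}}{1+2\gamma}$ is identified with $-c_m$ via the Gaussian moment recurrence, leaving $h_m - c_m$ as required. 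An alternative but equivalent route would be to work from~\eqref{eq:steinsol}, writing $e^{-\gamma y^2}e^{-y^2/2} = e^{-(1+2\gamma)y^2/2}$ and integrating by parts with $u = y^{m-1}$ and $dv = y e^{-(1+2\gamma)y^2/2}\,dy$, whose boundary term produces the polynomial correction and whose remainder reproduces $\cP h_{m-2}$. I expect no serious obstacle: the proof reduces to bookkeeping of two short recurrences, and the main care is in matching the index of the polynomial correction term so that the $h_{m-2}$ contributions cancel and the remaining constant matches $c_m$ via the Gaussian moment identity.
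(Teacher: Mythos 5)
Your proposal is correct in substance, and its two routes are really the same proof in two guises: your primary route substitutes the candidate into the Stein equation and invokes uniqueness among bounded solutions, while your ``alternative'' route (integration by parts in the representation \eqref{eq:steinsol}) is exactly what the paper does. Both rest on the same two ingredients you isolate: the recurrences $h_k' = kh_{k-1}-2\gamma h_{k+1}$, $xh_k=h_{k+1}$, and the Gaussian moment identity $\E[h_m(G)]=\tfrac{m-1}{1+2\gamma}\E[h_{m-2}(G)]$. The substitution route does require the uniqueness step you mention (the general solution of $f'-xf=g$ differs from $\cP g$ by a multiple of $e^{x^2/2}$, which is excluded because your candidate is bounded), and you address this adequately.

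One point worth making explicit: the lemma as printed is not literally correct --- the correction term should be $-\tfrac{1}{1+2\gamma}h_{m-1}$, not $-\tfrac{1}{1+2\gamma}h_m$. This is visible both from the paper's own proof (whose integration by parts produces $-\tfrac{1}{1+2\gamma}h_{m-1}$) and from Eq.~\eqref{eq:Phtilde}, where $\cP\wt{h}_m=-\tfrac{1}{1+2\gamma}h_{m-1}$ is what is actually used downstream; the $m=1$ case ($\cP h_1=-\tfrac{1}{1+2\gamma}h_0$) also confirms it. Your calculation implicitly lands on the right version: applying $\mathcal{L}f=f'-xf$ to the correction term yields ``a linear combination of $h_{m-2}$ and $h_m$'' only if that term is $h_{m-1}$ (applying it to $h_m$ would give $h_{m-1}$ and $h_{m+1}$, and nothing would cancel), and indeed $\mathcal{L}h_{m-1}=(m-1)h_{m-2}-(1+2\gamma)h_m$ makes the $h_{m-2}$ pieces cancel and the constant match $\E[h_m(G)]$ via the moment recurrence. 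So your proof establishes the corrected identity, which is the one the paper needs; you should just state that correction explicitly rather than leaving it as ``matching the index.''
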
 \label{lem:poisdermoments}
	\begin{proof}
 Recall the representation \eqref{eq:steinsol} for $\cP h_m$. Using integration by parts
     	\begin{align*}
			e^{x^2/2}\int_{-\infty}^xy^me^{-\frac{(1+2\gamma)}{2} y^2}dy
            &= e^{x^2/2}\left(-\frac{x^{m-1}}{1+2\gamma}e^{-\frac{(1+2\gamma)}{2} x^2} + \frac{m-1}{1+2\gamma}\int_{-\infty}^xy^{m-2}e^{-\frac{(1+2\gamma)}{2} y^2}dy\right)\\
			&=\frac{m-1}{1+2\gamma}e^{x^2/2} \int_{-\infty}^x h_{m-2}(y)e^{-y^2/2}dy - \frac{1}{1+2\gamma}h_{m-1}.
		\end{align*}
       Moreover, Gaussian integration by parts gives
        \[
        \E[h_m(G)]=\E\big[h_{m-1}'(G)\big]=(m-1) \E[h_{m-2}(G)]-2\gamma \E[h_m(G)],
        \]
        hence $\E[h_m(G)]=\frac{m-1}{1+2\gamma}\E[h_{m-2}(G)]$. Substituting into \eqref{eq:steinsol}  yields
        \[
        \cP h_m = \frac{m-1}{1+2\gamma}e^{x^2/2}\int_{-\infty}^x \big(h_{m-2}(y)-\E[h_{m-2}(G)]\big)e^{-y^2/2}dy - \frac{1}{1+2\gamma}h_{m-1},
        \]
        which equals $\cP h_{m-2}- \frac{1}{1+2\gamma}h_{m-1}$ as claimed.
	\end{proof}\vspace{-\baselineskip}

    In light of Lemma~\ref{lem:poissol}, we introduce the auxiliary shifted function
	\begin{align*}
	\wt{h}_m:=h_m-\frac{m-1}{1+2\gamma} h_{m-2}\,,\qquad m\geq 1.
	\end{align*}
    By linearity of the Poisson operator $\cP$ and Lemma~\ref{lem:poissol}, we have
    \begin{equation}\label{eq:Phtilde}
        f_m:=\cP \wt{h}_m=-\frac{1}{1+2\gamma} h_{m-1}\,,\qquad m\geq 1.
    \end{equation}
     Combined with Lemma~\ref{lem:hdervs}, the Poisson solution $f_m\equiv \cP \wt{h}_m$ and its derivatives is a linear combination of $(h_{j})_{j\in \mathbb{N}}$, where the coefficients have explicit dependence on $\gamma$. This self-similar algebraic structure is a key ingredient in our inductive proof of Theorem~\ref{thm:mainapprox}. Define the shifted approximation errors 
    \[
    \wt{\Delta}^\v_m(\gamma) = \sup\limits_{Z \in \mathcal{S}^\v}\bigg|\EE\Big[\wt{h}_m(Z)\Big] - \EE\Big[\wt{h}_m(G)\Big]\bigg|.
    \]
    In the induction, we first bound $\wt{\Delta}_m^{\v}(\gamma)$ via Stein's method, and then recover the bounds on $\Delta_m^{\v}$ in terms of $\wt{\Delta}_m^{\v}(\gamma)$. The following lemma makes this step precise.
    \begin{lemma}\label{lem:wtDelta:to:Delta}
    There exist constants $(c_{m,j})_{m,j\geq 0}$, independent of $\gamma$, such that $c_{m,j}=0$ whenever $m$ and $j$ have the opposite parity, and
    \[
    h_m=\frac{c_{m,0}}{(1+2\gamma)^{\frac{m}{2}}}h_0+\sum_{j=1}^{m}\frac{c_{m,j}}{(1+2\gamma)^{\frac{m-j}{2}}}\wt{h}_j.
    \]
    As a result,
    \[
    \Delta_{m}^{\v}(\gamma)\leq \frac{|c_{m,0}|}{(1+2\gamma)^{\frac{m}{2}}}\Delta_0^{\v}(\gamma)+\sum_{j=1}^{m}\frac{|c_{m,j}|}{(1+2\gamma)^{\frac{m-j}{2}}}\wt{\Delta}_{j}^{\v}(\gamma).
    \]
    \end{lemma}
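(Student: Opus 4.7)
The plan is to recognize that the claimed decomposition is essentially just the defining relation $\wt{h}_m = h_m - \frac{m-1}{1+2\gamma} h_{m-2}$ rearranged and iterated. Writing it as $h_m = \wt{h}_m + \frac{m-1}{1+2\gamma}h_{m-2}$ (valid for $m\geq 1$, where $h_{-1}\equiv 0$) gives a two-term recursion that decreases $m$ by $2$ at each step and introduces a factor of $\frac{1}{1+2\gamma}$. Unrolling this recursion terminates at either $h_0$ or $\wt{h}_1 = h_1$, depending on the parity of $m$, which will immediately yield both the parity condition and the correct power of $(1+2\gamma)$.

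Concretely, I would prove by induction on $m$ that there exist constants $(c_{m,j})$ independent of $\gamma$, with $c_{m,j}=0$ whenever $m-j$ is odd, such that
\[
h_m=\frac{c_{m,0}}{(1+2\gamma)^{m/2}}h_0+\sum_{j=1}^{m}\frac{c_{m,j}}{(1+2\gamma)^{(m-j)/2}}\wt{h}_j.
\]
The base cases $m=0$ (with $c_{0,0}=1$) and $m=1$ (with $c_{1,1}=1$, $c_{1,0}=0$) are immediate. For the inductive step, apply the recursion $h_m=\wt{h}_m+\frac{m-1}{1+2\gamma}h_{m-2}$ and substitute the inductive hypothesis for $h_{m-2}$. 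The $\wt{h}_m$ term contributes $c_{m,m}=1$. The term $\frac{m-1}{1+2\gamma}h_{m-2}$ contributes, for each $j\in\{0,1,\ldots,m-2\}$, a factor of $\frac{m-1}{1+2\gamma}\cdot\frac{c_{m-2,j}}{(1+2\gamma)^{(m-2-j)/2}}=\frac{(m-1)c_{m-2,j}}{(1+2\gamma)^{(m-j)/2}}$, so we set $c_{m,j}:=(m-1)c_{m-2,j}$, which is independent of $\gamma$. The parity condition propagates since $m-j$ and $(m-2)-j$ have the same parity. This completes the induction.

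For the second claim, apply the linear decomposition with $Z$ in place of the argument, subtract the same identity with $G$ in place of $Z$, take expectations, and apply the triangle inequality:
\[
\bigl|\EE[h_m(Z)]-\EE[h_m(G)]\bigr|\leq \frac{|c_{m,0}|}{(1+2\gamma)^{m/2}}\bigl|\EE[h_0(Z)]-\EE[h_0(G)]\bigr|+\sum_{j=1}^{m}\frac{|c_{m,j}|}{(1+2\gamma)^{(m-j)/2}}\bigl|\EE[\wt{h}_j(Z)]-\EE[\wt{h}_j(G)]\bigr|.
\]
Taking the supremum over $Z\in\mathcal{S}^{\v}$, using that the supremum of a sum of nonnegative terms is bounded by the sum of the individual suprema, yields the stated bound on $\Delta_m^{\v}(\gamma)$. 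There is no real obstacle here: the lemma is an algebraic bookkeeping statement, and the only mild care needed is tracking the parity and the correct exponent of $(1+2\gamma)$ across the induction, both of which fall out of the recursion mechanically.
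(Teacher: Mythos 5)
Your proof is correct and follows essentially the same route as the paper's: induction on $m$ via the recursion $h_m=\wt{h}_m+\frac{m-1}{1+2\gamma}h_{m-2}$, with the coefficient update $c_{m,j}=(m-1)c_{m-2,j}$ and $c_{m,m}=1$, followed by the triangle inequality for the bound on $\Delta_m^{\v}(\gamma)$. The exponent and parity bookkeeping you carry out is exactly what the paper leaves implicit.
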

    \begin{proof}
        The decomposition follows by induction on $m$. For $m=0$, it holds trivially with $c_{0,0}=1$. Assuming the claim up to $m$, note that $h_{m+1}\equiv \wt{h}_{m+1}+\frac{m}{1+2\gamma} h_{m-1}$, so setting $c_{m+1,m+1}=1$ and $c_{m+1,j}=mc_{m-1,j}$ yields the induction step. The inequality follows directly from the triangle inequality.
    \end{proof}\vspace{-\baselineskip}
  To pass from bounds on $(\wt{\Delta}_m^{\v}(\gamma))_{m\geq 1}$ to $\Delta_m^{\v}(\gamma)$, we need an additional step of controlling $\Delta_0^{\v}(\gamma)$ separately. This is achieved by the following lemma, which links the $\Delta^{\v}_0(\gamma)$ to $\wt{\Delta}^{\v}_2(\gamma)$.
	\begin{lemma} \label{lem:zerototwotilde}
		For every $\gamma > 0$,
		$$\Delta^\v_0(\gamma) \leq\frac{1}{\sqrt{1+2\gamma}}\int_0^\gamma\sqrt{1+2t}\wt{\Delta}^\v_2(t)dt.$$	
	\end{lemma}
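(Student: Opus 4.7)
The plan is to differentiate the deficit $D(\gamma) := \EE[h_0(Z;\gamma)] - \EE[h_0(G;\gamma)]$ in the parameter $\gamma$, observe that the derivative brings down a factor of $-x^2$, and then exploit the identity $h_2 = \wt{h}_2 + (1+2\gamma)^{-1} h_0$ from the definition of $\wt{h}_2$ to close a first-order linear ODE in $\gamma$ whose forcing term is controlled by $\wt{\Delta}_2^{\v}$. Here I make the $\gamma$-dependence explicit by writing $h_m(x;\gamma) = x^m e^{-\gamma x^2}$ and $\wt{h}_2(x;\gamma) = h_2(x;\gamma) - \frac{1}{1+2\gamma} h_0(x;\gamma)$.

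First I would fix $Z \in \mathcal{S}^{\v}$ and differentiate under the expectation (justified since $|h_0(x;\gamma)| \leq 1$ and $|\partial_\gamma h_0(x;\gamma)| = h_2(x;\gamma) \leq h_2(x;0)$ has uniform-in-$\gamma$ moments), obtaining
\[
D'(\gamma) = -\EE\big[h_2(Z;\gamma)\big] + \EE\big[h_2(G;\gamma)\big].
\]
Substituting the decomposition $h_2 = \wt{h}_2 + (1+2\gamma)^{-1} h_0$ yields the linear ODE
\[
D'(\gamma) + \frac{1}{1+2\gamma} D(\gamma) = -\Big(\EE\big[\wt{h}_2(Z;\gamma)\big] - \EE\big[\wt{h}_2(G;\gamma)\big]\Big).
\]
The integrating factor is $\sqrt{1+2\gamma}$, since $\frac{d}{d\gamma}\sqrt{1+2\gamma} = (1+2\gamma)^{-1/2}$. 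Multiplying through and noting the initial condition $D(0) = \EE[1] - \EE[1] = 0$, I would integrate from $0$ to $\gamma$ to obtain
\[
\sqrt{1+2\gamma}\, D(\gamma) = -\int_0^\gamma \sqrt{1+2t}\,\Big(\EE\big[\wt{h}_2(Z;t)\big] - \EE\big[\wt{h}_2(G;t)\big]\Big)\, dt.
\]

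The proof then concludes by taking absolute values, bounding the integrand pointwise by $\wt{\Delta}_2^{\v}(t)$ (valid since $Z \in \mathcal{S}^{\v}$), and finally taking the supremum over $Z \in \mathcal{S}^{\v}$ on the left-hand side. This produces exactly the claimed bound. There is no real obstacle here: the argument is essentially an integrating factor calculation, and the only mild point to verify is the differentiation under the expectation, which is immediate since both $h_0$ and its $\gamma$-derivative admit uniform $L^1$ majorants in $\gamma$ on any bounded interval.
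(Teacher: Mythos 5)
Your proposal is correct and follows essentially the same route as the paper: differentiate the deficit in $\gamma$, rewrite $h_2$ via $\wt{h}_2$ to obtain the linear ODE $D' + (1+2\gamma)^{-1}D = -(\EE[\wt{h}_2(Z)] - \EE[\wt{h}_2(G)])$, and integrate with the integrating factor $\sqrt{1+2\gamma}$ from the initial condition $D(0)=0$. The paper presents the same computation with the notation $g_0, g_2$ in place of your $D$ and forcing term, so there is nothing substantive to distinguish the two arguments.
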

	\begin{proof}
		Observe that $\frac{d}{d\gamma}h_0(x) = \frac{d}{d\gamma}e^{-\gamma x^2} = -x^2e^{-\gamma x^2} = -h_2(x)$. Fix $Z\in \mathcal{S}^{\v}$ and define
		\begin{align*}
			g_0(\gamma) &= \EE\left[e^{-\gamma Z^2}\right] - \EE\left[e^{-\gamma G^2}\right],\\ 
			g_2(\gamma) &= \EE\left[Z^2e^{-\gamma Z^2}-\frac{1}{1+2\gamma}e^{-\gamma Z^2}\right] -  \EE\left[G^2e^{-\gamma G^2}-\frac{1}{1+2\gamma}e^{-\gamma G^2}\right].
		\end{align*}
		According to the observation, we have
		$g'_0(\gamma) = -g_2(\gamma)  -\frac{1}{1+2\gamma}g_0(\gamma).$
		Since $g(0) = 0$, solving this differential equation yields
		$$g_0(\gamma) = \frac{1}{\sqrt{1+2\gamma}}\int_0^\gamma -\sqrt{1+2t}g_2(t)dt.$$
		By definition, $|g_2(t)| \leq \wt{\Delta}^\v_2(t)$ holds, so by taking absolute value above, we have
		$$\left|\EE\Big[e^{-\gamma Z^2}\Big] - \EE\Big[e^{-\gamma G^2}\Big]\right| \leq \frac{1}{\sqrt{1+2\gamma}}\int_0^\gamma\sqrt{1+2t}\widetilde{\Delta}^\v_2(t)dt.$$
		Taking a supremum over all $Z \in \mathcal{S}^\v$ yields the desired bound.
	\end{proof}\vspace{-\baselineskip}
	For the induction, we also need a result that controls the deficits with respect to different variances.
	\begin{lemma} \label{lem:variancedec}
		Let $Z = \frac{1}{\sqrt{\v}}\sum\limits_{i=1}^n Y_i \in \mathcal{S}^\v$ , and for $i \in [n]$ consider $Z_i = Z - \frac{1}{\sqrt{\v}}Y_i$. Set $\v' = \v\mathrm{Var}(Z_i)$ and let $G_i \sim \mathcal{N}\mathcal(0,\frac{\v'}{\v})$. Then,
		$$\Big|\EE\left[h_m(Z_i)\right]-\EE\left[h_m(G_i)\right]\Big| \leq \left(\frac{\v'}{\v}\right)^\frac{m}{2}\Delta_m^{\v'}\left(\frac{\gamma \v'}{\v}\right).$$
	\end{lemma}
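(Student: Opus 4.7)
The plan is to reduce the claim to the definition of $\Delta_m^{\v'}(\cdot)$ via a simple rescaling that sends $Z_i$ into the class $\mathcal{S}^{\v'}$ and simultaneously rewrites $h_m(Z_i)$ as a multiple of $h_{m}$ with the tilt parameter $\gamma$ replaced by $\gamma\v'/\v$.

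First, I would unpack what the hypothesis says. Since $Z_i=\frac{1}{\sqrt{\v}}\sum_{j\neq i}Y_j$ is a sum over $n-1$ independent mean-zero, bounded summands, we have $\Var\big(\sum_{j\neq i}Y_j\big)=\v-\Var(Y_i)=\v\,\Var(Z_i)=\v'$. Define the rescaled variable $\widetilde{Z}_i := \sqrt{\v/\v'}\,Z_i = \frac{1}{\sqrt{\v'}}\sum_{j\neq i}Y_j$. By construction, $\widetilde{Z}_i$ is a sum of $n-1$ independent centered variables bounded by $\bound=1$ whose total variance equals $\v'$, so $\widetilde{Z}_i\in \mathcal{S}^{\v'}$. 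Likewise, for a standard Gaussian $G\sim \mathcal{N}(0,1)$, the variable $G_i=\sqrt{\v'/\v}\,G$ has the law $\mathcal{N}(0,\v'/\v)$ prescribed in the statement.

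Next, I would perform the change of variable in the integrand. Writing $Z_i=\sqrt{\v'/\v}\,\widetilde{Z}_i$, we compute
\[
h_m(Z_i)=Z_i^m e^{-\gamma Z_i^2}=\left(\frac{\v'}{\v}\right)^{m/2}\widetilde{Z}_i^{\,m}\,e^{-\tfrac{\gamma \v'}{\v}\widetilde{Z}_i^{\,2}}=\left(\frac{\v'}{\v}\right)^{m/2}h_{m,\gamma'}(\widetilde{Z}_i),
\]
where $h_{m,\gamma'}$ denotes the moment function $h_m$ with the tilt parameter $\gamma$ replaced by $\gamma':=\gamma\v'/\v$. The same identity holds with $\widetilde{Z}_i$ replaced by $G$ and $Z_i$ replaced by $G_i$, yielding $h_m(G_i)=(\v'/\v)^{m/2}h_{m,\gamma'}(G)$.

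Finally, subtracting the two identities and taking absolute values gives
\[
\Big|\E[h_m(Z_i)]-\E[h_m(G_i)]\Big|=\left(\frac{\v'}{\v}\right)^{m/2}\Big|\E[h_{m,\gamma'}(\widetilde{Z}_i)]-\E[h_{m,\gamma'}(G)]\Big|\leq \left(\frac{\v'}{\v}\right)^{m/2}\Delta_m^{\v'}(\gamma'),
\]
where the last inequality is the very definition of $\Delta_m^{\v'}(\gamma')$ applied to $\widetilde{Z}_i\in \mathcal{S}^{\v'}$. Substituting $\gamma'=\gamma\v'/\v$ completes the proof. There is no real obstacle here: the only subtle point is to verify that the total variance of the truncated sum $\sum_{j\neq i}Y_j$ is exactly $\v'$ so that $\widetilde Z_i$ is admissible as a member of $\mathcal{S}^{\v'}$; this is immediate from the normalization $\v'=\v\Var(Z_i)$.
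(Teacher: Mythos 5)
Your proof is correct and follows essentially the same route as the paper's: rescale $Z_i$ to $\widetilde Z_i=\sqrt{\v/\v'}\,Z_i\in\mathcal S^{\v'}$, observe that the rescaling turns $h_m(\cdot\,;\gamma)$ into $(\v'/\v)^{m/2}h_m(\cdot\,;\gamma\v'/\v)$ for both $Z_i$ and $G_i$, and invoke the definition of $\Delta_m^{\v'}(\gamma\v'/\v)$. No gaps.
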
 
	\begin{proof}
     Define $Z'_i=\frac{1}{\v'}\sum_{j\neq i} Y_j$ so that $Z'_i \in \mathcal{S}^{\v'}$ and $Z_i = \sqrt{\frac{\v'}{\v}}Z'_i$. Writing $h_m(x;\gamma)=x^m e^{-\gamma x^2}$, we have 
        \[
        h_m(Z_i; \gamma)=\left(\frac{\v'}{\v}\right)^{m/2}h_m\left(Z'_i\,;\,\frac{\gamma \v'}{\v}\right).
        \]
       Let $G\sim \mathcal{N}(0,1)$ and set $G_i=\sqrt{\frac{\v'}{\v}}G$. The same identity holds with $(Z_i,Z'_i)$ by $(G_i,G)$. Since $Z'_i \in \mathcal{S}^{\v'}$, taking expectations and applying the definition of $\Delta_m^{\v}(\cdot)$ yields the claim.
	\end{proof}\vspace{-\baselineskip}
	
    We will need to bound the Gaussian expectations of the derivatives of $f_m\equiv \cP \wt{h}_m$. A key feature is that these Gaussian expectations exhibit cancellations that improve the decay in $\gamma$ beyond what is suggested by Lemma~\ref{lem:hdervs}. To illustrate, consider $\E[f_2'(G)]$ for $G\sim \mathcal{N}(0,1)$. By a direct calculation,
    \[
    \E[f_2'(G)]=-\frac{1}{1+2\gamma}\E[e^{-\gamma G^2}]+\frac{2\gamma}{1+2\gamma} \E[G^2 e^{-\gamma G^2}].
    \]
   Each term is individually of order $(1+2\gamma)^{-3/2}$. However, the next lemma shows that these terms cancel at leading order, yielding the sharper bound $\E[f_2'(G)]=O((1+2\gamma)^{-5/2})$. This additional factor of $(1+2\gamma)^{-1}$ is crucial for obtaining the optimal dependence on $\gamma$ in Theorem~\ref{thm:mainapprox}.

	\begin{lemma} \label{lem:gausexpec}
		Let $m,\ell \in \mathbb{N}$, and $a \in [\frac{1}{2},1]$ . Let $G_a \sim \mathcal{N}(0,a)$, and recall $f_m\equiv \cP \wt{h}_m$. Then,
		\begin{itemize}
			\item If $m + \ell$ is even,
            \[\EE[f_m^{(\ell)}(G_a)] = 0.
            \]
			\item If $m + \ell$ is odd and $\ell$ is even
			$$\Big|\EE[f_m^{(\ell)}(G_a)]\Big| = O\left((1+2\gamma)^{-\frac{m+2}{2}}\right).$$
			\item If $m + \ell$ is odd and $\ell$ is odd
			$$\Big|\EE[f_m^{(\ell)}(G_a)]\Big| = O\left((1+2\gamma)^{-\frac{m+3}{2}}\right).$$
		\end{itemize}	
		Above, the $O$ notation hides constants which depend on $m$ and $\ell$, but not on $\gamma$ and $a$.
	\end{lemma}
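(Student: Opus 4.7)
The approach is to establish all three bounds by induction on $\ell$, using a Stein-type recursion that reduces $\EE[f_m^{(\ell)}(G_a)]$ to quantities at strictly smaller derivative order (with possibly shifted $m$). Starting from the identity $f_m = -h_{m-1}/(1+2\gamma)$ in~\eqref{eq:Phtilde}, which also implies $xf_m(x) = f_{m+1}(x)$, the elementary Leibniz-type identity $xF^{(\ell-1)}(x) = (xF)^{(\ell-1)}(x) - (\ell-1)F^{(\ell-2)}(x)$ applied to $F = f_m$ gives
\[
xf_m^{(\ell-1)}(x) = f_{m+1}^{(\ell-1)}(x) - (\ell-1)\, f_m^{(\ell-2)}(x).
\]
Taking expectation and applying Stein's identity $\EE[F'(G_a)] = a^{-1}\EE[G_a F(G_a)]$ to $F = f_m^{(\ell-1)}$ yields the key recursion
\[
\EE[f_m^{(\ell)}(G_a)] = \frac{1}{a}\Big(\EE[f_{m+1}^{(\ell-1)}(G_a)] - (\ell-1)\EE[f_m^{(\ell-2)}(G_a)]\Big), \qquad \ell \geq 1,
\]
where the second summand vanishes trivially when $\ell = 1$.

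For the base case $\ell = 0$, I compute $\EE[f_m(G_a)] = -(1+2\gamma)^{-1}\EE[h_{m-1}(G_a)]$ directly: when $m$ is even the integrand $h_{m-1}$ is odd, so the expectation vanishes; when $m$ is odd, the Gaussian moment formula $\EE[G_a^{2j}e^{-\gamma G_a^2}] = (2j-1)!!\,a^{j}(1+2a\gamma)^{-j-1/2}$ combined with $a \in [1/2,1]$ gives $|\EE[f_m(G_a)]| = O((1+2\gamma)^{-(m+2)/2})$. These two cases correspond precisely to the first and second (even $\ell$) assertions of the lemma at $\ell = 0$.

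For the inductive step ($\ell \geq 1$) I verify each case is preserved by the recursion. If $m + \ell$ is even, then $(m+1)+(\ell-1) = m+\ell$ and $m+(\ell-2) = m+\ell-2$ are both even, so by induction both terms on the right-hand side vanish. If $m + \ell$ is odd and $\ell$ is even, the first term has $\ell-1$ odd and contributes $O((1+2\gamma)^{-((m+1)+3)/2}) = O((1+2\gamma)^{-(m+4)/2})$, while the second has $\ell-2$ even and contributes the dominant $O((1+2\gamma)^{-(m+2)/2})$. If $m + \ell$ is odd and $\ell$ is odd, the first term (with $\ell-1$ even applied at shifted $m'=m+1$) and the second (with $\ell-2$ odd at $m$) both contribute $O((1+2\gamma)^{-(m+3)/2})$. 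The factor $1/a \leq 2$ is absorbed into implicit constants depending only on $m,\ell$.

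The only nontrivial algebraic step is the derivation of the recursion via the Leibniz identity; the remainder is straightforward induction. I do not anticipate substantial obstacles, only careful bookkeeping of parities and verification that the implicit constants depend on $m, \ell$ alone (not on $\gamma$ or $a \in [1/2,1]$).
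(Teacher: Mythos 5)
Your proof is correct, but it follows a genuinely different route from the paper's. The paper computes $\EE[f_m^{(\ell)}(G_a)]$ in one shot: it writes $f_m=-\tfrac{1}{1+2\gamma}h_{m-1}$, applies Gaussian integration by parts to convert the $\ell$-th derivative into multiplication by the Hermite polynomial $H_\ell$ associated with $\cN(0,a)$, and then performs a Gaussian change of measure to reduce everything to moments of $G_\gamma\sim\cN\!\left(0,\tfrac{a}{1+2a\gamma}\right)$; the parity of $H_\ell$ and the vanishing of its constant term for odd $\ell$ then deliver the three cases directly. You instead derive the recursion $\EE[f_m^{(\ell)}(G_a)]=\tfrac{1}{a}\bigl(\EE[f_{m+1}^{(\ell-1)}(G_a)]-(\ell-1)\EE[f_m^{(\ell-2)}(G_a)]\bigr)$ from the algebraic identity $xf_m=f_{m+1}$ together with Stein's identity, and run a strong induction on $\ell$ (quantified over all $m$, which is needed since the recursion shifts $m\mapsto m+1$); your base case $\ell=0$ and the parity/exponent bookkeeping in the three cases all check out, and the factor $1/a\le 2$ is harmlessly absorbed. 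The paper's method is more direct and makes the source of the extra $(1+2\gamma)^{-1/2}$ gain for odd $\ell$ transparent (it is exactly $b_0=0$ for odd Hermite polynomials), whereas your inductive argument is more elementary — it needs only the single Gaussian moment computation at $\ell=0$ and first-order Stein — at the cost of tracking how the two terms of the recursion trade parities. Both yield constants depending only on $m,\ell$.
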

	\begin{proof}
    Recall that $f_m=-\frac{1}{1+2\gamma}h_{m-1}$ by Lemma~\ref{lem:poissol}. Let $H_{\ell} = (-1)^{\ell}e^{\frac{x^2}{2a}}\frac{d^{\ell}}{dx^{\ell}} e^{-\frac{x^2}{2a}}$ denote the $\ell$'th Hermite polynomial associated with $\cN(0,a)$. Gaussian integration by parts gives
    \[
    \E[f_m^{(\ell)}(G_a)]=-\frac{1}{1+2\gamma}\E[h_{m-1}^{(\ell)}(G_a)]=-\frac{1}{1+2\gamma}\E[H_{\ell}(G_a)h_{m-1}(G_a)].
    \]
    By a Gaussian change of measure, letting $G_{\gamma}\sim \cN(0,\frac{a}{1+2a\gamma})$, we further have that
    \[
     \E[f_m^{(\ell)}(G_a)]=-\frac{1}{(1+2\gamma)\sqrt{1+2a\gamma}}\E[H_{\ell}(G_{\gamma})G_{\gamma}^{m-1}].
    \]
    Since $H_{\ell}$ is an odd (resp. even) function when $\ell$ is odd (resp. even), the integrand $x\mapsto x^{m-1}H_{\ell}(x)$ is odd whenever $m+\ell$ is even, proving the first claim.
    
    When $m+\ell$ is odd, expand $H_{\ell}(x)=\sum_{i=0}^{\ell} b_i x^i$, with coefficients $b_i$ depending only on $a$.  Since $a\in [1/2,1]$, these coefficients are uniformly bounded. Using
    \[
    \E[G_\gamma^k]=O\big((1+2\gamma)^{-k/2}\big),
    \]
    and noting that $b_0=0$ when $\ell$ is odd, yields the stated bounds.
	\end{proof}\vspace{-\baselineskip}
	
	\subsubsection{An inductive Taylor approximation scheme}
	At this stage, we have the necessary tools to implement an iterative Taylor's approximation within Stein's method. As discussed above, the main thrust of the proof will be to prove Theorem \ref{thm:mainapprox} in the centered case where $\mu = 0$. Throughout, we assume without loss of generality that $\bound =1$.
\begin{proposition}\label{prop:centeredapprox}
   For every $m,M\in \mathbb{N}$, there exists a constant $C_{m,M}>0$ such that if $\v>\max(M+1,\gamma)$, then
		$$\Delta_{m}^{\v}(\gamma) \leq C_{m,M}\left(\frac{\log(2+2\gamma)^{M}}{\sqrt{\v}(1+2\gamma)^{\frac{m+2}{2}}} +  \frac{1}{\gamma^{\frac{m-1}{2}}\sqrt{1+2\gamma}}\left(\frac{\gamma}{\v}\right)^{\frac{M}{2}}\right),$$
		when $m$ is odd, and an improvement
		$$\Delta_{m}^{\v}(\gamma) \leq C_{m,M}\left(\frac{\log(2+2\gamma)^{M}}{\v(1+2\gamma)^{\frac{m+1}{2}}} +    \frac{1  }{\gamma^{\frac{m-1}{2}}\sqrt{1+2\gamma}}\left(\frac{\gamma}{\v}\right)^{\frac{M}{2}}\right),$$
		when $m$ is even.
\end{proposition}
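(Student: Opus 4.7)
The plan is to induct on $M$. I first prove the corresponding bound for the shifted deficits $\wt\Delta_m^\v(\gamma)$ via Stein's method, then transfer it to $\Delta_m^\v(\gamma)$ using Lemma~\ref{lem:wtDelta:to:Delta}, with the $\Delta_0^\v$ piece in that decomposition controlled by an integral of $\wt\Delta_2^\v$ through Lemma~\ref{lem:zerototwotilde}. Working with $\wt h_m$ is essential because Lemma~\ref{lem:poissol} gives the clean identity $f_m:=\mathcal P\wt h_m=-h_{m-1}/(1+2\gamma)$, whose derivatives, by Lemma~\ref{lem:hdervs}, are linear combinations of the very functions $h_j$ whose deficits I am trying to bound. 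This self-similarity is what allows the induction to close. The base case $M=0$ follows from the trivial uniform bound $\|h_m\|_\infty\leq C_m\gamma^{-m/2}$ of Lemma~\ref{lem:hdervbounds}: for $\gamma\geq 1$ this matches $\gamma^{-(m-1)/2}(1+2\gamma)^{-1/2}$, and for $\gamma<1$ the proposition's bound exceeds one and is dominated by the crude estimate $|h_m(x)|\leq |x|^m$ combined with boundedness of low moments of $Z\in\mathcal S^\v$.

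For the inductive step, I apply the Stein identity \eqref{eq:stein} to $\wt h_m$ and then the telescoping decomposition \eqref{eq:taylorBasis}. Using $\E Y_i=0$ and independence $Y_i\perp Z_i$, the zeroth-order term vanishes, and I Taylor expand $f_m(Z)-f_m(Z_i)-f_m'(Z_i)(Z-Z_i)$ and $f_m'(Z)-f_m'(Z_i)$ in $Z-Z_i=Y_i/\sqrt\v$ up to an appropriately chosen order depending on $M$. Lemma~\ref{lem:hdervbounds} controls the resulting $L^\infty$ remainder and produces the $(\gamma/\v)^{M/2}$ summand of the proposition. Each leading Taylor coefficient factors by independence as $\E[Y_i^j]\,\E[f_m^{(k)}(Z_i)]$; the observation $\sum_i\E[|Y_i|^{k+1}]\leq\sum_i\E[Y_i^2]=\v$ (a consequence of $|Y_i|\le 1$), combined with the $\v^{-(k+1)/2}$ prefactor in the first sum and $\v^{-(k+2)/2}$ in the second, makes both leading contributions of order $\v^{-1/2}\E[f_m^{(2)}(Z_i)]$ (first sum, $k=2$; second sum, $k=1$).

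The heart of the argument is to replace each $\E[f_m^{(k)}(Z_i)]$ by the Gaussian expectation $\E[f_m^{(k)}(G)]$. I expand $f_m^{(k)}$ via Lemma~\ref{lem:hdervs} as a combination $\sum_i\gamma^{(k+i)/2}h_{m-1+i}$ and apply Lemma~\ref{lem:variancedec} to convert each $\E[h_{m-1+i}(Z_i)]$ into a Gaussian expectation at variance $\v'=\v\,\Var(Z_i)=\v-O(1)$; the resulting approximation error is $\Delta_{m-1+i}^{\v'}(\gamma\v'/\v)$, which the inductive hypothesis at level $M-1$ controls. Since $\v'/\v=1-O(1/\v)$, both $\v'$ and $\gamma\v'/\v$ remain in the admissible range, so the induction closes. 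The surviving Gaussian pieces $\E[f_m^{(k)}(G)]$ are then evaluated via Lemma~\ref{lem:gausexpec}: they vanish when $m+k$ is even and otherwise decay as $(1+2\gamma)^{-(m+2)/2}$ or $(1+2\gamma)^{-(m+3)/2}$ according to the parity of $k$. This parity cancellation is precisely what drives the $\v^{-1}$ (rather than $\v^{-1/2}$) improvement for even $m$: when $m$ is even, the leading $k=2$ contribution vanishes in Gaussian expectation, pushing the principal term to $k=3$ in the first sum and $k=2$ in the second, and gaining an extra factor of $\v^{-1/2}$.

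The principal obstacle is bookkeeping: every Taylor coefficient produces an inductive error $\Delta_{j}^{\v'}(\gamma\v'/\v)$ that must be matched against the target bound, and the precise $(1+2\gamma)$-exponents and log powers must be preserved through the recursion. The $\log(2+2\gamma)^M$ factor arises from iterating Lemma~\ref{lem:zerototwotilde}: with $\wt\Delta_2^\v(t)$ bounded (at the previous inductive level) by something like $\v^{-1}(1+2t)^{-3/2}$ up to logs, the integral $\int_0^\gamma(1+2t)^{-1}\,dt$ produces one extra logarithm per inductive level, compounding through the $M$ iterations. Ensuring that the parity-dependent improvement for even $m$ survives requires checking that neither the second sum in \eqref{eq:taylorBasis} (whose derivative order is shifted by one) nor the inductive error terms $\Delta_{m-1+i}^{\v'}(\gamma\v'/\v)$ contaminate the vanishing at $k=2$; concretely this means verifying that the $k=3$ (first sum) and $k=2$ (second sum) Gaussian bounds from Lemma~\ref{lem:gausexpec}, together with the sharp inductive bound on the $k=2$ replacement error, jointly yield $\v^{-1}$ and the claimed $(1+2\gamma)^{-(m+1)/2}$ decay.
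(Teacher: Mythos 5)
Your proposal follows essentially the same route as the paper's proof: induction on $M$, Stein's method applied to the shifted functions $\wt h_m$ with the Poisson solution $f_m=-h_{m-1}/(1+2\gamma)$, a Taylor expansion whose remainder yields the $(\gamma/\v)^{M/2}$ term, inductive replacement of $\E[f_m^{(\ell)}(Z_i)]$ by Gaussian expectations via Lemmas~\ref{lem:hdervs} and~\ref{lem:variancedec}, the parity cancellation of Lemma~\ref{lem:gausexpec} for the even-$m$ improvement, and the transfer back through Lemmas~\ref{lem:wtDelta:to:Delta} and~\ref{lem:zerototwotilde} accumulating one logarithm per level. The only (immaterial) deviation is your base case, which uses the crude sup-norm/moment bounds rather than the $M=0$ Taylor remainder estimate; both suffice.
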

	\begin{proof}
    We proceed by induction on $M$. For a fixed $M$ we assume the desired claim holds for all $\v, m, \gamma$ such that $\v>\max(M+1,\gamma)$, and prove the corresponding statements with $M$ replaced by $M+1$.
    
    The proof is organized into several steps. In {\bf Step 0}, we derive a preliminary bound on $\wt{\Delta}_m^{\v}(\gamma)$ for $m\geq 1$ by applying a Taylor expansion to the Poisson solution $\cP h_m$ in the identity~\eqref{eq:taylorBasis}. We then establish the base case $M=0$ in {\bf Step 1}. For the induction step, assuming the bounds hold at level $M$, we obtain refined estimates on $\wt{\Delta}^{\v}_m(\gamma)$ in {\bf Step 2 -  4}, and finally convert these bounds into the desired estimate for $\Delta^{\v}_m(\gamma)$ at level $M+1$ in {\bf Step 5}.
    
    Throughout, $C_{m,M}, C_{m,M}'$ denote constants that only depends on $m,M$, and may change line to line. We drop the subscript $M$ (resp. $m$) if they do not depend on $M$ (resp. $m$).

		\medskip
	\noindent{\bf Step 0 -  Taylor approximation to bound $\wt{\Delta}_m$:} For $m\geq 1$, recall that we denoted $f_m=\cP \wt{h}_m$. Applying a $M+2$-order Taylor approximation to $\cP \wt{h}_m$ to the right-hand side of \eqref{eq:taylorBasis} yields 
		\begin{align} \label{eq:taylor}
			\bigg|&\EE\Big[\wt{h}_m(Z)\Big]-\EE\Big[\wt{h}_m(G)\Big]\bigg| = \Big|\EE\left[f'_m(Z)\right] -\EE\left[Zf_m(Z)\right]\Big|\nonumber\\ 
			&= \Bigg|\frac{1}{\sqrt{\v}}\sum\limits_{i=1}^n \EE\bigg[Y_i\bigg(\sum\limits_{\ell=2}^{M+1} \frac{(Z-Z_i)^\ell}{\ell!}f_m^{(\ell)}(Z_i) + \frac{(Z-Z_i)^{M+2}}{(M+2)!}f_m^{(M+2)}(T_i)\bigg)\bigg]\nonumber\\
			& \quad\quad + \frac{1}{\v}\sum\limits_{i=1}^n\EE\left[Y_i^2\right]\EE\bigg[\sum\limits_{\ell=2}^{M+1} \frac{(Z-Z_i)^{\ell-1}}{(\ell-1)!}f_m^{(\ell)}(Z_i) + \frac{(Z-Z_i)^{M+1}}{(M+1)!} f_m^{(M+2)}(T'_i)\bigg]\Bigg|,
		\end{align}		
        where $T_i$, and $T'_i$ are some random variables arising from the Lagrange form of the Taylor remainder.
		Since $Z-Z_i = \frac{Y_i}{\sqrt{\v}}$,  and $|Y_i| \leq 1$ almost surely,
		\begin{align*}
			\Bigg|\frac{1}{\sqrt{\v}}&\sum\limits_{i=1}^n\left\{\EE\left[\frac{Y_i(Z-Z_i)^{M+2}}{(M+2)!}f_m^{(M+2)}(T_i)\right]+ \frac{1}{\sqrt{\v}}\EE\left[Y_i^2\right]\EE\left[\frac{(Z-Z_i)^{M+1}}{(M+1)!}f_m^{(M+2)}(T_i)\right]\right\}\Bigg|\\&=	\frac{1}{\v^{\frac{M+3}{2}}}\Bigg|\sum\limits_{i=1}^n\left\{\EE\left[\frac{Y^{M+3}_i}{(M+2)!}f_m^{(M+2)}(T_i)\right]+ \EE\left[Y_i^2\right]\EE\left[\frac{Y_i^{M+1}}{(M+1)!}f_m^{(M+2)}(T_i)\right]\right\}\Bigg|\\
			&\leq \frac{2}{(M+2)!}\frac{1}{\v^{\frac{M+3}{2}}}\sum\limits_{i=1}^n\Bigg\{\EE\Big[Y^{2}_i\cdot \big|f_m^{(M+2)}(T_i)\big|\Big]+ \EE\left[Y_i^2\right]\EE\Big[\big|f_m^{(M+2)}(T_i)\big|\Big]\Bigg\}\,.
		\end{align*}
		Recalling $\sum_i \E[Y_i^2]=\v$, this is at most
		\begin{equation}\label{eq:I1bound}
        \begin{split}
		    \mathcal{I}_1
            &:= \frac{2}{(M+2)!}\frac{1}{\v^{\frac{M+1}{2}}}\Big\|f_m^{(M+2)}\Big\|_\infty\\
            &= \frac{2}{(M+2)!}\frac{1}{\v^{\frac{M+1}{2}}} \frac{1}{1+2\gamma}\|h_{m-1}^{(M+2)}\|_{\infty}\leq C_{m,M}\frac{1}{(1+2\gamma)\gamma^{\frac{m}{2}-1}}\left(\frac{\gamma}{\v}\right)^{\frac{M+1}{2}},
        \end{split}
		\end{equation}
        where we used $f_m=-\wt{h}/(1+2\gamma)$ (cf.~ Lemma~\ref{lem:poissol}) and $\|h_{m-1}^{(M+2)}\|_{\infty}\leq C_{m,M}\gamma^{\frac{M+3-m}{2}}$ from Lemma~\ref{lem:hdervbounds}.
        Similarly, the rest of the term in the right-hand side of \eqref{eq:taylor} is bounded by
		\begin{align*}
			&\Bigg|\frac{1}{\sqrt{\v}}\sum\limits_{i=1}^n\Bigg\{ \EE\bigg[Y_i\sum\limits_{\ell=2}^{M+1} \frac{(Z-Z_i)^\ell}{\ell!}f_m^{(\ell)}(Z_i)\bigg] + \frac{1}{\sqrt{\v}}\EE\left[Y_i^2\right]\EE\bigg[\sum\limits_{\ell=2}^{M+1} \frac{(Z-Z_i)^{\ell-1}}{(\ell-1)!}f_m^{(\ell)}(Z_i)\bigg]\Bigg\}\Bigg|\\
			&=	\Bigg|\frac{1}{\sqrt{\v}}\sum\limits_{i=1}^n\Bigg\{ \sum\limits_{\ell=2}^{M+1} \frac{\EE\big[Y_i^{\ell+1}\big]}{\v^{\frac{\ell}{2}}\ell!}\EE\left[f_m^{(\ell)}(Z_i)\right] + \EE\left[Y_i^2\right]\sum\limits_{\ell=2}^{M+1}\frac{\EE\big[Y_i^{\ell-1}\big]}{\v^{\frac{\ell}{2}}(\ell-1)!}\EE\left[ f_m^{(\ell)}(Z_i)\right]\Bigg\}\Bigg|\\
			&\leq \frac{2}{\sqrt{\v}}\sum\limits_{i=1}^n\EE\left[Y_i^2\right]\sum\limits_{\ell=2}^{M+1} \frac{1}{\v^{\frac{\ell}{2}}}\bigg|\EE\Big[f_m^{(\ell)}(Z_i)\Big]\bigg|,
		\end{align*}
        where the equality holds since $Z-Z_i$ is independent of $Y_i$, and the inequality holds by the triangle inequality and $|Y_i|\leq 1$.
		For $i \in [n]$ we let $G_i = \mathcal{N}(0,\mathrm{Var}(Z_i))$ and set
		\begin{equation}\label{eq:def:I2:I3}
        \begin{split}
			\mathcal{I}_2 &:=\frac{2 }{\sqrt{\v}}\sum\limits_{i=1}^n\EE\left[Y_i^2\right] \sum\limits_{\ell=2}^{M+1} \frac{1}{\v^{\frac{\ell}{2}}}\bigg|\EE\Big[f_m^{(\ell)}(Z_i)\Big]-\EE\Big[f_m^{(\ell)}(G_i)\Big]\bigg|\\
			\mathcal{I}_3 &:= \frac{2 }{\sqrt{\v}}\sum\limits_{i=1}^n\EE\left[Y_i^2\right] \sum\limits_{\ell=2}^{M+1} \frac{1}{\v^{\frac{\ell}{2}}}\bigg|\EE\Big[f_m^{(\ell)}(G_i)\Big]\bigg|
        \end{split}
		\end{equation}
		Combining the above bounds and collecting terms in \eqref{eq:taylor} gives the bound
		\begin{equation} \label{eq:zerotaylorbound}
			\bigg|\EE\Big[\wt{h}_m(Z)\Big]-\EE\Big[\wt{h}_m(G)\Big]\bigg| \leq \mathcal{I}_1+ \mathcal{I}_2+ \mathcal{I}_3.
		\end{equation}
		After establishing the base case, we shall bound each term separately.

        \medskip
	\noindent{\bf Step 1 - The base case $M=0$:} Fix $\v>\max( 1,\gamma)$. To prove the base case, we apply \eqref{eq:zerotaylorbound} with $M=0$. In this case, $\mathcal{I}_2,\mathcal{I}_3 = 0$ by definition. Thus, the bound \eqref{eq:I1bound} for $\cI_1$ yields that for every $m\geq 1$,
		$$\bigg|\EE\left[\wt{h}_m(Z)\right]-\left[\wt{h}_m(G)\right]\bigg|\leq \frac{C_{m}}{(1+2\gamma)\gamma^{\frac{m}{2}-1}} \sqrt{\frac{\gamma}{\v}}.$$
		Taking a supremum over all $Z \in \mathcal{S}^\v$ then shows
		\begin{equation*}\label{eq:tilde:base}
		    \wt{\Delta}^\v_m(\gamma) \leq \frac{C_{m}}{(1+2\gamma)\gamma^{\frac{m}{2}-1}} \sqrt{\frac{\gamma}{\v}}.
		\end{equation*}
        We then use Lemma~\ref{lem:zerototwotilde} to bound $\Delta_0^{\v}(\gamma)$. Since the previous estimate applies for all $\gamma\in (0,\v)$, 
		\begin{align*}
			\Delta^\v_0(\gamma) \leq\frac{1}{\sqrt{1+2\gamma}}\int_0^\gamma\sqrt{1+2t}\wt{\Delta}^\v_2(t)dt&\leq 	\frac{C}{\sqrt{1+2\gamma}}\frac{1}{\sqrt{\v}}\int_0^\gamma \sqrt{\frac{t}{1+2t}}dt\\
			&\leq  \frac{ C'\gamma}{\sqrt{1+2\gamma}} \frac{1}{\sqrt{\v}}.
		\end{align*}
      Therefore, applying these estimates to the bound on $\Delta_m^{\v}(\gamma)$ in Lemma~\ref{lem:wtDelta:to:Delta} yields
        \[
        \begin{split}
            \Delta_m^{\v}(\gamma)
            \leq \frac{C_{m,M}}{(1+2\gamma)^{\frac{m}{2}}}\Delta_0^{\v}(\gamma)+\sum_{j=1}^{m}\frac{C_{m,M}}{(1+2\gamma)^{\frac{m-j}{2}}}\wt{\Delta}_j^{\v}(\gamma)\leq \frac{C_{m,M}'}{\gamma^{\frac{m}{2}}}\sqrt{\frac{\gamma}{1+2\gamma}}\sqrt{\frac{\gamma}{\v}}.
        \end{split}
        \]
		Since $\v>\gamma$, this concludes the base case. 
		
        Next, we carry out the induction step. Assume that the desired claim holds at level $M$. For $\v>\max(M+2,\gamma)$, we prove the desired bound holds for $M+1$.
        \medskip
		
	\noindent {\bf Step 2 - Bounding the Gaussian approximation term $\mathcal{I}_2$ through induction:}  We bound $\mathcal{I}_2$ using the inductive hypothesis. Fix $i\in [n]$ and $2\leq \ell \leq M+1$. Recall that $G_i$ is a Gaussian random variable with the same variance as $Z_i$ and set $\v' = \v - \mathrm{Var}(Y_i)$. Since $f_m=-h_{m-1}/(1+2\gamma)$ (cf. \eqref{eq:Phtilde}), Lemma~\ref{lem:hdervs} gives the expansion of $f_{m}^{(\ell)}$ in terms of $(h_{m-1+i})_{-\ell\leq i\leq \ell}$, so by a triangle inequality
	\begin{equation}\label{eq:poisson:derivative}
        \begin{split}
			\left|\EE\left[f_m^{(\ell)}(Z_i)\right] - \EE\left[f_m^{(\ell)}(G_i)\right]\right|
			&\leq \frac{C_{m,M}}{1+2\gamma}\sum_{i=-\ell}^{\ell}\gamma^{\frac{\ell+i}{2}}\Big|\EE\left[h_{m-1+i}(Z_i)\right] - \EE\left[h_{m-1+i}(G_i)\right] \Big|\\
		 &\leq C_{m,M}\frac{\gamma^{\frac{\ell}{2}}}{1+2\gamma}\sum_{i= -\ell}^{\ell}\gamma^{\frac{i}{2}}\left(\frac{\v'}{\v}\right)^{\frac{m-1+i}{2}}\Delta^{\v'}_{m-1+i}\left(\frac{\v'\gamma}{\v}\right),
        \end{split}
		\end{equation}
        where we used Lemma~\ref{lem:variancedec} in the last step. Here, we adopted the convention from Lemma~\ref{lem:hdervs} that $h_k\equiv$ whenever $k<0$, and correspondingly set $\Delta_k\equiv 0$ in this range.
        
        To apply the inductive hypothesis, we need to guarantee that $\v'$ lies in the admissible range at level $M$. Since $\v>M+2$ and $|Y_i|\leq 1$ a.s.,
        \[
        \v'\equiv \v-\Var(Y_i)\geq \v-1>M+1,
        \]
       and $\frac{\v'}{\v}>\frac{M+1}{M+2}\geq \frac{1}{2}$.
        Letting $\gamma'=\frac{\v'}{\v}\gamma$, we also have $\frac{\gamma'}{\v'}=\frac{\gamma}{\v}<1$.
        Thus, the inductive hypothesis applied with $(\v',\gamma')$. Writing $j=m-1+i$, for any $-\ell\leq i\leq \ell$, we obtain from the inductive hypothesis 
		\begin{align*}
			\left(\frac{\v'}{\v}\right)^{\frac{j}{2}}\Delta^{\v'}_j\left(\frac{\v'\gamma}{\v}\right) &\leq C_{m,M}\left(\frac{\v'}{\v}\right)^{\frac{j}{2}}\left( \frac{\log(2+2\frac{\v'}{\v}\gamma)^{M}}{\sqrt{\v'}\left(1+2\frac{\v'}{\v}\gamma\right)^{\frac{j+2}{2}}} +  \frac{1}{\left(\frac{\v'}{\v}\gamma\right)^{\frac{j}{2}}}\sqrt{\frac{\gamma}{1+2\gamma}}\left(\frac{\gamma}{\v}\right)^{\frac{M}{2}}\right)\\
			&\leq C_{m,M}'\left(\frac{\log(2+2\gamma)^{M}}{\sqrt{\v}(1+2\gamma)^{\frac{j+2}{2}}} + \frac{1}{\gamma^{\frac{j}{2}}}\sqrt{\frac{\gamma}{1+2\gamma}}\left(\frac{\gamma}{\v}\right)^{\frac{M}{2}}\right),
		\end{align*}
         where we used $\frac{\v'}{\v}\in [\frac{1}{2},1]$ in the last step. Substituting this bound into \eqref{eq:poisson:derivative} yields
         \[
         \left|\EE\left[f_m^{(\ell)}(Z_i)\right] - \EE\left[f_m^{(\ell)}(G_i)\right]\right|\leq C_{m,M}
        \frac{\gamma^{\frac{\ell}{2}}}{1+2\gamma}\left(\frac{\log(2+2\gamma)^{M}}{\sqrt{\v}(1+2\gamma)^{\frac{m+1}{2}}}+\frac{1}{\gamma^{\frac{m-1}{2}}}\sqrt{\frac{\gamma}{1+2\gamma}}\left(\frac{\gamma}{\v}\right)^{\frac{M}{2}}\right).
         \]
         Multiplying $\v^{-\ell/2}$ and summing over $\ell=2,\ldots, M+1$ and using $\gamma<\v$, we obtain
		\begin{align*}
		\sum_{\ell=2}^{M+1}\frac{1}{\v^{\frac{\ell}{2}}}\left|\EE\left[f_m^{(\ell)}(Z_i)\right]-\EE\left[f_m^{(\ell)}(G_i)\right]\right|
        &\leq C_{m,M}\frac{\gamma}{\v}\left(\frac{\log(2+2\gamma)^{M}}{\sqrt{\v}(1+2\gamma)^{\frac{m+3}{2}}} + \frac{1}{\gamma^{\frac{m+1}{2}}}\sqrt{\frac{\gamma}{1+2\gamma}} \left(\frac{\gamma}{\v}\right)^{\frac{M}{2}}\right)\\
            &= C_{m,M}\frac{1}{\sqrt{\v}}\left(\frac{\gamma \log(2+2\gamma)^{M}}{\v(1+2\gamma)^{\frac{m+3}{2}}} +  \frac{1}{\gamma^{\frac{m}{2}}}\sqrt{\frac{\gamma}{1+2\gamma}}\left(\frac{\gamma}{\v}\right)^{\frac{M+1}{2}}\right).
		\end{align*}
	A key point here is that we gained an extra factor of $\sqrt{\frac{\gamma}{\v}}$. Recalling that $\v = \sum\limits_{i=1}^n \mathrm{Var}(Y_i)$, this yields
		\begin{align} \label{eq:I2bound}
			\mathcal{I}_2 &\equiv  \frac{2}{\sqrt{\v}}\sum\limits_{i=1}^n\EE\left[Y_i^2\right]\sum\limits_{\ell=2}^{M+1}\frac{1}{\v^\frac{\ell}{2}}\left|\EE\left[f_m^{(\ell)}(Z_i)\right]-\EE\left[f_m^{(\ell)}(G_i)\right]\right|\nonumber\\
			&\leq C_{m,M}\left(\frac{\log(2+2\gamma)^M}{\v(1+2\gamma)^{\frac{m+1}{2}}}+\frac{1}{\gamma^{\frac{m}{2}}}\sqrt{\frac{\gamma}{1+2\gamma}}\left(\frac{\gamma}{\v}\right)^{\frac{M+1}{2}}\right).
		\end{align}
        \medskip

		\noindent{\bf Step 3 - Bounding the Gaussian expectation term $\mathcal{I}_3$:}
		Recalling the definition of $\mathcal{I}_3$ in \eqref{eq:def:I2:I3},
        \[
     \cI_3\equiv \frac{2 }{\sqrt{\v}}\sum\limits_{i=1}^n\EE\left[Y_i^2\right] \sum_{\ell=2}^{M+1} \frac{1}{\v^{\frac{\ell}{2}}}\bigg|\EE\Big[f_m^{(\ell)}(G_i)\Big]\bigg|\leq 2M\max_{1\leq i\leq n}\Bigg\{\sum_{\ell=2}^{M+1} \frac{1}{\v^{\frac{\ell-1}{2}}}\bigg|\EE\Big[f_m^{(\ell)}(G_i)\Big]\bigg|\Bigg\},
        \]
        where the inequality follows from $\v=\sum_i \Var(Y_i)$. To this end, fix $i\in [n]$, and set $\v'\equiv \Var(Z_i)=\v-\Var(Y_i)$. Since $|Y_i|\leq 1$ a.s. and $\v >M+2$, we have $\v'/v\geq 1/2$. Thus, recalling $G_i\sim \cN(0,\v')$, 
        we can estimate the Gaussian expectations using Lemma \ref{lem:gausexpec}. In particular, we have $\E[f_m^{(\ell)}(G_i)]=0$ whenever $m+\ell$ is even.
        
        If $m$ is odd, the first non-vanishing term in the sum corresponds to $\ell = 2$. Thus, Lemma~\ref{lem:gausexpec} yields
		\begin{equation*} 
			\sum\limits_{\ell=2}^{M+1}\frac{1}{\v^{\frac{\ell-1}{2}}}\EE\left[f_m^{(\ell)}(G_i)\right] \leq \frac{C_{m,M}}{\sqrt{\v}(1+2\gamma)^{\frac{m+2}{2}}}.
		\end{equation*}
		If $m$ is even, the first non-vanishing term occurs at $\ell=3$ which is odd, and Lemma~\ref{lem:gausexpec} gives
		\begin{equation*} \label{eq:evenexpec}\sum\limits_{\ell=2}^{M+1}\frac{1}{\v^{\frac{\ell-1}{2}}}\EE\left[f_m^{(\ell)}(G_i)\right] \leq \frac{C_{m,M}}{\v(1+2\gamma)^{\frac{m+3}{2}}}.
		\end{equation*}
        Combining these estimates, we conclude that
        \begin{equation}\label{eq:I3bound}
        \cI_3\leq  \frac{C_{m,M}}{\sqrt{\v}(1+2\gamma)^{\frac{m+2}{2}}}\quad \textnormal{if $m$ is odd},\qquad 
             \cI_3\leq  \frac{C_{m,M}}{\v(1+2\gamma)^{\frac{m+3}{2}}}\quad \textnormal{if $m$ is even}
        \end{equation}
	\medskip
		\noindent{\bf Step 4 - Combining the bounds for $\wt{\Delta}^\v_m(\gamma)$:}
		We now combine the estimates obtained in the previous steps.
        
        When $m$ is odd, we substitute the bounds for $\cI_1$ from \eqref{eq:I1bound}, $\cI_2$ from \eqref{eq:I2bound}, and $\cI_3$ from \eqref{eq:I3bound} into  \eqref{eq:zerotaylorbound}. Taking the supremum over $Z\in \mathcal{S}^{\v}$ and using $\v>\gamma$, we obtain
	\begin{equation}\label{eq:tilde:Delta:odd}
	   \wt{\Delta}_m^{\v}(\gamma) \leq C_{m,M}\left(\frac{\log(2+2\gamma)^{M}}{\sqrt{\v}(1+2\gamma)^{\frac{m+2}{2}}} +  \frac{1}{\gamma^{\frac{m}{2}}}\sqrt{\frac{\gamma}{1+2\gamma}}\left(\frac{\gamma}{\v}\right)^{\frac{M+1}{2}}\right),
	\end{equation}
   Similarly, when $m$ is even the bounds \eqref{eq:I1bound}, \eqref{eq:zerotaylorbound}, \eqref{eq:I2bound}, and \eqref{eq:I3bound} yield
     \begin{equation}\label{eq:tilde:Delta:even}
           \wt{\Delta}_m^{\v}(\gamma) \leq C_{m,M}\left(\frac{\log(2+2\gamma)^{M}}{\v(1+2\gamma)^{\frac{m+1}{2}}} +  \frac{1}{\gamma^{\frac{m}{2}}}\sqrt{\frac{\gamma}{1+2\gamma}}\left(\frac{\gamma}{\v}\right)^{\frac{M+1}{2}}\right).
     \end{equation}
	\medskip
	\noindent {\bf Step 5 - Finishing the induction step:}
	We now transfer the bounds of $\wt{\Delta}_m^{\v}(\gamma)$ for $m\geq 1$ into $\Delta_m^{\v}(\gamma)$ for $m\geq 0$. First, combining Lemma~\ref{lem:zerototwotilde} with the estimate \eqref{eq:tilde:Delta:even} for $m=2$, we obtain
		\begin{equation*}\label{eq:zero:Delta}
        \begin{split}
			\Delta^\v_0(\gamma) &\leq\frac{1}{\sqrt{1+2\gamma}}\int_0^\gamma\sqrt{1+2t}\wt{\Delta}^\v_2(t)dt\\
			&\leq \frac{C_{M}}{\sqrt{1+2\gamma}}\int_0^\gamma\sqrt{1+2t}\left(\frac{\log(2+2\gamma)^{M}}{\v(1+2t)^{\frac{3}{2}}} +  \frac{\sqrt{t}}{t\sqrt{(1+2t)}}\left(\frac{t}{\v}\right)^{\frac{M+1}{2}}\right)dt\\
			&\leq C_M'\left(\frac{\log(2+2\gamma)^{M+1}}{\v\sqrt{1+2\gamma}}+\sqrt{\frac{\gamma}{1+2\gamma}}\left(\frac{\gamma}{\v}\right)^{\frac{M+1}{2}}\right).
        \end{split}
		\end{equation*}
        We now combine this estimate with Lemma~\ref{lem:wtDelta:to:Delta} and the bound \eqref{eq:tilde:Delta:even} for even $m$. This yields
    \[
    \begin{split}
    \Delta_{m}^{\v}(\gamma)
    &\leq  \frac{C_{m,M}}{(1+2\gamma)^{\frac{m}{2}}}\Delta_0^{\v}(\gamma)+\sum_{\substack{0\leq j\leq m\\ j:\textnormal{ even}}} \frac{C_{m,M}}{(1+2\gamma)^{\frac{m-j}{2}}} \wt{\Delta}_j^{\v}(\gamma)\\
    &\leq C'_{m,M}\left(\frac{\log(2+2\gamma)^{M}}{\v(1+2\gamma)^{\frac{m+1}{2}}} +  \frac{1}{\gamma^{\frac{m}{2}}}\sqrt{\frac{\gamma}{1+2\gamma}}\left(\frac{\gamma}{\v}\right)^{\frac{M+1}{2}}\right).
    \end{split}
    \]
    The case where $m$ is odd follows by the same argument, replacing \eqref{eq:tilde:Delta:odd} with \eqref{eq:tilde:Delta:even}. This completes the inductive step and the proof.
	\end{proof}\vspace{-\baselineskip}
    With Proposition \ref{prop:centeredapprox} we can finally prove Theorem \ref{thm:mainapprox}. 
    Having established the result for $\Delta_m^\v(\gamma)$, the final step is to reintroduce the mean $\mu$ and establish the same result $\Delta_{m,\mu}^\v(\gamma)$ provided that $\mu$ is small enough. 
    
\begin{proof}[Proof of Theorem \ref{thm:mainapprox}]
  The proof follows by using the bounds established in Proposition~\ref{prop:centeredapprox} for $\mu=0$, and then Taylor expanding $h_m(Z+\mu)$ with respect to $\mu$. Applying $M$'th order Taylor expansion gives
	\begin{equation} \label{eq:mean}
    \begin{split}
		\EE[h_m(Z+\mu)] - \EE[h_m(Z)]  = \sum\limits_{\ell=1}^{M-1} \frac{\mu^\ell}{\ell!}\EE\left[h^{(\ell)}_m(Z)\right] + \frac{\mu^{M}}{M!}\EE\left[h^{(\ell)}_m(T)\right],
        \end{split}
	\end{equation}
    for some $T\in [Z,Z+\mu]$. We use
	$|\EE[h^{(\ell)}_m(Z)]| \leq |\EE[h^{(\ell)}_m(Z)] - \EE[h^{(\ell)}_m(G)]|+|[h^{(\ell)}_m(G)]|$ to bound the first summmand.
	A calculation analogous to Lemma \ref{lem:gausexpec} shows that for any fixed $\ell\geq 0$,
	$$\bigg|\EE\Big[h^{(\ell)}_m(G)\Big]\bigg| =\bigg|\E\Big[H_{\ell}(G)h_m(G)\Big]\bigg|= O\bigg(\frac{1}{(1+2\gamma)^{\frac{m+1}{2}}}\bigg),$$
    where $H_{\ell}(x)=(-1)^{\ell}e^{\frac{x^2}{2}}\frac{d^{\ell}}{dx^{\ell}}e^{\frac{x^2}{2}}$ is the Hermite polynomial and $O(\cdot)$ hides the dependence on $\ell$. When $m$ is odd, we have the improvement
    \begin{equation} \label{eq:improvedexpec}
        \bigg|\EE\Big[h^{(\ell)}_m(G)\Big]\bigg| = O\bigg(\frac{1}{(1+2\gamma)^{\frac{m+2}{2}}}\bigg) 
    \end{equation}
    Moreover, invoking the established bounds for $\Delta_m^\v(\gamma)$ from Proposition~\ref{prop:centeredapprox}, along with Lemma \ref{lem:hdervs} gives that for any $0\leq \ell\leq M$,
	\begin{align*} 
		\bigg|\EE\Big[h^{(\ell)}_m(Z)\Big] - \EE\Big[h^{(\ell)}_m(G)\Big]\bigg| &\leq \sum\limits_{i=-\ell}^\ell c_{m,\ell,i}\gamma^{\frac{\ell+i}{2}}\Delta_{m+i}^\v(\gamma) \nonumber\\
        &\leq C_{m}\left(\frac{\gamma^{\frac{\ell}{2}}\log(2+2\gamma)^M}{\sqrt{\v}(1+2\gamma)^{\frac{m+2}{2}}} + \frac{\gamma^{\frac{\ell+1}{2}}}{\gamma^{\frac{m}{2}}\sqrt{1+2\gamma}}\left(\frac{\gamma}{\v}\right)^{\frac{M}{2}}\right).
	\end{align*} 
   Here and below, we denote $C_{m}$ as a constant that only depends on $m,M$, and $\bound$, which may change from line to line. Consequently, since $|\mu| \leq C/\sqrt{\v}$ and $\omega >\max(\gamma, (M+1)\bound)$, we have for even $m$ that
	\begin{align*}
		\sum\limits_{\ell=1}^{M-1} \frac{|\mu|^\ell}{\ell!}&\left(\left|\EE\left[h^{(\ell)}_m(Z)\right] - \EE\left[h^{(\ell)}_m(G)\right]\right| + \left|\EE\left[h^{(\ell)}_m(G)\right]\right|\right)\\
        &\leq C_{m} \left(\frac{|\mu|\sqrt{\gamma}\log(2+2\gamma)^M}{\sqrt{\v}(1+2\gamma)^\frac{m+2}{2}} + \frac{\mu^2}{(1+2\gamma)^{\frac{m+1}{2}}} + \frac{\sqrt{\gamma}}{\gamma^{\frac{m}{2}}\sqrt{1+2\gamma}}\left(\frac{\gamma}{\v}\right)^{\frac{M}{2}}\right)\\
        &\leq C_{m}\left(\frac{\log(2+2\gamma)^M}{\v(1+2\gamma)^\frac{m+1}{2}} + \frac{1}{\gamma^{\frac{m}{2}}}\left(\frac{\gamma}{\v}\right)^{\frac{M}{2}}\right),
	\end{align*}
	where we also used that $h_m'$ is an odd function and so $\EE\left[h_m'(G)\right] = 0$.
    When $m$ is odd, the first derivative does not vanish, and we instead use the improved bound in \eqref{eq:improvedexpec}.
    	Then for odd $m$ the bounds on $\mu$ imply,
	\begin{align*}
		\sum\limits_{\ell=1}^{M-1} \frac{|\mu|^\ell}{\ell!}&\left(\left|\EE\left[h^{(\ell)}_m(Z)\right] - \EE\left[h^{(\ell)}_m(G)\right]\right| + \left|\EE\left[h^{(\ell)}_m(G)\right]\right|\right)\\
        &\leq C_m \left(\frac{|\mu|\sqrt{\gamma}\log(2+2\gamma)^M}{\sqrt{\v}(1+2\gamma)^\frac{m+2}{2}} + \frac{|\mu|}{(1+2\gamma)^{\frac{m+2}{2}}} + \frac{1}{\gamma^{\frac{m}{2}}\sqrt{1+2\gamma}}\left(\frac{\gamma}{\v}\right)^{\frac{M}{2}}\right)\\
        &\leq C_m \left(\frac{\log(2+2\gamma)^M}{\sqrt{\v}(1+2\gamma)^{\frac{m+2}{2}}} + \frac{1}{\gamma^{\frac{m}{2}}}\left(\frac{\gamma}{\v}\right)^{\frac{M}{2}}\right).\\
	\end{align*}
	As for the remainder term, by Lemma \ref{lem:hdervbounds}, and since $|\mu| \leq C/\sqrt{\v}$,
	$$\frac{|\mu|^{M}}{M!}\bigg|\EE\left[h^{(M)}_m(T)\right] \bigg|\leq \frac{|\mu|^{M}}{ M!}\|h^{( M)}_m\|_{\infty} \leq C_m \frac{\left(\mu\sqrt{\gamma}\right)^{M}}{\gamma^\frac{m}{2}}\leq C_m'\frac{1}{\gamma^\frac{m}{2}}\left(\frac{\gamma}{\v}\right)^{\frac{M}{2}}.$$ Combining the three displays above with \eqref{eq:mean}, and using Proposition~\ref{prop:centeredapprox} to bound $|\E[h_m(Z)]-\E[h_m(G)]|$ concludes the proof.
\end{proof}\vspace{-\baselineskip}

    \section{Slow mixing in Erd\"os-R\'enyi and random $d$-regular graphs}
    In this section we will colloquially use $G$ to denote a random graph drawn from one of the following two distributions. The type of distribution will determine the possible degree behavior, which we now define.
    \begin{definition} \label{def:admis}
    We call the following two distributions \emph{admissible} for our lower bound:
    \begin{itemize}
    	\item Either $G\sim G(n,d/n)$ is an Erd\"os-R\'enyi random graph with average degree $d<\frac{n}{2}$. In particular \emph{$d$ can depend on $n$}.
    	\item Or $G$ is a random $d$-regular graphs, where \emph{$d$ is fixed} and potentially large.
    \end{itemize}
        \end{definition}
    Other than some specific estimates, which we will handle separately below, the arguments are virtually identical for these two random graph ensembles. 
For $G$ as above, denote the anti-ferromagnetic Ising model on $G$ with inverse temperature $\beta$ by $\nu_{\beta,d}$. That is, $\nu_{\beta,d}$ is a probability measure on $\{-1,+1\}^{n}$ such that
    \[
    \nu_{\beta, d}(x)\propto \exp\left(\beta\sqrt{d} H_{n,d}(x)\right)\,,\quad H_{n,d}(x):=-\frac{1}{\sqrt{d}}\langle x,A_Gx \rangle,
    \]
    where $A_G=(A_{ij})_{i,j\in [n]}$ denotes the adjacency matrix of $G$. 
    We prove slow mixing of $\nu_{\beta,d}$ for $\beta \gtrsim d^{-1/2}$ by following the proof of slow mixing of SK model at low temperature by \cite{Sellke25}, which builds upon the existence of \textit{gapped states} by~\cite{minzer2024perfect, dandi2025maximal}. To define this notion, consider the \textit{local field} of a configuration $x\in \{-1,+1\}^n$ at site $i\in [n]$
   \begin{equation}\label{eq:def:local:field}
  \partial_i H_{n,d}(x)=x_i\cdot (H_{n,d}(x)-H_{n,d}(x^{(i)}))/2=-\frac{2}{\sqrt{d}}(A_G x)_i,
   \end{equation}
   where $x^{(i)}$ denotes the configuration obtained by flipping $i$'th coordinate of $x$.

   For $\kappa,\delta>0$, we say $x=(x_i)_{i\in [n]}\in \{-1,+1\}^n$ is $(\kappa,\delta)$-gapped state for $H_{n,d}$ if
   \begin{equation}\label{eq:def:gapped}
   |I_x|\equiv |\left\{i\in [n]:x_i   \partial_i H_{n,d}(x)<\kappa \right\}|\leq \delta n.
   \end{equation}
   That is, flipping any $i\in [n]$ coordinate except $\delta n$ coordinates decreases energy by at least $\kappa$. Note that for $G \sim G(n,d/n)$,  it is essential to allow for $\delta > 0$. Indeed, when $d = \Theta(1)$, $G(n,d/n)$ has isolated vertices which cannot contribute to any gapped state. Below, we will increase $d$ as a function of $\delta$ to lessen this effect.

    Our results also extend immediately to the \emph{Kawasaki dynamics on the central slice}, which we now introduce briefly. These dynamics are associated to the Gibbs measure conditional on the slice $\{x:\langle \one, x\rangle/n = m\}$ for some $m\in [-1,1]$. The main difference in the Kawasaki dynamics is that in each update, two randomly chosen neighbors resample their spins according to the restricted Gibbs measure, conditional on the other vertices. This is in contrast to Glauber dynamics, where only a single vertex may change its spin, and the choice of the vertex does not depend on the structure of the graph. When $m = 0$ in the restriction, we shall refer to the process as Kawasaki dynamics on the central slice.
   
 \begin{proposition}\label{prop:slow:mixing:ER}
        Suppose that there exists an absolute constant $\kappa>0$ such that for any fixed $\delta>0$, if $d\geq d(\delta)$ then with probability $1-o(1)$, there exists a $(\kappa,\delta)$-gapped state for $H_{n,d}$ satisfying $\langle \one , x\rangle =0$.
     
        Then, there exist absolute constants $d_0, C,c>0$ such that if $d\geq d_0$ and $\beta> C/\sqrt{d}$, then with probability $1-o(1)$ over the randomness of the admissible distribution (as in Definition \ref{def:admis}) of $G$, we have that the mixing time of Glauber dynamics on $\nu_{\beta,d}$ is at least 
        $e^{cn\beta \sqrt{d}}$. The same result applies when the Glauber dynamics is replaced with the Kawasaki dynamics on the central slice.
    \end{proposition}
    For large enough, but fixed $d$, the assumption of Proposition~\ref{prop:slow:mixing:ER} is established in \cite[Theorem 2]{dandi2025maximal} under a numerical hypothesis that was verified in~\cite{minzer2024perfect} using interval arithmetics. The paper also shows the existence of a gapped state in for $G(n,1/2)$, and in Subsection \ref{sec:intermediate} we explain how their result extends to the entire intermediary regime $1\ll d\ll n$. The existence of gapped states in random $d$-regular is more delicate. Since the edges in this model are not independent, the required result does not follow from previous works. Instead, in Section \ref{sec:regulargapped} we built upon a method introduced in \cite{Dembo2017extremal} to couple between the so-called configuration model and an Erd\"os-R\'enyi graph. The coupling allows us to show the existence of gapped states in the configuration model, which then implies the same for random $d$-regular graphs, for any fixed $d$. 
     \begin{remark} \label{rmk:gapped}
    	It is plausible that a direct approach can show the existence of gapped states in random $d$-regular graphs when $d$ grows with $n$. In that case, as will become evident below, our proof can easily extend to more general regimes. 
    \end{remark}

   In light of the above existence results, Proposition~\ref{thm:explowerbound} follows immediately by combining Proposition \ref{prop:slow:mixing:ER}, as well as the analog result for Kawasaki dynamics follows immediately. We thus focus on Proposition~\ref{prop:slow:mixing:ER} for the rest of this section.
   
	As emphasized above, the proof strategy is similar to~\cite{Sellke25}, which considers the Wigner ensemble $W$ instead of  $A_G$. The only non-trivial change to make is to adapt the result \cite[Proposition 2.3]{Sellke25} proving that with high probability, any principal submatrix of $W$ of size $\rho n\times \rho n$, for some $\rho\in (0,1/2)$, has operator norm at most $O(\sqrt{\rho\log(1/\rho)})$ which is $o_{\rho}(1)$. The same is \textit{false} when $W$ is replaced by $A_G$ due to potentially high-degree nodes in the Erd\"os-R\'enyi case, or the existence of many disjoint stars in random $d$-regular graphs. Instead, we will use the fact that for \emph{all} $x,y\in \{-1,+1\}^n$, $x-y$ is delocalized when $\|x-y\|=\Omega(n)$ to replace the operator norm by another norm that reduces the effect of high-degree nodes. Define the symmetric matrix $W$:
    \[
    W:= -\frac{1}{\sqrt{d}}(A_G-\E[A_G]),
    \]
    as a normalization of $A_G$.
    For $p\in (0,1/2)$, define the set of configurations
    \begin{equation} \label{eq:sparsedef}
    	 \mathcal{X}_{p}:= \left\{x\in \{-1/\sqrt{pn}, 1/\sqrt{pn},0\}^n:\|x\|_0=pn\right\}.
    \end{equation}
    The point of $\mathcal{X}_{p}$ is that if $x,y\in \{-1,+1\}^n$ differ in exactly in $pn$ coordinates, then $x-y/\|x-y\|_2\in \mathcal{X}_{p}$. So, instead of bounding $(x-y)W(x-y)$ by $\|W\|_{\op}\|x-y\|^2$, which can incur a large error due to high-degree vertices, we use the bound below.
  	\begin{lemma}\label{lem:bernstein}
		There exist universal constants $c,C>0$ such that the following holds with probability at least $1-n^{-c}$. For all $p\in (0,1/2)\cap n^{-1}\mathbb{Z}$, we have
		\[
		\begin{split}
			\sup_{x,y \in \mathcal{X}_p}\big|\langle x,Wy \rangle\big|
			&\leq C\left(\sqrt{p\log (1/p)}+\frac{1}{\sqrt{d}}\log(1/p)\right)\,,\\
			\sup_{x \in \mathcal{X}_p, y\in \mathcal{X}_1}\big|\langle x,Wy \rangle\big|
			&\leq C\left(1+\frac{1}{\sqrt{dp}}\right)\,.
		\end{split}
		\]
	\end{lemma}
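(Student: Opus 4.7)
The plan is to fix $x, y$ first, express $\langle x, W y\rangle$ as a sum of independent mean-zero random variables, apply Bernstein's inequality, and then union-bound over an appropriate discretization. Since both bounds compare $\langle x, Wy\rangle$ against a scale that will ultimately be larger than $n^{-c}$, we can afford a union bound over $p \in (0,1/2)\cap n^{-1}\mathbb{Z}$ (which has at most $n/2$ values) at the very end, so the real task is the concentration for fixed $p$. Because $A_{ii}=0$ and $A_{ij}=A_{ji}$, one writes
\[
\langle x, W y\rangle \;=\; \frac{1}{\sqrt d}\sum_{i<j} (A_{ij}-d/n)(x_i y_j + x_j y_i),
\]
a sum of independent mean-zero random variables. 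Two quantities will drive the Bernstein bound: the variance proxy $\sigma^2 := \sum_{i<j} \tfrac{1}{d}\mathrm{Var}(A_{ij})(x_iy_j+x_jy_i)^2$ and the almost-sure bound $M$ on each summand. Using $\mathrm{Var}(A_{ij})\le d/n$ and the identity $\sum_{i<j}(x_iy_j+x_jy_i)^2 \le 2\|x\|^2\|y\|^2 + 2\langle x,y\rangle^2 \le 4$, one gets $\sigma^2 \le 4/n$ regardless of whether $y\in\mathcal{X}_p$ or $y\in \mathcal{X}_1$. The pointwise bound on the summand depends on the setting: for $x,y\in \mathcal{X}_p$ one has $M \le 4/(\sqrt{d}\,pn)$; for $x\in\mathcal{X}_p$, $y\in\mathcal{X}_1$ one has $M \le 4/(\sqrt{d}\,n\sqrt{p})$.

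For the first bound I would estimate the cardinality $|\mathcal{X}_p|\le \binom{n}{pn}2^{pn} \le (2e/p)^{pn}$, so $\log|\mathcal{X}_p|^2 \le C pn\log(1/p)$. Taking a union bound over pairs in $\mathcal{X}_p\times \mathcal{X}_p$ at failure probability $e^{-K pn\log(1/p)}$ for a large constant $K$ and applying Bernstein with $k=K pn\log(1/p)$, the required threshold is
\[
t \;\lesssim\; \sqrt{\sigma^2 k} + Mk \;\lesssim\; \sqrt{p\log(1/p)} \;+\; \frac{\log(1/p)}{\sqrt d},
\]
which is precisely the claimed rate. For the second bound the cardinality of $\mathcal{X}_1$ is $2^n$, so $\log(|\mathcal{X}_p|\cdot|\mathcal{X}_1|) \le C n$; Bernstein with $k = Cn$ together with $\sigma^2 \le 4/n$ and $M \le 4/(\sqrt{d}\,n\sqrt p)$ yields
\[
t \;\lesssim\; \sqrt{\sigma^2 k} + Mk \;\lesssim\; 1 + \frac{1}{\sqrt{dp}},
\]
again matching the claim.

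Finally, summing the failure probabilities over the at most $n/2$ admissible values of $p$ only costs a polynomial factor, so by choosing the constants generously (say failure probability $\le n^{-2c}$ for each $p$) we obtain the uniform-in-$p$ bound with probability at least $1-n^{-c}$ for a small absolute constant $c>0$. The main subtlety I anticipate is choosing the right union-bound scale: one needs $K$ large enough that $e^{-Kpn\log(1/p)}$ beats $|\mathcal{X}_p|^2 \cdot n$ uniformly in $p$ and also handles the degenerate endpoint $p = 1/n$ (where $\log(1/p) \asymp \log n$); this is harmless but deserves care since in that regime the second term $\log(1/p)/\sqrt d$ dominates. A minor technicality in the second bound is that the combined union bound over $\mathcal{X}_p\times\mathcal{X}_1$ is dominated by $|\mathcal{X}_1|=2^n$, so the dependence on $p$ enters only through $M$, consistent with the $1/\sqrt{dp}$ factor appearing in the statement.
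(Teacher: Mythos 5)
Your proposal is correct and follows essentially the same route as the paper: express $\langle x,Wy\rangle$ as a sum of independent centered summands over $i<j$, apply Bernstein's inequality with the variance proxy $O(1/n)$ and the sup bounds $O(1/(\sqrt{d}\,pn))$ (resp.\ $O(1/(\sqrt{d}\,n\sqrt{p}))$), then union bound over $\mathcal{X}_p\times\mathcal{X}_p$ (resp.\ $\mathcal{X}_p\times\mathcal{X}_1$) using $|\mathcal{X}_p|\le\exp(Cpn\log(1/p))$ and finally over the $O(n)$ admissible values of $p$. All the quantitative details (variance, pointwise bound, entropy scale, and the endpoint $p=1/n$) match the paper's proof.
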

	\begin{proof}
		Fix $p\in (0,1/2)\cap n^{-1}\mathbb{Z}$ and assume first that $G \sim G(n,d/n)$ is an Erd\"os-R\'enyi graph. We'll take a union bound over such $p$ since there are at most $n/2$ many of them. Note that since $p<1/2$, we have by Stirling approximation, 
		\[
		|\cX_p|=\binom{n}{pn}2^{pn}\leq \exp\Big(n\big(\mathcal{H}(p)+p\log 2)\Big) \leq \exp(3np\log(1/p)).
		\]
		Meanwhile, for each fixed pair $x,y\in \mathcal{X}_p$, since $\|x\|_{\infty}\cdot \|y\|_{\infty}= 1/(pn)$ and $\|x\|_2=\|y\|_2=1$. Since we've assumed for now that the entries of $W$ are independent, we have by Bernstein's inequality
		\begin{equation}\label{eq:bernstein}
			\begin{split}
				\P\left(\big|\langle x,Wy\rangle \big|\geq t\right)=\P\left(\Big|\sum_{i<j} W_{ij}(x_iy_j+x_jy_i)\Big|\geq t\right)
				&\leq \exp\left(-\frac{\frac{1}{2}t^2}{\frac{2}{n}+\frac{2}{3\sqrt{d}pn}t}\right)\\
				&\leq \exp\left(-\frac{n}{8}\min\Big\{t^2,3\sqrt{d}p t\Big\}\right)
			\end{split}
		\end{equation}
		where we used $W_{ij}$ are centered, independent with variance at $\frac{1}{n}(1-\frac{d}{n})$ and $|W_{ij}|\leq \frac{1}{\sqrt{d}}$. Thus, by a union bound over $x,y\in \mathcal{X}_p$ and $p\in (0,1/2)\cap n^{-1}\mathbb{Z}$, the first estimate holds with probability at least $1-\exp(-2np\log(1/p))\geq 1-n^{-3}$ by taking $C=50$. The second estimate follows by the same argument where we use $|\mathcal{X}_1|=2^n, |\mathcal{X}_p|\leq 2^{3n/2}$ and the analog of \eqref{eq:bernstein} holds where $3\sqrt{d}pt$ term is replaced with $3\sqrt{dp}t$.
		
		If $G$ is instead a random $d$-regular graph, then $W$ has dependencies between its entries and \eqref{eq:bernstein} does not follow from Bernstein's inequality. To handle this issue we replace the Bernstein inequality with a concentration bound from \cite{cook2018size} to arrive at the same result, see Lemma \ref{lem:edgeregularbernstein} immediately below. In particular, up to constants, Lemma \ref{lem:edgeregularbernstein} is identical to the estimate in \eqref{eq:bernstein} and so the proof proceeds in the same fashion.
	\end{proof}\vspace{-\baselineskip}
		\begin{lemma}\label{lem:edgeregularbernstein}
		Let $d$ be fixed, and suppose $G \sim G_{\reg}(n,d)$ is a random $d$-regular graph. Let $A_G$ be the adjacency matrix of $G$ and set $$W = -\frac{1}{\sqrt{d}}\left(A_G - \E[A_G]\right).$$
		Suppose that $n$ is large enough. Then, there exists a universal constant $C>0$, such that for any $x,y \in \RR^n$ with $\|x\|_2,\|y\|_2 \leq 1$,
		$$\PP\left(|\langle x, Wy\rangle| \geq t\right) \leq C\exp\left( \frac{1}{C}\frac{t^2}{\frac{1}{n} + \frac{a}{\sqrt{d}}t}\right),$$
		where $a = \|x\|_{\infty}\cdot \|y\|_{\infty}$.
	\end{lemma}
	\begin{proof}
		The proof is based on \cite[Theorem 5.1]{cook2018size}. We first assume that $x,y \in \RR_+^n$, have positive coordinates. For such vectors set  $\tilde{\sigma}^2 = \frac{d}{n-1}\sum\limits_{i\neq j}x^2_iy^2_j$ as well as $\mu = \E\left[\langle x,A_Gy\rangle\right]$. With these notation,  \cite[Theorem 5.1]{cook2018size} states, that there exists a constant $c_d > 0$, depending only on $d$, such that for any $t \geq \frac{c_d}{n}$,
		$$\P\left(\left|\langle x,A_Gy\rangle - \mu\right| \geq t\right) \leq \exp\left(-\frac{1}{C'}\frac{t^2}{\tilde{\sigma}^2+at}\right),$$
		where $C' > 1$ is some absolute constant, and where $a = \|x\|_{\infty}\|y\|_{\infty}$ is as in the statement of the result. 
		
		We now note that  $\left|\langle x, Wy\rangle\right| = \frac{1}{\sqrt{d}}\left|\langle x,A_Gy\rangle-\mu\right|$ and that $\tilde{\sigma}^2 \leq \frac{2d}{n}$, where we used $\|x\|_2,\|y\|_2 \leq 1$.
		Combining the above, and replacing $t$ by $\sqrt{d}t$, we get for $t \geq \frac{c_d}{n}$,
		\begin{equation} \label{eq:positivebernstein}
			\P\left(\left|\langle x,Wy\rangle\right| \geq t\right) \leq C'\exp\left(-\frac{1}{C'}\frac{t^2}{ \frac{1}{n}+\frac{a}{\sqrt{d}}t}\right).
		\end{equation}
		On the other hand if $t  < \frac{c_d}{n}$ and $n$ is large enough, then we can obtain the desired result by making $C'$ larger if necessary. This concludes the case when $x$ and $y$ have positive coordinates.
		
		The general case follows from a simple decomposition. For $x \in \RR^n$, write $x^+ = \max(0,x)$ and $x^- = -\min(0,x)$, i.e. separate the positive and negative entries of $x$, so that $x = x^+ - x^-$. Define $y^+$ and $y^-$ similarly. We have
		$$\big|\langle x,Wy \rangle\big| \leq \big|\langle x^+,Wy^+ \rangle\big| + \big|\langle x^-,Wy^- \rangle\big| + \big|\langle x^+,Wy^- \rangle\big| +\big|\langle x^-,Wy^+ \rangle\big|.$$
		Further, for any $\xi_1,\xi_2 \in \{\pm\}$, $\|x^{\xi_1}\|_{\infty}\cdot\|y^{\xi_2}\|_{\infty} \leq \|x\|_\infty\cdot\|y\|_{\infty}  = a$. From \eqref{eq:positivebernstein} it now follows that
		$$\P\left(\big|\langle x,Wy \rangle\big|
		\geq t\right) \leq 4C'\exp\left(-\frac{1}{16C'}\frac{t^2}{\frac{1}{n}+ \frac{a}{4\sqrt{d}}t}\right),$$
		concluding the proof.
	\end{proof}\vspace{-\baselineskip}

	From now on, we will fix a small $\kappa>0$ and choose $\rho>0$ small enough compared to $\kappa$, and $\delta>0$ small enough (resp. $d$ large enough) depending on both $\kappa$ and $\rho$. 
	
	\begin{lemma}\label{lem:energy:estimate}
		Fix $\kappa > 0$. There exists a constant $C_\kappa>0$ such that for any $n\geq C_\kappa$ and $\rho \leq 1/C_{\kappa}$, there exist $\delta>0$ and $d_0>0$, depending only on $\kappa$ and $\rho$ 
		such that on the event in Lemma~\ref{lem:bernstein}, the following holds: if $d>d_0$ and $x$ is a $(\kappa,\delta)$-gapped state with $\langle \one ,x \rangle =0$, then for any $y\in \{-1,+1\}^n$ with $\|x-y\|_1=\rho n$, we have
		\[
		H_{n,d}(y)\leq H_{n,d}(x)-\rho\kappa n/4.
		\]
	\end{lemma}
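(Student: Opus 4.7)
\medskip

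\noindent\textbf{Proof proposal.} Let $S=\{i\in[n]:x_i\neq y_i\}$, so that $|S|=\rho n/2$, and set $u=y-x$, which is supported on $S$ with $u_i=-2x_i$ for $i\in S$. Expanding quadratically in $u$ yields the fundamental identity
\[
H_{n,d}(y)-H_{n,d}(x)=-\frac{2}{\sqrt d}\langle u,A_Gx\rangle-\frac{1}{\sqrt d}\langle u,A_Gu\rangle = -2\sum_{i\in S}x_i\,\partial_iH_{n,d}(x)\;-\;\frac{1}{\sqrt d}\langle u,A_Gu\rangle,
\]
so the task reduces to bounding the linear term (which is where the gapped property enters) and the quadratic remainder (which is purely spectral).

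\medskip

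\noindent For the linear term, split the sum according to whether $i\in I_x$ or not. On the good part $S\setminus I_x$ the gapped condition gives $x_i\partial_iH_{n,d}(x)\ge\kappa$, whence $-2\sum_{i\in S\setminus I_x}x_i\partial_iH_{n,d}(x)\le -2\kappa(|S|-|I_x|)\le -\kappa\rho n+2\kappa\delta n$. On the bad part $S\cap I_x$ a single high-degree vertex could make $\partial_iH_{n,d}(x)$ individually large, so I will not estimate term by term. Instead, writing $A_G=\sqrt d\,W+\tfrac{d}{n}(\one\one^\sT-I)$ and using $\langle\one,x\rangle=0$, one gets the clean identity $x_i\partial_iH_{n,d}(x)=-2 x_i[Wx]_i+2\sqrt d/n$, hence
\[
\Big|\sum_{i\in S\cap I_x} x_i\partial_iH_{n,d}(x)\Big|\le 2\big|\langle z,Wx\rangle\big|\cdot \sqrt{|S\cap I_x|\,n}+2\sqrt d\,\delta,
\]
where $z\in\mathcal X_{p}$ with $p=|S\cap I_x|/n\le \delta$ is the normalized indicator of $S\cap I_x$ signed by $x$, and $x/\sqrt n\in\mathcal X_1$. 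The second bound of Lemma~\ref{lem:bernstein} applied to these vectors gives $|\langle z,Wx\rangle|\le C(1+1/\sqrt{dp})$, which multiplied by $\sqrt{pn\cdot n}$ yields an error of size $O(n\sqrt\delta+n/\sqrt d+\sqrt d\,\delta)$.

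\medskip

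\noindent For the quadratic term $-\frac1{\sqrt d}\langle u,A_Gu\rangle$, substitute again $A_G=\sqrt d\,W+\tfrac{d}{n}(\one\one^\sT-I)$ and use $u_i=-2x_i$ for $i\in S$: the $\one\one^\sT$ piece produces $-\tfrac{4\sqrt d}{n}(\sum_{i\in S}x_i)^2+\tfrac{4\sqrt d|S|}{n}$, whose absolute value is at most $2\sqrt d\rho$, while the $W$ piece equals $-4|S|\langle \tilde z,W\tilde z\rangle$ for $\tilde z=x\one_S/\sqrt{|S|}\in\mathcal X_{\rho/2}$. The first bound of Lemma~\ref{lem:bernstein} then gives $|\langle \tilde z,W\tilde z\rangle|\le C\big(\sqrt{\rho\log(1/\rho)}+\log(1/\rho)/\sqrt d\big)$, so
\[
\Big|\tfrac{1}{\sqrt d}\langle u,A_Gu\rangle\Big|\le 2C\rho n\sqrt{\rho\log(1/\rho)}+\tfrac{2C\rho n\log(1/\rho)}{\sqrt d}+2\sqrt d\,\rho.
\]

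\medskip

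\noindent Collecting everything,
\[
H_{n,d}(y)-H_{n,d}(x)\le -\kappa\rho n+2\kappa\delta n+C\big(n\sqrt\delta+n/\sqrt d+\sqrt d\,\delta\big)+2C\rho n\sqrt{\rho\log(1/\rho)}+\tfrac{2C\rho n\log(1/\rho)}{\sqrt d}+2\sqrt d\,\rho.
\]
Choosing first $\rho$ small enough (depending on $\kappa$) so that $2C\sqrt{\rho\log(1/\rho)}\le\kappa/8$, then $\delta$ small enough (depending on $\kappa,\rho$) so that $2\kappa\delta+C\sqrt\delta\le\kappa\rho/8$, and finally $d\ge d_0(\kappa,\rho)$ so that the remaining $1/\sqrt d$ and $\sqrt d\,\delta/n$ errors are absorbed, all the error terms sum to at most $3\kappa\rho n/4$, giving the claimed bound $H_{n,d}(y)-H_{n,d}(x)\le -\kappa\rho n/4$. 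The main obstacle in this plan is precisely the treatment of the bad sites $S\cap I_x$, where pointwise estimates fail because of high-degree vertices; replacing the pointwise bound by the restricted bilinear inequality of Lemma~\ref{lem:bernstein} (which is tailored to the sparse Erd\H os--R\'enyi setting) is the key step that makes the argument work.
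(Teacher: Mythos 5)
Your proof is correct and follows essentially the same route as the paper's: the linear-plus-quadratic expansion, the gapped condition on $S\setminus I_x$, the bilinear bound of Lemma~\ref{lem:bernstein} on $S\cap I_x$, and the restricted quadratic-form bound for the remainder (the paper merely passes once at the outset to $\wt{H}_{n,d}=\langle \cdot\,, W\cdot\rangle$ using $\langle \one, y\rangle^2\ge 0$, rather than substituting $A_G=\sqrt{d}\,W+\tfrac{d}{n}(\one\one^{\sT}-I)$ piece by piece). The one slip is your claim that the $\one\one^{\sT}$ contribution to the quadratic term has \emph{absolute value} at most $2\sqrt{d}\rho$ --- the piece $\tfrac{4\sqrt{d}}{n}\big(\sum_{i\in S}x_i\big)^2$ can be of order $\sqrt{d}\,\rho^2 n$ --- but since it enters with a favorable (negative) sign it can simply be dropped from the upper bound, exactly as the paper drops $-\tfrac{\sqrt{d}}{n}\langle\one,y\rangle^2$, so the conclusion is unaffected.
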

	\begin{proof}
		Recalling $W=-(A_G-\E[A_G])/\sqrt{d}$ and $H_{n,d}(x)=-\langle x, A_G x\rangle/\sqrt{d}$, define $\wt{H}_{n,d}(x)=\langle x, Wx \rangle$. Since $\E[A_G]=d/n(\one \one^{\sT}-I_n)$\footnote{When $G$ is a random $d$-regular graph we have $\frac{d}{n-1}$ instead of $\frac{d}{n}$ in the expectation, but the proof is the same otherwise.}, we have that
		\begin{equation}\label{eq:H:to:H:tilde}
			H_{n,d}(z)=\wt{H}_{n,d}(z)-\frac{\sqrt{d}}{n}\langle \one, z\rangle^2+\frac{\sqrt{d}}{n}\|z\|_2^2.
		\end{equation}
		Using $\|z\|_2^2=n$, and since we assumed $\langle \one, x\rangle =0$, it follows from $\langle \one, y\rangle^2\geq 0$ that
		\[
		\begin{split}
			H_{n,d}(x)-H_{n,d}(y)
			&\geq \wt{H}_{n,d}(x)-\wt{H}_{n,d}(y)\\
			&=\langle \nabla \wt{H}_{n,d}(x),x-y\rangle-\langle x-y, W(x-y)\rangle,
		\end{split}
		\]
		where the last equality holds since $\wt{H}_{n,d}(x)$ is a quadratic form so $2$nd Taylor order expansion around $x$ is exact. By differentiating \eqref{eq:H:to:H:tilde} and using $\langle \one, x\rangle =0$ and $x_i^2=1$, $ x_i \partial_i H_{n,d}(x)=x_i \partial_i \wt{H}_{n,d}(x)+\frac{2\sqrt{d}}{n}$. Since $d\leq n$, the last term is negligible for large enough $n\geq C_{\kappa}$. In particular, by definition of gapped state in \eqref{eq:def:gapped},
		\[
		x_i \partial_i \wt{H}_{n,d}(x)\geq \frac{\kappa}{2}\,,\qquad\textnormal{for}\quad i\notin I_x,
		\]
		and $|I_x|\leq \delta n$. Define $J\equiv J_{x,y}:=\{j\in [n]: x_j\neq y_j\}$. Since $\|x-y\|_1=\rho n$, $|J|=\rho n/2$, we have
		\[
		\langle \nabla \wt{H}_{n,d}(x),x-y\rangle = 2\sum_{i\in J}x_i\partial_i \wt{H}_{n,d}(x)\geq \kappa(\rho/2 - \delta)n - 2\bigg|\sum_{i\in J\cap I_x}x_i \partial_i \wt{H}_{n,d}(x)\bigg|, 
		\]
		where the $\kappa(\rho/2 - \delta)n$ term comes from the contribution of $|J\setminus I_x| \geq (\rho/2-\delta)n$ by definition of a gapped state. Using Lemma~\ref{lem:bernstein}, the final sum is bounded by
		\[
		4\bigg|\sum_{i\in J\cap I_x}\sum_{j=1}^{n}W_{ij}x_i y_j\bigg|\leq C\left(1+\frac{1}{\sqrt{d\frac{|J\cap I_x|}{n}}}\right)\sqrt{|J\cap I_x|}\sqrt{n}\leq C\left(\sqrt{\delta}+\frac{1}{\sqrt{d}}\right)n,
		\]
		where we used $|J\cap I_x|\leq |I_x|\leq \delta n$ in the final step. Combining with the previous display, setting the constant $\delta$ small enough compared to $\rho, \kappa$ and $d$ to be large enough so that $1/\sqrt{d}\ll \kappa\rho$, we have $\langle \nabla \wt{H}_{n,d}(x),x-y\rangle\geq \kappa\rho n/3$. For the quadratic term, using the first estimate in Lemma~\ref{lem:bernstein}, 
		\[
		\begin{split}
			\langle x-y, W(x-y)\rangle =\sum_{i,j\in J} (x_i-y_i)W_{ij}(x_j-y_j)
			&\leq C|J|\left(\sqrt{\frac{|J|}{n}\log (n/|J|)}+\frac{1}{\sqrt{d}}\log(n/|J|)\right)\\
			&\leq C'n\left(\sqrt{\rho^3 \log (1/\rho)}+\frac{\rho\log(1/\rho)}{\sqrt{d}}\right),
		\end{split}
		\]
		so setting $\rho$ small enough and $1/\sqrt{d}\ll \kappa/\log(1/\rho)$, we have $H_{n,d}(x)-H_{n,d}(y)\geq \rho\kappa n/4$.
	\end{proof}\vspace{-\baselineskip}
	\begin{proof}[Proof of Proposition~\ref{prop:slow:mixing:ER}]    
		Fix $\kappa > 0$ that satisfies the assumption of Proposition~\ref{prop:slow:mixing:ER}, and set $\rho>0$ which is admissible by Lemma 
		\ref{lem:energy:estimate}. Further let $\delta>0$ be small enough and $d > d_0$ be large enough, as determined by Lemma \ref{lem:energy:estimate}. Set $x$ to be a $(\kappa,\delta)$-gapped state of $H_{n,d}$ satisfying $\langle \one,x \rangle =0$, which exists with high probability by the premise of the proposition. 
		
		From here, the proof is identical to \cite[Theorem 1]{Sellke25}. Consider
		\[
		B(\rho) := \{y\in \{-1,+1\}^n:\|x-y\|_1\leq\rho n\},\qquad S(\rho)=\{y\in \{-1,+1\}^n:\|x-y\|_1=\rho n\}.
		\]
		Then, as long as $\beta \geq C/\sqrt{d}$ for large enough $C>0$, by Lemma \ref{lem:energy:estimate}, $\frac{\nu_{\beta,d}(S(\rho))}{\nu_{\beta,d}(x)}\leq 2^n e^{-\rho\kappa n/4}\leq e^{-c\beta\sqrt{d} n}$, and for $\rho < \frac{1}{2}$, $\nu_{\beta,d}(B(\rho)) \leq \frac{1}{2}$. The claim now follows from Cheeger's inequality. The proof for Kawasaki dynamics on the central slice follows exactly in the same manner. The main point is that the only states through which the Kawasaki chain can leave $B(\rho)$ lie in $\cup_{\rho'>\rho/2}S(\rho')$. Thus, by taking $\delta$ small enough and $d$ large enough as determined by Lemma~\ref{lem:energy:estimate} for $\rho/2$ instead of $\rho$, the same argument applies. We note in passing that the same argument applies to any other reversible Markov chains with respect to either $\nu_{\beta,d}$ or $\nu_{\beta,d}$ conditioned on the centered slice, which flips at most $cn$ coordinates at a time, where $c>0$ is a small enough constant.
	\end{proof}\vspace{-\baselineskip}
	
	\subsection{Gapped states in Erd\"os-R\'enyi graphs in the intermediate regime} \label{sec:intermediate}
	Here we consider the Erd\"os-R\'enyi graph $G \sim G(n,d/n)$ in the intermediate regime where $d:= d(n)$ satisfies $1\ll d\ll n$. We establish that with high probability $G$ possess gapped, which follows from the proof of \cite{dandi2025maximal} for the dense case $d = \frac{n}{2}.$ The main idea is to replace $A_G$, the adjacency matrix of $G$, by a Gaussian matrix, and invoke universality results. For simplicity we will work with the centered version satisfying $\tilde{A}_G = A_G - \E[A_G]$. This is with no loss of generality, since the existence of gapped space is invariant to shifting by a constant.
	
	Following \cite{dandi2025maximal}, for an $n\times n$ matrix $W$, we define the bisection deficit function
	$$D_d(W,\kappa,x) = \sum\limits_{i=1}^n\max\left(\kappa-\frac{1}{\sqrt{d}}\sum\limits_{j} W_{i,j}x_ix_j,0\right).$$
	Above $x \in \{\pm 1\}^n$ is a signing of the vertices which should be thought as a bisection, $\kappa$ is the potential gap of the state and $d$ is parameter acting as the density. Taking the minimum over of all bisections, we arrive at
	$$D^*_d(W,\kappa) := \min\limits_{x: \langle x , {\bf1}\rangle = 0} D_d(W,\kappa,x).$$
	The key point, used below, is that having $D^*_d(W,\kappa) = o(n)$ implies the existence of a gapped state.
	
	Now, let $J$ be random $n\times n$ matrix with independent standard Gaussian entries. For this matrix \cite[Lemma 16 and Lemma 17]{dandi2025maximal} imply 
	$$D_n^*(J,\kappa) = o(n),$$
	with high probability, provided $\kappa$ is small enough. 
	Our next step is to consider the re-normalized matrix $\tilde J = \sqrt{\frac{d}{n}}J$, for which it is clear that for every $x \in \{-1,1\}^n$, $D_d(\tilde{J},\kappa,x) = D_n(J,\kappa,x)$ holds, so
	\begin{equation} \label{eq:gappedgaus}
		D^*_d(\tilde{J},\kappa) = o(n).
	\end{equation}
	We will now show that $\left| D^*_d(\tilde{J},\kappa) - D^*_d(\tilde A_G,\kappa)\right| = o(n)$ as well.
	
	Since $\EE[\tilde A_G] = 0$, we note that, for every $i,j \in [n]$, 
	$$\EE[(\tilde A_G)_{i,j}^2] =\frac{d}{n}\left(1-\frac{d}{n}\right), \quad  \EE[\tilde{J}^2_{i,j}] =\frac{d}{n}, \quad \text{ and } \quad \EE[|(\tilde A_G)_{i,j}|^3], \EE[|\tilde{J}_{i,j}|^3] \leq C\frac{d}{n},$$
	for some universal constant $C>0$.
	Under these conditions, according to \cite[Equation (6.8)]{dandi2025maximal} as long as $1\ll d$, 
	\begin{equation} \label{eq:gappedapprox}
		\left| D^*_d(\tilde{J},\kappa) - D^*_d(\tilde A_G,\kappa)\right| = o(n).
	\end{equation}
	We make the important remark that all computations leading to \cite[Equation (6.8)]{dandi2025maximal}, and especially \cite[Equations (5.15),(5.16), (5.20) (6.2), (6.4), and (6.7)]{dandi2025maximal} are not asymptotic and hold for arbitrary values of $d$. In particular, $d$ could depend on $n$ for these estimates. Combining \eqref{eq:gappedgaus} and \eqref{eq:gappedapprox} we can thus see $D^*_d(\tilde A_G,\kappa) = o(n).$ Finally, for the existence of a gapped state, define 
	$$N_d(\tilde A_G,\kappa,x) = \sum\limits_{i}{\bf 1}\left(\frac{1}{\sqrt{d}}\sum\limits_j (\tilde A_G)_{i,j}x_ix_j - \kappa\geq 0\right), \text{ and } \quad N^*_d(\tilde A_G,\kappa) := \max\limits_{x: \langle x , {\bf1}\rangle = 0} N_d(\tilde A_G,\kappa,x).$$
	In words, $n-N^*_d(\tilde A_G,\kappa)$ is the number of vertices violating the gapped condition, as in \eqref{eq:def:gapped}.
	Then, according to \cite[Lemma 11]{dandi2025maximal} (see also \cite[Lemma 6]{dandi2025maximal}), the condition $D^*_d(\tilde A_G,\kappa) = o(n)$, implies
	$$N^*_d(\tilde A_G,\kappa') = n(1-o(n)),$$
	for any $\kappa' < \kappa$. This is then equivalent to the existence of a $(\kappa',\delta)$ state for any $\delta > 0$, provided $n$ is large enough.
	
	\subsection{Gapped states for random regular graphs} \label{sec:regulargapped}
	In this section, we show  that the existence of a balanced gapped state in the sparse Erd\"{o}s-R\'{e}nyi graph $G(n,d/n)$ for large enough $d$, implies the same property for random $d$-regular configuration model $G^{\textnormal{c}}_{\textnormal{reg}}(n,d)$. In the configuration model each vertex is assigned $d$ incident half-edges and a multigraph is formed by a uniform random matching among all $nd$ half-edges (assuming $nd$ is even). The probability of obtaining a simple graph is roughly $e^{-d^2}$. So, when $d$ is fixed, high probability events in the configuration model $G^{\textnormal{c}}_{\textnormal{reg}}(n,d)$, immediately transfer to random $d$-regular graphs $G_{\textnormal{reg}}(n,d)$.
	
	Throughout this section we adopt the following conventions. For a multigraph $G$ with adjacency matrix $A$, we set $A_{ij}=\ell$ if there are $\ell$ edges between vertices $i\neq j\in [n]$ and $A_{ii}=2\ell$ if there are $\ell$ loops at vertex $i$. We write $O(\cdot), o(\cdot)$ and $\Theta(\cdot)$ for the usual $n\to\infty$ asymptotics, while $O_d(\cdot), o_d(\cdot)$, and $\Theta_d(\cdot)$ denote the $d\to\infty $ asymptotics. A sequence of events $A_n$ holds with high probability (w.h.p.) if $\P(A_n)\to 1$ as $n\to\infty$. For random $\{X_n\}$ and non-random $f:\R^{+}\to\R^{+}$, we write $X_n=o_d(f(d))$ w.h.p. as $n\to\infty$ if there exists non-random $g(d)=o_d(f(d))$ such that the sequence $A_n =\{|X_n|\leq g(d)\}$ holds w.h.p. as $n\to\infty$.

	\begin{proposition}\label{prop:gapped:random:regular}
		Suppose there exists a constant $\kappa>0$ such that for any $\delta>0$, if $d\geq d(\delta)$ then w.h.p. as $n\to\infty$ there exists $(\kappa,\delta)$-gapped state for $G(n,d/n)$ satisfying $\langle \one, x\rangle =0$. Then for any $\eta>0$ and $\delta>0$, if $d\geq d'(\eta,\delta)$ then w.h.p. as $n\to\infty$, there exists $(\kappa-\eta, \delta)$-gapped state for $G^{\textnormal{c}}_{\reg}(n,d)$. The same holds with the roles of $G(n,d/n)$ and $G^{\textnormal{c}}_{\reg}(n,d)$ interchanged. 
		
		In particular, when $d$ is fixed and sufficiently large, since by \cite{dandi2025maximal} $G(n,d/n)$ has gapped states, so does $G_{\reg}(n,d)$.
	\end{proposition}
	\begin{remark}
		Although we only need the implication from $G(n,d/n)$ to $G_{\reg}(n,d)$ in order to establish slow mixing of anti-ferromagnetic Ising model on $G_{\reg}(n,d)$, together with the reverse implication this proposition shows that the so-called ``maximal stability threshold'' (see \cite[Definition 1]{dandi2025maximal}) is the same for both ensembles. In particular, since this threshold for $G(n,d/n)$ was identified in \cite{dandi2025maximal} as the root $h_\star$ of a certain nonlinear equation, Proposition~\ref{prop:gapped:random:regular} shows that the same value $h_\star$ also characterizes the threshold for $G_{\reg}(n,d)$.
	\end{remark}
	The proof relies on the coupling of $G(n,d/n)$ and $G^{\textnormal{c}}_{\reg}(n,d)$ introduced in \cite[Section 3]{Dembo2017extremal} (see also~\cite{Frieze1992independence} for a similar construction). Note that moving from $G(n,d/n)$ to $G^{\textnormal{c}}_{\reg}(n,d)$ is non-trivial since the degree of the typical vertices differ by $\Theta(\sqrt{d})$, thus any coupling of these two graphs leads to $\Theta(1)$ difference in their local field distributions (recall the $1/\sqrt{d}$ factor in $\partial_i H_{n,d}$~\eqref{eq:def:local:field}). Instead, we use the coupling from \cite{Dembo2017extremal} to embed $G(n,d_{-}), d_{-}=d-\sqrt{d}\log d$ into $G^{\textnormal{c}}_{\reg}(n,d)$. The additional randomness in this coupling allows us to show that the difference in their local fields is negligible for all but an $o_d(1)$ fraction of vertices.

	We first recall the coupling by \cite{Dembo2017extremal}. Let $d_{-}=d-\sqrt{d}\log d$. Draw i.i.d. $X_i\sim \textnormal{Poisson}(d_{-})$ for $i\in[n]$, and let $B_i =(d-X_i)_{+}$. Color $B_i$ of the half-edges incident to vertex $i$ {\small \sf{BLUE}}, and the remaining $d-B_i$ half-edges {\small \sf{RED}}. Matching the half-edges uniformly to obtain $G_1\sim G^{\textnormal{c}}_{\reg}(n,d)$, it is decomposed into $G_{\rr}$ consisting of all $\rr$ edges, $G_{\rb}$ consisting of all multi-color edges (i.e. $\rb$ and $\br$), and $G_{\bb}$ consisting of all $\bb$ edges. To construct the second graph $G_2$, remove all $\bb$ edges, disconnect all multi-colored $\rb$ edges, and delete all ${\small \sf{BLUE}}$ half-edges that as a result became unmatched. Finally, form a new subgraph $\widetilde{G}_{\rr}$ by a uniform re-matching between all unmatched ${\small \sf{RED}}$ half-edges (if odd number of them, leave one of them as self loop). The graph $G_2$ has vertex set $[n]$ and the edge set $E(G_2)=E(G_{\rr})\sqcup E(\widetilde{G}_{\rr})$.

	A key observation is that $G_2$ is approximately distributed as $G(n,d_{-}/n)$. More precisely, \cite{Dembo2017extremal} (see their proof of Lemma 3.1) showed that there exists a random graph $\widetilde{G}_2$ and a coupling between $G_2$ and $\widetilde{G}_2$ such that
	\begin{equation}\label{eq:G:2:contiguous}
		\textnormal{$\widetilde{G}_2$ is mutually contiguous with $G(n,(d-\sqrt{d}\log d)/n)$, and }|E(\widetilde{G}_2)\triangle E(G_2)|/n=o_d(\sqrt{d}).
	\end{equation}
	Let $A_1$ and $A_2$ denote the adjacency matrices of $G_1$ and $G_2$, respectively. Define
	\[
	\Delta:=A_1-A_2\,,\qquad \Omega_n:=\{x\in \{-1,+1\}^n:\langle \one, x\rangle =0\}.
	\]
	The following is our main approximation result from which we deduce Proposition \ref{prop:gapped:random:regular}.
	\begin{proposition}\label{prop:gapped:random:regular:2}
		For any $\eps, \delta>0$, there exists $d(\eps,\delta)$ such that if $d\geq d(\eps,\delta)$, then w.h.p. as $n\to\infty$
		\[
		\sup_{x\in \Omega_n}F_{\eps}^{+}(x)\leq \delta\quad\textnormal{where}\quad F_{\eps}^{+}(x):= \frac{1}{n}\sum_{i=1}^{n}\one\left\{\frac{x_i(\Delta x)_i}{\sqrt{d}}\geq \eps\right\}.
		\]
		The same conclusion holds with $F_{\eps}^{+}(x)$ replaced by $ F_{\eps}^{-}(x):=\frac{1}{n}\sum_{i=1}^{n}\one\left\{\frac{x_i(\Delta x)_i}{\sqrt{d}}\leq -\eps\right\}$.
	\end{proposition}
	\begin{proof}[Proof of Proposition~\ref{prop:gapped:random:regular}]
		Fix $\kappa>0$ as in the assumption. For a matrix $A\in \R^{n\times n}$ and $q>0$, define
		\[
		T_{A,q}(x):=\frac{1}{n}\sum_{i=1}^{n}\one\left\{x_i(Ax)_i \geq -q/2\right\}\,,\quad x\in \Omega_n.
		\]
		Recalling the definition of gapped states~\eqref{eq:def:gapped}, by assumption on $G(n,d/n)$, if $A\sim G(n,d/n)$ and $d\geq d(\delta)$, then w.h.p. $\inf_{x\in \Omega_n}T_{A,\kappa\sqrt{d}}(x)\leq \delta$. We first show that an analogous statement holds for $A_2$. Let $\tilde{A}_2$ be the adjacency matrix of $\tilde{G}_2$ and fix $\xi>0$, to be chosen below. For any $x\in \Omega_n$, a union bound yields
		\[
		T_{A_2,\kappa\sqrt{d_{-}}-\xi}(x)
		\leq T_{\tilde{A}_2,\kappa\sqrt{d_{-}}}(x)+\frac{1}{n}\sum_{i=1}^{n}\one \left\{\big|x_i(\tilde{A}_2 x)_i-x_i(A_2 x)_i\big|\geq \xi/2\right\}.
		\]
		Using Markov's inequality and the fact $\sum_{i}|(\tilde{A}_2-A_2)x_i|\leq 2|E(\tilde{G}_2)\triangle E(G_2)|$, this gives
		\[
		T_{A_2,\kappa\sqrt{d_{-}}-\xi}(x)\leq  T_{\tilde{A}_2,\kappa\sqrt{d_{-}}}(x)+\frac{4}{n\xi}\left|E(\tilde{G}_2)\triangle E(G_2)\right|.
		\]
		By the assumption on $G(n,d/n)$ together with the mutual contiguity of $\tilde{G}_2$, if $d$ is large enough so that $d-\sqrt{d}\log d \geq d(\delta/4)$, we have $\inf_{x\in \Omega_n}T_{\tilde{A}_2,\kappa\sqrt{d_{-}}}(x)\leq \delta/4$ with high probability. Moreover, $|E(\tilde{G}_2)\triangle E(G_2)|/n=o_d(\sqrt{d})$ by \eqref{eq:G:2:contiguous}, thus
		\[
		\inf_{x\in \Omega_n}T_{A_2,\kappa\sqrt{d_{-}}-\xi}(x)\leq \frac{\delta}{4}+\frac{o_d(\sqrt{d})}{\xi}.
		\]
		Given any fixed $\eta, \delta>0$, choosing $\xi\equiv \kappa\sqrt{d_{-}}-(\kappa-\eta)\sqrt{d}=\Omega_d(\sqrt{d})$ shows that if $d\geq d(\eta, \delta)$, w.h.p. as $n\to\infty$
		\begin{equation}\label{eq:intermediate:claim}
			\inf_{x\in \Omega_n}T_{A_2,(\kappa-\eta)\sqrt{d}}(x)\leq \delta/2.
		\end{equation}
		Finally, observe that for any $x\in \Omega_n$,
		\begin{equation*}\label{eq:gapped:triangle}
			T_{A_1,(\kappa-\eta)\sqrt{d}}(x)\leq T_{A_2,(\kappa-\eta/2)\sqrt{d}}(x)+F^{+}_{\eta/2}(x).
		\end{equation*}
		Note that by Proposition~\ref{prop:gapped:random:regular:2}, if $d$ is large enough, w.h.p., $\sup_{x\in \Omega_n} F_{\eps}^{+}(x)\leq \delta/2$. Therefore, combining with \eqref{eq:intermediate:claim} shows that $\inf_{x\in \Omega_n}  T_{A_1,(\kappa-\eta)\sqrt{d}}(x)\leq \delta$ holds w.h.p., concluding the proof of the existence of gapped states for $A_1\sim G^{\textnormal{c}}_{\reg}(n,d)$.
		
		For the reverse direction, observe that the map $d\mapsto d-\sqrt{d}\log d$ is a one-to-one correspondence on $[1,\infty)$. Using again the contiguity relation for $\tilde{G}_2$ and repeating the preceding argument with the roles of $A_1$ and $\tilde{A}_2$ interchanged yields the existence of $(\kappa-\eta,\delta)$-gapped states for $G(n,d_{-})$. 
	\end{proof}\vspace{-\baselineskip}
	We prove Proposition~\ref{prop:gapped:random:regular:2} via three lemmas below. Let $B=(B_1,...,B_n)$ denote the counts of ${\small \sf{BLUE}}$ edges of all vertices. 
	\begin{lemma}\label{lem:F:concentration}
		For any fixed $\eps>0$, with probability $1-o(1)$ as $n\to\infty$, 
		\[
		\sup_{x\in \Omega_n}F_{\eps}^{+}(x)\leq \sup_{x\in \Omega_n}\E\left[F_{\eps/2}^{+}(x)\Big| B\right]+o_d(1)\,,\quad\textnormal{and}\quad \sup_{x\in \Omega_n}F_{\eps}^{-}(x)\leq \sup_{x\in \Omega_n}\E\left[F_{\eps/2}^{-}(x)\Big| B\right]+o_d(1).
		\]
	\end{lemma}
	\begin{proof}
		Let $\phi_{\eps}^{+}(x)$ be a Lipschitz approximation of $x\mapsto \one\{x\geq \eps\}$, where $\phi_{\eps}^{+}(x)=0$ for $x\leq \eps/2$ and $\phi_{\eps}^{+}(x)=1$ for $x\geq \eps$, and linear in between. Further, let $\phi_{\eps}^{-}(x):=\phi_{\eps}^{+}(-x)$. Then, $\phi_{\eps}^{\pm}$ are both $2/\eps$-Lipschitz. For any bounded $2/\eps$-Lipschitz function $g:\R\to [0,1]$, we claim that with high probability over $B$,
		\begin{equation}\label{eq:claim:lem:F:concentration}
			\sup_{x\in \Omega_n} \left|\Phi_n g(x)-\E\Big[\Phi_n g(x)\Big| B\Big]\right|=o_d(1)\,,~~\textnormal{where}~~ \Phi_n g(x) :=\frac{1}{n}\sum_{i=1}^{n}g\left(\frac{x_i(\Delta x)_i}{\sqrt{d}}\right).
		\end{equation}
		Observe that \eqref{eq:claim:lem:F:concentration} then concludes the proof since we can apply it to $g=\phi_{\eps}^{+}$ and $g=\phi_{\eps}^{-}$, and combine with $\one\{x\geq \eps\}\leq \phi_{\eps}^{+}(x)\leq \one\{x\geq \eps/2\},$ which holds true for every $x \in \R$. 
		
		We are left to prove~\eqref{eq:claim:lem:F:concentration}, following similar arguments as in \cite[Lemma 3.3]{Dembo2017extremal}. For $z>0$, let
		\begin{equation}\label{eq:bound:p:n}
			p(n):=\P\left( \sup_{x\in \Omega_n} \left|\Phi_n g(x)-c(x,B)\right|\geq z\right)\,,\quad\textnormal{where}\quad c(x,B):=\E\Big[\Phi_n g(x)\Big| B\Big].
		\end{equation}
		Since the $B=(B_1,\ldots, B_n)$ are i.i.d., it holds, by Chernoff's inequality, that $B\in \mathcal{E}_n$ w.h.p. where $\mathcal{E}_n:=\{b:|S_n(b)-n\E B_1|\leq \sqrt{n\log n}\}$ and $S_n(b):=\sum_{i=1}^{n}b_i$. Thus by a union bound over $x\in \Omega_n$,
		\[
		p(n)\leq 2^{n}\sup_{x\in \Omega_n}\sup_{b\in \mathcal{E}_n}\P\Big(\big|\Phi_n g(x)-c(x,B)\big|\geq z\Big|B=b\Big)+o_n(1).
		\]
		To control the right-hand side, we will invoke the Azuma-Hoeffding inequality. To apply the inequality we will set a filtration generating the graph and consider the Doob martingale with respect to this filtration. Thus, fix $b\in \mathcal{E}_n$ and half-edge colors following $\{B=b\}$. Observe that $\Delta=A_1-A_2$ are determined by $G_{\br}\cup G_{\bb}$ and $\tilde{G}_{\rr}$, which in turn can be generated as follows: Sequentially match a candidate half-edge to a uniformly chosen half-edge, first using ${\small \sf{BLUE}}$ half-edges for the candidates until all of them are exhausted. Once all ${\small \sf{BLUE}}$ half-edges are matched, cut all $\br$ half-edges and re-match all free ${\small \sf{RED}}$ half-edges. Let $\mathcal{F}_k$ be the sigma-algebra generated by all half-edge colors and first $k\geq 0$ edges to have been paired. Fixing $x\in \Omega_n$, consider the Doob's martingale $M_k= \E[\Phi_n g(x)\mid \mathcal{F}_k]$. This martingale starts with $M_0=\E[\Phi_n g(x)\mid B]$ and since $G_{\rb}\cup G_{\bb}$ and $\tilde{G}_{\rr}$ is determined after $2S_n(b)$ steps in this process, $M_k=\Phi_n g(x)$ for $k\geq 2 S_n(b)$. Moreover, if we let $m$ denote a realization of a matching of all ${\small \sf{BLUE}}$ half-edges and ${\small \sf{RED}}$ freed half-edges, and let $\Phi_n g(x,m)$ be the value of $\Phi_n g(x)$ realized by $m$, then
		\begin{equation}\label{eq:martingale:diff}
			\sup_{k\geq 0}|M_k-M_{k-1}|\leq \sup_{m,m'} \left|\Phi_n g(x,m)-\Phi_n g(x,m')\right|,
		\end{equation}
		where the latter supremum is taken with respect to all neighboring matchings $m,m'$, i.e. $m'$ can be obtained from $m$ switching two edges. Write $\Delta$ and $\Delta'$ for the value of $A_1-A_2$ realized by $m$ and $m'$ respectively, and let $i_1,i_2,i_3,i_4$ be vertices in which $(i_1,i_2), (i_3,i_4)\in m$ whereas $(i_1,i_3),(i_2,i_4)\in m'$. Then,
		\begin{equation}\label{eq:Phi:diff:bound}
			\begin{split}
				\left|\Phi_ng(x,m)-\Phi_ng(x,m')\right|
				&\leq \frac{1}{n}\sum_{\ell=1}^{4}\left|g\left(\frac{x_{i_{\ell}}(\Delta x)_{i_{\ell}}}{\sqrt{d}}\right)-g\left(\frac{x_{i_{\ell}}(\Delta'x)_{i_{\ell}}}{\sqrt{d}}\right)\right|\\
				&\leq \frac{2}{n\eps}\sum_{\ell=1}^{4}\left|\frac{((\Delta-\Delta')x)_{i_{\ell}}}{\sqrt{d}}\right|\leq \frac{16}{n\eps\sqrt{d}},
			\end{split}
		\end{equation}
		where the last inequality holds since $\Delta_{i_{\ell}j}=\Delta'_{i_{\ell}j}$ for all $j\in [n]$ except $2$ vertices, in which case they differ by at most $1$. Thus, $\sup_{k\geq 0}|M_k-M_{k-1}|\leq 16/(n\eps\sqrt{d})$. By the Azuma-Hoeffding inequality, we get that for $b\in \mathcal{E}_n$,
		\[
		\P\Big(\big|\Phi_n g(x)-c(x,B)\big|\geq z_n\Big|B=b\Big)\leq 2\exp\left(-\frac{z_n^2n^2\eps^2 d}{CS_n(b)}\right)\leq 2\exp\left(-\frac{z_n^2n^2\eps^2 d}{C(n\E B_1+\sqrt{n\log n})}\right).
		\]
		Since $\E[B_1]=\sqrt{d}\log d+ O_d(1)$ (cf.~\cite[Eq. (3.15)]{Dembo2017extremal}), choosing $z=C'd^{-1/4}\sqrt{\log d}/\eps$ in \eqref{eq:bound:p:n}, for large enough absolute constant $C'>0$ shows $p(n) = o_n(1)$. Moreover, since with this choice $z=o_d(1)$, and since $\Phi_ng$ is bounded, this also establishes~\eqref{eq:claim:lem:F:concentration}, thus concluding the proof.
	\end{proof}\vspace{-\baselineskip}
	\begin{lemma}\label{lem:concentration:for:most:i}
		For any fixed $\eps>0$, with probability $1-o(1)$ as $n\to\infty$,
		\begin{equation}\label{eq:lem:concentration:for:most:i}
			\sup_{x\in \Omega_n}\Bigg\{\frac{1}{n}\sum_{i=1}^{n}\P\left(\bigg|\frac{x_i(\Delta x)_i-\E\big[x_i(\Delta x)_i\big| B\big]}{\sqrt{d}}\bigg|\geq \eps \,\Bigg|\,B\right)\Bigg\}= o_d(1).
		\end{equation}
	\end{lemma}
	\begin{proof}
		Fix $i\in [n], x\in \Omega_n$, and we condition on half-edge colors such that $\{B=b\}$ for some $b$. Abbreviate $f_i(x)=x_i(\Delta x)_i/\sqrt{d}$. Recalling the filtration $(\mathcal{F}_k)_{k\geq 0}$ from the proof of Lemma~\ref{lem:F:concentration}, consider the Doob's martingale $N_k=\E[f_i(x)\mid \mathcal{F}_k]$. As before, this martingale starts with $N_0=\E[f_i(x)\mid B]$ and $N_k=f_i(x)$ holds for $k\geq 2S_n(b)$, where $S_n(b):=\sum_{i=1}^{n}b_i$.  For $k\geq 2 S_n(b)$, we claim that its predictable quadratic variation satisfies
		\begin{equation}\label{eq:quadratic:variation:upper}
			\langle N\rangle_k\equiv \sum_{\ell=1}^{k} \E[(N_{\ell}-N_{\ell-1})^2\mid \mathcal{F}_{\ell-1}]\leq \frac{64}{d}(B_i+2RB_i),
		\end{equation}
		where $RB_i$ denote the number of ${\small \sf{RED}}$ half-edges incident to $i$ paired with ${\small \sf{BLUE}}$ half-edges. To this end, recall the sequential matching process generating $(\mathcal{F}_k)_{k\geq 0}$. By definition of $f_i(x)$, given $\mathcal{F}_{\ell-1}$, if the $\ell$'th half-edge to be paired is not incident to $i$ and is not matched to an half-edge incident to $i$, then $N_{\ell}=N_{\ell-1}$. Otherwise, following the same switching argument as in \eqref{eq:martingale:diff} and \eqref{eq:Phi:diff:bound}, we have $|N_{\ell}-N_{\ell-1}|\leq 8/\sqrt{d}$. Since the number of steps in the matching process that involves vertex $i$ is equal to $B_i + 2RB_i$, where the factor $2$ stems from rewiring, the inequality in~\eqref{eq:quadratic:variation:upper} holds.
		
		Thus, by Freedman's martingale version of Bernstein's inequality~\cite{freedman75} (see e.g.~\cite{dzhaparidze01}), we have for $v>0$,
		\[
		\P\left(\left|f_i(x)-\E[f_i(x)]\mid B]\right|\geq \eps\,,\, \frac{64(B_i+2RB_i)}{d}\leq v\,\Bigg|\, B\right)\leq 2\exp\left(-\frac{\frac{\eps^2}{2}}{v+\frac{8\eps}{3\sqrt{d}}}\right).
		\]
		We take $v=C\eps^2/\log d$ so that the right-hand side is $d^{-\Omega(1)}$ since $\eps>\log d/\sqrt{d}$ for large enough $d$. Averaging over $i\in [n]$ and taking supremum over $x\in \Omega_n$, we find that the left-hand side of \eqref{eq:lem:concentration:for:most:i} is at most
		\[
		\begin{split}
			\frac{1}{n}\sum_{i=1}^{n}\P\left(\frac{B_i+2RB_i}{d}\geq \frac{C'\eps^2}{\log d}\,\Bigg|\,B\right)+d^{-C}&
			\leq \frac{1}{C'\eps^2 }\frac{\log d}{nd}\sum_{i=1}^{n}\E[B_i+2RB_i\mid B]+ d^{-C}\\
			&\leq \frac{1}{C'\eps^2}\frac{3\log d}{nd}\sum_{i=1}^{n}B_i+d^{-C},
		\end{split}
		\]
		where we used Markov's inequaltiy in the first step and $\sum_{i=1}^{n} RB_i \leq \sum_{i=1}^{n} B_i$ in the second step. Finally, note that since $(B_i)_{i\leq n}$ are i.i.d. $\sum_{i=1}^{n}B_i/n\leq 2 \E[B_1]=2\sqrt{d}\log d+O_d(1)$ with high probability, concluding the proof.
	\end{proof}\vspace{-\baselineskip}
	\noindent We will require one final bound, which will follow from estimates on the max-cut size of $G^{\textnormal{c}}_{\reg}(n,d)$, established in \cite{Dembo2017extremal}. Recall that in a graph $G$ with adjacency matrix $A$ and $x\in \Omega_n$, the cut size of $x$ w.r.t. $A$ is
	\[
	\cut_A(x)\equiv \frac{|E|}{2}-\frac{1}{2}\sum_{(i,j)\in E}x_ix_j=\frac{1}{4}\left(\langle \one, A\one \rangle - \langle x, Ax\rangle \right),
	\]
	where $E$ denote the edge set of the graph $G$.
	\begin{lemma}\label{lem:local:field:to:cut}
		With probability $1-o(1)$ as $n\to\infty$
		\[
		\sup_{x\in \Omega_n}\left\{\frac{1}{n}\sum_{i=1}^{n}\E\left[\frac{x_i(\Delta x)_i}{\sqrt{d}}\,\bigg|\, B\right]\right\}=o_d(1)\,,\quad\textnormal{and}\quad 
		\inf_{x\in \Omega_n}\left\{\frac{1}{n}\sum_{i=1}^{n}\E\left[\frac{x_i(\Delta x)_i}{\sqrt{d}}\,\bigg|\, B\right]\right\}=o_d(1).
		\]
	\end{lemma}
	\begin{proof}
		With abuse of notations, let $\cut_{\Delta}(x)\equiv \cut_{A_1}(x)-\cut_{A_2}(x)$. By definition of $\cut_A(x)$, we have
		\begin{equation*}
			\frac{1}{n}\sum_{i=1}^{n}\E\left[\frac{x_i(\Delta x)_i}{\sqrt{d}}\,\bigg|\, B\right]=\frac{1}{n\sqrt{d}}\E \Big[\langle \one, \Delta \one \rangle - 4\cut_{\Delta}(x)\,\Big|\, B\Big].
		\end{equation*}
		Note that $\langle \one, \Delta \one\rangle=\sum_i \{\deg_{G_{\rb\cup \bb}}(i)-\deg_{\tilde{G}_{\rr}}(i)\}=\sum_i B_i$. Since $\sum_{i} B_i/n=\E[B_1]+o(1)$ w.h.p., and $\E[B_1]=\sqrt{d}\log d+O_d(1)$, it follows that w.h.p., uniformly over all $x\in \Omega_n$,
		\begin{equation}\label{eq:local:field:to:cut}
			\frac{1}{n}\sum_{i=1}^{n}\E\left[\frac{x_i(\Delta x)_i}{\sqrt{d}}\,\bigg|\, B\right]=\log d+O_d(d^{-1/2})-\frac{4}{n\sqrt{d}}\E[\cut_{\Delta}(x)\mid B].
		\end{equation}
		Meanwhile, it was shown in the proof of \cite[Lemma 3.3]{Dembo2017extremal} that for both $\mathcal{A}= G_{\rb\cup \bb}$ and $\mathcal{A}=\tilde{G}_{\rr}$, w.h.p.
		\[
		\sup_{x\in \Omega_n}\Big|\E[\cut_{\mathcal{A}}\mid B]-\E[\cut_{\mathcal{A}}(x)]\Big|\leq Cnd^{1/4}\sqrt{\log d}.
		\]
		Moreover, by \cite[Lemma 3.2]{Dembo2017extremal}, uniformly over all $x\in \Omega_n$,
		\[
		\E[\cut_{G_{\rb\cup \bb}}(x)]=n\left(\frac{\sqrt{d}\log d}{2}+O_d(\log ^2 d)\right)+o(n)\,,~~\E[\cut_{\tilde{G}_{\rr}}(x)]=n\left(\frac{\sqrt{d}\log d}{4}+O_d(1)\right)+o(n).
		\]
		As a result, we have that w.h.p., uniformly over all $x\in \Omega_n$,
		\[
		\begin{split}
			\frac{1}{n}\E[\cut_{\Delta}(x)\mid B]
			&=\frac{1}{n}\left(\E[\cut_{{G_{\rb\cup \bb}}}(x)]-\E[\cut_{\tilde{G}_{\rr}}(x)]\right)+O_d(d^{1/4}\sqrt{\log d})\\
			&=\frac{\sqrt{d}\log d}{4}+O_d(d^{1/4}\sqrt{\log d})+o(1).
		\end{split}
		\]
		Therefore, plugging this estimate into \eqref{eq:local:field:to:cut} shows that uniformly over $x\in \Omega_n$, the left-hand side of \eqref{eq:local:field:to:cut} equals $O_d(d^{-1/4}\log d)$, concluding the proof.
	\end{proof}\vspace{-\baselineskip}
	\begin{proof}[Proof of Proposition~\ref{prop:gapped:random:regular:2}]
		Fix $\eps>0$. Abbreviate $f_i(x)=x_i(\Delta x)_i/\sqrt{d}$. By Lemma~\ref{lem:F:concentration}, we have that w.h.p., $\sup_{x\in \Omega_n} F_{\eps}^{+}(x)$ is at most
		\begin{equation*}\label{eq:gapped:random:regular:union:bound}
			\begin{split}
				\sup_{x\in \Omega_n} \E\Big[F_{\eps/2}^{+}(x)\Big|B\Big]
				\leq \sup_{x\in \Omega_n}\Bigg(\frac{1}{n}\sum_{i=1}^{n}\left\{\P\bigg(\Big|f_i(x)-\E[f_i(x)\mid B]\Big|\geq \frac{\eps}{4}\,\bigg|\,B\bigg)+\one\left\{\E[f_i(x)\mid B]\geq \frac{\eps}{4}\right\}\right\}\Bigg),
			\end{split}
		\end{equation*}
		where we used a union bound. Analogous upper bound holds for $\sup_{x\in \Omega_n} F_{\eps}^{-}(x)$, where we replace the above indicator by $\one\{\E[f_i(x)|B]\leq -\eps/4\}$. By Lemma~\ref{lem:concentration:for:most:i}, the supremum of the first term in the right-hand side is $o_d(1)$ with high probability. Thus, applying Markov's inequality to the indicator shows that w.h.p., we have
		\[
		\sup_{x\in \Omega_n} F_{\eps}^{\pm}(x)\leq o_d(1)+\sup_{x\in \Omega_n}\left\{\pm\frac{4}{n\eps}\sum_{i=1}^{n}\E\left[f_i(x)|B\right]\right\}.
		\]
		By Lemma~\ref{lem:local:field:to:cut}, the final supremum is $o_d(1)$ w.h.p., thus $\sup_{x\in \Omega_n} F_{\eps}^{\pm} (x)=o_d(1)$ w.h.p..
	\end{proof}\vspace{-\baselineskip}
     	
     	\section{Remaining proofs} \label{sec:app}
     	In this section, we prove lower bounds on the anti-ferromagnetic Curie-Weiss model, and the corollaries to Theorem \ref{thm:main} from Section \ref{sec:corrolaries}. 
     	\begin{proof}[Proof of Corollary \ref{cor:dregular}]
     		Let $G$ be any $d$-regular graph, with $A_G$ its adjacency matrix with eigenvalues $d = \lambda_1\geq \lambda_2 \geq \dots \lambda_n \geq -d$. Since $A_G {\bf{1}} = d{\bf{1}}$ and $d$ is the maximal eigenvalue of $A_G$, by the spectral theorem $A_G = d{\bf{1}}{\bf{1}}^T + M,$ where $\mathrm{image}(M)\subset {\bf{1}}^\perp$. In particular, because $\Tr(A_G) = 0$ necessarily $\lambda_n < 0$, and $\lambda_{\min}(M) = \lambda_n$. Also, $\lambda_{\max}(M) = \lambda_2 \geq 0$ holds unless $d = n-1$, and $G$ is the complete graph \cite{torgaev1985graphs}. 
     		Assume for now $d < n-1$. Since the Ising measure is supported on $\{-1,1\}^n$, we can add a multiple of the identity and rewrite $\mu_{J_G, h}$ as
     		$$\mu_{J_G, h}(x) \propto \exp\left(-\frac{\beta}{n} d\langle {\bf{1}},x\rangle^2 + \beta\langle x, \tilde{M}x\rangle + \langle h,x\rangle\right),$$
     		where $\tilde{M}$ is positive semi-definite with $\|M\|_{\op} = \lambda_2 - \lambda_n$. Taking $u = {\bf 1}$ and $J = \beta\tilde{M}$, Theorem \ref{thm:main} applies provided that $\beta d \leq n^{1-\eps}$ and $\beta\left(\lambda_2 - \lambda_n\right)< \frac{1}{2},$ for some $\eps >0$. This condition is equivalent to $\beta < \min\left(\frac{1}{2(\lambda_2-\lambda_n)}, \frac{n^{1-\eps}}{d}\right)$, and so we can conclude the proof when $d < n-1$. When $d = n-1$, for the complete graph, $\|\tilde M\| = 1$ instead, but now $\min\left(\frac{1}{2(\lambda_2-\lambda_n)}, \frac{n^{1-\eps}}{d}\right) = \min\left(\frac{1}{2}, \frac{n^{1-\eps}}{d}\right) = \frac{n^{1-\eps}}{d}$, and we arrive at the same conclusion. 
     	\end{proof}\vspace{-\baselineskip}
     	\begin{proof}[Proof of Corollary \ref{cor:randomdregular}]
     		By Friedman's inequality \cite{friedman2009proof, bordnave2020proof}, and its analogs for dense graphs \cite{sarid2023spectral, he2024spectral}, for any $d \leq \frac{n}{2}$ if $A_G$ is the adjacency matrix of a random $d$-regular, then with high probability
     		$$\max(\lambda_2,|\lambda_n|) \leq M_d:=2\sqrt{d\left(1-\frac{d}{n}\right)}(1+o(1)),$$
     		and the same holds for $d > \frac{n}{2}$ by symmetry. 
     		For $d = \Theta(1)$, the bound sharpens to $M_d = 2\sqrt{d-1}$.
     		Thus,  $\lambda_2 - \lambda_n \leq 2M_d$ and if $\beta < \frac{1}{2M_d}$ then with high probability $\beta < \frac{1}{2(\lambda_2-\lambda_n)}$ as well, at least when $n$ is large. The corollary now follows from Corollary~\ref{cor:dregular}. 
     		Note further that for $d = \Theta(1)$ or even $d < (1-\eps)n$, then the condition $\beta < \frac{1}{2M_d}$ automatically enforces $\beta \leq \frac{n^{1-\eps}}{d}$, for large enough $n$.
     	\end{proof}\vspace{-\baselineskip}
     	\begin{proof}[Proof of Corollary \ref{thm:denseER}]
     		The proof is virtually identical to the proof of $d$-regular graphs. Again, let $\lambda_1\geq \lambda_2\geq \lambda_n$ be the eigenvalues of $A_G$. When $d \geq C\log(n)$ for large enough $C>0$, $G$ is connected with overwhelming probability and the eigenvector $u$, corresponding to $\lambda_1$ satisfies $\|u\|_{\infty}\leq n^{-1/2+o(1)}\|u\|_2$, see \cite[Theorem 2.7]{he2019local} for example. Furthermore, we have that $\lambda_1 \leq d(1+o(1))$ \cite{krivelevich2003largest}, and  $\max(\lambda_2,|\lambda_n|) \leq \frac{2}{c}\sqrt{d}(1+o(1))$, where $c > 0$ depends only on $C$, see \cite[Theorem 3.2]{benaych2020spectral} and \cite[Corollary 2.3]{alt2021extremal}. To refine the bound, \cite[Corollary 2.3]{alt2021extremal} also shows that as $C \to \infty$, $c \to 1$.
     		Therefore, we have the following expression
     		$$\mu_{J_G, h}(x) \propto \exp\left(-\frac{\beta'}{n} d\langle u,x\rangle^2 + \beta\langle x, \tilde{M}x\rangle + \langle h,x\rangle\right),$$
     		where $\beta' = \beta + o(1)$, $u$ is an eigenvector with $\|u\|_\infty = 1$ and $\tilde{M}$ is a positive semi-definite matrix satisfying $\|\tilde{M}\|_{\op} \leq \frac{4}{c}\sqrt{d}(1+o(1)).$
     		Again, the condition on $\beta$ allows to invoke Theorem \ref{thm:main} from whence the result follows.
     	\end{proof}\vspace{-\baselineskip}

     	\medskip\noindent\textbf{Slow mixing of anti-ferromagnetic Curie-Weiss when $\beta = \Theta(n)$:}
     	Here, we consider the measure
     	$$\pi(x)  = \frac{1}{Z_n(\beta_0)}\exp\bigg(-\beta_0\Big(\sum\limits_{i=1}^n x_i\Big)^2\bigg) \qquad x \in \{-1,+1\}^n,$$
     	and show that the associated Glauber dynamics has a mixing time which scales by $e^{\beta_0}$. This example rules out possible extensions of Theorem \ref{thm:main} to the regime $\beta = \Theta(n)$.
     	\begin{lemma} \label{lem:lowerbound}
     		Let $\beta_0 > 0$ and let $\pi$ be as above, then $\rho_{\mathrm{LS}}\left(\pi\right) \leq \frac{C}{ne^{4\beta_0}}$.
     	\end{lemma}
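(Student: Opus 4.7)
The plan is to bound the MLSI constant through the spectral gap using the standard inequality $\rho_{\mathrm{LS}}(\pi)\leq 2\lambda(\pi)$, and to bound $\lambda(\pi)$ via the Poincar\'e inequality applied to the simplest possible test function, the single coordinate $f(x)=x_1$. By the sign-flip symmetry of $\pi$ one has $\mathbb{E}_\pi[X_1]=0$, so $\mathrm{Var}_\pi(f)=1$ and the whole estimate reduces to bounding the Dirichlet form.

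Only flips at coordinate $1$ contribute to $\mathcal{E}(f,f)$, and a short detailed-balance computation yields
\[
\mathcal{E}(f,f) \;=\; \frac{2}{n}\,\mathbb{E}_\pi\!\left[\pi(-X_1\mid X_{-1})\right] \;=\; \frac{1}{n}\,\mathbb{E}_\pi\!\left[\mathrm{sech}^2(2\beta_0 S_1)\right],\qquad S_1\;:=\;\sum_{j\neq 1}X_j,
\]
where the second equality uses the elementary identity $\pi(+1\mid X_{-1})\,\pi(-1\mid X_{-1})=\tfrac14\,\mathrm{sech}^2(2\beta_0 S_1)$, which follows directly from the form of the single-site conditional of $\pi$. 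The key idea is then a parity argument: for even $n$, the random variable $S_1$ is a sum of $n-1$ terms in $\{\pm 1\}$ and is therefore always odd, so $|S_1|\geq 1$ \emph{deterministically}. Combined with the bound $\mathrm{sech}^2(y)\leq 4 e^{-2|y|}$, this gives $\mathrm{sech}^2(2\beta_0 S_1)\leq 4e^{-4\beta_0}$ pointwise, whence $\mathcal{E}(f,f)\leq 4e^{-4\beta_0}/n$ and $\lambda(\pi)\leq 4e^{-4\beta_0}/n$.

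The inequality $\rho_{\mathrm{LS}}\leq 2\lambda$ is obtained by applying the definition $\mathrm{Ent}_\pi(Pf)\leq (1-\rho_{\mathrm{LS}})\mathrm{Ent}_\pi(f)$ to $f=1+\varepsilon g$ with $\mathbb{E}_\pi g=0$, expanding both entropies to order $\varepsilon^2$ to extract the bound $\mathrm{Var}(Pg)\leq (1-\rho_{\mathrm{LS}})\mathrm{Var}(g)$, and comparing with the spectral estimate $\mathrm{Var}(Pg)\leq (1-\lambda)^2\mathrm{Var}(g)$ (valid because the Glauber kernel is a convex combination of single-site projections and hence positive semi-definite). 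Combining everything gives $\rho_{\mathrm{LS}}(\pi)\leq 8/(ne^{4\beta_0})$, proving the lemma with $C=8$.

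There is no substantial analytic obstacle; the only subtle point is the parity restriction. For odd $n$ the sum $S_1$ can vanish, the pointwise bound above fails, and in fact the dynamics mixes in polynomial time (since the measure concentrates on $\{\sum x = \pm 1\}$ and these two sets communicate with $\Omega(1)$ probability in a single step). The statement is therefore naturally interpreted for even $n$, which is the regime where $\pi$ is concentrated at the bottleneck $\{\sum x = 0\}$ and which suffices to rule out any extension of Theorem~\ref{thm:main} into $\beta=\Theta(n)$.
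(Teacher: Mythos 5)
Your proof is correct, and it reaches the stated bound (with $C=8$) by a route that is organized differently from the paper's, though the two are close cousins. The paper bounds the conductance of the cut $S=\{x_1=1\}$ and invokes Cheeger's inequality: it decomposes the boundary flow according to the magnetization level, uses the exact acceptance probability $e^{-4\beta_0}/(1+e^{-4\beta_0})$ only on the balanced configurations, bounds the unbalanced contribution by a shift-and-compare estimate on $\sum_m \binom{n}{n/2-m}e^{-4\beta_0 m^2}$, and finally compares to $Z_n(\beta_0)\geq\binom{n}{n/2}$. You instead test the Poincar\'e inequality with $f(x)=x_1$ (which is affinely the indicator of the same cut, so the underlying bottleneck is identical) and replace the entire magnetization decomposition by the single pointwise observation that $\pi(+1\mid X_{-1})\pi(-1\mid X_{-1})=\tfrac14\sech^2(2\beta_0 S_1)$ with $|S_1|\geq 1$ deterministically for even $n$. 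This is cleaner: it avoids any comparison with the partition function and handles balanced and unbalanced configurations uniformly. Both proofs assume $n$ even (the paper does so explicitly "for simplicity"), and both need the same final reduction from the MLSI constant to the spectral gap; your linearization argument $\Ent_\pi(1+\eps g)=\tfrac{\eps^2}{2}\Var_\pi(g)+O(\eps^3)$ applied to the second eigenfunction is a valid way to get $\rho_{\mathrm{LS}}\leq 1-(1-\lambda)^2\leq 2\lambda$ (the positive semi-definiteness of the Glauber kernel is not even needed for this one-sided implication, since it suffices to plug in the single eigenfunction with eigenvalue $1-\lambda$). One small caveat: your closing remark that the dynamics mixes in polynomial time for odd $n$ is plausible but unproven as stated; it is an aside and does not affect the lemma.
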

     	\begin{proof}
     		For simplicity, we assume that $n$ is even and define the sub-cube $S:= \left\{x \in \{-1,+1\}^n: x_1 = 1\right\}.$
     		Consider the conductance $Q(S,S^c):= \sum\limits_{x\in S, y\notin S} \pi(x)P(x,y),$
     		where $P$ is the transition kernel for Glauber dynamics. 
     		For any $x \in S$, and $y \in S^C$, $P(x,y) \neq 0$ if only if $y$ is obtained from $x$ by flipping the first coordinate, which we denote as $x'$.
     		Now, if $x$ is balanced, in the sense that $\sum x_i = 0$, then
     		$$P(x,x') = \frac{1}{n}\frac{\pi(x')}{\pi(x)+\pi(x')} = \frac{1}{n}\frac{e^{-4\beta_0}}{e^{-4\beta_0} + 1}\leq \frac{1}{n}\frac{1}{e^{4\beta_0}}.$$
     		Otherwise, we use the crude bound $P(x,x')\leq \frac{1}{n}$ and we estimate the conductance by
     		$$Q(S,S^c) \leq  \frac{1}{Z_n(\beta_0)n} \left(\binom{n}{n/2}\frac{1}{e^{4\beta_0}} + 2\sum\limits_{m=1}^{n/2}\binom{n}{n/2- m}e^{-4\beta_0 m^2} \right).$$
     		Focusing on the sum,
     		\begin{align*}
     			\sum\limits_{m=1}^{n/2}\binom{n}{n/2- m}e^{-4\beta_0 m^2} &\leq \sum\limits_{m=0}^{n/2}\binom{n}{n/2- m}e^{-4\beta_0 (m+1)^2}\\
     			&= \sum\limits_{m=0}^{n/2}\binom{n}{n/2- m} e^{-4\beta_0 m^2}\frac{e^{-4\beta_0 (m+1)^2}}{e^{-4\beta_0 m^2}} \\
     			&\leq \frac{1}{e^{4\beta_0}}\sum\limits_{m=0}^{n/2}\binom{n}{n/2- m} e^{-4\beta_0 m^2} = \frac{1}{e^{4\beta_0}}\frac{Z_n(\beta_0)}{2}
     		\end{align*}
     		Clearly, $Z_n(\beta_0) \geq \binom{n}{n/2}$ and $\pi(S) = \frac{1}{2}$. We have thus shown
     		$\frac{Q(S,S^c)}{\pi(S)} \leq  \frac{4}{n e^{4\beta_0}}$.
     		The bound on $\rho_{\mathrm{LS}}(\pi)$ now follows from Cheeger's inequality.
     	\end{proof}\vspace{-\baselineskip}
	\bibliographystyle{alpha}
	\bibliography{bib}{}
\end{document}